\newcommand{\A}{\mathbb{A}}
 \newcommand{\C}{\mathbb{C}}
  \newcommand{\DD}{\mathbb{D}}
\newcommand{\N}{\mathbb{N}}
\renewcommand{\P}{\mathbb{P}}
 \newcommand{\Q}{\mathbb{Q}}
 \newcommand{\R}{\mathbb{R}}
 \newcommand{\Z}{\mathbb{Z}}
\newcommand{\tf}{\widetilde{\f}}
\newcommand{\tp}{\widetilde{\p}}
\newcommand{\tphi}{\widetilde{\phi}}
\newcommand{\cE}{\mathcal{E}}
\newcommand{\cH}{\mathcal{H}}
\newcommand{\cK}{\mathcal{K}}
\newcommand{\cL}{\mathcal{L}}
\newcommand{\cM}{\mathcal{M}}
\newcommand{\cO}{\mathcal{O}}
\newcommand{\cT}{\mathcal{T}}
\newcommand{\cU}{\mathcal{U}}
\newcommand{\cX}{\mathcal{X}}
\newcommand{\ess}{\mathrm{ess}}
\renewcommand{\a}{\alpha}
\renewcommand{\b}{\beta}
\renewcommand{\d}{\delta}
\newcommand{\e}{\varepsilon}
\newcommand{\f}{\varphi}
\newcommand{\om}{\omega}
\newcommand{\Om}{\Omega}
\newcommand{\p}{\psi}
\newcommand{\eg}{{\rm e.g.\ }} 
\newcommand{\ie}{{\rm i.e.\ }}
\newcommand{\inter}{\cdot\ldots\cdot}
\newcommand{\winter}{\wedge\dots\wedge}
\newcommand{\rE}{\mathring{E}}
\newcommand{\rsig}{\mathring{\D}}
\newcommand{\hto}{\hookrightarrow}
\DeclareMathOperator{\dd}{{d}}
\newcommand{\an}{\mathrm{an}}
\DeclareMathOperator{\ii}{I}
\DeclareMathOperator{\Cz}{C^0}
\newcommand{\redu}{\mathrm{red}}
\newcommand{\trop}{\mathrm{trop}}
\DeclareMathOperator{\Capa}{Cap}
\DeclareMathOperator{\DFS}{DFS}
\DeclareMathOperator{\FS}{FS}
\DeclareMathOperator{\MA}{MA}
\DeclareMathOperator{\Spec}{Spec}
\DeclareMathOperator{\supp}{supp}
\DeclareMathOperator{\ord}{ord}
\DeclareMathOperator{\Log}{Log}
\DeclareMathOperator{\CPSH}{CPSH}
\DeclareMathOperator{\reg}{reg}
\DeclareMathOperator{\Sk}{Sk}
\DeclareMathOperator{\Hnot}{H^0}
\DeclareMathOperator{\VCar}{VCar}
\newcommand{\ddc}{dd^c}
\newcommand{\val}{\mathrm{val}}
\newcommand{\re}{\mathrm{ref}}
\newcommand{\D}{\Delta}
\newcommand{\cro}[1]{[\![#1]\!]}
\newcommand{\simto}{\overset\sim\to}
\newcommand{\hyb}{\mathrm{hyb}}
\numberwithin{equation}{section}       % Number formulas within sections
\newtheorem{prop} {Proposition} [section]
\newtheorem{thm}[prop] {Theorem} 
\newtheorem{defi}[prop] {Definition}
\newtheorem{lem}[prop] {Lemma}
\newtheorem{cor}[prop]{Corollary}
\newtheorem{prop-def}[prop]{Proposition-Definition}
\newtheorem{exam}[prop]{Example}
\newtheorem{rmk}[prop]{Remark}
\newtheorem{conj}[prop]{Conjecture}
\newtheorem{ass}[prop]{Assumption}
\theoremstyle{remark}
\newtheorem*{ackn}{Acknowledgment}
\title[From amoebas to pluripotential theory]{From amoebas to pluripotential theory on hybrid analytic spaces}
\date{\today}
\author{S{\'e}bastien Boucksom}
\address{Sorbonne Universit\'e and Universit\'e Paris Cit\'e\\
CNRS\\
IMJ-PRG\\
F-75005 Paris\\
France}
\email{sebastien.boucksom@imj-prg.fr}
\begin{document}

\begin{abstract} These lecture notes are an introduction to the use of non-Archimedean geometry in the study of meromorphic degenerations of complex algebraic varieties. They provide a self-contained discussion of hybrid spaces, which fill in one-parameter degenerations with the associated non-Archimedean Berkovich space as a central fiber. The main focus is on the interplay between complex and non-Archimedean pluripotential theory, and on the relation between convergence of psh metrics and the associated Monge--Amp\`ere measures in the hybrid space, following work of the author with M.~Jonsson, and recent breakthrough work by Y.~Li. 
\end{abstract}

\maketitle

\setcounter{tocdepth}{1}
\tableofcontents
% 
%
%%%%%%%%%%%%%%%%%%%%%%%%%%%%%%%%%%%%%%%%%%%%%%%%%%%%%%%%%%%%%%%%%%%
\section*{Introduction}
The main purpose of these lecture notes is to provide an introduction to the use of non-Archimedean geometry in the study of one-parameter meromorphic degenerations $X\to\DD^\times$ of complex algebraic varieties, \ie complex analytic families over a small punctured disc attached to algebraic varieties $X_K$ over the field $K=\C\{t\}$ of convergent Laurent series. 

\smallskip

The appearance of non-Archimedean techniques in this context can be traced back at least to the pioneering work of George Bergman~\cite{Berg}, which gave birth to tropical geometry. Bergman considered a meromorphic degeneration of algebraic subvarieties $X_t\subset T=(\C^\times)^n$ of an algebraic torus, $t\in\DD^\times$, and provided a realization---at first partly conjectural, later fully settled by Bieri--Groves~\cite{BG}---of the rescaled limit as $t\to 0$ of the \emph{amoeba} of $X_t$, \ie its image under the log map $T\to\R^n$, as the non-Archimedean analogue of the amoeba, defined in terms of (semi)valuations on the algebraic variety $X_K$. 

From a more recent perspective, the space $X_K^\an$ of semivaluations on an algebraic $K$-variety $X_K$, compatible with the natural valuation of $K$, is an instance of \emph{Berkovich analytification} \cite{BerkBook}. Assuming now $X_K$ to be smooth and projective, which is the main case of interest in these notes, $X_K^\an$ can alternatively be described in terms of \emph{snc models} $\cX\to\DD$, \ie extensions of the family $X\to\DD^\times$ obtained by adding in a simple normal crossing divisor $\cX_0$ as a central fiber. To each snc model $\cX$ is attached a finite simplicial complex $\D_\cX$ encoding the combinatorics of the intersections of components of $\cX_0$, and which can be canonically realized as a subspace of $X_K^\an$ using monomial valuations; this further comes with a natural retraction map $X_K^\an\to\D_\cX$, and $X_K^\an$ can then be recovered as the projective limit of all such dual complexes. 

Inspired by the analogy between the retraction maps $X_K^\an\to\D_\cX$ and Lagrangian fibrations, Kontsevich proposed in the early 2000's an ambitious program~\cite{KS} aiming to approach the Strominger--Yau--Zaslow conjecture---and, more generally, study the limiting behavior of degenerations $X\to\DD^\times$ of Calabi--Yau manifolds from a differential geometric perspective---in terms of the non-Archimedean analogue of a Calabi--Yau metric on the associated Berkovich space $X_K^\an$. The first steps of this program were taken in the unpublished notes~\cite{KT}, and later developed in~\cite{nama,siminag}, laying the basis of a non-Archimedean analogue of complex pluripotential theory. 

The relation between a meromorphic degeneration $X\to\DD^\times$ and the associated Berkovich space $X_K^\an$ is best materialized in the \emph{hybrid space} $X^\hyb\to\DD$, a topological extension of $X\to\DD^\times$ that adds $X_K^\an=X_0$ as a central fiber. This construction, originally introduced by Berkovich in \cite{BerNA}, allows to formulate the convergence of measures and metrics on the complex analytic fibers $X_t$ to similar objects on the non-Archimedean space $X_0$, and was exploited in~\cite{konsoib} to establish the convergence of Calabi--Yau volume forms to their non-Archimedean analogue, defined in terms of the Kontsevich--Soibelman \emph{essential skeleton}~\cite{KS,MN,NX}. 

This was taken much further in the recent breakthrough work of Y.~Li~\cite{LiSYZ}, who managed to establish a similar convergence result for the Calabi--Yau potentials, under a conjectural invariance property of the non-Archimedean Calabi--Yau potentials that was in turn established for certain degenerations of Calabi--Yau hypersurfaces in~\cite{HJMM,AH,Litor}. 

\smallskip

The contents of these notes basically consist in a reasonably self-contained introduction to the above developments: 
\begin{itemize}
\item the introductory Section~\ref{sec:amoeba} discusses Bergman's work on amoebas and tropicalizations; 
\item Section~\ref{sec:Berkan} introduces Berkovich's general notion of analytification of schemes over a normed ring, including the notion of continuous metrics and Fubini--Study metrics on line bundles in this context; 
\item Section~\ref{sec:hybrid} introduces the main character of these notes, to wit the hybrid space of a meromorphic degeneration, and establishes its basic topological properties; 
\item Section~\ref{sec:NAmetrics} makes a deeper study of the Berkovich space of a meromorphic degeneration, introducing the notion of models and the associated model metrics and functions, as well as the description of Berkovich analytification as a limit of dual complexes;
\item Section~\ref{sec:hybmod} introduces the hybrid version of model functions and log maps, and shows that they characterize the hybrid topology;
\item building on this, Section~\ref{sec:cvmeas} focuses on the convergence of measures in the hybrid space, and provides a detailed proof of the main result of~\cite{konsoib}, which includes the case of convergence of Calabi--Yau volume forms;
\item Section~\ref{sec:pluripot} provides a (rather quick) introduction to complex and non-Archimedean pluripotential theory, and establishes a slightly improved version of Favre's convergence result for Monge--Amp\`ere measures~\cite{Fav}; 
\item Section~\ref{sec:hybstab} provides a proof of the main result of~\cite{LiSYZ}, with an occasional mild simplification and in the slightly more general setting considered in~\cite{konsoib};
\item finally, Appendix~\ref{sec:stab} reviews Ko\l{}odziej's stability results for solutions to complex Monge--Amp\`ere equations, and their crucial asymmetric version due to Yang Li. 
\end{itemize}
We also refer to Yang Li's very nice survey~\cite{Li22} for another introduction to the present circle of ideas. For lack of time, the present notes unfortunately do not cover many important more recent developments, including~\cite{HJMM,AH,Litor,Li23,Li25}. 

\begin{ackn} These notes are an expanded version of a series of lectures I delivered during the CIME school `Calabi--Yau varieties' in July 2024 in Cetraro. I am very grateful to Simone Diverio, Vincent Guedj and Hoang Chinh Lu for organizing this very nice and successul event, and to the CIME foundation for providing the opportunity to publish these lecture notes. I would also like to thank Mattias Jonsson for our many joint works and discussions on the topics of these notes. Finally, I thank the referees for their careful reading and helpful suggestions. 
\end{ackn}
%
%
%
%
%
%
%%%%%%%%%%%%%%%%%%%%%%%%%%%%%%%%%%%%%%%%%%%%
\section{Amoebas, valuations and tropicalization}\label{sec:amoeba}
This introductory section aims to motivate the use of (semi)valuations in the study of degenerations of algebraic varieties through the historical example of tropicalizations as limits of amoebas. 
%
%
%%%%%%%%%%%%%%%%%%%%%%%%%%%%%%%%%%%%%%%%%%%%
\subsection{The asymptotic cone of an amoeba}
The complex algebraic torus $T:=(\C^\times)^n$ is equipped with a natural \emph{log map}\footnote{The minus sign is here to ensure compatibility with semivaluations, see below.} 
$$
\Log\colon T\to\R^n\quad x\mapsto(-\log|x_1|,\dots,-\log|x_n|),
$$
which is continuous, proper, and in fact a principal $(S^1)^n$-bundle.

\begin{defi} The \emph{amoeba} of an algebraic subvariety $Z\subset T$ is defined as its image $\Log(Z)\subset\R^n$ under the log map.
\end{defi}
Here we identify $Z$ with the corresponding complex analytic variety, \ie its set of complex points endowed with the Euclidean topology. The purpose of what follows is to discuss the pioneering work~\cite{Berg}, where George Bergman discovered that the large scale geometry of the amoeba, \ie its asymptotic cone $\lim_{\e\to 0}\e\Log(Z)$, can be realized as a non-Archimedean version of the amoeba, defined in terms of (semi)valuations, and further admits a piecewise linear structure (see Figure~1), thereby giving birth to what is now called \emph{tropical geometry}. 

\begin{figure}\label{fig:am}
\centering
\includegraphics[scale=0.4]{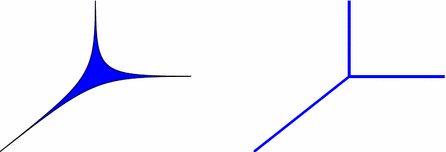}
\caption{Amoeba and tropicalization of $Z=(z_1+z_2+1=0)\subset(\C^\times)^2$}
\end{figure}

\subsubsection{Semivaluations}\label{sec:Berkan}
We view $T$ as an affine algebraic variety with ring of functions 
$$
\cO(T)=\C[z_1^\pm,\dots,z_n^\pm]
$$
the ring of Laurent polynomials 
\begin{equation}\label{equ:Laurent}
f=\sum_{\a\in \Z^n} a_\a z^\a, 
\end{equation}
where we use the standard notation $z^\a:=\prod_i z_i^{\a_i}$ and $a_\a\in\C$ is nonzero for only finitely many $\a$. For any (complex) point $x\in T$ and $\a\in\Z^n$, we have 
$$
\log|x^\a|=\sum_i\a_i\log|x_i|=-\a\cdot\Log(x)
$$
and hence 
\begin{equation}\label{equ:Laurentbd}
-\log|f(x)|\ge \min_{a_\a\ne 0}\a\cdot\Log(x)-C
\end{equation}
for each Laurent polynonial $f$, where $C>0$ only depends on $f$. 

\medskip

Consider next a sequence $(x_j)$ of points of $T$, and $\e_j\in (0,1)$ such that $\e_j\to 0$ and 
$\e_j\Log(x_j)$ admits a limit in $\R^n$. For each $f\in \cO(T)$, $-\e_j\log|f(x_j)|$ is then bounded below, by~\eqref{equ:Laurentbd}. Using Tychonoff's theorem, we may thus assume (after passing to a generalized subsequence, \ie a subnet) that $-\e_j\log|f(x_j)|$ admits a limit $v(f)\in\R\cup\{+\infty\}$ for all $f\in\cO(T)$ (the use of nets being due to the uncountability of this ring). Using the trivial relations
$$
-\e_j\log|(fg)(x_j)|=-\e_j\log|f(x_j)|-\e_j\log|g(x_j)|,
$$
$$
-\e_j\log|(f+g)(x_j)|\ge\min\left\{-\e_j\log|f(x_j)|,-\e_j\log|g(x_j)|\right\}-\e_j\log 2
$$
for all $f,g\in \cO(T)$, we see that the pointwise limit defines a \emph{semivaluation}\footnote{It is a valuation if further $v(f)=\infty\Rightarrow f=0$.} on $\cO(T)$,  \ie a map
$$
v\colon\cO(T)\to\R\cup\{+\infty\}
$$
such that 
$$
v(fg)=v(f)+v(g),\quad v(f+g)\ge\min\{v(f),v(g)\},\quad v(0)=+\infty,
$$ 
whose restriction to the ground field $\C$ further coincides with the \emph{trivial valuation} $v_0\colon\C\to\R\cup\{+\infty\}$, such that $v_0(a)=0$ for $a\in\C^\times$ and $v_0(0)=+\infty$. This leads to the following definition. 

\begin{defi}\label{defi:Berktriv} For any complex affine variety $X$ with ring of functions $\cO(X)$, we denote by $X^\an$ the set of all semivaluations $v$ on $\cO(X)$ whose restriction to $\C$ coincides with the trivial valuation $v_0$. 
\end{defi}
Endowed with the topology of pointwise convergence, the space $X^\an$ is Hausdorff and locally compact (see~\S\ref{sec:Berkan}). 

The following useful simple fact is left as an exercise: 

\begin{lem}\label{lem:valequ} Pick $v\in X^\an$ and a finite set $(f_i)$ in $\cO(X)$. If $v(\sum_i f_i)>\min_i v(f_i)$, then the minimum is achieved at least twice. 
\end{lem}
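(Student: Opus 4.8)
The plan is to argue by contradiction, exploiting only the ultrametric inequality $v(f+g)\ge\min\{v(f),v(g)\}$ together with the fact that $v$ restricts to the trivial valuation on $\C$, so that in particular $v(-h)=v(-1)+v(h)=v(h)$ for every $h\in\cO(X)$.

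Suppose the minimum $m:=\min_i v(f_i)$ is attained at exactly one index, say $i_0$, so that $v(f_{i_0})=m<v(f_i)$ for all $i\ne i_0$. First I would note that the hypothesis $v(\sum_i f_i)>m$ forces $m<+\infty$ (otherwise all $v(f_i)=+\infty$, hence $v(\sum_i f_i)\ge m=+\infty$, and the strict inequality cannot hold). Next, writing $g:=\sum_{i\ne i_0}f_i$ and applying the ultrametric inequality a finite number of times, one gets $v(g)\ge\min_{i\ne i_0}v(f_i)>m$ (with the convention $v(g)=v(0)=+\infty$ if $i_0$ is the only index), so in any case $v(g)>m$.

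Now I would use $f_{i_0}=\bigl(\sum_i f_i\bigr)-g=\bigl(\sum_i f_i\bigr)+(-g)$ and apply the ultrametric inequality once more, together with $v(-g)=v(g)$, to obtain
$$
m=v(f_{i_0})\ge\min\Bigl\{v\Bigl(\textstyle\sum_i f_i\Bigr),\,v(g)\Bigr\}.
$$
But $v(\sum_i f_i)>m$ by hypothesis and $v(g)>m$ by the previous step, so the right-hand side is $>m$, a contradiction. Hence the minimum must be attained at least twice.

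There is essentially no genuine obstacle here: the whole argument is a formal consequence of the non-Archimedean triangle inequality and the triviality of $v$ on constants, and the only points requiring minor care are the bookkeeping of the value $+\infty$ (handled by the observation that the hypothesis excludes $m=+\infty$) and the sign change $g\mapsto -g$ (handled by $v(-1)=0$).
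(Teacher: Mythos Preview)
Your proof is correct; the paper itself leaves this lemma as an exercise, so there is no proof to compare against. One small remark: you do not actually need that $v$ is trivial on $\C$ to conclude $v(-g)=v(g)$, since $v(-1)=0$ already follows from the semivaluation axioms (indeed $2v(-1)=v(1)=0$), but this does not affect the validity of your argument.
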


%\begin{rmk} If we replace the trivial absolute $|\cdot|_0$ on $\C$ with the usual one in Definition~\ref{defi:Berktriv}, then we recover the usual analytification of $X$, \ie its set of complex points with the Euclidean topology. This is a consequence of the Gelfand--Mazur theorem, see Example~\ref{exam:GM}. 
%\end{rmk} 

\subsubsection{The non-Archimedean amoeba}\label{sec:naam}
Consider now a subvariety $Z\subset T$, corresponding to a (radical) ideal $I_Z\subset\cO(T)$, with amoeba $\Log(Z)\subset\R^n$. 

By definition, each point in the asymptotic cone $\lim_{\e\to 0}\e\Log(Z)$ is the limit in $\R^n$ of $\e_j\Log(x_j)$ for some sequence (or net) of complex points $x_j\in Z$ and $\e_j\in\R_{>0}$ converging to $0$. The considerations of~\S\ref{sec:Berkan} thus apply and show that, after passing to a subnet, $f\mapsto-\e_j\log|f(x_j)|$ converges pointwise to a semivaluation $v$ on $\cO(T)$, trivial on $\C$. Since all $x_j$ lie in $Z$, any $f\in I_Z$ further satisfies $-\e_j\log|f(x_j)|=+\infty$, and hence $v(f)=+\infty$, which means that $v$ descends to a semivaluation on $\cO(T)/I_Z\simeq\cO(Z)$, \ie an element of $Z^\an\subset T^\an$. In particular, each component $-\e_j\log|z_i(x_j)|$, $i=1,\dots,n$, of $\e_j\Log(x_j)\in\R^n$ converges to $v(z_i)$, which is finite since $z_i$ is invertible. 

We conclude that the asymptotic cone $\lim_{\e\to 0}\e\Log(Z)$ is contained in the \emph{non-Archimedean amoeba} $\Log(Z^\an)$, \ie the image of $Z^\an\subset T^\an$ under the \emph{non-Archimedean log map}
$$
\Log\colon T^\an\to\R^n
$$
defined by
$$
\Log(v):=(v(z_1),\dots,v(z_n)). 
$$
Like its complex counterpart, the non-Archimedean log map is continuous and proper---and can in fact be interpreted as an affinoid torus fibration in the language of Berkovich geometry. Perhaps more surprinsigly, we have: 

\begin{lem}\label{lem:sec} The map $\Log\colon T^\an\to\R^n$ admits a \emph{unique} continuous section, which takes $w\in\R^n$ to the simplest valuation with value $w_i$ on $z_i$, \ie the \emph{monomial valuation} 
$$
\val_w(\sum_\a a_\a z^\a):=\min_{a_\a\ne 0} \a\cdot w. 
$$
\end{lem}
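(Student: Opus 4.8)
The plan is to verify by hand that $w\mapsto\val_w$ is a continuous section, and then to obtain uniqueness from a density argument whose crucial input is that the fibre of $\Log$ over a $\Q$-generic point is a single point.

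\emph{Step 1: $w\mapsto\val_w$ is a continuous section.} First I would check that $\val_w\in T^\an$ for every $w\in\R^n$. The ultrametric inequality $\val_w(f+g)\ge\min\{\val_w(f),\val_w(g)\}$ and the normalization $\val_w(a)=0$ for $a\in\C^\times$ (the only monomial of a nonzero constant being $z^0$) are immediate from the definition. Multiplicativity $\val_w(fg)=\val_w(f)+\val_w(g)$ uses that $\cO(T)$ is a domain: writing $\mathrm{in}_w(h)$ for the sum of the terms of $h$ of minimal $w$-weight, the polynomials $\mathrm{in}_w(f)$ and $\mathrm{in}_w(g)$ are nonzero, the part of $fg$ of weight $\val_w(f)+\val_w(g)$ equals $\mathrm{in}_w(f)\,\mathrm{in}_w(g)$, and this is again nonzero, so no cancellation occurs at the bottom weight. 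Since $\val_w(z_i)=w_i$ we get $\Log(\val_w)=w$, so $w\mapsto\val_w$ is a section. Continuity is then clear: $T^\an$ carries the topology of pointwise convergence, and for each nonzero $f=\sum_\a a_\a z^\a$ the function $w\mapsto\val_w(f)=\min_{a_\a\ne 0}\a\cdot w$ is a minimum of finitely many linear functions, hence continuous (while $\val_w(0)\equiv+\infty$).

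\emph{Step 2: rigidity of the fibre over a $\Q$-generic point.} The heart of the argument is to show that if $w_1,\dots,w_n$ are linearly independent over $\Q$, then $\Log^{-1}(w)=\{\val_w\}$. Let $v\in T^\an$ with $\Log(v)=w$; since each $z_i$ is a unit, $v(z^\a)=\a\cdot w$ and $v(a z^\a)=\a\cdot w$ for all $a\in\C^\times$ and $\a\in\Z^n$. Given a nonzero $f=\sum_{\a\in S}a_\a z^\a$ with finite support $S$ and $a_\a\ne 0$, the numbers $\a\cdot w$, $\a\in S$, are pairwise distinct, because $(\a-\a')\cdot w\ne 0$ for $\a\ne\a'$ in $\Z^n$ by $\Q$-linear independence; hence $\min_{\a\in S}v(a_\a z^\a)$ is attained at a unique $\a$. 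Were $v(f)>\min_{\a\in S}v(a_\a z^\a)=\val_w(f)$, then Lemma~\ref{lem:valequ} applied to the family $(a_\a z^\a)_{\a\in S}$ would force this minimum to be attained at least twice, a contradiction. Thus $v(f)=\val_w(f)$ for every $f$, \ie $v=\val_w$.

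\emph{Step 3: conclusion.} The set $\Om\subset\R^n$ of points with $\Q$-linearly independent coordinates is the complement of the countable union of the hyperplanes $\{w:c\cdot w=0\}$, $c\in\Z^n\setminus\{0\}$, hence dense by Baire's theorem. Any continuous section $s$ of $\Log$ must send each $w\in\Om$ into $\Log^{-1}(w)=\{\val_w\}$, so $s$ agrees with $w\mapsto\val_w$ on the dense set $\Om$; as both maps are continuous and $T^\an$ is Hausdorff (recalled above), they coincide everywhere.

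\emph{Main obstacle.} Essentially all the substance is in Step~2, the rigidity of generic fibres via Lemma~\ref{lem:valequ}; once that is in place, Step~3 is a soft density-and-separation argument. The only point in Step~1 requiring a genuine (if elementary) idea is the multiplicativity of $\val_w$, which really uses the absence of zero divisors in $\C$.
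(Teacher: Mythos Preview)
Your proof is correct and follows essentially the same approach as the paper's: the paper likewise observes that $w\mapsto\val_w$ is a continuous section (declaring this ``clear'' rather than spelling out multiplicativity), and for uniqueness uses Lemma~\ref{lem:valequ} exactly as you do to show that the fibre over a point $w$ with $\Q$-linearly independent coordinates is $\{\val_w\}$, then concludes by density. The only cosmetic differences are that you supply more detail in Step~1 and invoke Baire for the density of the generic locus, which is a bit heavier than needed (the complement of countably many hyperplanes has full Lebesgue measure).
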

\begin{proof} It is clear that $w\mapsto\val_w$ is a continuous section. To see uniqueness, pick $v\in T^\an$ and $f=\sum_\a a_\a z^\a\in\cO(T)$. For each $\a\in\Z^n$ we have $v(z^\a)=\val_w(z^\a)=\a\cdot w$ with $w:=\Log(v)$. The semivaluation property of $v$ yields 
\begin{equation}\label{equ:vval}
v(f)\ge\min_{a_\a\ne 0} v(z^\a)=\val_w(f). 
\end{equation} 
If $w$ has $\Q$-linearly independent entries, then $\a\cdot w\ne\b\cdot w$ when $\a\ne\b$, and Lemma~\ref{lem:valequ} thus shows that equality holds in~\eqref{equ:vval}. This means that any section of $\Log$ necessarily coincides with $\val_w$ on the dense subset of $w\in\R^n$ with $\Q$-linearly independent entries, and uniqueness of continuous sections follows. 
\end{proof}

\subsubsection{Tropicalization} 
We next claim that the non-Archimedean amoeba $\Log(Z^\an)$ is in turn contained in the \emph{tropicalization} 
$$
Z^\trop\subset\R^n
$$ 
of $Z$, defined as the intersection of all \emph{tropical hypersurfaces} $V(f^\trop)$ attached to the (concave) piecewise linear functions 
$$
f^\trop(w):=\min_{a_\a\ne 0}\a\cdot w=\val_w(f)
$$ 
with $f=\sum_\a a_\a z^\a$ in $I_Z$. Here $V(f^\trop)\subset\R^n$ is the non-linearity locus of $f^\trop$, \ie the set of points at which the minimum defining it is achieved by at least two indices $\a\ne\b$. 

To see the claim, note that for any $v\in Z^\an\subset T^\an$ and $f=\sum_\a a_\a z^\a$ in $I_Z$, we have $v(f)=\infty$, while $v(z^\a)$ is finite for all $\a$. Setting $w:=\Log(v)$, Lemma~\ref{lem:valequ} shows that the right-hand minimum in 
$$
v(f)>\min_{a_\a\ne 0} v(z^\a)=\min_{a_\a\ne 0} w\cdot\a=f^\trop(w)
$$ 
is achieved at least twice, and the claim follows. At this point, we have thus established the chain of inclusions
\begin{equation}\label{equ:inc}
\lim_{\e\to 0}\e\Log(Z)\subset\Log(Z^\an)\subset Z^\trop. 
\end{equation}
In the present setting, the fundamental theorem of tropical geometry can now be stated as follows: 

\begin{thm}\label{thm:trop}  The inclusions in~\eqref{equ:inc} are equalities. Furthermore, $Z^\trop$ is a rational polyhedral cone complex, \ie a finite union of rational polyhedral cones, of dimension equal to $\dim Z$. 
\end{thm}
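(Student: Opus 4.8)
\emph{Strategy and reductions.} The plan is to run the classical argument via Gr\"obner fans and one-parameter degenerations. Both $Z^\trop$ and $Z^\an$ (hence $\Log(Z^\an)$), as well as the asymptotic cone $\lim_{\e\to0}\e\Log(Z)$, decompose as the union of the corresponding objects attached to the irreducible components of $Z$, so I may assume $Z$ irreducible of dimension $d$. By~\eqref{equ:inc} it then suffices to show: (i) $Z^\trop\subset\lim_{\e\to0}\e\Log(Z)$; (ii) $Z^\trop$ is a rational polyhedral cone complex; (iii) every maximal cone of $Z^\trop$ has dimension $d$.

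\emph{Initial ideals and the polyhedral structure.} For $w\in\R^n$ and $f=\sum_\a a_\a z^\a\in\cO(T)$, let $\mathrm{in}_w(f):=\sum_{\a\cdot w=f^\trop(w)}a_\a z^\a$ and let $\mathrm{in}_w(I_Z)\subset\cO(T)$ be the ideal generated by the $\mathrm{in}_w(f)$, $f\in I_Z$. If $w\notin Z^\trop$ then some $f\in I_Z$ has $f^\trop$ linear at $w$, so $\mathrm{in}_w(f)$ is a single monomial, hence a unit, and $\mathrm{in}_w(I_Z)=\cO(T)$. The converse --- that $w\in Z^\trop$ forces $\mathrm{in}_w(I_Z)$ to be a \emph{proper} ideal --- together with the fact that $w\mapsto\mathrm{in}_w(I_Z)$ is constant on the relatively open cones of a finite rational polyhedral fan (the \emph{Gr\"obner fan} of $I_Z$), are the two inputs I would import from Gr\"obner basis theory over a field. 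Granting them, $Z^\trop$ is the union of the closures of those Gr\"obner cones on which $\mathrm{in}_w(I_Z)$ is proper, which gives (ii).

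\emph{Lifting tropical points.} For (i), fix $w\in Z^\trop$; since $Z^\trop$ is closed and covered by rational cones while $\lim_{\e\to0}\e\Log(Z)$ is closed, I may take $w\in\Z^n$. Let $\lambda_w\colon\Gm\to T$, $\lambda_w(t)=(t^{w_1},\dots,t^{w_n})$, so that $\Log(\lambda_w(t)\cdot x)=(-\log|t|)\,w+\Log(x)$. A direct computation identifies the central fiber $\mathcal Z_0$ of the scheme-theoretic closure $\mathcal Z\subset T\times\A^1$ of the family $t\mapsto\lambda_w(t)^{-1}\cdot Z$ ($t\ne0$) with $V(\mathrm{in}_w(I_Z))$, which is nonempty precisely because $w\in Z^\trop$. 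As $\mathcal Z$ is irreducible (being the closure of the image of $Z\times\Gm$) and $\mathcal Z_0\subsetneq\mathcal Z$, every $p\in\mathcal Z_0$ is an analytic limit of points $\lambda_w(t_j)^{-1}\cdot z_j$ with $z_j\in Z$ and $t_j\to0$; writing $\e_j:=(-\log|t_j|)^{-1}\to0^+$, the displayed identity gives $\e_j\Log(z_j)=w+\e_j\Log(\lambda_w(t_j)^{-1}\cdot z_j)\to w$, as the correction term is bounded. Hence $w\in\lim_{\e\to0}\e\Log(Z)$, and combined with~\eqref{equ:inc} the three sets coincide.

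\emph{Equidimensionality, and the main obstacle.} Statement (iii) is the genuine Bieri--Groves content. The global bound $\dim Z^\trop=d$ follows from Noether normalization: a suitably generic monomial map $\pi\colon T\to(\Gm)^d$ is generically finite and dominant on $Z$, so $\trop(\pi)\colon\R^n\to\R^d$ is a linear map restricting to a surjection $Z^\trop\to\R^d$ with finite fibres. Purity is then obtained locally: for each $w$ one has $\mathrm{in}_{w'}(\mathrm{in}_w(I_Z))=\mathrm{in}_{w+\e w'}(I_Z)$ for $\e$ small (again using a finite Gr\"obner basis), so the star of $Z^\trop$ at $w$ equals $\trop(V(\mathrm{in}_w(I_Z)))$, while $\dim V(\mathrm{in}_w(I_Z))=d$ by flatness of $\mathcal Z\to\A^1$; taking $w$ in the relative interior of a maximal cone $\sigma$, where this star is the linear span of $\sigma$, and applying the global bound to $V(\mathrm{in}_w(I_Z))$ forces $\dim\sigma=d$. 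I expect the two genuinely non-combinatorial points to be the main work: the lifting in (i), where the scheme-theoretic closure (equivalently, flatness of the degeneration) does the real job of turning a tropical point into honest complex points; and the purity in (iii), whose delicate part is the uniform control of initial ideals under perturbation provided by finiteness of Gr\"obner bases.
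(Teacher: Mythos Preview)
Your argument is correct and follows the standard Gr\"obner-degeneration route (essentially the proof in Maclagan--Sturmfels), but it is genuinely different from what the paper does. The paper does not prove Theorem~\ref{thm:trop} directly; instead it establishes $Z^\trop=\Log(Z^\an)$ via Bergman's valuation-theoretic argument (Proposition~\ref{prop:NAtrop}, using nonemptiness of the Berkovich spectrum of $(\cO(Z),\|\cdot\|_{Z,w})$), cites~\cite{MS} for the existence of a tropical basis giving the polyhedral structure, and later recovers $\Log(Z^\an)=\lim_{\e\to 0}\e\Log(Z)$ from openness of the hybrid structure map $Z^\hyb\to[0,1]$ (Example~\ref{exam:amoebas}, following~\cite{Jon}). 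Your one-parameter degeneration $t\mapsto\lambda_w(t)^{-1}\cdot Z$ bypasses the non-Archimedean amoeba entirely and produces explicit complex points $z_j\in Z$ with $\e_j\Log(z_j)\to w$, which is more elementary and more constructive; the paper's route, by contrast, is chosen precisely because it motivates the Berkovich and hybrid machinery that the rest of the notes are about. Both approaches defer the genuinely hard equidimensionality statement to Bieri--Groves, and both import the same finiteness input (Gr\"obner fan for you, tropical basis for the paper), so neither is more self-contained on that point. One small remark: the fact you flag as imported---that $w\in Z^\trop$ forces $\mathrm{in}_w(I_Z)\ne(1)$---actually follows directly from the graded description of $\mathrm{in}_w(I_Z)$: a unit in $\mathrm{in}_w(I_Z)$ is homogeneous for the $w$-grading, hence equals $\mathrm{in}_w(f)$ for some $f\in I_Z$, and then $w\notin V(f^\trop)$.
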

This result was essentially already established in~\cite{Berg}: Bergman proved $Z^\trop\subset\Log(Z^\an)$ (see Proposition~\ref{prop:NAtrop} below), and showed that $Z^\trop\subset\lim_{\e\to 0}\e\Log(Z)$ would follow from the last point of the theorem, which was later established by Bieri--Groves~\cite{BG}. While the latter is clear when $Z$ is a hypersurface, defined by a single equation, it becomes nontrivial in the general case because, for a given finite set $(f_i)$ of generators of $I_Z$, it need not be the case that $Z^\trop$ coincides with the intersection of the tropical hypersurfaces $V(f_i^\trop)$. However, this can be shown to hold for \emph{some} well-chosen set of generators (see~\cite[Theorem~2.6.5]{MS}), which yields the last point. 

More recently, the equality between the asymptotic cone and the non-Archimedean amoeba was directly obtained by Jonsson using hybrid spaces~\cite{Jon}. We shall return to this point of view in Example~\ref{exam:amoebas} below.

%
%
%%%%%%%%%%%%%%%%%%%%%%%%%%%%%%%%%%%%%%%%%%%%
\subsection{Degenerations of amoebas}\label{sec:degam}
Consider now a \emph{meromorphic degeneration} of algebraic subvarieties 
$$
Z_t\subset T=(\C^\times)^n,
$$
parametrized by $t\in\DD^\times$ in a small punctured . By definition, this means that the $Z_t$'s are cut out by Laurent polynomials whose coefficients lie in the field
$$
K:=\C\{t\}
$$
of convergent\footnote{On a sufficiently small punctured disc depending on the given series.} Laurent series, and hence correspond to a  $K$-subvariety $Z_K$ of the $K$-torus $T_K$, \ie the affine $K$-variety with ring of functions 
$$
\cO(T_K)=K[z_1^\pm,\dots,z_n^\pm]. 
$$ 
For each $t\in\DD^\times$ we define 
$$
\Log_t\colon T\to\R^n
$$
by 
$$
\Log_t(x)=\left(\frac{\log|x_1|}{\log|t|},\dots,\frac{\log|x_n|}{\log|t|}\right)=\e_t\Log(x),
$$
where the scaling factor 
\begin{equation}\label{equ:et}
\e_t:=(\log|t|^{-1})^{-1}\in (0,1), 
\end{equation}
which tends to $0$ as $t\to 0$, will be ubiquitous in these notes.

We are interested in the asymptotic behavior as $t\to 0$ of the rescaled amoebas
$$
\Log_t(Z_t)=\e_t\Log(Z_t)\subset\R^n. 
$$
The field $K$ is equipped with the discrete valuation $v_K\colon K\to\Z\cup\{+\infty\}$ defined by 
$$
v_K(a)=\ord_0(a):=\min\{k\in\Z\mid a_k\ne 0\}
$$
for any (convergent) Laurent series $a(t)=\sum_{k\ge -N} a_k t^k$; its valuation ring 
$$
\cO_K:=\{f\in K\mid v_K(f)\ge 0\}=\cO_{\C,0}
$$
coincides with the ring of germs of holomorphic functions at $0\in\C$. 

\begin{defi}\label{defi:Berknontriv} The \emph{Berkovich space}\footnote{See Example~\ref{exam:Berkaff} for the terminology.} $Z_K^\an$ is defined as the space of all semivaluations $v$ on $\cO(Z_K)$ whose restriction to $K$ coincides with $v_K$. 
\end{defi}
Again, this space is Hausdorff and locally compact for the topology of pointwise convergence on $\cO(Z_K)$. 

As in~\S\ref{sec:naam}, a compactness argument shows that $\lim_{t\to 0} \Log_t(Z_t)$
is contained in the \emph{non-Archimedean amoeba} $\Log(Z_K^\an)$, \ie the image of $Z_K^\an\subset T_K^\an$ under the non-Archimedean log map 
$$
\Log\colon T_K^\an\to\R^n\quad v\mapsto (v(z_1),\dots,v(z_n)). 
$$
As in Lemma~\ref{lem:sec}, this map admits a unique continuous section, which takes $w\in\R^n$ to the monomial valuation
$$
\val_w\left(\sum_{\a\in\Z^n} a_\a z^\a\right)=\min_\a \val_w(a_\a z^\a)=\min_\a\left\{\a\cdot w+v_K(a_\a)\right\}
$$
Lemma~\ref{lem:valequ} again implies that the non-Archimedean amoeba $\Log(Z_K^\an)$ is contained in the tropicalization $Z^\trop$, defined as the intersection of all tropical hypersurfaces $V(f^\trop)$ attached to the piecewise affine functions 
$$
f^\trop(w)=\val_w(f)=\min_{\a\in\Z^n}\{\a\cdot w+v_K(a_\a)\}
$$
with $f\in I_Z$, \ie the locus where the min is achieved twice. The fundamental theorem of tropical geometry now states that 
$$
\lim_{t\to 0}\Log_t(Z_t)=\Log(Z_K^\an)=Z_K^\trop, 
$$
and that $Z_K^\trop$ is a rational polyhedral complex of dimension $\dim Z$, obtained as the intersection of the tropical hypersurfaces $V(f_i^\trop)$ for some finite set of generators $(f_i)$ of $I_Z$. 
%
%
%%%%%%%%%%%%%%%%%%%%%%%%%%%%%%%%%%%%%%%%%%%%
\subsection{Toric degenerations}\label{sec:toric} 
We will later consider degenerations of smooth varieties to normal crossing divisors, which are (locally and transversally) modelled on the meromorphic degeneration 
\begin{equation}\label{equ:toricdeg}
Z_t:=\left\{t=\prod_{i\in J} z_i^{b_i}\right\}\subset (\C^\times)^J
\end{equation}
for a finite set $J$ and $b\in\Z^J$ nonzero. Here the rescaled amoeba $\Log_t(Z_t)$ is actually independent of $t$, equal to the affine hyperplane
$$
H=H_{J,b}:=\{b\cdot w=1\}\subset\R^J
$$
For later use, we make a number of observations in this context. Consider quite generally a principal bundle $\pi\colon P\to M$ with respect to a compact Lie group $G$. Each fiber $\pi^{-1}(m)$ admits a unique $G$-invariant probability measure $\rho_m$, corresponding to the (normalized) Haar measure of $G$. Any function $f\in \Cz(P)$ gives rise to a function $\int_{P/M} f\in \Cz(M)$, with value $\int_{\pi^{-1}(m)} f\,\rho_m$ at $m\in M$. Dually, any measure $\mu$ on $M$ induces a measure on $P$, which we shall (somewhat abusively) denote by $\pi^\star\mu$, such that 
\begin{equation}\label{equ:pullmes}
\int_P f\,\pi^\star\mu=\int_M\left(\int_{P/M} f\right)\,\mu=\int_M\mu(dm)\int_{\pi^{-1}(m)} f\,\rho_m
\end{equation}
for $f\in C^0_c(P)$. The measure $\pi^\star\mu$ is $G$-invariant, and conversely any $G$-invariant measure $\nu$ on $P$ can be written as $\nu=\pi^\star\mu$ with $\mu=\pi_\star\nu$.  

\begin{exam}\label{exam:logpull1} The log map $\Log\colon(\C^\times)^n\to\R^n$ is a principal $(S^1)^n$-bundle. The $(\C^\times)^n$- invariant holomorphic volume form $\Om:=\bigwedge_i d\log z_i$ (with $d\log z_i:=dz_i/z_i$) induces an $(S^1)^n$-invariant (smooth positive) volume form 
$$
|\Om|^2:=i^{n^2}\Om\wedge\overline\Om
$$ 
on $(\C^\times)^n$, which satisfies 
\begin{equation}\label{equ:Omlog}
|\Om|^2=(2\pi)^n\Log^\star\sigma
\end{equation}
with $\sigma$ the Lebesgue measure on $\R^n$. 
\end{exam}

Returning to~\eqref{equ:toricdeg}, for any $t\ne 0$ the map $\Log_t\colon Z_t\to H$ is similarly a principal bundle with respect to the compact Lie group 
$$
G=G_{J,b}:=\{\tau\in(S^1)^J\mid\prod_i \tau_i^{b_i}=1\}. 
$$
Note that $G$ (and hence $Z_t$) has $m:=\gcd(b_i)$ components, the identity component being the compact torus 
$$
G^0=\{\tau\in(S^1)^J\mid\prod_i \tau_i^{b_i'}=1\},
$$
where $b_i':=b_i/m$. The invariant holomorphic volume form $\Om=\bigwedge_i d\log z_i$ on $(\C^\times)^J$ induces a holomorphic volume form $\Om_t$ on $Z_t$, by requiring
$$
\Om_t\wedge\frac{dt}{t}=\Om
$$
with $t=\prod_i z_i^{b_i}$. Applying~\eqref{equ:Omlog} to the each component of the complexification
$$
G_\C=\{\tau\in(\C^\times)^J\mid \prod_i \tau_i^{b_i}=1\},
$$
one checks  (see~\cite[\S 1.4]{konsoib}) that the associated $G$-invariant smooth positive volume form $|\Om_t|^2$ on $Z_t$ satisfies
\begin{equation}\label{equ:Omt}
\e_t^p |\Om_t|^2=(2\pi)^p\Log_t^\star\sigma_H
\end{equation}
with $p:=\dim Z_t=|J|-1$ and $\sigma_H$ the Lebesgue measure of the affine hyperplane $H$ induced by its equation in $\R^J$. 

\medskip

As a final remark, pick a smooth convex function $f$ on a convex open subset $U\subset H$. Then $\Log_t^\star f$ is a plurisubharmonic (psh) function on the $G$-invariant open subset $\Log_t^{-1}(U)\subset Z_t$, and a computation reveals that the complex and real Monge--Amp\`ere measures are related by
\begin{equation}\label{equ:MAlog}
\left(\ddc\Log_t^\star f\right)^p=\e_t^p\Log_t^\star\MA_\R(f). 
\end{equation}
By approximation, this remains true for any convex function, the Monge--Amp\`ere measures being respectively understood in the sense of Bedford--Taylor and Alexandrov.

%
%
%%%%%%%%%%%%%%%%%%%%%%%%%%%%%%%%%%%%%%%%%%%%

\section{Berkovich analytification}\label{sec:Berkan}
This section reviews Berkovich's general construction of analytic spaces attached to schemes over a normed ring~\cite{BerkBook,BerNA}. We take here an elementary approach, merely viewing these analytifications as topological spaces, and refer to~\cite{Poi,LP} for a more advanced study. 
%
%
%%%%%%%%%%%%%%%%%%%%%%%%%%%%%%%%%%%%%%%%%%%%
\subsection{The Berkovich spectrum}\label{sec:Berkspec}
Let $A$ be a ring\footnote{All rings in these notes are assumed to be commutative and with a unit.}. A \emph{seminorm} $\|\cdot\|\colon A\to\R_{\ge 0}$ is defined as a function that satisfies 
\begin{itemize}
\item $\|0\|=0$, $\|\pm 1\|=1$;
\item $\|a+b\|\le \|a\|+\|b\|$ for all $a,b\in A$. 
\end{itemize}
It is a \emph{norm} if further $\|a\|=0\Rightarrow a=0$. A seminorm is \emph{submultiplicative} if $\|ab\|\le \|a\|\cdot\|b\|$ for all $a,b$, and \emph{multiplicative} if equality holds. 

We usually denote a multiplicative seminorm by the `absolute value' notation $|\cdot|$. Any multiplicative seminorm is induced by a \emph{character} of  $A$, \ie a ring map to a complete valued field. Indeed, the kernel of $|\cdot|$ is a prime ideal $P$, and $|\cdot|$ induces a multiplicative norm on the fraction field of $A/P$, whose completion is called the \emph{residue field} $\cH=\cH(|\cdot|)$. This comes with a character $A\to\cH$ inducing $|\cdot|$, and any other character with the same property factors through $\cH$. 

A multiplicative seminorm $|\cdot|$ is called \emph{non-Archimedean} if it is bounded on the image of $\Z$ in $A$. As is well-known, this is the case iff the ultrametric inequality 
 $$
 |a+b|\le\max\{|a|,|b|\}
 $$
 holds, which equivalently means that 
 $$
 v:=-\log|\cdot|
 $$
 is a semivaluation $v\colon A\to\R\cup\{+\infty\}$, \ie
 $$
 v(ab)=v(a)+v(b),\quad v(a+v)\ge\min\{v(a),v(b)\}. 
 $$
 Otherwise, $|\cdot|$ is called \emph{Archimedean}. In that case, $\Q$ injects in the residue field $\cH$, which thus contains a copy of $\R$, and hence is isomorphic to either $\R$ or $\C$, by the Gelfand--Mazur theorem. 
 
 \begin{defi} Let $A=(A,\|\cdot\|_A)$ be a normed ring, \ie a ring equipped with a submultiplicative norm. The \emph{Berkovich spectrum} $\cM(A)$ is defined as the set of all \emph{bounded} multiplicative seminorms $|\cdot|\colon A\to \R_{\ge 0}$, equipped with the topology of pointwise convergence.
\end{defi} 
Here bounded means $|\cdot|\le C\|\cdot\|_A$ for some $C>0$, which can always be taken equal to $1$, as one easily sees using the multiplicativity (resp.~submultiplicativity) of $|\cdot|$ (resp.~$\|\cdot\|$). 

A point of $\cM(A)$ will usually be labelled by $x$, the corresponding multiplicative seminorm being denoted by $A\ni f\mapsto |f(x)|$, where $f\mapsto f(x)$ is understood as the character $A\to\cH(x)$ to the residue field. Each $f\in A$ thus induces a continuous function 
$$
|f|\colon\cM(A)\to\R_{\ge 0},
$$
such that $\sup_{\cM(A)}|f|\le\|f\|_A$, and the topology of $\cM(A)$ is generated by such functions. 

The space $\cM(A)$ is always nonempty~\cite[Theorem~1.2.1]{BerkBook}. It is further compact Hausdorff, as an easy consequence of Tychonoff's theorem. 

\begin{rmk}\label{rmk:seminorm} The above definition is usually introduced when $A$ is complete for its norm, \ie a Banach ring. Since we merely view analytifications as topological spaces, this makes no difference to us, since any bounded multiplicative seminorm on $A$ automatically extends to the completion of $A$ (compare Proposition~\ref{prop:basean} below). Similarly, $\|\cdot\|_A$ can be allowed to be merely a seminorm, since the set $N\subset A$ of elements with zero norm is then a closed ideal, and $\|\cdot\|_A$ descends to a norm on $A/N$ such that $\cM(A)\simeq\cM(A/N)$. 
\end{rmk}

\begin{rmk}\label{rmk:Berg} When the norm $\|\cdot\|_A$ is ultrametric, $\cM(A)$ was already introduced in~\cite{Bergval}, and proved to be non-empty in Theorem~1 therein. 
\end{rmk}

%Any $a\in A$ defines a continuous function $|a|\colon \cM(A)\to\R_{\ge 0}$ by evaluation, that satisfies the following version of the maximum principle 
%$$
%\sup_{\cM(A)}|a|=\lim_{n\to\infty}\|a^n\|^{1/n}.
%$$

\begin{exam} For a complex Banach algebra $A$, the Gelfand--Mazur theorem implies that the residue field of any element in $\cM(A)$ is equal to $\C$, so that $\cM(A)$ can be identified with the usual Gelfand spectrum of $A$, \ie the space of bounded characters $A\to\C$. 
\end{exam}

\begin{exam} If $K=(K,|\cdot|_K)$ is a valued field, \ie a field equipped with a multiplicative norm, then $\cM(K)$ reduces to the point $|\cdot|_K$. 
\end{exam}

\begin{exam} The ring of integers $\Z$ equipped with the usual absolute value $|\cdot|_\infty$ is a (complete) normed ring. As a consequence of the well-known Ostrowski theorem, each element of $\cM(\Z)$ is either of the form $|\cdot|_p^\e$ for some prime $p\in\Z$ and $\e\in [0,\infty]$, the case $\e=0$ (resp.~$\e=\infty$) being understood as the trivial absolute $|\cdot|_0$ on $\Z$ (resp.~on $\Z/p\Z$), or $|\cdot|_\infty^\e$ with $\e\in[0,1]$, the case $\e=0$ being again $|\cdot|_0$. Thus $\cM(\Z)$ is a star-shaped $\R$-tree rooted at $|\cdot|_0$, endowed with the \emph{weak} tree topology, for which each neighborhood of $|\cdot|_0$ contains all but finitely many of the branches. Analytic geometry over $\Z$ has been extensively studied by Poineau~\cite{Poi}. 
\end{exam}

\begin{exam}\label{exam:hybrid} The \emph{hybrid field} $\C^\hyb$ is defined as the field $\C$ equipped with the \emph{hybrid norm} 
$$
\|\cdot\|^\hyb:=\max\{|\cdot|_\infty,|\cdot|_0\}, 
$$
where $|\cdot|_\infty$ is the usual absolute value and $|\cdot|_0$ is the trivial one. The hybrid field is a (complete) normed ring, and one checks that each element of $\cM(\C^\hyb)$ is of the form $|\cdot|_\infty^\e$ with $\e\in [0,1]$, the case $\e=0$ being again interpreted as $|\cdot|_0$. This yields an identification 
$$
\cM(\C^\hyb)\simeq [0,1].
$$ 
\end{exam}
Analytic geometry over $\C^\hyb$ plays a central role in these notes, and will be studied in more detail in~\S\ref{sec:hybrid}. 
%
%
%%%%%%%%%%%%%%%%%%%%%%%%%%%%%%%%%%%%%%%%%%%%
\subsection{General Berkovich analytification}
We use~\cite[\S 1]{BerNA} as a reference for what follows. 

\begin{defi}\label{defi:Berkgen} Let $A=(A,\|\cdot\|_A)$ be a normed ring, and $X$ an affine scheme of finite type over $A$, corresponding to an $A$-algebra of finite type $\cO(X)$. The \emph{Berkovich analytification} $X^\an$ is defined as the set of all multiplicative seminorms $|\cdot|\colon \cO(X)\to \R_{\ge 0}$ whose pullback to $A$ is bounded by $\|\cdot\|_A$. It is equipped with the topology of pointwise convergence. 
\end{defi}
As above, a point of $X^\an$ is usually labelled by $x$, $\cO(X)\ni f\mapsto |f(x)|$ being the associated multiplicative seminorm. Each $f\in\cO(X)$ thus defines a continuous function 
$$
|f|\colon X^\an\to\R_{\ge 0},
$$
the topology of $X^\an$ being generated by such functions. The space $X^\an$ is Hausdorff, and comes with a continuous \emph{structure map} 
$$
X^\an\to\cM(A).
$$
Analytification is functorial, \ie any map $X\to Y$ of affine $A$-schemes induces a continuous map $X^\an\to Y^\an$ compatible with the structure maps, and with composition. 

\begin{lem}\label{lem:loccomp} The space $X^\an$ is locally compact and countable at infinity. 
\end{lem}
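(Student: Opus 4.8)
The statement is that $X^{\an}$, for $X$ an affine scheme of finite type over a normed ring $A$, is locally compact and countable at infinity. The plan is to reduce to the case of affine space $\A^n_A$, for which one can write down an explicit exhaustion by compact sets. First I would choose a closed immersion $X\hookrightarrow\A^n_A=\Spec A[T_1,\dots,T_n]$, coming from a presentation $\cO(X)=A[T_1,\dots,T_n]/I$. By functoriality of analytification (and since a closed immersion of $A$-algebras is surjective), the induced map $X^{\an}\to(\A^n_A)^{\an}$ is a homeomorphism onto its image, and that image is closed: it is cut out by the conditions $|f(x)|=0$ for $f$ ranging over (generators of) $I$, each of which is a closed condition since $|f|$ is continuous. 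A closed subspace of a locally compact, countable-at-infinity space is again locally compact and countable at infinity, so it suffices to treat $\A^n_A$.

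For $\A^n_A$, I would exhibit the exhaustion directly. For $C\ge 1$ set
\[
K_C:=\Bigl\{\,x\in(\A^n_A)^{\an}\ :\ |a(x)|\le C\|a\|_A\ \text{for all}\ a\in A,\ \ |T_i(x)|\le C\ \text{for}\ i=1,\dots,n\,\Bigr\}.
\]
Each $K_C$ is closed in $(\A^n_A)^{\an}$, being an intersection of closed conditions (continuity of the functions $|a|$ and $|T_i|$). I claim $K_C$ is compact: a multiplicative seminorm on $A[T_1,\dots,T_n]$ is determined by its values on $A$ and on the $T_i$, and the value on an arbitrary monomial $a\,T^\alpha$ is then $|a(x)|\prod_i|T_i(x)|^{\alpha_i}\le C^{1+|\alpha|}\|a\|_A$, while the value on a general polynomial is bounded by the sum of the values on its monomials; hence $K_C$ embeds into a product of compact intervals $\prod_{a}[0,C\|a\|_A]\times\prod_i[0,C]$, and one checks (exactly as for the Berkovich spectrum) that it is closed in that product, so compact by Tychonoff. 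Next, every point $x\in(\A^n_A)^{\an}$ lies in some $K_C$: its pullback to $A$ is bounded by $\|\cdot\|_A$ by definition of the analytification, and $|T_i(x)|$ is some finite real number, so any $C$ exceeding all the $|T_i(x)|$ works. Thus $(\A^n_A)^{\an}=\bigcup_{m\ge 1}K_m$ is an exhaustion by compacta, giving countability at infinity. Finally, for local compactness I would note that $K_{C}$ lies in the interior of $K_{C'}$ whenever $C'>C$: any point of $K_C$ has a basic neighborhood (finitely many strict inequalities on $|a|$ and $|T_i|$) contained in $K_{C'}$. Hence every point has a compact neighborhood, namely some $K_m$.

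Alternatively, one can invoke the structure map $\pi\colon(\A^n_A)^{\an}\to\cM(A)$, using that $\cM(A)$ is compact (shown earlier in the excerpt) and that $\pi$ restricted to the locus $\{|T_i|\le C\ \forall i\}$ is a proper map with compact fibers (polydiscs over each residue field), but the hands-on exhaustion above seems cleanest and self-contained.

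**Main obstacle.** The only genuinely nontrivial point is verifying that each $K_C$ is compact, i.e.\ that it is a \emph{closed} subset of the product of intervals. This is the same argument that makes $\cM(A)$ compact: one must check that a pointwise limit of bounded multiplicative seminorms is again multiplicative (and submultiplicative/bounded), which passes to the limit without difficulty, plus the observation that the seminorm axioms cut out a closed subset of the Tychonoff product. Everything else—reduction to $\A^n_A$ via a closed immersion, the exhaustion, and the nesting $K_C\subset\Int K_{C'}$—is routine once this compactness is in hand.
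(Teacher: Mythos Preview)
Your proof is correct and takes essentially the same approach as the paper. The paper works directly on $X$ by choosing generators $f_i$ of the $A$-algebra $\cO(X)$ and setting $K_m:=\{\max_i|f_i|\le m\}$, which is exactly your $K_C$ pulled back under the closed immersion $X^{\an}\hookrightarrow(\A^n_A)^{\an}$; your explicit reduction to affine space is a cosmetic repackaging of the same Tychonoff argument, and your condition $|a(x)|\le C\|a\|_A$ is redundant (as you yourself note) since it holds with $C=1$ by definition of $X^{\an}$.
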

Note, however, that $X^\an$ is typically not first countable, so that its topology should be described in terms of nets instead of sequences. This already happens in the case of main interest for these notes, \ie for algebraic varieties over $A=\C\{t\}$. 

\begin{proof} Pick a finite set of generators $(f_i)$ of the $A$-algebra $\cO(X)$. For each $m\in\N$, the set 
$$
K_m:=\{\max_i|f_i|\le m\}\subset X^\an
$$
is then compact, as a consequence ofTychonoff. The result follows since any given point of $X^\an$ admits $K_m$ as a compact neighborhood for $m\gg 1$. 
\end{proof}

\begin{exam}\label{exam:Berkaff} Assume $A=(K,|\cdot|_K)$ is a non-Archimedean valued field, with associated (possibly trivial) valuation $v_K=-\log|\cdot|_K$. Every multiplicative seminorm $|\cdot|$ in $X^\an$ is then automatically non-Archimedean, being bounded on the image of $\Z\to K$. Setting $v:=-\log|\cdot|$ yields an identification of $X^\an$ with the space of all semivaluations $v\colon\cO(X)\to\R\cup\{+\infty\}$ such that $v|_K=v_K$ (compare Definition~\ref{defi:Berknontriv}). 
\end{exam}

If $X\hto Y$ is a closed embedding of affine $A$-schemes, then $X^\an$ can be identified with the closed subspace of seminorms in $Y^\an$ that vanish on each $f\in\cO(Y)$ lying in the ideal of $X$. 

Similarly, if $X\hto Y$ is an open embedding, then the induced map $X^\an\hto Y^\an$ is an open embedding of topological spaces. This allows to globalize the construction and define the analytification 
$$
X^\an\to\cM(A)
$$
of any $A$-scheme of finite type $X$ by gluing together the analytifications of finitely many affine open subschemes covering $X$. The topology of $X^\an$ is thus the coarsest one such that, for any affine open subscheme $U\subset X$, we have
\begin{itemize}
\item $U^\an\hto X^\an$ is an open embedding; 
\item $|f|\colon U^\an\to\R_{\ge 0}$ is continuous for each $f\in\cO(U)$.
\end{itemize}
\begin{prop}\label{prop:Berktop}\cite[Lemma~1.2]{BerNA} For any scheme $X$ of finite type over a normed ring $A$, the topological space $X^\an$ is: 
\begin{itemize}
\item[(i)] locally compact and countable at infinity; 
\item[(ii)] Hausdorff if $X$ is separated over $A$ (\eg a variety over a field); 
\item[(iii)] compact Hausdorff if $X$ is projective over $A$. 
\end{itemize}
\end{prop}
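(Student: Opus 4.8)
The strategy is to reduce everything to the affine case, which has already been handled in Lemma~\ref{lem:loccomp}, and then glue. For the affine building block, recall that if $X = \Spec \cO(X)$ with $\cO(X)$ a finitely generated $A$-algebra, fixing generators $(f_i)$, the sets $K_m = \{\max_i |f_i| \le m\}$ are compact (Tychonoff) and exhaust $X^\an$; moreover each point has some $K_m$ as a compact neighborhood, so $X^\an$ is locally compact and countable at infinity. I would record at the same time that these $K_m$'s are Hausdorff in the affine case (pointwise-convergence topology on a subset of a product of Hausdorff spaces $\R_{\ge 0}$), so affine $X^\an$ is Hausdorff.

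\textbf{Globalizing (i).} Cover $X$ by finitely many affine opens $U_1, \dots, U_N$, so that $X^\an = \bigcup_k U_k^\an$ with each $U_k^\an \hookrightarrow X^\an$ an open embedding. Local compactness is local, hence immediate: any $x \in X^\an$ lies in some $U_k^\an$, and by the affine case has a compact neighborhood there, which remains a compact neighborhood in $X^\an$ since $U_k^\an$ is open. For countability at infinity, write each $U_k^\an = \bigcup_m K_{k,m}$ with $K_{k,m}$ compact and increasing; then $\bigcup_k \bigcup_{m \le j} K_{k,j}$ is an increasing sequence of compact subsets exhausting $X^\an$ (using that there are only finitely many $k$).

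\textbf{Hausdorffness (ii).} This is the step requiring the separatedness hypothesis, and I expect it to be the main point of the argument — the naive "Hausdorff is local" reasoning fails, exactly as it does for schemes (the affine line with doubled origin is a non-separated scheme whose analytification is non-Hausdorff). Take two distinct points $x, y \in X^\an$. If they lie in a common affine open $U_k^\an$, the affine case separates them. Otherwise, with $x \in U_k^\an$ and $y \in U_\ell^\an$, consider the diagonal: separatedness of $X$ over $A$ means $U_k \cap U_\ell \hookrightarrow U_k \times_A U_\ell$ is a closed embedding. Analytification turns this into a closed embedding $(U_k \cap U_\ell)^\an \hookrightarrow (U_k \times_A U_\ell)^\an$, and on the other hand $(U_k \cap U_\ell)^\an = U_k^\an \cap U_\ell^\an$ inside $X^\an$. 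The point is that a point of $X^\an$ lying in $U_k^\an$ but \emph{not} in $U_\ell^\an$ can be separated from points of $U_\ell^\an$: since $U_k \cap U_\ell$ is a principal open (after refining the cover, we may assume all pairwise intersections are principal, $U_k \cap U_\ell = \{g_{k\ell} \ne 0\}$ for $g_{k\ell} \in \cO(U_k)$), the function $|g_{k\ell}|$ is continuous on $U_k^\an$ and vanishes at $x$ but is bounded away from $0$ near $y$; patching such separating functions from both charts, one produces disjoint opens. One must be slightly careful that $|g_{k\ell}|$ need not extend continuously across $X^\an$, but its zero locus within $U_k^\an$ together with the open $U_\ell^\an$ suffices to build the separation. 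This is exactly the argument of~\cite[Lemma~1.2]{BerNA}, and I would simply invoke it, checking only that the refinement to principal intersections is harmless.

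\textbf{The projective case (iii).} Given (i) and (ii), it remains to show $X^\an$ is compact when $X$ is projective over $A$. Here I would use the valuative-type criterion built into the construction: embed $X \hookrightarrow \P^n_A$, reducing to $X = \P^n_A$ (closed subschemes give closed subspaces of the analytification, and closed subsets of compact spaces are compact). For $\P^n$, cover by the standard charts $U_i = \Spec A[x_0/x_i, \dots, x_n/x_i]$. The key compactness input is that every point of $(\P^n_A)^\an$ lies in one of the $\overline{K}_i$ where $\overline{K}_i \subset U_i^\an$ is the compact set $\{|x_j/x_i| \le 1 \text{ for all } j\}$ — indeed, given a seminorm, pick $i$ maximizing $|x_i|$ in the appropriate sense, which lands the point in $\overline{K}_i$. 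Thus $(\P^n_A)^\an = \bigcup_i \overline{K}_i$ is a finite union of compact sets, hence compact; being also Hausdorff by (ii), we are done. Alternatively, and perhaps more cleanly, I would note $(\P^n_A)^\an$ is the image of the compact set $\{\max_i |x_i| = 1\} \subset (\A^{n+1}_A)^\an$ under the continuous quotient map $\A^{n+1}_A \setminus 0 \to \P^n_A$, but the chart argument is more elementary and self-contained.

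\textbf{Main obstacle.} The genuinely non-formal step is (ii): Hausdorffness does not follow from the affine case by a local argument, and one must actively use separatedness via the closedness of the diagonal, translated through analytification. Everything else is either local (and hence inherited from Lemma~\ref{lem:loccomp}) or a finite-cover bookkeeping argument.
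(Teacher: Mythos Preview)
Your treatment of (i) and (iii) matches the paper's proof essentially verbatim: finite affine cover plus Lemma~\ref{lem:loccomp} for (i), and for (iii) the reduction to $\P^N_A$ followed by the unit-polydisc covering $\{\max_{j\ne i}|z_j/z_i|\le 1\}$.

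The real divergence is in (ii). The paper does \emph{not} attempt the general separated case: it proves (iii) first, and then simply observes that (ii) follows from (iii) whenever $X$ is quasi-projective (an open subspace of a compact Hausdorff space is Hausdorff), adding that this is enough for the purposes of the notes. Your direct approach via the diagonal is closer in spirit to Berkovich's original argument in~\cite{BerNA}, but as written it has a gap. The claim that one can refine the affine cover so that all pairwise intersections $U_k\cap U_\ell$ are \emph{principal} in $U_k$ is not correct in general (separatedness only gives that $U_k\cap U_\ell$ is affine, hence a union of principal opens of $U_k$, but not a single one); and even granting it, your separating argument is incomplete, since $g_{k\ell}\in\cO(U_k)$ is not defined near $y\in U_\ell^\an$, so ``$|g_{k\ell}|$ bounded away from $0$ near $y$'' has no immediate meaning. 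A correct direct proof needs to go through the natural continuous map $X^\an\times_{\cM(A)}X^\an\to(X\times_A X)^\an$ and pull back the closed diagonal. For the scope of this paper, the quasi-projective shortcut is both simpler and sufficient.
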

\begin{proof} Since $X$ can be written as a finite union of affine open subschemes, (i) follows from Lemma~\ref{lem:loccomp}. We next prove (iii). Assume $X$ is projective, and pick a closed embedding $X\hto\P^N_A$. Then $X^\an\hto\P^{N,\an}_A$ is a closed embedding, and it therefore suffices to show that $\P^{N,\an}_A$ is compact. If we denote by $z_0,\dots,z_n$ the homogeneous coordinates of $\P^N_A$, then each point of $\P^{N,\an}_A$ lies in the unit polydisc $\{\max_{j\ne i}|z_j/z_i|\le 1\}$ in one of the coordinate charts $\{z_i\ne 0\}$, and (iii) follows since each such polydisc is compact, by Lemma~\ref{lem:loccomp} and its proof. Finally, (ii) follows from (iii) at least when $X$ is quasi-projective, which is enough for our purposes. 
\end{proof}

\begin{exam}\label{exam:GM} The Gelfand--Mazur theorem implies that the analytification of any complex variety $X$ with respect to the usual absolute $|\cdot|_\infty$ on $\C$ coincides (as a topological space) with the usual complex analytic space attached to $X$, \ie the set of complex points with the Euclidean topology. The identification is given by sending a complex point $x$ lying in some affine open $U\subset X$ to the point of $U^\an$ defined by the multiplicative seminorm $\cO(U)\ni f\mapsto|f(x)|_\infty$. 
\end{exam}

\begin{exam}\label{exam:NAsemival} Let $X$ be a variety over a non-Archimedean valued field $K$, with valuation $v_K$. Then each valuation 
$$
v\colon K(X)^\times\to\R
$$
on the function field of $X$ such that $v|_K=v_K$ defines a point in $X^\an$. This point is Zariski dense, in the sense that it lies in $U^\an$ for any affine open subscheme $U$, the corresponding multiplicative seminorm
$$
\cO(U)\ni f\mapsto |f(v)|:=e^{-v(f)}
$$
being a norm. More generally, any point of $X^\an$ corresponds to a valuation 
$$
v\colon K(Y)^\times\to\R
$$
on the function field of some (closed) subvariety $Y\subset X$, and will loosely be referred to as a \emph{semivaluation} on $X$. 
\end{exam}
%
%%%%%%%%%%%%%%%%%%%%%%%%%%%%%%%%%%%%%%%%%%%%
\subsection{Tropicalization, reprise}
To illustrate the above ideas, we return to the setting of~\S\ref{sec:amoeba}, and establish the equality between non-Archimedean amoeba and tropicalization. 

\begin{prop}\label{prop:NAtrop} 
Let $K$ be a field endowed with a (possibly trivial) valuation $v_K$, and 
$$
Z\subset T=\Spec K[z_1^\pm,\dots,z_n^\pm]
$$ 
a subvariety of a (split) torus over $K$, with non-Archimedean log map 
$$
\Log\colon T^\an\to\R^n,\quad v\mapsto (v(z_1),\dots, v(z_n)).
$$
Then the non-Archimedean amoeba $\Log(Z^\an)$ coincides with the tropicalization $Z^\trop$, \ie the intersection of the non-linearity loci $V(f^\trop)$ of all piecewise affine concave functions 
$$
f^\trop(w):=\min_\a\{\a\cdot w+v_K(a_\a)\}
$$
with $f=\sum_{\a\in\Z^n} a_\a z^\a$ in the ideal $I_Z$ of $Z$. 
\end{prop}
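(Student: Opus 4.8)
\emph{Strategy.} The plan is to prove the two inclusions $\Log(Z^\an)\subseteq Z^\trop$ and $Z^\trop\subseteq\Log(Z^\an)$ separately. For the first one I would merely repeat, now over the general valued field $(K,v_K)$, the argument already used above in the special cases: given $v\in Z^\an$ and a nonzero $f=\sum_\a a_\a z^\a\in I_Z$, one has $v(f)=+\infty$ while $v(a_\a z^\a)=v_K(a_\a)+\a\cdot w$ is finite for every $\a$, where $w:=\Log(v)$; since $+\infty>\min_\a\{v_K(a_\a)+\a\cdot w\}=f^\trop(w)$, Lemma~\ref{lem:valequ} applied to the finite family $(a_\a z^\a)_\a$ forces this minimum to be attained at least twice, \ie $w\in V(f^\trop)$. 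Letting $f$ run over $I_Z$ yields $w\in Z^\trop$. Thus the whole substance of the proposition lies in the reverse inclusion $Z^\trop\subseteq\Log(Z^\an)$, which is Bergman's theorem~\cite{Berg}.

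\emph{Reduction to a good field and to the Nullstellensatz.} For this I would fix $w\in Z^\trop$ and begin with a reduction to a convenient ground field. For any valued field extension $L/K$, restriction of multiplicative seminorms from $\cO(Z)\otimes_K L$ to $\cO(Z)$ defines a continuous map $Z_L^\an\to Z^\an$ intertwining the two log maps, whence $\Log(Z_L^\an)\subseteq\Log(Z^\an)$; moreover $Z^\trop$ is unaffected by such an extension (a standard part of the tropical dictionary, provable via stability of Gr\"obner bases). Choosing $L$ with algebraically closed residue field and value group containing the coordinates of $w$ --- \eg a suitable Hahn series extension --- I may therefore assume from the start that the residue field $\widetilde K$ of $v_K$ is algebraically closed and that $w$ lies in the value group of $v_K$. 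In this situation, standard initial-form bookkeeping turns the hypothesis $w\in Z^\trop$ into the statement that the initial ideal $\mathrm{in}_w(I_Z)\subseteq\widetilde K[z_1^\pm,\dots,z_n^\pm]$ contains no monomial; by Hilbert's Nullstellensatz the corresponding initial degeneration $V(\mathrm{in}_w(I_Z))$ is then a nonempty subvariety of the torus over $\widetilde K$, and I would pick a $\widetilde K$-point $\bar p$ of it, with $\bar p_i\in\widetilde K^\times$ for all $i$.

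\emph{The lifting step --- the main obstacle.} The remaining and crucial step is to lift the pair $(w,\bar p)$ to a genuine point $v\in Z_L^\an$ with $\Log(v)=w$; its restriction to $\cO(Z)$ is then the sought-after element of $Z^\an$ lying over $w$. \textbf{This is where I expect the real difficulty to be.} The naive candidate is the monomial valuation $\val_w$ on $\cO(T)$ twisted by $\bar p$, namely the semivaluation $v_0$ that assigns to $f$ the value $\val_w(f)$ whenever $\mathrm{in}_w(f)(\bar p)\ne 0$ and a strictly larger value otherwise. This $v_0$ is a well-defined semivaluation lying over $w$, and, because $\bar p\in V(\mathrm{in}_w(I_Z))$, it kills $I_Z$ to first order, in the sense that $v_0(f)>\val_w(f)$ for every $f\in I_Z$. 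But $v_0(f)>\val_w(f)$ is far weaker than $v_0(f)=+\infty$, so $v_0$ need not descend to $\cO(Z)$; promoting it to an honest semivaluation on $Z$ forces one to control the vanishing of the elements of $I_Z$ along $w$ to all higher orders. This is precisely the lifting lemma behind the fundamental theorem of tropical geometry, and it is genuinely non-trivial: one can obtain it by a transfinite (Zorn-type) induction over the monomial-twisted valuations dominating $\val_w$, or, as in Maclagan--Sturmfels~\cite[Chapter~3]{MS}, by first replacing the generators of $I_Z$ by a tropical basis and reducing the lifting to the case of curves.

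\emph{Remarks on the route.} To close, I would observe that this argument uses nothing about the polyhedral structure of $Z^\trop$ (Theorem~\ref{thm:trop}), which is logically downstream; and that, as an alternative, the properness of $\Log\colon T^\an\to\R^n$ makes $\Log(Z^\an)$ closed in $\R^n$, so that one could instead content oneself with lifting only a dense subset of $Z^\trop$ --- but at the price of some a priori structural information on $Z^\trop$, which I would rather avoid at this stage.
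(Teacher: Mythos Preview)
Your proof is correct in outline --- it is essentially the Maclagan--Sturmfels route via initial ideals and the lifting lemma --- but the paper proceeds quite differently, and the comparison is instructive. Instead of passing to a field extension and invoking the Nullstellensatz on the initial degeneration, the paper (following Bergman~\cite{Berg}) first reformulates membership in $Z^\trop$ purely in terms of the monomial valuation $\val_w$: one checks that $w\in Z^\trop$ iff $\val_w(z^\b)=\sup_{g\in I_Z}\val_w(z^\b+g)$ for every $\b\in\Z^n$. Then the quotient seminorm $\|f\|_{Z,w}:=\inf_{g\in I_Z}|f+g|_w$ on $\cO(Z)$ is introduced, and the non-emptiness of the Berkovich spectrum $\cM(\cO(Z),\|\cdot\|_{Z,w})$ (Berkovich~\cite[Theorem~1.2.1]{BerkBook}, or already~\cite[Theorem~1]{Bergval}) produces $v\in Z^\an$ with $v(z^\b)\ge\sup_{g\in I_Z}\val_w(z^\b+g)=\b\cdot w$ for all $\b$, forcing $\Log(v)=w$.

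This buys a great deal: the hard lifting step you flag simply disappears, replaced by a one-line appeal to an abstract existence theorem, and no base change, Gr\"obner theory, or tropical basis is needed. Your approach, on the other hand, is more constructive and exhibits the point of $Z^\an$ concretely (via the lifted $\bar p$), at the cost of the heavier machinery you acknowledge. It is worth knowing both, but for the purposes of these notes the Bergman--Berkovich argument is both shorter and more in keeping with the spectral viewpoint developed in~\S\ref{sec:Berkspec}.
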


\begin{proof} As in \S\ref{sec:amoeba}, the inclusion $\Log(Z^\an)\subset Z^\trop$ is a direct consequence of Lemma~\ref{lem:valequ}. Conversely, pick $w\in\R^n$, and consider the monomial valuation $\val_w\in T^\an$ such that
$$
\val_w(f)=\min_\a\val_w(a_\a z^\a)=f^\trop(w). 
$$
Following~\cite{Berg}, we note that $w$ lies in $Z^\trop$ iff, for any $\b\in\Z^n$, we have 
$$
\val_w(z^\b)=\sup_{g\in I_Z}\val_w(z^\b+g). 
$$
Indeed, by definition $w\notin Z^\trop$ iff there exists $g=\sum_\a a_\a z^\a$ in $I_Z$ and $\b\in\Z^n$ such that $a_\b\ne 0$ and $$
\min_{\a\ne\b}\{\a\cdot w+v_K(a_\a)\}=\val_w(a_\b z^\b-g)>\b\cdot w+v_K(a_\b)=\val_w(a_\b z^\b). 
$$
Dividing $g$ by $a_\b\in K^\times$ yields the claim. 

The multiplicative seminorm $|\cdot|_w=e^{-\val_w}$ of $\cO(T)=K[z_1^\pm,\dots,w_n^\pm]$ induces a quotient seminorm $\|\cdot\|_{Z,w}$ on $\cO(Z)=\cO(T)/I_Z$, defined by 
$$
\|f\|_{Z,w}:=\inf_{g\in I_Z}|f+g|_w=\exp(-\sup_{g\in I_Z}\val_w(f+g))
$$
for $f\in\cO(T)$, and which is \emph{a priori} merely submultiplicative. By~\cite[Theorem~1.2.1]{BerkBook}, the Berkovich spectrum of $(\cO(Z),\|\cdot\|_{Z,w})$ is non-empty (see Remark~\ref{rmk:seminorm}), which means that there exists $v\in Z^\an$ such that 
$$
v(f)\ge\sup_{g\in I_Z}\val_w(f+g)
$$ 
for all $f\in\cO(T)$. For $w\in Z^\trop$, the above observation yields 
$$
\sum_i\b_i v(z_i)=v(z^\b)\ge\sup_{g\in I_Z}\val_w(z^\b+g)=\val_w(z^\b)=\sum_i \b_i w_i
$$ 
for all $\b\in\Z^n$. This proves $v(z_i)=w_i$ for all $i$, and hence $w=\Log(v)\in\Log(Z^\an)$. 
\end{proof}

\begin{rmk} The above argument is exactly the one used in~\cite{Berg}, which relied on~\cite[Theorem~1]{Bergval} where we have used~\cite[Theorem~1.2.1]{BerkBook}. 
\end{rmk}

%
%%%%%%%%%%%%%%%%%%%%%%%%%%%%%%%%%%%%%%%%%%%%
\subsection{Base change}

In preparation to the study of hybrid spaces, we study the effect of base change on Berkovich analytification. Consider as above a scheme $X$ of finite type over a normed ring $A$, and assume given a normed ring map $\tau\colon A\to B$, \ie a ring map such that $\|\tau(a)\|_B\le \|a\|_A$ for all $a\in A$. The analytification of the base change $X_B$ sits in a commutative diagram of continuous maps
\begin{equation}\label{equ:basean}
\xymatrix{X_B^\an\ar[r]\ar[d]  & X^\an\ar[d]\\
\cM(B)\ar[r]& \cM(A)}
\end{equation}

\begin{prop}\label{prop:basean} Assume the normed ring map $\tau\colon A\to B$ has dense image. Then $\cM(B)\hto\cM(A)$ is a homeomorphism onto its image, and $X_B^\an\hto X^\an$ is a homeomorphism onto the preimage of $\cM(B)\subset\cM(A)$. If we further assume that $\tau\colon A\to B$ is an isometry with dense image (\eg the completion of $A$), then 
$$
\cM(B)\simto\cM(A),\quad X_B^\an\simto X^\an
$$
are both homeomorphisms. 

\end{prop}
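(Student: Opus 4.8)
The plan is to reduce to the affine case and then analyze the fibres of the structure maps. I would first handle $\iota\colon\cM(B)\to\cM(A)$, which is the case $X=\Spec A$ and is needed for the general case. It is continuous by functoriality of analytification. It is injective because, by the triangle inequality, any bounded multiplicative seminorm on $B$ is $1$-Lipschitz for $\|\cdot\|_B$, hence determined by its restriction to the dense subring $\tau(A)$; being a continuous injection from a compact space into a Hausdorff space, $\iota$ is then a homeomorphism onto its image. The crucial additional point, which makes the scheme-theoretic statement work, is that $\iota$ \emph{preserves residue fields}: for $y\in\cM(B)$ with image $w=\iota(y)$, the ring map $\tau$ induces an isometric inclusion with dense image $A/\ker w\hookrightarrow B/\ker|\cdot|_y$, which passes to an isometric inclusion with dense image of fraction fields, hence to an isometric isomorphism $\cH(w)\simto\cH(y)$ on completions.

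For general $X$, covering $X$ by affine opens $U$---so the $U_B$ cover $X_B$, and $U^\an\hto X^\an$, $U_B^\an\hto X_B^\an$ are open embeddings compatible with the structure maps---reduces the statement about $\Phi\colon X_B^\an\to X^\an$ to the affine case; so assume $\cO(X)=A[t_1,\dots,t_n]/I$, whence $\cO(X_B)=\cO(X)\otimes_A B=B[t_1,\dots,t_n]/(I)$. I would show $\Phi$ is both proper and injective. For properness, the images $\bar t_i$ of the coordinates generate $\cO(X_B)$ as a $B$-algebra, so the sets $K'_m=\{x\in X_B^\an:\max_i|\bar t_i(x)|\le m\}$ are compact (by the proof of Lemma~\ref{lem:loccomp}), they exhaust $X_B^\an$, and $\Phi^{-1}(K_m)=K'_m$ where $K_m=\{\max_i|\bar t_i|\le m\}\subset X^\an$; since every compact subset of $X^\an$ lies in some $K_m$, $\Phi$ is proper. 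For injectivity, writing a general $g\in\cO(X_B)$ as a finite sum $\sum_j f_j\otimes b_j$ and choosing $a_j\in A$ with $\tau(a_j)$ close to $b_j$, one has $g=\bar\tau(h)+\sum_j f_j\otimes(b_j-\tau(a_j))$ with $h=\sum_j a_jf_j\in\cO(X)$, so multiplicativity of any seminorm $x\in X_B^\an$ gives $\bigl|\,|g(x)|-|h(\Phi(x))|\,\bigr|\le\sum_j|f_j(\Phi(x))|\,\|b_j-\tau(a_j)\|_B$; letting $\tau(a_j)\to b_j$ shows $|g(x)|$ depends on $x$ only through $\Phi(x)$, so $\Phi$ is injective. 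A proper continuous injection into a locally compact Hausdorff space is a closed embedding, so $\Phi$ is a homeomorphism onto a closed subspace of $X^\an$.

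It remains to identify the image of $\Phi$ with the preimage $S$ of $\cM(B)$ under $X^\an\to\cM(A)$. The inclusion $\Phi(X_B^\an)\subseteq S$ is immediate from the commutative diagram~\eqref{equ:basean}. Conversely, let $x'\in S$ have structure image $w\in\cM(A)$, so that $w=\iota(y)$ for the unique $y\in\cM(B)$ over it. The residue field $\cH(w)$ embeds isometrically into $\cH(x')$ in the standard way, and $\cH(y)\simto\cH(w)$ by the first paragraph; composing the characters $\cO(X)\to\cH(x')$ and $B\to\cH(y)\simto\cH(w)\hto\cH(x')$---which agree on $A$---produces a character $\cO(X_B)=\cO(X)\otimes_A B\to\cH(x')$ whose associated multiplicative seminorm is a point of $X_B^\an$ lying over $x'$. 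Hence $X_B^\an\simto S$. For the final assertion, if $\tau$ is an isometry with dense image, then $\|\cdot\|_A=\|\tau(\cdot)\|_B$ and any $w\in\cM(A)$, being $1$-Lipschitz for $\|\cdot\|_B$ on the dense subring $\tau(A)\subset B$, extends by continuity to a bounded multiplicative seminorm on $B$ mapping to $w$; thus $\iota$ is surjective as well, so $\cM(B)\simto\cM(A)$, $S=X^\an$, and $X_B^\an\simto X^\an$.

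The hard part will be the reverse inclusion $S\subseteq\Phi(X_B^\an)$, and more precisely the residue-field-preservation property of $\iota$ on which it rests: recovering a multiplicative seminorm on $\cO(X)\otimes_A B$ from one on $\cO(X)$ lying over $\cM(B)$ is not a formal consequence of density, and ultimately reduces to the elementary but essential fact that a dense subring of a valued domain has dense fraction field, and hence the same completion. Once that is in hand, the surjectivity is just a gluing of characters, and the remaining ingredients---compactness, properness, and the closed-map lemma---are soft.
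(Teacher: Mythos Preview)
Your proof is correct, but it takes a genuinely different route from the paper's. The paper works directly with seminorms throughout: it introduces \emph{admissible approximations} $f_n=\sum_k a_{n,k}g_k\in\cO(X)$ of a given $f=\sum_k b_k\otimes g_k\in\cO(X_B)$, uses them first to show via a net argument that $X_B^\an\to X^\an$ is a homeomorphism onto its image, and then, for $x\in X^\an$ lying over $\cM(B)$, shows that $|f_n(x)|$ is Cauchy and defines $|f(y)|:=\lim_n|f_n(x)|$ directly, checking independence of choices and multiplicativity by hand. Your argument is more structural: you replace the net argument by properness plus the closed-map lemma, and you replace the Cauchy-limit construction of the preimage point by the residue-field identification $\cH(y)\simto\cH(w)$ together with the universal property of the tensor product $\cO(X)\otimes_A B$. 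The paper's approach is more self-contained and uniform (one estimate does all the work), while yours isolates the conceptual content---in particular, the residue-field-preservation property you highlight is exactly what underlies the paper's Cauchy-sequence step, but the paper never names it as such. Both arguments ultimately rest on the same density/Lipschitz mechanism; yours just packages it differently.
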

\begin{proof} Writing $X$ as a finite union of affine open subschemes, we may assume without loss that $X$ is affine. Pick $f\in\cO(X_B)=\cO(X)\otimes_A B$, and write $f=\sum_{k=1}^N b_k\otimes g_k$ with $b_k\in B$ and $g_k\in\cO(X)$. Since $\tau(A)$ is dense in $B$, we can then find a sequence $f_n\in\cO(X)$ of the form $f_n=\sum_{k=1}^N a_{n,k} g_k$ where $a_{n,k}\in A$ satisfies $\tau(a_{n,k})\to b_k$ for each $k$. We shall then say that $(f_n)$ is an \emph{admissible approximation} of $f$. 

To show that $X_B^\an\to X^\an$ is a homeomorphism onto its image, pick a net $(y_i)$ in $X_B^\an$ and $y\in X_B^\an$. Assuming their images $x_i,x$ in $X^\an$ satisfy $x_i\to x$, we need to show $y_i\to y$ in $X_B^\an$, \ie $|f(y_i)|\to |f(y)|$ for any $f\in\cO(X_B)=\cO(X)\otimes_A B$. Pick an admissible approximation $f_n=\sum_{k=1}^N a_{n,k} g_k$ of $f$ as above, and set 
$$
\e_n:=\max_{1\le k\le N}\|\tau(a_{n,k})-b_k\|_B.
$$
Since $\lim_i |g_k(x_i)|=|g_k(x)|$ for all $k$, we may assume $\sup_{k,i}|g_k(x_i)|\le C<\infty$, where $C>0$ denotes a uniform constant that is allowed to vary from line to line. Then 
$$
\big||f(y_i)|-|f_n(x_i)|\big|=\left|(f-\tau(f_n))(y_i)\right|\le \e_n C,\quad\big||f(y)|-|f_n(x)|\big|\le \e_n C. 
$$
Since $x_i\to x$, we have $|f_n(x_i)|\to |f_n(x)|$ for each $n$. We infer 
$$
\limsup_i\left||f(y_i)-|f(y)|\right|\le\e_n C
$$
for all $n$, and hence $|f(y_i)|\to |f(y)|$. This shows that $X_B^\an\hto X^\an$ is a homeomorphism onto its image. Specializing to $X=\Spec A$, this implies that $\cM(A)\hto\cM(B)$ also is a homeomorphism onto its image. 

Next pick $x\in X^\an$ in the preimage of $\cM(B)\hto\cM(A)$, \ie $|a(x)|\le\|\tau(a)\|_B$ for all $a\in A$. Pick $f\in\cO(X_B)$, and let $f_n\in\cO(X)$ be an admissible approximation of $f$. Then 
$$
\big||f_n(x)|-|f_m(x)|\big|\le|(f_n-f_m)(x)|\le(\e_n+\e_m) C\max_k |g_k(x)|^C. 
$$
The sequence $(|f_n(x)|)_n$ is thus Cauchy, and hence admits a limit in $\R_{\ge 0}$. For any other choice of admissible approximation $(f'_n)$ of $f$, we similarly see that $|f_n(x)|-|f'_n(x)|$ tends to $0$. It follows that $|f(y)|:=\lim_n |f_n(x)|$ is independent of the choice of admissible approximation, and hence defines a point $y\in X_B^\an$ mapping to $x$, since admissible approximations are stable under sums and products. 

Finally, assume that $\tau$ is an isometry. A similar Cauchy sequence argument then shows that $\cM(B)\to\cM(A)$ is onto, and the rest follows.  
\end{proof}

%
%
%%%%%%%%%%%%%%%%%%%%%%%%%%%%%%%%%%%%%%%%%%%%
\subsection{Continuous metrics}\label{sec:metrics}
Let $X$ be a scheme of finite type over a normed ring $A$, and pick a line bundle $L$ on $X$. 

\begin{defi}\label{defi:metric} We define a \emph{continuous metric} $\phi$ on $L^\an$ as the data, for any Zariski open $U\subset X$ and $s\in\Hnot(U,L)$, of a continuous function 
$$
|s|_\phi\colon U^\an\to\R_{\ge 0},
$$
compatible with restriction to open subschemes, and subject to the conditions 
\begin{itemize}
\item[(i)] $|fs|_\phi=|f||s|_\phi$ for any $f\in\cO(U)$; 
\item[(ii)] $|s|>0$ on $U^\an$ when $s$ is a trivializing section on $U$. 
\end{itemize}
\end{defi}
Fix a trivializing open cover $(U_i)$ of $X$ for $L$, with trivializing sections $s_i\in\Hnot(U_i,L)$ and associated cocyle $g_{ij}\in\cO^\times(U_{ij})$. Setting $\phi_i:=-\log|s_i|_\phi$ then defines a 1--1 correspondence between the set of continuous metrics $\phi$ on $L^\an$ and that of finite families of functions $\phi_i\in \Cz(U_i^\an)$ such that $\phi_i-\phi_j=\log|g_{ij}|$ on $U_{ij}^\an$. Using a partition of unity, we easily infer that $L^\an$ always admits a continuous metric. 

We use additive notation for line bundles and metrics thereon, \ie if $\phi,\phi'$ are continuous metrics on $L^\an$, $L'^\an$ for some line bundles $L,L'$ on $X$, we denote by $-\phi$ the induced metric on the dual line bundle $-L:=L^\vee$, and $\phi+\phi'$ the induced metric on $L+L':=L\otimes L'$. 

When $L=\cO_X$ is trivial, we further identify a continuous metric $\phi$ on $\cO_X^\an$ with the continuous function 
$$
\phi=-\log|1|_\phi\in \Cz(X^\an).
$$
Given two continuous metrics $\phi,\phi'$ on the same line bundle $L$, $\phi-\phi'$ is thus viewed as a continuous function on $X^\an$, in line with the fact that the space $\Cz(L^\an)$ of continuous metrics on $L^\an$ is an affine space modeled on $CX^0(X^\an)$. 

\begin{exam}\label{exam:FS} Assume that, for some $m\ge 1$, $mL$ is generated by finitely many global sections $s_i\in\Hnot(X,mL)$, and pick constants $\lambda_i\in\R$. This defines a \emph{tropical Fubini--Study metric}
$$
\phi=\tfrac 1m\max_i\{\log|s_i|+\lambda_i\}
$$ 
on $L^\an$, where $\log|s_i|$ denotes the (singular) metric induced by $s_i$. Explicitly, 
$$
|s|_\phi=\left(\max_i|s_i/s^m|e^{\lambda_i}\right)^{-1/m}
$$
on $U^\an$ for any trivializing section $s\in\Hnot(U,L)$ with $U\subset X$ open. If further $\lambda_i\in\Q$ we say that $\phi$ is \emph{rational}. 
\end{exam}
Such metrics give rise to a convenient space of `test functions'. More specifically, assume $X$ is projective, so that $X^\an$ is compact (see Proposition~\ref{prop:Berktop}). Pick an ample line bundle $L$, and denote by 
$$
\DFS(X^\an)\subset \Cz(X^\an)
$$
the set of functions of the form $f=m(\phi-\p)$ with $\phi,\p$ rational tropical Fubini--Study metrics on $L^\an$ and $m\in\Z_{>0}$. Then: 

\begin{prop}\label{prop:DFS} The set $\DFS(X^\an)$ is independent of the choice of ample line bundle $L$, and is a dense $\Q$-linear subspace of $\Cz(X^\an)$, stable under max and containing $1$. 
\end{prop}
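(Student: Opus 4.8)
The plan is to establish the three asserted properties of $\DFS(X^\an)$ in the following order: first independence of the choice of ample line bundle, then the algebraic structure ($\Q$-linear subspace, stable under max, containing $1$), and finally density, which I expect to be the main point.

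\textbf{Independence of $L$.} Let $L, L'$ be two ample line bundles. Given $f = m(\phi - \psi)$ with $\phi, \psi$ rational tropical Fubini--Study metrics on $L^\an$, I want to rewrite $f$ using metrics on $L'^\an$. Since $L'$ is ample, for $k \gg 1$ both $kL'$ and $kL' + L$ (equivalently $kL' - L$, after possibly enlarging $k$) are globally generated, indeed very ample. I would first observe that if $\chi$ is a rational tropical Fubini--Study metric on a globally generated $M^\an$ and $\chi'$ one on $M'^\an$, then $\chi + \chi'$ is a rational tropical Fubini--Study metric on $(M+M')^\an$ (using products of the generating sections and sums of the constants $\lambda_i$), and similarly $\max$ of two Fubini--Study metrics on the \emph{same} bundle, with a common choice of generating sections, is again Fubini--Study. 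Using this, write $\phi = (\phi + \theta) - \theta$ where $\theta$ is a fixed rational Fubini--Study metric on $(kL')^\an$ chosen so that $\phi + \theta$ and likewise $\psi + \theta$ are rational Fubini--Study metrics on $(L + kL')^\an$ (possible because $L + kL'$ is very ample and a sum of generating sections times a Fubini--Study metric on a very ample bundle is Fubini--Study). Then $f = m(\phi-\psi) = m\big((\phi+\theta) - (\psi+\theta)\big)$ exhibits $f$ as a difference of Fubini--Study metrics on $(L+kL')^\an$. Iterating the argument once more lets one replace $L + kL'$ by a multiple of $L'$, so $f$ lies in the space defined using $L'$. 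By symmetry the two spaces coincide.

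\textbf{Algebraic structure.} That $\DFS(X^\an)$ is a $\Q$-linear subspace is formal: it is clearly stable under multiplication by positive integers (replace $m$ by $nm$), hence by positive rationals after clearing denominators, and it contains the negative of each element (swap $\phi$ and $\psi$), and $0$; closure under addition follows from the observation above that sums of Fubini--Study metrics on a globally generated bundle are Fubini--Study, after passing to a common multiple $mL$ large enough that $mL$ is very ample. For stability under $\max$: given $f = m(\phi-\psi)$ and $f' = m(\phi'-\psi')$ (brought to a common $m$ and, using the independence just proved and a common multiple, to a common ample $L$), write $\max(f, f') = m\big(\max(\phi + \psi', \phi' + \psi) - (\psi + \psi')\big)$; since $\psi + \psi'$ and the two metrics inside the $\max$ are rational Fubini--Study metrics on $(2L)^\an$, and the $\max$ of two such on the same bundle (with sections and weights concatenated) is again rational Fubini--Study on $(2L)^\an$, we get $\max(f,f') \in \DFS(X^\an)$. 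Finally $1 \in \DFS(X^\an)$: take $\phi$ any rational Fubini--Study metric on $L^\an$ and $\psi = \phi - 1/m$, which is again rational Fubini--Study (shift all the $\lambda_i$), so $1 = m(\phi - \psi)$.

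\textbf{Density.} This is the crux. The target is to approximate an arbitrary $g \in \Cz(X^\an)$ uniformly by elements of $\DFS(X^\an)$. Since $\DFS(X^\an)$ is a $\Q$-linear subspace stable under $\max$ and containing the constants, its uniform closure is a sublattice of $\Cz(X^\an)$ containing the constants, so by the lattice version of the Stone--Weierstrass theorem it suffices to show that $\DFS(X^\an)$ separates points of the compact Hausdorff space $X^\an$. So let $x \neq y$ in $X^\an$. Fix $m$ with $mL$ very ample and embed $X \hookrightarrow \P^N$ by its global sections $s_0, \dots, s_N$ of $mL$; then $x, y$ have distinct images, so there are indices $i, j$ with $|s_i(x)/s_j(x)| \neq |s_i(y)/s_j(y)|$ (as a point of $X^\an$ is determined by the collection of values $|s_i/s_j|$ on the chart where $s_j \neq 0$, together with which $s_j$ vanish). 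Consider the two rational Fubini--Study metrics $\phi = \tfrac1m \max_k \log|s_k|$ and $\psi = \tfrac1m \max(\log|s_i|, \log|s_j|)$ on $L^\an$ (the latter using only $s_i, s_j$); more simply, compare $\phi$ with $\phi' = \tfrac1m\max_{k}\{\log|s_k| + \lambda_k\}$ for a suitable choice of one nonzero rational weight. The function $\phi - \psi$ or a difference of two such Fubini--Study metrics, chosen so that the relevant $\log$-terms at $x$ and at $y$ pick out the indices $i$ and $j$, will take different values at $x$ and $y$; multiplying by $m$ gives an element of $\DFS(X^\an)$ separating $x$ and $y$. The main obstacle is precisely this last bookkeeping: one must choose the generating sections and the rational weights $\lambda_k$ so that the piecewise-max expressions genuinely distinguish $x$ from $y$, which requires unwinding that a point of the analytification of projective space is determined by the values $|s_k/s_j|$, i.e.\ that the tropical Fubini--Study functions $\log(|s_i/s_j|)$ separate points — this is where the very ampleness of $mL$ and the explicit description of $\P^{N,\an}$ from the proof of Proposition~\ref{prop:Berktop} enter.
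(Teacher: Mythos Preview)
Your overall strategy matches the paper's exactly: verify the algebraic properties, show independence of $L$, establish that $\DFS(X^\an)$ separates points, and conclude density by the lattice Stone--Weierstrass theorem. The paper's own proof is in fact much terser than yours, outsourcing both independence and separation of points to an external reference.

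There is, however, a genuine gap in your separation argument. You claim that after embedding $X\hookrightarrow\P^N$ via sections $s_0,\dots,s_N$ of $mL$, two distinct points $x\ne y$ of $X^\an$ must satisfy $|s_i/s_j|(x)\ne|s_i/s_j|(y)$ for some $i,j$. This is false: a multiplicative seminorm on $A[z_1/z_0,\dots,z_N/z_0]$ is \emph{not} determined by its values on the generators $z_i/z_0$. For instance, over a trivially valued field, the seminorms corresponding to evaluation at two distinct points of $\A^1$ with the same (trivial) absolute value agree on the coordinate $z$ but differ on suitable translates $z-c$. So for a fixed $m$ and fixed basis $(s_i)$, the ratios $|s_i/s_j|$ need not separate $x$ from $y$.

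The fix is to allow sections of higher multiples. Since $x\ne y$, there is an affine chart $U_0=\{s_0\ne 0\}$ and a regular function $g\in\cO(U_0)$ with $|g(x)|\ne|g(y)|$ (or one of $x,y$ lies outside $U_0^\an$, which is easier). Homogenizing, $g=s/s_0^d$ for some section $s\in H^0(X,dmL)$. Now work with the generating set $s_0^d,\dots,s_N^d,s$ of $dmL$: the tropical coordinates $\log|s/s_0^d|$, $\log|s_i^d/s_0^d|$ now genuinely distinguish $x$ from $y$, and your weight-varying argument with $\phi_\lambda=\tfrac{1}{dm}\max_k\{\log|\cdot|+\lambda_k\}$ then produces a DFS function separating them. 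You correctly flagged this step as ``the main obstacle,'' but the specific claim you wrote down to resolve it does not hold.
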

\begin{proof} As one easily sees, $\DFS(X^\an)$ is a $\Q$-linear subspace stable under max and containing $1$. It is also not hard to check that it is independent of $L$ (see~\cite[Lemma~2.6]{nakstabold}), and that it separates the points of $X^\an$~\cite[Theorem~2.7]{nakstabold}. Density is then a consequence of the Stone--Weierstrass theorem. 
\end{proof}

%
%
%%%%%%%%%%%%%%%%%%%%%%%%%%%%%%%%%%%%%%%%%%%%
\section{Hybrid spaces}\label{sec:hybrid}
This section provides a self-contained introduction to the main character of these notes, \ie the hybrid space attached to a meromorphic degeneration of algebraic varieties. 
%
%
%%%%%%%%%%%%%%%%%%%%%%%%%%%%%%%%%%%%%%%%%%%%
\subsection{The hybrid space of a complex variety}\label{sec:hybtriv}
To any complex algebraic variety $Z$ can be attached two types of analytifications in the sense of Definition~\ref{defi:Berkgen}, corresponding respectively to the usual absolute value $|\cdot|_\infty$ and the trivial absolute value $|\cdot|_0$ on $\C$. 

As mentioned in Example~\ref{exam:GM}, the Gelfand--Mazur theorem implies that the first analytification recovers the usual one, \ie the set of complex points of $Z$ with the Euclidean topology. As in Example~\ref{exam:NAsemival}, we view the analytification $Z^\an$ with respect to $|\cdot|_0$ as a space of semivaluations, which extends Definition~\ref{defi:Berktriv} to the non-affine case. 

Following Berkovich~\cite{BerNA}, we now bring together these two analytifications into a single hybrid object, as follows. 

\begin{defi} The \emph{hybrid space} $Z^\hyb$ of a complex algebraic variety $Z$ is defined as its analytification with respect to the hybrid norm $\|\cdot\|^\hyb=\max\{|\cdot|_\infty,|\cdot|_0\}$ on $\C$. 
\end{defi} 
By Proposition~\ref{prop:Berktop}, the space $Z^\hyb$ is Hausdorff and locally compact; it is further compact if $Z$ is projective. 

Recall from Example~\ref{exam:hybrid} that the spectrum $\cM(\C^\hyb)$ of the hybrid field $\C^\hyb=(\C,\|\cdot\|^\hyb)$ can be canonically identified with the interval $[0,1]$ by mapping $\e\in [0,1]$ to $|\cdot|_\infty^\e$, interpreted as $|\cdot|_0$ when $\e=0$. The hybrid space of $Z$ thus comes with a structure map
$$
Z^\hyb\to \cM(\C^\hyb)=[0,1]. 
$$
The fiber $Z^\hyb_\e$ over $\e>0$ is the analytification with respect $|\cdot|_\infty^\e$, which is canonically identified with the usual analytification (by Gelfand--Mazur), while the fiber $Z^\hyb_0$ over $\e=0$ is the Berkovich space $Z^\an$. This yields an identification
$$
Z^\hyb\simeq (Z^\an\times\{0\})\coprod(Z\times (0,1])
$$
compatible with the projection to $[0,1]$, the topology being the coarsest one such that for any affine open $U\subset Z$ we have
\begin{itemize}
\item $U^\hyb\subset Z^\hyb$ is open; 
\item for any $f\in\cO(U)$, the function $|f|^\hyb\colon U^\hyb\to\R_{\ge 0}$ defined by $|f|^\hyb(x,\e):=|f(x)|^{\e}$ for $(x,\e)\in Z\times (0,1]$ and $|f|^\hyb(v,0):=e^{-v(f)}$ for $v\in Z^\an$, is continuous. 
\end{itemize}
A sequence (or net) $(x_j,\e_j)$ of complex points $x_j\in X$ and $\e_j\to 0_+$ thus converges to a semivaluation $v\in Z^\an$ iff, for any Zariski open $U\subset X$ with $v\in U^\an$, $x_j$ ultimately lies in $U^\an$, and 
$$
-\e_j\log|f(x_j)|\to v(f)
$$
for all $f\in\cO(U)$. Intuitively, this means that $v$ is realized as a `vanishing order' along the net $(x_i)$, with rate $(\e_j)$. 

By~\cite[Theorem~C]{Jon}, the structure map $Z^\hyb\to [0,1]$ is open, \ie given any $v\in Z^\an$ and any net $\e_j\to 0$, we can find complex points $x_j\in Z$ such that $(x_j,\e_j)\to (v,0)$ in $Z^\hyb$. In particular, the Archimedean part $Z\times(0,1]$ is dense in $Z^\hyb$, \ie any semivaluation can be realized as a vanishing order along some net of complex points (see also Corollary~\ref{cor:dense} below). 

\begin{exam}\label{exam:amoebas} Let $T=\Spec \C[z_1^\pm,\dots,z_n^\pm]$ be a torus, with hybrid analytification $T^\hyb\to [0,1]$. The \emph{hybrid log map}
$$
\Log\colon T^\hyb\to\R^n
$$ 
with components $-\log|z_i|$ is then continuous. Given any subvariety $Z\subset T$, we have $\Log(Z^\hyb_\e)=\e\Log(Z)$ for $\e>0$, and $\Log(Z^\hyb_0)=\Log(Z^\an)$. As in~\cite{Jon}, we thus recover the equality 
$$
\Log(Z^\an)=\lim_{\e\to 0}\e\Log(Z)
$$
between the non-Archimedean amoeba and the asymptotic cone of the (complex) amoeba $\Log(Z)$ (see Theorem~\ref{thm:trop}) from the openness of the structure map $Z^\hyb\to [0,1]$.
\end{exam}
Combined with Proposition~\ref{prop:NAtrop}, this recovers most of the `fundamental theorem of tropical geometry' (Theorem~\ref{thm:trop}).

%
%
%%%%%%%%%%%%%%%%%%%%%%%%%%%%%%%%%%%%%%%%%%%%
\subsection{The hybrid space of a meromorphic degeneration} 
We next consider the case of main interest in these notes, \ie that of a meromorphic degeneration 
$$
\pi\colon X\to\DD^\times
$$
of complex algebraic varieties $X_t=\pi^{-1}(t)$, parametrized by a small punctured disc centered at $0\in\C$. By definition, this means that $X$ is the complex analytic space attached to an algebraic variety $X_K$ over the non-Archimedean field $K=\C\{t\}$ of convergent Laurent series. 

Define
$$
X_0:=X_K^\an
$$
as the Berkovich analytification of $X_K$, viewed as a space of semivaluations $v$ on $X$ (see Example~\ref{exam:NAsemival}). Recall also the scaling factor
$$
\e_t:=(\log|t|^{-1})^{-1}\in (0,1),
$$
which tends to $0$ as $t\to 0$  (see~\S\ref{sec:degam}). Inspired by~\S\ref{sec:degam} and~\S\ref{sec:hybtriv}, we introduce: 

\begin{defi}\label{defi:hybrid} The \emph{hybrid space} of the meromorphic degeneration $X\to\DD^\times$ 
is defined as the disjoint union
$$
X^\hyb:=\coprod_{t\in\DD} X_t
$$
endowed with the \emph{hybrid topology}, \ie the coarsest topology such that, for any affine open $U_K\subset X_K$, we have:
\begin{itemize}
\item[(i)] $U^\hyb\subset X^\hyb$ is open;
\item[(ii)] for any $f\in\cO(U_K)$, the function $|f|^\hyb\colon U^\hyb\to\R_{\ge 0}$ equal to $x\mapsto |f(x)|^{\e_t}$ on $X_t$ for $t\ne 0$ and $v\mapsto |f(v)|=e^{-v(f)}$ for $v\in X_0$ is continuous. 
\end{itemize}
\end{defi}
Equivalently, the hybrid topology is characterized by: 
\begin{itemize}
\item $X\hto X^\hyb$ is an open embedding; 
\item $X_0\hto X^\hyb$ is a closed embedding; 
\item a net of complex points $x_i\in X_{t_i}$ with $t_i\to 0$ converges in $X^\hyb$ to a semivaluation $v\in X_0$ iff, for any affine open $U_K\subset X_K$ such that $v\in U_0$, we have $x_i\in U_{t_i}$ for $i$ large enough, and 
\begin{equation}\label{equ:hybcv}
\lim_i\frac{\log|f(x_i)|}{\log|t_i|}=v(f)
\end{equation}
for all $f\in\cO(U_K)$. 
\end{itemize}
As in~\S\ref{sec:hybtriv}, we interpret~\eqref{equ:hybcv} as saying that $v$ is realized as a `vanishing order' along the net $(x_i)$. As we shall later see, $X$ is dense in $X^\hyb$ (cf.~Corollary~\ref{cor:dense}), \ie any semivaluation can be realized as a vanishing order along some net of complex points. 

\medskip

By definition, the hybrid space comes with a natural continuous extension
$$
\pi\colon X^\hyb\to\DD
$$
of $\pi\colon X\to\DD^\times$, such that $\pi^{-1}(0)=X_0$. The construction is functorial, in that any morphism $X_K\to Y_K$ of algebraic $K$-varieties induces a continuous map $X^\hyb\to Y^\hyb$ over a sufficiently small disc $\DD$, compatible with composition. 

%\begin{exam}\label{exam:constant} Assume $X=Z\times\DD^\times$ is a constant family, with $Z$ a complex algebraic variety. Then $X^\hyb\to\DD$ is obtained as the base change of $Z^\hyb\to [0,1]$ by the map $\DD\ni t\mapsto \e_t\in [0,1]$. NO! Look at fiber over $0$. 
%\end{exam}

In analogy with~\S\ref{sec:hybtriv}, our next goal is to provide a description of the hybrid space in terms of Berkovich analytification, closely related to~\cite[\S 3]{BerNA}. As a consequence, we will obtain the following basic topological properties. 

\begin{prop}\label{prop:hybcomp} For any algebraic variety $X_K$ over $K$, the hybrid space $X^\hyb$ is Hausdorff and locally compact. If $X_K$ is further projective, then the map $\pi\colon X^\hyb\to\DD$ is proper.
\end{prop}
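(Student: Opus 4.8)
The plan is to realise $X^\hyb$, locally over the base disc, as (a closed part of) a Berkovich analytification over a Banach ring, and then to read off the statement from Proposition~\ref{prop:Berktop}. Fix a radius $r$ small enough that $\e_t\in(0,1)$ for $0<|t|\le r$ (possible by~\eqref{equ:et}) and that the finitely many convergent Laurent series occurring as coefficients of equations cutting out $X_K$ are holomorphic on a neighbourhood of $\DD_r:=\{|t|\le r\}$. Let $\cA_r$ be the ring of germs along $\DD_r$ of holomorphic functions, equipped with the \emph{hybrid norm}
$$
\|f\|_{\hyb}:=\sup_{t\in\DD_r}|f|^{\hyb}(t),\qquad |f|^{\hyb}(t):=|f(t)|^{\e_t}\ \ (t\ne 0),\quad |f|^{\hyb}(0):=e^{-\ord_0 f}.
$$
Using the identity $|t|^{\e_t}\equiv e^{-1}$ one checks that $|f|^{\hyb}$ extends continuously across $t=0$, that $|fg|^{\hyb}=|f|^{\hyb}\,|g|^{\hyb}$ and $|f+g|^{\hyb}\le|f|^{\hyb}+|g|^{\hyb}$ pointwise on $\DD_r$, and hence that $\|\cdot\|_{\hyb}$ is a submultiplicative norm (completeness being irrelevant by Remark~\ref{rmk:seminorm}). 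For $t\ne0$ the seminorm $f\mapsto|f(t)|^{\e_t}$ and for $t=0$ the seminorm $f\mapsto e^{-\ord_0 f}$ are then bounded by $\|\cdot\|_{\hyb}$, so they define points $x_t\in\cM(\cA_r)$, and $t\mapsto x_t$ is a continuous injection $\DD_r\to\cM(\cA_r)$, hence a homeomorphism onto a closed subspace (as $\DD_r$ is compact and $\cM(\cA_r)$ Hausdorff). I do not need to determine $\cM(\cA_r)$ exactly: besides $\DD_r$ it carries an extra segment $\{\lambda\cdot\ord_0:\lambda\ge1\}$ attached at $x_0$, which will simply be discarded.

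Next, by a standard spreading-out argument, $X_K$ is the base change to $K$ of a model $\cX_r$, of finite type and separated over $\cA_r$ (and projective over $\cA_r$ if $X_K$ is projective over $K$), with $\cX_r\otimes_{\cA_r}(\cA_r/\fm_t)=X_t$ for $t\ne0$. The analytification $\cX_r^\an$ comes with a structure map to $\cM(\cA_r)$, and I claim that the part of $\cX_r^\an$ lying over the closed subspace $\DD_r$ is, over $\DD_r$, homeomorphic to the part $\pi^{-1}(\DD_r)$ of $X^\hyb$. Indeed, over $x_t$ with $t\ne0$ the fibre is the analytification of $X_t$ relative to $|\cdot|_\infty^{\e_t}$, which is the complex analytic space $X_t$ by Gelfand--Mazur (Example~\ref{exam:GM}); over $x_0$ the fibre is the analytification over the completed residue field, namely $\C((t))$ with its $t$-adic valuation, which by Proposition~\ref{prop:basean} applied to the isometric dense inclusion $K=\C\{t\}\hookrightarrow\C((t))$ is identified with $X_K^\an=X_0$. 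For a local section $f$ of $\cX_r$, the Berkovich function $|f|$ equals $|f(x)|^{\e_t}$ on $X_t$ and $e^{-v(f)}$ on $X_0$ — i.e.\ it is exactly the function $|f|^{\hyb}$ of Definition~\ref{defi:hybrid}(ii), so that convergence in $\cX_r^\an$ of a net of complex points with $t_i\to0$ to some $v\in X_0$ is governed precisely by~\eqref{equ:hybcv} — and affine opens of $\cX_r$ induce the same open subsets as affine opens of $X_K$. Since both the hybrid topology and the topology of $\cX_r^\an$ are, by construction, the coarsest ones making these data continuous/open, the claim follows; and since $\DD_r\subset\cM(\cA_r)$ is closed, $\pi^{-1}(\DD_r)$ is thereby realised as a closed subspace of $\cX_r^\an$.

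Granting this, the proposition follows from Proposition~\ref{prop:Berktop}. A closed subspace of a locally compact Hausdorff space is locally compact Hausdorff, so each $\pi^{-1}(\DD_r)$ is, by Proposition~\ref{prop:Berktop}(i)--(ii) applied to $\cX_r/\cA_r$. Since the sets $\pi^{-1}(\{|t|<r\})$ are open in $X^\hyb$, cover it, are open subspaces of the respective $\cX_{r'}^\an$ for $r'>r$, and any two points of $X^\hyb$ lie in a common one, it follows that $X^\hyb$ itself is Hausdorff and locally compact. For properness when $X_K$ is projective: choose $\cX_r$ projective over $\cA_r$, so $\cX_r^\an$ is compact Hausdorff by Proposition~\ref{prop:Berktop}(iii); any compact $C\subset\DD$ is contained in some $\DD_r$, and $\pi^{-1}(C)$ is then a closed subset of the compact space $\cX_r^\an$, hence compact, which is exactly the properness of $\pi\colon X^\hyb\to\DD$.

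\textbf{The main obstacle} is establishing the description in the first two paragraphs — this is the content of the two subsections that follow in the paper — rather than the formal deduction above. The delicate points are the identification of the central fibre of $\cX_r^\an$ with $X_0$ (via Proposition~\ref{prop:basean} and the density of convergent in formal Laurent series) and, above all, checking that the Berkovich topology restricts to the hybrid topology along the central fibre: that the convergence of a net $x_i\in X_{t_i}$ to a semivaluation is correctly encoded by the asymptotics $\log|f(x_i)|/\log|t_i|\to v(f)$ of~\eqref{equ:hybcv}, which amounts to matching the two generating families of functions $|f|^{\hyb}$ and $|f|$.
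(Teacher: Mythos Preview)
Your approach is correct and follows the same strategy as the paper: realise $\pi^{-1}(\bar\DD_r)$ inside a Berkovich analytification over a suitable normed ring and invoke Proposition~\ref{prop:Berktop}. The paper works over the Banach ring $K_r^\hyb$ of Laurent series converging on $\bar\DD_r^\times$, equipped with the $\ell^1$-hybrid norm $\|a\|^\hyb_r=\sum_k\|a_k\|^\hyb r^k$; this has the pleasant feature that $\cM(K_r^\hyb)$ is \emph{exactly} $\bar\DD_r$ (Lemma~\ref{lem:hybspec}), so no extra points need to be discarded and $X_{K_r}^\hyb$ is directly $\pi^{-1}(\bar\DD_r)$. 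Your ring $\cA_r$ of holomorphic germs with its sup-norm yields a larger spectrum, but since $\bar\DD_r\hookrightarrow\cM(\cA_r)$ is closed the same conclusions follow. One small wrinkle in your version: the coefficients of the defining equations of $X_K$ are \emph{Laurent} series, hence not in $\cA_r$ until you clear powers of $t$; and $\cA_r$ is not dense in $K$ (it sits inside $\cO_K$), so the matching of topologies at the central fibre really goes through the localisation $\cA_r[t^{-1}]\hookrightarrow K$, which is dense --- this is implicit in your last paragraph but worth making explicit. The paper's choice of Laurent series avoids both issues.
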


In the projective case, which is the one we will mostly consider afterwards, we will later provide another, more global description of the hybrid topology in terms of hybrid model functions (see Proposition~\ref{prop:hybchar}). 

For each $r\in (0,1)$ we denote by $K_r$ the ring of complex Laurent series 
$$
a(t)=\sum_{k\in\Z} a_k t^k\in\C\cro{t}
$$
that converge on the punctured closed disc $\bar\DD_r^\times$ of radius $r$, \ie such that $\sum_k|a_k| r^k<\infty$, where $|a_k|=|a_k|_\infty$ is the usual complex absolute value. Since 
$$
\|a_k\|^\hyb=\max\{|a_k|,|a_k|_0\}\le 1+|a_k|,
$$
the \emph{hybrid norm}
$$
\|a\|^\hyb_r:=\sum_k\|a_k\|^\hyb r^k
$$
is then finite; we denote by 
$$
K_r^\hyb:=(K_r,\|\cdot\|_r^\hyb)
$$
the resulting normed ring. As one easily sees, $K_r$ coincides with the ring of (a priori) doubly infinite series 
$$
\left\{a=\sum_{k\in\Z} a_k t^k\in\C\cro{t^\pm}\mid\sum_{k\in\Z}\|a_k\|^\hyb r^k<\infty\right\}. 
$$
In the language of Berkovich geometry, the spectrum $\cM(K_r^\hyb)$ thus corresponds to the circle 
$$
C_r^\hyb:=\{|t|=r\}\subset\A^{1,\hyb}_\C=(\Spec\C[t])^\hyb
$$
in the hybrid affine line. As observed in~\cite[Proposition~A.4]{konsoib}, this hybrid circle is topologically a disc: 

\begin{lem}\label{lem:hybspec} For any $r\in (0,1)$, there is a canonical homeomorphism 
$$
\bar\DD_r\simto\cM(K_r^\hyb),
$$
which takes $0\in\bar\DD_r$ and $t\in\bar\DD_r^\times$ to the multiplicative seminorms 
$$
K_r\ni a\mapsto r^{v_K(a)}\quad\text{and}\quad K_r\ni a\mapsto |a(t)|^{\e_{r,t}},\quad\e_{r,t}:=\frac{\log r}{\log|t|}\in (0,1].
$$
\end{lem}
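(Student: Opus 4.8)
The strategy is to define an explicit continuous map $\bar\DD_r\to\cM(K_r^\hyb)$ by the prescribed formulas and show it is a bijection, then invoke compactness. First I would check that each prescription actually gives a bounded multiplicative seminorm on $K_r$. For $t\in\bar\DD_r^\times$, the point $a\mapsto|a(t)|^{\e_{r,t}}$ is the composition of the evaluation character $K_r\to\C$, $a\mapsto a(t)$ (well-defined and of norm $\le1$ since $\sum_k|a_k|\,|t|^k\le\sum_k|a_k|\,r^k\le\|a\|_r^\hyb$ when $|t|\le r$), followed by raising the usual absolute value on $\C$ to the power $\e_{r,t}\in(0,1]$; this is still multiplicative, and boundedness by $\|\cdot\|_r^\hyb$ follows because $|a(t)|^{\e_{r,t}}\le\max\{1,|a(t)|\}\le\max\{1,\sum_k|a_k| r^k\}\le\|a\|_r^\hyb$ using $\e_{r,t}\le1$ and $|a(t)|\le\sum_k|a_k|r^k$. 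For $t=0$, the prescription $a\mapsto r^{v_K(a)}$ is multiplicative because $v_K$ is a valuation, and bounded since $r^{v_K(a)}=r^{\min\{k:a_k\ne0\}}\le\sum_k|a_k|r^k\le\|a\|_r^\hyb$ (here one uses $r<1$, so the leading term dominates termwise up to the sum). Continuity of $t\mapsto(a\mapsto\cdots)$ on all of $\bar\DD_r$ is a matter of checking, for each fixed $a\in K_r$, that $t\mapsto|a(t)|^{\e_{r,t}}$ extends continuously by $r^{v_K(a)}$ as $t\to0$: writing $a=t^m u$ with $u$ a unit in $K_r$ (so $u(0)\ne0$, $m=v_K(a)$), one has $|a(t)|^{\e_{r,t}}=|t|^{m\e_{r,t}}|u(t)|^{\e_{r,t}}=r^m|u(t)|^{\e_{r,t}}$, and $|u(t)|^{\e_{r,t}}\to|u(0)|^0=1$ as $t\to0$ since $\e_{r,t}\to0$ and $|u(t)|$ stays bounded away from $0$ and $\infty$ near $0$.

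\textbf{Injectivity and surjectivity.} Injectivity is easy: the value on $a=t$ recovers $|t|^{\e_{r,t}}=r$ for every $t\ne0$ — wait, that is the same for all $t$ — so instead one recovers $t$ itself from the character $K_r\to\cH$; more precisely, distinct points of $\bar\DD_r^\times$ give distinct evaluation characters (evaluate at a function separating the two points, e.g. $a(t)=t$ gives different complex values), and the seminorm at $0$ is the only non-Archimedean one in the family, so the map is injective. For surjectivity, take $|\cdot|\in\cM(K_r^\hyb)$ with residue field $\cH$. By the general trichotomy in Section~\ref{sec:Berkspec}, either $|\cdot|$ is Archimedean, in which case $\cH$ is $\R$ or $\C$; since $K_r\supset\C$ with $\C$ carrying its usual absolute value restricted from $\|\cdot\|_r^\hyb$, and $|\cdot|$ is bounded by $\|\cdot\|_r^\hyb$, the restriction to $\C$ is a power $|\cdot|_\infty^\e$ with $\e\in(0,1]$; the character $K_r\to\cH=\C$ is then continuous for $\|\cdot\|_r^\hyb$, hence is evaluation at some point $t\in\bar\DD_r$ (here one uses that a bounded $\C$-algebra homomorphism $K_r\to\C$ is $a\mapsto a(t)$ for $|t|\le r$, which follows since such a character is determined by the image of $t$, and boundedness forces that image into $\bar\DD_r$); and then $|\cdot|=|a(t)|^\e$, forcing $\e=\log r/\log|t|=\e_{r,t}$ once $t\ne0$ by evaluating on $a=t$. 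If instead $|\cdot|$ is non-Archimedean, it is a seminorm trivial on $\C^\times$ (being bounded and multiplicative on a field where all nonzero elements have $\|\cdot\|^\hyb$-norm $\ge1$... more carefully: $|c|\le\|c\|_r^\hyb=\max\{|c|_\infty,1\}$ and $|c^{-1}|\le\max\{|c|_\infty^{-1},1\}$ force $|c|=1$ for $c\in\C^\times$), hence is $e^{-v}$ for a semivaluation $v$ on $K_r$ trivial on $\C$; such a $v$ on the DVR-completion $\C\cro t\supset K_r$ is a multiple $\lambda v_K$ of the $t$-adic valuation, and boundedness $e^{-v}\le\|\cdot\|_r^\hyb$ evaluated on $t$ gives $e^{-\lambda}\le r$, while evaluated on $t^{-1}\cdot(\text{leading coefficient adjustments})$... the cleanest route is: boundedness on $t$ gives $e^{-\lambda v_K(t)}=e^{-\lambda}\le r$, and one needs the reverse; this follows from applying boundedness to the geometric-series unit $1/(1-t/r) = \sum_k (t/r)^k\in K_r$ — wait, that has $\|\cdot\|_r^\hyb$-norm $\sum_k\max\{r^{-k},1\}r^k$ which diverges — so one argues instead that the only bounded multiplicative seminorm extending the $t$-adic valuation with the correct normalization is $r^{v_K}$, by the spectral radius formula $|t^{-1}a|=\lim_n\|t^{-n}a^n\|_r^{1/n}$ applied to suitable $a$. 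Surjectivity thus produces exactly the point $a\mapsto r^{v_K(a)}$ at $t=0$.

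\textbf{Conclusion and the main obstacle.} Once the map $\bar\DD_r\to\cM(K_r^\hyb)$ is shown to be a continuous bijection from a compact Hausdorff space to a Hausdorff space (the target is compact Hausdorff by Section~\ref{sec:Berkspec}), it is automatically a homeomorphism, finishing the proof. I expect the main obstacle to be the surjectivity analysis in the non-Archimedean case: pinning down that the only bounded multiplicative seminorm on $K_r^\hyb$ that is trivial on $\C$ is exactly $r^{v_K}$ — in particular ruling out all smaller powers $r^{\lambda v_K}$ with $\lambda<1$ and all "partially supported" seminorms — requires a careful spectral-radius computation using that elements of $K_r$ are genuine convergent series on $\bar\DD_r^\times$ rather than formal ones. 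The Archimedean case is comparatively routine once one knows that every bounded $\C$-point of $\A^{1,\hyb}_\C$ lying over $|t|=r$ is evaluation at a point of $\bar\DD_r$, which is a standard fact about affinoid discs that I would cite from \cite[Proposition~A.4]{konsoib} or re-derive directly. A cleaner alternative, which I would present if the direct computation gets unwieldy, is to identify $\cM(K_r^\hyb)$ with the fiber over $|t|=r$ of the structure map $\A^{1,\hyb}_\C\to\cM(\C^\hyb)=[0,1]$ and quote the known description of the hybrid affine line, reducing everything to Berkovich's classification of points.
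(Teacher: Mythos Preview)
Your overall strategy matches the paper's: define the map, verify it lands in $\cM(K_r^\hyb)$, check continuity, prove bijectivity, and conclude by compactness. Two points need fixing.

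\textbf{Boundedness.} Your inequality $\max\{1,|a(t)|\}\le\|a\|_r^\hyb$ is false: for $a=t$ one has $\max\{1,|t|\}=1$ but $\|t\|_r^\hyb=r<1$. The paper's fix is to factor out $|t|^{k_0}$ with $k_0=v_K(a)$ \emph{before} applying $\e_{r,t}\le 1$: since $\e_{r,t}\log|t|=\log r$, one gets
\[
|a(t)|^{\e_{r,t}}\le r^{k_0}\Bigl(\sum_{k\ge k_0}\|a_k\|^\hyb r^{k-k_0}\Bigr)^{\e_{r,t}},
\]
and now the bracketed sum is $\ge\|a_{k_0}\|^\hyb\ge 1$ (because $a_{k_0}\ne 0$), so the exponent $\e_{r,t}$ can be replaced by $1$, giving exactly $\|a\|_r^\hyb$.

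\textbf{Non-Archimedean surjectivity.} You are overcomplicating this. Once $|\cdot|$ is trivial on $\C^\times$, evaluate on $t$ and on $t^{-1}$: boundedness gives $|t|\le\|t\|_r^\hyb=r$ and $|t^{-1}|\le\|t^{-1}\|_r^\hyb=r^{-1}$, which forces $|t|=r$. For general $a\in K_r$ with $k_0=v_K(a)$, approximating by partial sums (using $|\cdot|\le\|\cdot\|_r^\hyb$) and the ultrametric inequality gives $|a|\le r^{k_0}$; writing $a=a_{k_0}t^{k_0}+b$ with $v_K(b)>k_0$, the same bound gives $|b|\le r^{k_0+1}<r^{k_0}=|a_{k_0}t^{k_0}|$, so the ultrametric equality case yields $|a|=r^{k_0}$. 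No spectral radius is needed, and your attempted test element $\sum_k(t/r)^k$ indeed does not lie in $K_r$.

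The paper itself bypasses this direct analysis and instead takes the route you list as your ``cleaner alternative'': it views $\cM(K_r^\hyb)$ as the circle $\{|t|=r\}$ in $\A^{1,\hyb}_\C$, uses Gelfand--Mazur to identify the fiber over each $\e\in(0,1]$ of the structure map to $[0,1]=\cM(\C^\hyb)$ with the usual complex circle of radius $r^{1/\e}$, and observes that the fiber over $0$ is the single point $r^{v_K}$. So your fallback is in fact the paper's main argument.
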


\begin{proof} Pick $a\in K_r$ and set $k_0:=v_K(a)$. For any $t\in\bar\DD_r\setminus\{0\}$ we have 
$$
\e_{r,t}\log |a(t)|\le\e_{r,t}\log\sum_{k\ge k_0} |a_k| |t|^k\le\e_{r,t}\log\sum_{k\ge k_0}\|a_k\|^\hyb |t|^k
$$
$$
=\e_{r,t}\log\sum_{k\ge k_0} \|a_k\|^\hyb |t|^{k-k_0}+k_0\log r\le\e_{r,t}\log \sum_{k\ge k_0}\|a_k\|^\hyb r^{k-k_0}+k_0\log r
$$
$$
\le\log\sum_{k\ge k_0}\|a_k\|^\hyb r^{k-k_0}+k_0\log r=\|a\|_r^\hyb,
$$
where the last inequality holds because $\e_{r,t}\le 1$ and
$$
0\le\log\|a_{k_0}\|^\hyb\le\log\sum_{k\ge k_0}\|a_k\|^\hyb r^{k-k_0}, 
$$
since $a_{k_0}\ne 0$. This yields $|a(t)|^{\e_{r,t}}\le\|a\|_r^\hyb$, which proves that the multiplicative seminorm $K_r\ni a\mapsto|a(t)|^{\e_{r,t}}$ belongs to $\cM(K_r)$. Furthermore, $|a(t)|^{\e_{r,t}}=r^{\frac{\log|a(t)|}{\log|t|}}$ depends continuously on $t\in\bar\DD_r\setminus\{0\}$, and converges pointwise to $r^{v_K(a)}$ as $t\to 0$. This shows that the map $\bar\DD_r\to\cM(K_r^\hyb)$ in the statement is well-defined and continuous. As noted above, $\cM(K_r^\hyb)$ can be viewed as the Berkovich circle of radius $r$ in the hybrid line $\A^{1,\hyb}_\C$. The Gelfand--Mazur theorem thus yields an identification of the fiber over $\e\in (0,1]$ of the structure map
$$
\cM(K_r^\hyb)\to\cM(\C^\hyb)\simeq [0,1]
$$
with the circle of radius $r$ with respect to the absolute value $|\cdot|_\infty^\e$ on $\C$, \ie the usual circle in $\C$ of radius $r^{1/\e}$, while the fiber over $0$ consists of the non-Archimedean absolute value $r^{v_K}$. This implies that $\bar\DD_r\to\cM(K_r^\hyb)$ is bijective, and hence a homeomorphism, by compactness. 
\end{proof}
Since $K=\bigcup_{r\in (0,1)} K_r$, the $K$-scheme $X_K$, which is separated and of finite type, induces for each $0<r\ll 1$ a $K_r$-scheme $X_{r}$ that is separated and of finite type as well, and projective when $X$ is projective. We denote by 
$$
X_{K_r}^\hyb\to\cM(K_r^\hyb)
$$
its analytification with respect to the norm $\|\cdot\|^\hyb_r$. By Proposition~\ref{prop:Berktop}, $X_{K_r}^\hyb$ is Hausdorff, locally compact, and compact if $X$ is projective over $K$. Proposition~\ref{prop:hybcomp} is thus a consequence of the following result: 

\begin{thm}\label{thm:hybcomp} For any $0<r\ll 1$, the preimage of $\bar\DD_r$ under $\pi\colon X^\hyb\to\DD$ admits a canonical homeomorphism $\pi^{-1}(\bar\DD_r)\simto X_{K_r}^\hyb$ compatible with the projection to $\bar\DD_r\simto\cM(K_r^\hyb)$. 
\end{thm}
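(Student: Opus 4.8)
The plan is to construct the homeomorphism $\pi^{-1}(\bar\DD_r)\simto X_{K_r}^\hyb$ fiberwise over $\cM(K_r^\hyb)\simeq\bar\DD_r$ and then check that it is a homeomorphism by a local-compactness/properness argument. First I would treat the affine case: assume $X_K=\Spec\cO(X_K)$, so that $X_r=\Spec\cO(X_r)$ with $\cO(X_r)=\cO(X_K)\otimes_K K_r$ (after possibly shrinking $r$ so that all the finitely many structure constants and generators of the ideal defining $X_K$ lie in $K_r$). A point of $X_{K_r}^\hyb$ is then a multiplicative seminorm on $\cO(X_r)$ whose restriction to $K_r$ is bounded by $\|\cdot\|_r^\hyb$, i.e.\ lies in $\cM(K_r^\hyb)$; by Lemma~\ref{lem:hybspec} this restriction is either $r^{v_K}$ or $a\mapsto|a(t)|^{\e_{r,t}}$ for a unique $t\in\bar\DD_r^\times$. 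In the first case the seminorm is non-Archimedean and, since its restriction to $K_r$ extends the valuation $v_K$ of $K$ (note $K_r$ is dense-enough in the relevant sense and $r^{v_K}$ is the restriction of the Gauss-type norm), it defines by Example~\ref{exam:Berkaff} a semivaluation on $\cO(X_K)$ extending $v_K$, hence a point of $X_0=X_K^\an$. In the second case, the residue field contains $\C$ with its usual absolute value raised to the power $\e_{r,t}$, so by Gelfand--Mazur the seminorm is evaluation at a complex point of $X_{r}$ lying over $t$, i.e.\ a point of $X_t$. This yields a bijection $X_{K_r}^\hyb\to\pi^{-1}(\bar\DD_r)$, which I then glue over an affine open cover of $X_K$ using that analytification takes open (resp.\ closed) immersions to open (resp.\ closed) immersions, exactly as in the globalization discussion preceding Proposition~\ref{prop:Berktop}.

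Next I would check that the bijection is a homeomorphism. Continuity in both directions amounts to matching the generating families of functions: on $X_{K_r}^\hyb$ the topology is generated by $|f|$ for $f\in\cO(X_r)$, while on $\pi^{-1}(\bar\DD_r)\subset X^\hyb$ it is generated (Definition~\ref{defi:hybrid}) by $|f|^\hyb$ for $f\in\cO(U_K)$ together with the requirement that $U^\hyb$ be open. The key point is that for $f\in\cO(X_K)\subset\cO(X_r)$, the function $|f|$ on $X_{K_r}^\hyb$ restricts on the fiber over $t\ne 0$ to $|f(x)|^{\e_{r,t}}$ and on the central fiber to $e^{-v(f)}$, which is \emph{precisely} $|f|^\hyb$ under the identification $\e_{r,t}=\log r/\log|t|$ built into Lemma~\ref{lem:hybspec} — here one uses that $\e_{r,t}$ and the scaling factor $\e_t=(\log|t|^{-1})^{-1}$ differ only by the fixed factor $\log r^{-1}$, so the two topologies are generated by the same functions up to reparametrization of $\cM(K_r^\hyb)\simeq\bar\DD_r$. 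Since general $f\in\cO(X_r)=\cO(X_K)\otimes_K K_r$ is a finite $K_r$-linear combination of elements of $\cO(X_K)$, and $|f|$ is controlled by such combinations together with $\|\cdot\|_r^\hyb$ on $K_r$ (which is continuous on $\cM(K_r^\hyb)\simeq\bar\DD_r$), continuity of $X_{K_r}^\hyb\to X^\hyb$ follows; conversely, the functions $|f|^\hyb$ with $f\in\cO(U_K)$ generate enough of the topology of $\pi^{-1}(\bar\DD_r)$ to see that the inverse map is continuous as well.

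Finally, to upgrade the continuous bijection to a homeomorphism I would invoke compactness/properness: when $X_K$ is projective, $X_{K_r}^\hyb$ is compact Hausdorff by Proposition~\ref{prop:Berktop}(iii) and $\pi^{-1}(\bar\DD_r)$ is Hausdorff (it is an open-and-closed-patched subspace of $X^\hyb$, whose Hausdorffness will in turn follow from the homeomorphism, so one argues directly: $X_{K_r}^\hyb$ compact maps continuously and bijectively to $\pi^{-1}(\bar\DD_r)$, and one checks the target is Hausdorff by exhibiting enough continuous functions separating points, namely the $|f|^\hyb$), so a continuous bijection from a compact space to a Hausdorff space is a homeomorphism. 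In the general (merely separated, finite-type) case I would reduce to a finite affine cover and use that both spaces are glued from the corresponding affine pieces along open immersions, so the homeomorphism is obtained by gluing the affine homeomorphisms; local compactness and countability at infinity on both sides (Proposition~\ref{prop:Berktop}(i) and the parallel construction of $X^\hyb$) make the gluing unproblematic. I expect the main obstacle to be the bookkeeping in the affine case: verifying carefully that for general elements of $\cO(X_r)$ — not just those coming from $\cO(X_K)$ — the seminorm $|f|$ is indeed continuous as a function of the point of $\pi^{-1}(\bar\DD_r)$, which requires the admissible-approximation style estimate (as in the proof of Proposition~\ref{prop:basean}) controlling $|f|$ by $K_r$-linear combinations of $|g|$ with $g\in\cO(X_K)$ and the hybrid norm on the coefficients, uniformly on compact subsets; getting this estimate to interact correctly with the reparametrization $t\mapsto\e_{r,t}$ is the delicate step.
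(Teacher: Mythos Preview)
Your overall strategy matches the paper's, but you have the direction of the base change reversed, and this causes you to misplace the difficulty. It is $X_K$ that arises as the base change of $X_{K_r}$ along $K_r\hookrightarrow K$, so $\cO(U_{K_r})\subset\cO(U_K)$ (every regular function over $K_r$ is already one over $K$); the expression $\cO(X_K)\otimes_K K_r$ is meaningless since $K_r$ is not a $K$-algebra. Consequently the \emph{easy} direction of continuity is $\pi^{-1}(\bar\DD_r)\to X_{K_r}^\hyb$: for $f\in\cO(U_{K_r})\subset\cO(U_K)$ one has $|f|=(|f|^\hyb)^{\log(1/r)}$ under the bijection, and $|f|^\hyb$ is continuous by the very definition of the hybrid topology. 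The nontrivial direction is the reverse: one must show that the hybrid topology on $\pi^{-1}(\bar\DD_r)$---a priori generated by $|f|^\hyb$ for $f$ ranging over the \emph{larger} ring $\cO(U_K)=\bigcup_{s\le r}\cO(U_{K_s})$---is already determined by the subfamily with $f\in\cO(U_{K_r})$. So your ``main obstacle'' at the end is stated backwards.

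The paper resolves this by invoking Proposition~\ref{prop:basean} a second time, now for the inclusion $K_r\hookrightarrow K_s$ with $s<r$ (which has dense image in the hybrid norm): this yields that $X_{K_s}^\hyb\to X_{K_r}^\hyb$ is a homeomorphism onto the preimage of $\bar\DD_s$, so the extra test functions coming from $\cO(U_{K_s})$ impose no new constraints near the central fiber beyond those from $\cO(U_{K_r})$. This replaces your compactness/Hausdorff argument entirely and works uniformly for separated $X_K$, not just projective. Your instinct that the admissible-approximation estimate from the proof of Proposition~\ref{prop:basean} is the key tool is exactly right; you just need to apply it to $K_r\hookrightarrow K_s$ (approximating elements of $\cO(U_{K_s})$ by ones in $\cO(U_{K_r})$) rather than in the direction you describe. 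Your compactness argument in the projective case would also work once the inclusion direction is corrected, and is a legitimate alternative the paper does not use.
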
 
\begin{proof} Since $X_K$ is the base change of $X_{r}$ with respect to the inclusion $K_r\subset K$, which is dense for the $r^{v_K}$-topology, Proposition~\ref{prop:basean} shows that the fiber of the structure map 
 $$
 X_{r}^\hyb\to\cM(K_r^\hyb)
 $$
 over $r^{v_K}\in\cM(K_r^\hyb)$ coincides with the analytification of $X_K$ with respect to the absolute value $r^{v_K}$ on $K$, and hence can be canonically identified with the space of semivaluations $v\in X_0$, using the map $v\mapsto r^v$. Combined with Lemma~\ref{lem:hybspec} and the Gelfand--Mazur theorem, this yields a set theoretic identification $\pi^{-1}(\bar\DD_r)\simto X_{K_r}^\hyb$, compatible with the projections to $\bar\DD_r\simto\cM(K_r^\hyb)$. 
 
By definition of the topology of $X_{r}^\hyb$, $\pi^{-1}(\bar\DD_r^\times)\hto X_{r}^\hyb$ is an open embedding, $X_0\hto X_{r}^\hyb$ is a closed embedding, and a net of complex points $x_i\in X_{t_i}$ with $t_i\to 0$ converges in $X_{r}^\hyb$ to a semivaluation $v\in X_0$ iff, for any affine open $U_{K_r}\subset X_{r}$ and $f\in\cO(U_{K_r})$, we have $x_i\in U_{t_i}$ for $i$ large enough and 
$$
|f(x_i)|^{\e_{r,t_i}}\to r^{v(f)}.
$$
Since $\e_{r,t}=\frac{\log r}{\log|t|}$, the last condition is equivalent to~\eqref{equ:hybcv}, except that we now require $f$ to be defined over $K_r\subset K$. Comparing with Definition~\ref{defi:hybrid}, we infer that the topology of $X^\hyb$ is the coarsest one such that $\pi^{-1}(\DD_r)\to\ X_{r}^\hyb$ is continuous for all $0<r\ll 1$. For $0<s\le r\ll 1$, the injection $K_r\to K_s$ yields a normed ring map $K_r^\hyb\to K_s^\hyb$ with dense image. Using Proposition~\ref{prop:basean} again, we conclude that $X_{K_s}^\hyb\to X_{K_r}^\hyb$ is a homeomorphism onto the preimage of $\bar\DD_s$, and the result follows. 
\end{proof}

%
%
%%%%%%%%%%%%%%%%%%%%%%%%%%%%%%%%%%%%%%%%%%%%
\subsection{Hybrid continuous metrics}\label{sec:hybridlim}
Consider as above a meromorphic degeneration $\pi\colon X\to\DD^\times$ associated to an algebraic variety $X_K$ over the field $K=\C\{t\}$ of convergent Laurent series, and assume given a \emph{meromorphic line bundle} $L$ on $X$, \ie a holomorphic line bundle induced by a line bundle $L_K$ on $X_K$. For each $t\in\DD^\times$, we denote by $L_t$ the induced holomorphic line bundle on $X_t=\pi^{-1}(t)$, and by $L_0=L_K^\an$ the induced line bundle on $X_0=X_K^\an$. 

For $0<r\ll 1$ we get an induced line bundle $L_{K_r}$ on the $K_r$-scheme $X_{K_r}$, and we can thus consider continuous metrics on the analytification $L_{K_r}^\hyb$ over $X_{K_r}^\hyb$ in the general sense of \S\ref{sec:metrics}. Thanks to Theorem~\ref{thm:hybcomp}, this admits the following more concrete description. 

\begin{defi}\label{defi:hybridmetric} We say that a family $\phi=(\phi_t)_{t\in\DD}$ of continuous metrics $\phi_t$ on each $L_t$ is \emph{hybrid continuous} if, for any Zariski open $U_K\subset X_K$ and $s\in\Hnot(U_K,L_K)$, the function 
$$
|s|_{\phi^\hyb}\colon U^\hyb\to\R_{\ge 0}
$$
equal to $|s|_{\phi_0}$ on $U_0$ and $|s|_{\phi_t}^{\e_t}$ on $U_t$ for $t\ne 0$ is continuous in the hybrid topology. 
\end{defi}
It is enough to check the condition when $s$ is a trivializing section. Note that a hybrid continuous metric $\phi$ restricts to a continuous metric on the holomorphic line bundle $L$. 

\begin{exam}\label{exam:conf} A family $\phi=(\phi_t)_{t\in\DD}$ of continuous functions $\phi_t\in \Cz(X_t)$, viewed as metrics on the trivial line bundle, is hybrid continuous iff the rescaled function $\phi^\hyb\colon X^\hyb\to\R$ defined by 
$$
\phi^\hyb:=\left\{
\begin{array}{ll}
\phi_0 & \text{ on }X_0\\ 
\e_t\phi_t & \text{ on }X_t,\,t\ne 0
\end{array}
\right. 
$$
is continuous in the hybrid topology. 
\end{exam}

As a  consequence we get the following useful criterion: 

\begin{lem}\label{lem:hybcrit} Let $\phi=(\phi_t)_{t\in\DD}$, $\tphi=(\tphi_t)_{t\in\DD}$ be two families of continuous metrics on the $L_t$'s, and assume that
\begin{itemize}
\item $\tphi$ is hybrid continuous; 
\item $\phi_0=\tphi_0$.
\end{itemize}
The following are then equivalent:
\begin{itemize}
\item[(i)] $\phi$ is hybrid continuous;
\item[(ii)] $\phi|_L$ is continuous and $\e_t\sup_{X_t}|\phi_t-\tphi_t|\le 0$ as $t\to 0$. 
\end{itemize}
\end{lem}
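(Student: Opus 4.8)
The plan is to work locally on a trivializing open cover and to reduce both directions to the criterion of Example~\ref{exam:conf}, applied to the difference $u_t := \phi_t - \tphi_t$, which by hypothesis is a family of continuous \emph{functions} on the $X_t$'s (since $\phi$ and $\tphi$ are metrics on the same $L_t$) with $u_0 = 0$. The key remark is that $\phi$ is hybrid continuous if and only if, for every trivializing section $s$ of $L_K$ over some Zariski open $U_K$, the function $|s|_{\phi^\hyb}$ is continuous in the hybrid topology; and since $|s|_{\phi_t} = |s|_{\tphi_t} e^{-u_t}$ on $U_t$ for $t \neq 0$, we have $|s|_{\phi^\hyb} = |s|_{\tphi^\hyb} \cdot e^{-u^\hyb}$ where $u^\hyb$ is the rescaled function from Example~\ref{exam:conf}. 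Because $\tphi$ is already hybrid continuous, $|s|_{\tphi^\hyb}$ is continuous and strictly positive on $U^\hyb$ (positivity of $|s|$ for a trivializing section is part of Definition~\ref{defi:metric}), so $|s|_{\phi^\hyb}$ is continuous on $U^\hyb$ if and only if $e^{-u^\hyb}$ is, if and only if $u^\hyb$ is. Thus (i) is equivalent to the hybrid continuity of the family $u = (u_t)$ in the sense of Example~\ref{exam:conf}.

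It then remains to show that hybrid continuity of $u$ (with $u_0 = 0$) is equivalent to (ii), i.e.\ to: $\phi|_L$ is continuous \emph{and} $\e_t \sup_{X_t} |u_t| \to 0$ as $t \to 0$. For the forward direction, hybrid continuity of $u^\hyb$ restricted to the open part $X \hookrightarrow X^\hyb$ gives continuity of each $u_t$, hence of $\phi|_L$; and for the boundary behavior, I would use that $X_K$ is quasi-compact so that, covering it by finitely many affine opens on which everything is defined, the function $u^\hyb$ is continuous on a compact neighborhood of $X_0$ in $X^\hyb$ when $X$ is projective — more generally one argues locally — and since $u^\hyb \equiv 0$ on $X_0$, continuity forces $u^\hyb = \e_t u_t \to 0$ uniformly as $t \to 0$. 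The precise way to extract a \emph{uniform} estimate is: if $\e_{t_i}\sup_{X_{t_i}}|u_{t_i}|$ did not tend to $0$, pick points $x_i \in X_{t_i}$ with $\e_{t_i}|u_{t_i}(x_i)| \geq \delta > 0$; by properness/compactness (Proposition~\ref{prop:hybcomp}, Theorem~\ref{thm:hybcomp}) after passing to a subnet $x_i \to v \in X_0$ in $X^\hyb$, and then continuity of $u^\hyb$ gives $u^\hyb(x_i) \to u^\hyb(v) = u_0(v) = 0$, a contradiction. (When $X_K$ is merely separated one runs the same argument inside each member of a finite affine cover, using local compactness.)

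For the converse direction, assume (ii). Continuity of $\phi|_L$ handles the interior $X$, and closedness of $X_0 \hookrightarrow X^\hyb$ together with the topology being characterized by convergence of nets reduces the problem to checking sequential/net continuity of $u^\hyb$ at points of $X_0$. So let $(x_i, t_i) \to (v, 0)$ in $X^\hyb$ with $v \in X_0$; I must show $\e_{t_i} u_{t_i}(x_i) \to u_0(v) = 0$. But $|\e_{t_i} u_{t_i}(x_i)| \leq \e_{t_i} \sup_{X_{t_i}} |u_{t_i}|$, and the right-hand side tends to $0$ by the hypothesis in (ii); this is the whole point. Hence $u^\hyb$ is continuous, i.e.\ $u$ is hybrid continuous, hence so is $\phi$.

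The main obstacle I anticipate is the forward implication in (ii), namely upgrading ``$u^\hyb$ continuous and vanishing on $X_0$'' to the \emph{uniform} decay $\e_t \sup_{X_t}|u_t| \to 0$: this is where one genuinely needs the properness of $\pi$ over a neighborhood of $0$ (in the projective case) or a careful local-compactness argument (in the general case), rather than pointwise continuity alone. The rest is a matter of carefully matching up the definitions — Definition~\ref{defi:hybridmetric}, Example~\ref{exam:conf}, and the net-characterization of the hybrid topology from Definition~\ref{defi:hybrid} — and observing that multiplying by the nowhere-vanishing continuous function $|s|_{\tphi^\hyb}$ does not affect continuity. I would also note at the outset that it suffices to test continuity on trivializing sections, as already remarked after Definition~\ref{defi:hybridmetric}, so that the reduction to Example~\ref{exam:conf} is legitimate.
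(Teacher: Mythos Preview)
The paper gives no explicit proof of this lemma, merely presenting it as a consequence of Example~\ref{exam:conf}; your argument is exactly the natural fleshing-out of that remark and is correct. Two small comments. First, where you write ``continuity of each $u_t$, hence of $\phi|_L$'': what you need is continuity of $u$ as a function on the total space $X$, not just fiberwise; this does follow, since $u=\e_t^{-1}\cdot u^\hyb|_X$ and $\e_t$ is continuous and nonvanishing on $\DD^\times$. Second, you are right that the forward implication (i)$\Rightarrow$(ii) genuinely uses properness of $\pi\colon X^\hyb\to\DD$ to pass from pointwise vanishing on $X_0$ to uniform decay of $\e_t\sup_{X_t}|u_t|$; your subnet argument via Proposition~\ref{prop:hybcomp} is the correct one in the projective case, which is the only setting in which the lemma is applied. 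Your parenthetical claim that the merely separated case can be handled ``inside each member of a finite affine cover, using local compactness'' is not convincing as stated (points $x_i$ may escape any fixed compact set), but this does not affect the paper's uses of the lemma.
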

The last condition holds as soon as $\phi_t-\tphi_t$ is uniformly bounded. 

\begin{exam}\label{exam:FShyb} Assume $s_1,\dots,s_N\in\Hnot(X_K,mL_K)$ have no common zeroes for some $m\ge 1$, and pick $\lambda_i\in\R$. Then the family of metrics 
$$
\phi_0=\frac 1m\max\{\log|s_i|+\lambda_i\},\quad\phi_t=\frac{1}{2m}\log\sum_i|s_i|^2|t|^{-2\lambda_i}\text{ for }t\ne 0
$$
is hybrid continuous. Indeed, setting
$$
\tphi_t:=\frac 1m\max_i\{\log|s_i|+\e_t^{-1}\lambda_i\}
$$
for all $t\in\DD$ defines a hybrid continuous family, corresponding to a tropical Fubini--Study metric on $L_{K_r}^\hyb$, see Example~\ref{exam:FS}. Now $|\phi_t-\tphi_t|\le\frac{\log N}{m}$ is bounded independently of $t$, and $(\phi_t)$ is thus hybrid continuous as well by Lemma~\ref{lem:hybcrit}.
 \end{exam}

%
%
%%%%%%%%%%%%%%%%%%%%%%%%%%%%%%%%%%%%%%%%%%%%
\section{Models, non-Archimedean metrics and dual complexes}\label{sec:NAmetrics}
From now on, we consider a projective meromorphic degeneration $\pi\colon X\to\DD^\times$, associated to a projective variety $X_K$ over the field $K=\C\{t\}$ of convergent Laurent series. In this section we make a deeper study of the associated Berkovich space, denoted as above by $X_0=X_K^\an$. We introduce in this section several important classes of metrics and functions on $X_0$, and relate this space to the dual complexes of snc models, when $X_K$ is smooth. 
%
%%%%%%%%%%%%%%%%%%%%%%%%%%%%%%%%%%%%%%%%%%%%
\subsection{Non-Archimedean Fubini--Study metrics}\label{sec:NAFS}
Let $L$ be a meromorphic line bundle on $X$, corresponding to a line bundle $L_K$ on $X_K$, and consider as in Example~\ref{exam:FS} a rational tropical Fubini--Study metric on $L_0$, \ie 
\begin{equation}\label{equ:FStrop}
\phi=\tfrac 1m\max_i\{\log|s_i|+\lambda_i\}
\end{equation}
for a finite set of sections $s_i\in\Hnot(X_K,mL_K)$, $m\in\Z_{>0}$, and $\lambda_i\in\Q$. Since any $v\in X_0=X_K^\an$ restricts to the valuation of $K=\C\{t\}$, and hence satisfies $v(t)=1$, we have $\log|t|\equiv-1$ on $X_0$, and it follows that $\phi$ can be rewritten as 
$$
\phi=\tfrac{1}{m'}\max_i\log\big|s'_i\big|
$$
where $m'=km$ with $k$ sufficiently divisible and $s'_i:=s_i^k t^{-k\lambda_i}\in\Hnot(X_K,kmL_K)$. In other words, we can assume $\lambda_i=0$ in~\eqref{equ:FStrop}. We shall simply say that $\phi$ is a  \emph{(non-Archimedean) Fubini--Study metric} on $L_0$, and denote by 
 $$
 \FS(L_0)\subset \Cz(L_0)
 $$
 the set of such metrics. By definition, this set is nonempty iff $L$ is \emph{semiample}, \ie $mL$ is basepoint free for $m$ sufficiently divisible. Taking $L$ is ample, Proposition~\ref{prop:DFS} shows that the $\Q$-vector space
$$
\DFS(X_0)\subset \Cz(X_0)
$$
of functions of the form $f=m(\phi-\p)$ with $\phi,\p\in\FS(L_0)$ and $m\in\Z_{>0}$ is independent of $L$, and is dense in $\Cz(X_0)$.

%
%%%%%%%%%%%%%%%%%%%%%%%%%%%%%%%%%%%%%%%%%%%%
\subsection{Models of $X$}\label{sec:models}
The meromorphic degeneration $X\to\DD^\times$ admits an extension to a surjective and projective holomorphic map $\cX\to\DD$, called a \emph{model} of $X$, whose fiber $\cX_0$ over $0\in\DD$ is called the \emph{central fiber}. 

Indeed, pick an embedding $X_K\hto\P^N_K$. This corresponds to an embedding $X\hto\P^N\times\DD^\times$ over a possibly shrunk disc, and the closure $\cX$ of $X$ in $\P^N\times\DD$ is then a model of $X$. 

A model can equivalently be seen as a flat, projective scheme over the valuation ring $\cO_K=\cO_{\C,0}$, together with an identification of its generic fiber with $X_K$. 

Models are far from unique: any blow up of a model along a subvariety of the central fiber is also a model. We say that a model $\cX'$ \emph{dominates} another model $\cX$ if the canonical meromorphic map $\cX'\dashrightarrow\cX$ extends to a morphism $\cX'\to\cX$. This defines a partial order on the set of models (modulo canonical isomorphism), which is further inductive, \ie any two models can be dominated by a third one. 

When $X$ is normal, the normalization of any model is also a model. In the general case, the integral closure of any model in its generic fiber is also a model, which is said to be \emph{integrally closed} (see for instance~\cite[\S 1.4]{trivval}). Integrally closed models give rise to points of the Berkovich space $X_0$, as follows: 

\begin{exam} Assume $\cX$ (and hence $X$) is normal, and pick an irreducible component $E$ of $\cX_0$, with multiplicity $b_E:=\ord_E(\cX_0)$. Then 
\begin{equation}\label{equ:vE}
v_E:=b_E^{-1}\ord_E
\end{equation}
defines a valuation on the function field of $X_K$ that restricts to $v_K$ on $K$, and hence an element $v_E\in X_0$. More generally, this remains true when $X$ is possibly non-normal and  $\cX$ is integrally closed (see for instance~\cite[Theorem~1.13]{trivval}). 
\end{exam} 
Points of $X_0$ of the form $v_E$ as above are called \emph{divisorial valuations}. They form a dense subset of $X_0$.

%
%%%%%%%%%%%%%%%%%%%%%%%%%%%%%%%%%%%%%%%%%%%%
\subsection{Models of a line bundle}\label{sec:modelslb}

Any meromorphic line bundle $L$ on $X$ can similarly be extended to a line bundle $\cL$ on some model $\cX$ of $X$. It is convenient to allow $\cL$ to be a $\Q$-line bundle, which means that $m\cL$ is a line bundle extending $mL$ for some $m\in\Z_{>0}$. This leads to: 

\begin{defi} A \emph{model} $\cL$ of a line bundle $L$ on $X$ is defined as a $\Q$-line bundle $\cL$ extending $L$ to a model $\cX$ of $X$.  
\end{defi}
We then say that $\cL$ is \emph{determined} on $\cX$. The pull-back of $\cL$ to any higher model of $X$ is also model of $L$. If $\cL$, $\cL'$ are models of $L,L'$ determined on the same model $\cX$, then $\cL\pm\cL'$ is a model of $L\pm L'$, where we use additive notation for tensor products of line bundles. 

\begin{exam} Every model of the trivial line bundle $\cO_X$, determined on a given model $\cX$ of $X$, is of the form $\cL=\cO_\cX(D)$ where $D$ is a $\Q$-Cartier divisor on $\cX$ which is \emph{vertical}, \ie supported in $\cX_0$. 
\end{exam}
We denote by $\VCar(\cX)$ the (finite dimensional) $\Q$-vector space of vertical $\Q$-Cartier divisors on $\cX$. When $\cX$ is nonsingular, it is simply generated by the irreducible components of $\cX_0$. 

Any two models $\cL$, $\cL'$ of the same line bundle $L$ can be pulled back to some common model $\cX$, and then differ by some $D\in\VCar(\cX)$, by linearity. 
%
%%%%%%%%%%%%%%%%%%%%%%%%%%%%%%%%%%%%%%%%%%%%
\subsection{Centers}\label{sec:centers}
Let $\cX$ be a model of $X$. Viewed as an $\cO_K$-scheme, $\cX$ is projective (and hence proper), and the valuative criterion of properness thus implies that any semivaluation $v\in X_0$ admits a \emph{center} 
$$
c_\cX(v)\in\cX_0
$$
characterized as the unique (scheme) point $\xi\in\cX_0$ such that $v\ge 0$ on $\cO_{\cX,\xi}$ and $v>0$ on the maximal ideal. Equivalently, an affine Zariski open $\cU\subset\cX$ contains $c_\cX(v)$ iff $v(f)\ge 0$ for all $f\in\cO(\cU)$, and we then have $v(f)>0$ iff $f$ vanishes at $c_\cX(v)$. Centers are compatible with domination of models: if a model $\cX'$ dominates $\cX$, then $c_\cX(v)$ is the image of $c_{\cX'}(v)$. 

The corresponding complex subvariety 
$$
Z_\cX(v)=\overline{\{c_\cX(v)\}}\subset\cX_0
$$
can be roughly viewed as the `locus that matters' for $v$ on $\cX$. As an illustration, we have: 

\begin{lem}\label{lem:center} Assume $v\in X_0$ is the limit of in $X^\hyb$ of a net of complex points $x_j\in X_{t_j}$. Then every limit point $x\in\cX_0$ of $(x_j)$ in $\cX$ (for the complex topology) lies in $Z_\cX(v)$.
\end{lem}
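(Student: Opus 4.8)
The plan is to combine the valuative description of the center $c_\cX(v)$ through the sign of $v$ on regular functions (recalled in~\S\ref{sec:centers}) with the characterisation~\eqref{equ:hybcv} of hybrid convergence, after passing to a subnet and choosing a well-adapted affine chart of the model.

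First I would pass to a subnet of $(x_j)$, still converging to $v$ in $X^\hyb$, such that $x_j\to x$ in $\cX$ for the complex topology; since $\pi\colon X^\hyb\to\DD$ is continuous and $\pi(v)=0$, the parameters $t_j:=\pi(x_j)\in\DD^\times$ then satisfy $t_j\to 0$, hence $\log|t_j|\to-\infty$. Set $\xi:=c_\cX(v)\in\cX_0$. Using that $\cX$ is projective, hence quasi-projective, over $\cO_K$, I would pick an affine open $\cV\subset\cX$ containing both the (closed) point $x$ and the point $\xi$ --- for instance the complement of a general member of a very ample linear system on $\cX$. Since $\xi\in\cV$, the valuative description of the center gives $v(f)\ge 0$ for all $f\in\cO(\cV)$, with $v(f)>0$ precisely when $f$ vanishes at $\xi$; moreover the support point of $v$ specialises to $\xi\in\cV$, so it lies in the affine open $\cV_K:=\cV\cap X_K$ of $X_K$, \ie $v\in\cV_K^{\an}\subset X_0$.

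Because $\cV$ is open and $x_j\to x\in\cV$, we have $x_j\in(\cV_K)_{t_j}$ for $j$ large; since $v\in\cV_K^{\an}$ and $x_j\to v$ in $X^\hyb$, the defining property~\eqref{equ:hybcv} yields
$$
\lim_j\frac{\log|f(x_j)|}{\log|t_j|}=v(f)\quad\text{for all }f\in\cO(\cV_K),
$$
and in particular for all $f\in\cO(\cV)\subset\cO(\cV_K)$. On the other hand any $f\in\cO(\cV)$ restricts to a holomorphic function on $\cV$ for the complex topology, so $f(x_j)\to f(x)$; if $f(x)\ne 0$ then $\log|f(x_j)|$ stays bounded while $\log|t_j|\to-\infty$, so the limit above equals $0$. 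Hence $v(f)>0$ forces $f(x)=0$ for every $f\in\cO(\cV)$.

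To conclude, note that $\{f\in\cO(\cV):v(f)>0\}$ is exactly the prime ideal of $\cO(\cV)$ cutting out $Z_\cX(v)\cap\cV=\overline{\{\xi\}}\cap\cV$; we have just shown it is contained in the maximal ideal of $x$, so $x\in Z_\cX(v)$, as wanted. I expect the only delicate point to be the assertion $v\in\cV_K^{\an}$: the hybrid topology is tested only against functions on affine opens of $X_K$ containing $v$, so one really has to carry out the computation inside a single affine chart of the model that simultaneously contains the complex limit $x$ and the center $c_\cX(v)$ --- which is precisely why $\cV$ is chosen to contain both, and why the projectivity of the model is invoked.
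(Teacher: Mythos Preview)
Your proof is correct and follows essentially the same strategy as the paper: pass to a subnet, work in an affine chart of the model, and combine the hybrid limit formula with the boundedness of regular functions near $x$. The one difference is that you choose $\cV$ to contain both $x$ and $c_\cX(v)$ (using projectivity), which directly guarantees $v\in\cV_K^{\an}$ and hence legitimates the use of~\eqref{equ:hybcv}; the paper instead picks an affine neighborhood $\cU$ of $x$ alone and then deduces $c_\cX(v)\in\cU$ from that same limit formula --- your extra care neatly avoids the apparent circularity.
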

\begin{proof} After passing to a subnet, assume $x_j\to x\in\cX_0$, and pick an affine Zariski open neighborhood $\cU$ of $x$ in $\cX$ (viewed as an $\cO_K$-scheme). For every $f\in\cO(\cU)$, $f(x_j)$ is (ultimately) bounded, and hence 
$$
v(f)=\lim_j\frac{\log|f(x_j)|}{\log|t_j|}\ge 0.
$$
This proves $c_\cX(v)\in\cU$, so that $Z_\cX(v)\cap\cU$ is the intersection of the zero loci of all $f\in\cO(U)$ such that $v(f)>0$, as noted above. Now each such $f$ ultimately satisfies $|f(x_j)|\le |t_j|^\e$ for any $\e\in(0,v(f))$, and hence $f(x)=\lim_j f(x_j)=0$. This shows $x\in Z_\cX(v)$ and concludes the proof. 
\end{proof}

\begin{exam} Pick a divisorial valuation $v\in X_0$. By definition, $v=v_E$ for an irreducible component $E$ of the central fiber of an integrally closed model $\cX'$, which can be assumed to dominate the given model $\cX$ (see~\eqref{equ:vE}). Then $Z_{\cX'}(v)=E$, and $Z_\cX(v)$ is thus the image of $E$ in $\cX$.
\end{exam}

%
%%%%%%%%%%%%%%%%%%%%%%%%%%%%%%%%%%%%%%%%%%%%
\subsection{Model metrics}\label{sec:NAmodelmetr}

As a key construction in non-Archimedean geometry, every model $\cL$ of a line bundle $L$ gives rise to a continuous metric $\phi_\cL$ on $L_0$, called a \emph{model metric}. The metric $\phi_\cL$ is unchanged upon pulling back $\cL$ to a higher model, and depends linearly on $\cL$ (in additive notation for $\Q$-line bundles and metrics). From a complex geometric perspective, the model metric $\phi_\cL$ can be understood as the `hybrid limit' of any family of metrics on $L_t$ that extends continuously to $\cL$ (see Proposition~\ref{prop:hybmodel} below). 

To describe the construction of model metrics, note first that mapping a semivaluation $v\in X_0$ to its center (viewed as a scheme point of $\cX_0$) defines a map 
$$
c_\cX\colon X_0\to\cX_0
$$
with the somewhat unusual property of being \emph{anticontinuous}, \ie the preimage of a (Zariski) open subset is closed (and hence compact, since $X_0$ is compact by projectivity of $X$). 

By definition of the center, every function $f\in\cO(\cU)$ on a Zariski open subset $\cU$ of $\cX$ (viewed as an $\cO_K$-scheme) satisfies
$$
|f(v)|=e^{-v(f)}\le 1
$$
for each $v$ in the compact set $c_\cX^{-1}(\cU)$, the inequality being strict iff $f$ vanishes at $c_\cX(v)$, \ie along the subvariety $Z_\cX(v)$. In particular, $|f|\equiv 1$ on $c_\cX^{-1}(\cU)$ if $f$ is a unit on $\cU$. 

The model metric $\phi_\cL$ associated to a model $\cL$ of $L$ is defined by imposing a similar condition for sections. Assume first that $\cL$ is an honest line bundle on $\cX$, and pick a trivializing open cover $(\cU_i)$ of $\cX$ for $\cL$, with trivializing sections $s_i\in\Hnot(\cU_i,\cL)$. The compact subsets $c_\cX^{-1}(\cU_i)$ then cover $X_0$, and the model metric $\phi_\cL$ is defined by requiring 
$$
|s_i|_{\phi_\cL}\equiv 1\text{ on } c_\cX^{-1}(\cU_i)
$$
for all $i$. The generic fibers $U_{i,K}$ of the $\cU_i$'s yield an open cover $(U_{i,0})_i$ of $X_0$, and we get
$$
|s_i|_{\phi_\cL}=|s_i/s_j|\text{ on }U_{i,0}\cap c_\cX^{-1}(\cU_j)
$$
for all $i,j$, which uniquely determines the continuous function $|s_i|_\cL$ on $U_{i,0}$, and hence the metric $\phi_\cL$.  

When $\cL$ is merely a $\Q$-line bundle $\cL$, $m\cL$ is a line bundle for some $m\in\Z_{>0}$, hence defines model metric $\phi_{m\cL}$ on $mL_0$, and we set
$$
\phi_{\cL}:=m^{-1}\phi_{m\cL}. 
$$
Fubini--Study metrics form a special class of model metrics: 

\begin{prop}\label{prop:FSmodel} Assume $L$ is semiample. Then $\FS(L_0)$ coincides with the set of model metrics $\phi_\cL$ with $\cL$ a semiample model of $L$. 
\end{prop}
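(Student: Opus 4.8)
The plan is to prove both inclusions by reducing each to a single computation with model metrics, which I isolate as follows.

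\emph{Key computation.} Let $M$ be a meromorphic line bundle on $X$, and let $\cN$ be a genuine line bundle model of $M$ on a model $\cX$ which is globally generated by sections $\tau_1,\dots,\tau_N\in\Hnot(\cX,\cN)$; write $s_i:=\tau_i|_{X_K}\in\Hnot(X_K,M_K)$ for the induced sections (which therefore have no common zero on $X_K$). I claim $\phi_\cN=\max_i\log|s_i|$ as metrics on $M_0$, where $\log|s_i|$ is the singular metric induced by $s_i$ as in Example~\ref{exam:FS}. Indeed, let $\cU_i:=\{\tau_i\ne 0\}\subset\cX$ be the open locus where $\tau_i$ is a trivializing section; since the $\tau_i$ generate $\cN$ these cover $\cX$, so any $v\in X_0$ satisfies $c_\cX(v)\in\cU_i$ for some $i$. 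Fix such an $i$. On $\cU_i$ the functions $h_j:=\tau_j/\tau_i=s_j/s_i$ are regular, so $v(h_j)\ge 0$ for all $j$, with $v(h_i)=0$, hence $\min_j v(h_j)=0$. Now, on the one hand the defining property $|\tau_i|_{\phi_\cN}\equiv 1$ on $c_\cX^{-1}(\cU_i)$ gives $|s_j|_{\phi_\cN}(v)=|h_j(v)|\,|\tau_i|_{\phi_\cN}(v)=e^{-v(h_j)}$; on the other hand the explicit formula of Example~\ref{exam:FS} (applied with trivializing section $\tau_i$ on the generic fibre $U_{i,0}$) gives $|s_j|_{\max_k\log|s_k|}(v)=|h_j(v)|\bigl(\max_k|h_k(v)|\bigr)^{-1}=e^{-v(h_j)+\min_k v(h_k)}=e^{-v(h_j)}$. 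The two metrics agree at every $v$, proving the claim.

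\emph{Model metrics of semiample models lie in $\FS(L_0)$.} Let $\cL$ be a semiample model of $L$ on some $\cX$, so that for some $m\in\Z_{>0}$ the $\Q$-line bundle $m\cL$ is a genuine globally generated line bundle; choose global sections $\sigma_i\in\Hnot(\cX,m\cL)$ generating it, with restrictions $s_i\in\Hnot(X_K,mL_K)$. The key computation with $(M,\cN)=(mL,m\cL)$ yields $\phi_{m\cL}=\max_i\log|s_i|$, hence $\phi_\cL=\tfrac1m\phi_{m\cL}=\tfrac1m\max_i\log|s_i|\in\FS(L_0)$.

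\emph{Every Fubini--Study metric is the model metric of a semiample model.} Conversely, let $\phi=\tfrac1m\max_i\log|s_i|\in\FS(L_0)$ with $s_i\in\Hnot(X_K,mL_K)$ having no common zero. Pick a line bundle model $\cM$ of $mL$ on some model $\cX$; since each $s_i$, viewed as a rational section of $\cM$, has poles of order at most some $N$ along the components of $\cX_0$, after replacing $\cM$ by $\cM+N\cX_0$ (still a line bundle model of $mL$, as $N\cX_0$ is vertical) we may assume $s_i\in\Hnot(\cX,\cM)$ for all $i$. The base ideal $\fb\subset\cO_\cX$ of the $s_i$ relative to $\cM$ is then cosupported on $\cX_0$, precisely because the $s_i$ have no common zero on $X_K$. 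Let $\mu\colon\cX'\to\cX$ be the blow-up of $\fb$, which is again projective over $\cO_K$, hence a model of $X$; then $\fb\cdot\cO_{\cX'}=\cO_{\cX'}(-F)$ for an effective vertical $F$, and $\cL':=\mu^*\cM-F$ is a genuine line bundle model of $mL$ globally generated by the sections $\mu^*s_i$, hence semiample. Thus $\cL:=\tfrac1m\cL'$ is a semiample model of $L$, and applying the key computation to $(M,\cN)=(mL,\cL')$, together with the fact that $\mu$ is an isomorphism over $X_K$ so $\mu^*s_i|_{X_K}=s_i$, gives $\phi_{\cL'}=\max_i\log|s_i|$ and therefore $\phi_\cL=\tfrac1m\phi_{\cL'}=\phi$.

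\emph{Main obstacle.} The only genuinely delicate step is the spreading-out in the last part: one must ensure $\cM$ can be arranged so the $s_i$ become honest sections (the twist by $N\cX_0$), verify that the base ideal $\fb$ is supported in the central fibre (which is exactly where base-point freeness on $X_K$ is used), and know that its blow-up is an admissible model with principal total transform; the remaining manipulations are bookkeeping with $\Q$-line bundles. The key computation itself is a routine unwinding of the definition of a model metric, the only subtlety being to carry it out on a chart $\cU_i$ containing the center of the given semivaluation.
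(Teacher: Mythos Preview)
Your proof is correct and the core computation---showing that for a globally generated model the model metric equals $\max_i\log|s_i|$---is exactly the argument the paper gives. The only difference is in how you produce the semiample model from a given Fubini--Study metric: the paper extends the morphism $X_K\to\P^N_K$ defined by $(s_i)$ to a morphism $\cX\to\P^N_{\cO_K}$ for some model $\cX$ and pulls back $\cO(1)$, whereas you start from an arbitrary line bundle model, twist by a multiple of $\cX_0$ to clear poles, and blow up the base ideal. These constructions are equivalent (the blow-up of the base ideal is precisely the model on which the rational map to $\P^N$ becomes a morphism), so your approach is a more explicit unpacking of the same idea; the paper's phrasing is more economical but yours makes the spreading-out step visible.
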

\begin{proof} Assume first $\phi$ is a Fubini--Study metric, and write $\phi=\tfrac 1m\max_i\log|s_i|$ for a finite set of sections $s_i\in\Hnot(X_K,mL_K)$ without common zeroes. Replacing $L$ with $mL$ we may further assume $m=1$. The morphism $X\to\P^N_K$ with homogeneous coordinates $(s_i)$ extends to a morphism $\cX\to\P^N_{\cO_K}$ for some model $\cX$ of $X$. Denoting by $\cL$ the pullback $\cO(1)$, each $s_i$ extends to a section $s_i\in\Hnot(\cX,\cL)$. Then $\cU_i:=\{s_i\ne 0\}$ defines a trivializing open cover of $\cX$ for $\cL$. For all $i,j$, $s_j/s_i$ is a regular function on $\cU_i$, and hence $|s_j/s_i|\le 1$ on $c_\cX^{-1}(\cU_i)$, with equality when $i=j$. Thus 
$$
|s_i|_\phi=\left(\max_j|s_j/s_i|\right)^{-1}\equiv 1
$$
on $c_\cX^{-1}(\cU_i)$, and it follows that $\phi=\phi_\cL$. Conversely, if $\phi=\phi_\cL$ where $m\cL$ is generated by finitely many sections $(s_i)$, then the same argument yields $m\phi_\cL=\phi_{m\cL}=\max_i\log|s_i|$. 
\end{proof}
%
%
%%%%%%%%%%%%%%%%%%%%%%%%%%%%%%%%%%%%%%%%%%%%
\subsection{Model functions}\label{sec:modelfunc}
Pick a model $\cX$ of $X$ and $v\in X_0$. By definition of the center $\xi=c_\cX(v)$, $v$ vanishes on each unit of $\cO_{\cX,\xi}$. This allows to evaluate $v$ on any vertical Cartier divisor $D$ on $\cX$ by setting
$$
v(D):=v(f_D)\in\R
$$
for any choice of local equation $f_D$ of $D$ at $\xi$, which yields a linear map $D\mapsto v(D)$ that can be extended to all vertical $\Q$-Cartier divisors $D\in\VCar(\cX)$. 

If $D$ is further effective, then $v(D)\ge 0$, and $v(D)>0$ iff $c_\cX(v)$ lies in (the support of) $D$. 

\begin{exam} By definition, any $v\in X_0$ coincides with $v_K$ on $K$, and the vertical Cartier divisor $\cX_0$ thus satisfies $v(\cX_0)=1$. 
\end{exam}

On the other hand, each $D\in\VCar(\cX)$ determines a model $\cO_\cX(D)$ of $\cO_X$, and hence a model metric on the trivial line bundle, \ie a \emph{model function}
$$
\phi_D\in \Cz(X_0).
$$
Tracing the definitions, one checks that the two constructions are compatible, \ie $\phi_D(v)=v(D)$ for all $v\in X_0$.

\begin{prop}\label{prop:modelfunc} The $\Q$-vector space of model functions coincides with the dense subspace $\DFS(X_0)\subset \Cz(X_0)$ (see~\S\ref{sec:NAFS}). 
\end{prop}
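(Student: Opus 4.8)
The plan is to read off the equality of the two subspaces from Proposition~\ref{prop:FSmodel}, which identifies $\FS(L_0)$ with the set of model metrics $\phi_\cL$ attached to semiample models $\cL$ of $L$, together with two formal ingredients already in place: the model metric $\phi_\cL$ depends $\Q$-linearly on $\cL$ and is unchanged under pullback to a higher model (so any two vertical $\Q$-Cartier divisors living on different models can be compared on a common dominating model), and $\DFS(X_0)$ does not depend on the chosen ample line bundle (Proposition~\ref{prop:DFS}, recalled in~\S\ref{sec:NAFS}). Using these remarks one first checks that model functions really do form a $\Q$-vector subspace of $\Cz(X_0)$: sums are handled by pulling back to a common model and adding the corresponding divisors in $\VCar$, and rational scalars by scaling the divisor, with $\phi_{D+D'}=\phi_D+\phi_{D'}$ and $\phi_{qD}=q\,\phi_D$.

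For the inclusion $\DFS(X_0)\subseteq\{\text{model functions}\}$, write a general element as $f=m(\phi-\p)$ with $L$ ample, $\phi,\p\in\FS(L_0)$ and $m\in\Z_{>0}$. By Proposition~\ref{prop:FSmodel} we have $\phi=\phi_\cL$ and $\p=\phi_{\cL'}$ for semiample models $\cL,\cL'$ of $L$; after pulling both back to a common model $\cX$, the $\Q$-line bundle $\cL-\cL'$ extends the trivial bundle $\cO_X$, hence equals $\cO_\cX(D)$ for some $D\in\VCar(\cX)$, and linearity of model metrics gives $\phi-\p=\phi_\cL-\phi_{\cL'}=\phi_D$. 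Therefore $f=m\,\phi_D=\phi_{mD}$ is a model function.

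For the reverse inclusion, fix $D\in\VCar(\cX)$. Since $\cX$ is projective over $\cO_K$ it carries an ample line bundle $\cA$, whose generic fibre $A$ is then an ample line bundle on $X$. For $N\gg 1$ the $\Q$-line bundle $N\cA+D$ is again ample on $\cX$; both $N\cA+D$ and $N\cA$ are thus semiample models of the ample line bundle $NA$, so by Proposition~\ref{prop:FSmodel} their model metrics lie in $\FS((NA)_0)$, and linearity yields $\phi_D=\phi_{N\cA+D}-\phi_{N\cA}\in\DFS(X_0)$, where we use the independence of $\DFS(X_0)$ on the polarization to express this with $L=NA$. Since model functions form a $\Q$-vector space, every $\phi_D$ with $D\in\VCar(\cX)$ then lies in $\DFS(X_0)$.

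There is no genuine obstacle here: the proposition is essentially a formal consequence of the machinery of~\S\ref{sec:NAmodelmetr}–\S\ref{sec:modelfunc} and Proposition~\ref{prop:FSmodel}. The only point that is not pure bookkeeping is the perturbation step---that $N\cA+D$ remains ample for $N\gg1$---which is the standard fact that ampleness (equivalently, relative ampleness over the essentially affine base $\Spec\cO_K$) is an open condition, so adding the fixed vertical $\Q$-Cartier divisor $D$ cannot destroy it once $N$ is large. The rest is careful use of linearity of $\phi_\bullet$ in the model and of the domination order on models.
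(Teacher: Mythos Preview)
Your proof is correct and follows essentially the same approach as the paper: both inclusions go through Proposition~\ref{prop:FSmodel}, with the reverse inclusion handled by a perturbation-by-ample trick. The only cosmetic difference is that you pick an ample bundle $\cA$ directly on the given model $\cX$ and invoke the polarization-independence of $\DFS(X_0)$, whereas the paper first passes to a higher model on which the fixed ample $L$ admits an ample model $\cL$ and writes $\phi_D=m(\phi_{\cL+m^{-1}D}-\phi_\cL)$.
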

\begin{proof} Pick an ample line bundle $L$. By Proposition~\ref{prop:FSmodel}, every metric in $\FS(L_0)$ is a model metric, and every function in $\DFS(X_0)$, being a difference of model metrics on $L_0$, is thus a model function. Conversely, pick a model function $\phi_D$ with $D\in\VCar(\cX)$. After possibly replacing $\cX$ by a higher model of $X$, $L$ admits an ample model $\cL$ on $\cX$. For $m\gg 1$, $\cL+m^{-1} D$ is ample as well. By Proposition~\ref{prop:FSmodel}, 
$$
\phi:=\phi_{\cL+m^{-1}D},\quad\p:=\phi_\cL
$$
both lie in $\FS(L_0)$, and $\phi_D=m(\phi-\p)$ thus lies in $\DFS(X_0)$. 
\end{proof}

%
%
%%%%%%%%%%%%%%%%%%%%%%%%%%%%%%%%%%%%%%%%%%%%
\subsection{Snc models and dual complexes}\label{sec:snc}
We assume here that the projective variety $X_K$ is \emph{smooth}, \ie the holomorphic map $\pi\colon X\to\DD^\times$ is a submersion, after perhaps shrinking $\DD$. As we now explain, one can then use the combinatorics of the central fibers of a special class of models to get a description of the Berkovich space $X_0$ as a limit of simplicial complexes. While this goes back to the fundamental work of Berkovich, we use instead~\cite[\S 3]{siminag} as a reference for what follows. 

By resolution of singularities, any model of $X$ is dominated by a regular model $\cX$ such that $\cX_0$ has simple normal crossing support. Denoting by
$$
\cX_0=\sum_{i\in I} b_i E_i
$$
its irreducible decomposition, this means that the $E_i$'s are smooth hypersurfaces that intersect transversally. After further blowing up intersections of the $E_i$'s, we may further achieve that each intersection 
$$
E_J:=\bigcap_{i\in J} E_i
$$
with $J\subset I$ is \emph{connected}. We then say that $\cX$ is an \emph{snc model}. The set of snc models is cofinal in the poset of all models; in particular, any model metric on a line bundle $L$ can be written as $\phi_\cL$ for a model $\cL$ of $L$ determined on an snc model $\cX$ of $X$.

A nonempty intersection $E_J$ with $J\subset I$ is called a \emph{stratum} of $\cX$, with \emph{relative interior} 
$$
\rE_J:=E_J\setminus\bigcup_{i\in I\setminus J} E_i. 
$$
Each $x\in\cX_0$ belongs to $\rE_J$ for a unique stratum $E_J$, with $J\subset I$ the set of components $E_i$ passing through $x$.

The combinatorics of the intersections of the components $E_i$ is encoded in the \emph{dual (intersection) complex}, \ie the simplicial complex $\D_\cX$ having a vertex $e_i$ for each $E_i$, an edge between $e_i$ and $e_j$ if $E_i$ and $E_j$ intersect, and so on. More precisely, we associate to each stratum $E_J$ the $(|J|-1)$-dimensional simplex 
\begin{equation}\label{equ:simplex}
\D_J:=\{w\in\R_{\ge 0}^J\mid \sum_{i\in J} b_i w_i=1\},
\end{equation}
and $\D_\cX$ is then defined by identifying $\D_J$ with a face of $\D_{J'}$ whenever $E_{J'}$ is a substratum of $E_J$. 

Equivalently, $\D_\cX$ can be globally realized as the intersection of the affine hyperplane 
\begin{equation}\label{equ:affhyp}
H_\cX:=\{w\in\R^I\mid \sum_{i\in I}b_i w_i=1\}\subset\R^I
\end{equation}
with the family of cones $\R_{\ge 0}^J\times\{0\}$ attached to the strata $E_J$. 
\begin{defi}\label{defi:NAlog} The \emph{non-Archimedean log map} 
$$
\Log_\cX\colon X_0\to H_\cX\subset\R^I
$$
associated to an snc model $\cX$ is defined as the map with components the model functions $\phi_{E_i}$, $i\in I$. 
\end{defi} 
In other words, we have 
$$
\Log_\cX(v)=(v(E_i))_{i\in I}. 
$$
for $v\in X_0$ (see~\S\ref{sec:modelfunc}). Since $v(E_i)\ge 0$, and $v(E_i)>0$ iff the center of $v$ is contained in $E_i$, the above description of the dual complex yields an inclusion
$$
\Log_\cX(X_0)\subset\D_\cX. 
$$
In analogy with  Lemma~\ref{lem:sec}, we have: 

\begin{prop}\label{prop:valglob} The map $\Log_\cX\colon X_0\to\D_\cX$ admits a unique continuous section 
$$
\val_\cX\colon\D_\cX\hto X_0
$$
with the property that, for each $w\in\mathring{\D}_J$, the center of $\val_\cX(w)$ coincides with the generic point of $E_J$. 
\end{prop}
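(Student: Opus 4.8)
The plan is to construct $\val_\cX$ explicitly out of monomial valuations, following~\cite[\S 3]{siminag} (the reference used for this subsection), and then to establish uniqueness by a density argument in the spirit of the proof of Lemma~\ref{lem:sec}.

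For the construction, given $w\in\D_\cX$ let $J\subset I$ be the unique subset with $w\in\mathring\D_J$, so that $w_i>0$ for $i\in J$, $w_i=0$ for $i\notin J$, and $\sum_{i\in J}b_iw_i=1$; let $\xi_J$ denote the generic point of the connected smooth stratum $E_J$. Since $\cX$ is regular and $\cX_0$ is snc, the local equations $z_i$ of the $E_i$, $i\in J$, form a regular system of parameters of $\cO_{\cX,\xi_J}$, with completion $\widehat\cO_{\cX,\xi_J}\cong\C(E_J)[[z_i:i\in J]]$. I would then set
$$
\val_\cX(w)\Bigl(\sum_{\alpha\in\N^J}c_\alpha z^\alpha\Bigr):=\min\bigl\{\alpha\cdot w\ :\ c_\alpha\ne 0\bigr\}\qquad(c_\alpha\in\C(E_J)),
$$
which is a genuine valuation because, all $w_i$ being positive, the $w$-leading form of a nonzero element is a nonzero element of the domain $\C(E_J)[z_i:i\in J]$; it restricts to a valuation on $\cO_{\cX,\xi_J}$, hence on $\C(X_K)=\mathrm{Frac}(\cO_{\cX,\xi_J})$. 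Independence of this from the choices of $z_i$ and of the Cohen isomorphism is routine and is carried out in~\cite[\S 3]{siminag}. Then I would verify the required properties in turn: because $\cX_0$ has local equation $t=u\prod_{i\in J}z_i^{b_i}$ near $\xi_J$ with $u$ a unit, one gets $\val_\cX(w)(t)=\sum_{i\in J}b_iw_i=1$ and more generally $\val_\cX(w)|_K=v_K$, so $\val_\cX(w)\in X_0$; next, $\val_\cX(w)$ is $\ge 0$ on $\cO_{\cX,\xi_J}$, vanishes exactly on units and is positive exactly on $\fm_{\xi_J}=(z_i)_{i\in J}$, so its center on $\cX$ is $\xi_J$; and since $E_i$ with $i\notin J$ avoids a neighbourhood of $\xi_J$, one reads off $\val_\cX(w)(E_i)=w_i$ for $i\in J$ and $=0$ for $i\notin J$, i.e. $\Log_\cX(\val_\cX(w))=w$. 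Finally one needs continuity of $w\mapsto\val_\cX(w)$: on each $\mathring\D_J$ this is immediate, since $w\mapsto\val_\cX(w)(f)$ is a finite minimum of affine functions of $w$, but continuity across the faces of $\D_\cX$ requires a genuine argument, again supplied by~\cite[\S 3]{siminag}. This produces a continuous section with the stated center property (and in particular shows $\Log_\cX$ is surjective).

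For uniqueness, let $\sigma$ be any continuous section of $\Log_\cX$ such that $c_\cX(\sigma(w))=\xi_J$ whenever $w\in\mathring\D_J$. Since $\sigma$ and $\val_\cX$ are continuous and $\D_\cX=\bigsqcup_J\mathring\D_J$, it is enough to show $\sigma=\val_\cX$ on the set $S$ of points $w$ lying in some $\mathring\D_J$ with $(w_i)_{i\in J}$ $\Q$-linearly independent; this set is dense, since inside each simplex the $\Q$-dependent tuples form a countable union of affine hyperplanes. Fix $w\in S$ and put $v:=\sigma(w)$: then $v\in X_0$ has center $\xi_J$ on $\cX$ and satisfies $v(z_i)=v(E_i)=w_i$ for $i\in J$. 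The crux is the rigidity claim that these properties force $v=\val_\cX(w)$. To prove it, I would note that $v\ge 0$ on $\cO_{\cX,\xi_J}$ with $v(\fm_{\xi_J}^n)\ge n\min_{i\in J}w_i\to\infty$, so $v$ extends continuously to $\widehat\cO_{\cX,\xi_J}$ and still vanishes on units; then, for a nonzero $f=\sum_\alpha c_\alpha z^\alpha$, the $\Q$-linear independence of the $w_i$ makes $d:=\min\{\alpha\cdot w:c_\alpha\ne 0\}$ attained at a single index $\alpha_0$, so that $v(c_{\alpha_0}z^{\alpha_0})=d$ while every other term present has $v$-value $>d$ and the high-order tail has $v$-value tending to $\infty$; hence $v(f)=d=\val_\cX(w)(f)$. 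Restricting to $\cO_{\cX,\xi_J}$ and passing to fractions gives $v=\val_\cX(w)$ on $\C(X_K)$, hence on $X_0$, as desired.

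I expect the rigidity step of the uniqueness argument to be the main obstacle. The center hypothesis is genuinely indispensable: the fibre of $\Log_\cX$ over an interior point of $\D_\cX$ is in general infinite, containing for instance many divisorial valuations obtained from iterated blow-ups. Moreover rigidity actually \emph{fails} at points of $\D_\cX$ with $\Q$-linearly dependent coordinates---e.g. at the point of $\mathring\D_J$ with all $w_i$ equal, where blowing up $E_J$ and then successive generic points produces distinct valuations with center $\xi_J$ and the same image under $\Log_\cX$. This is why one must run the argument over the dense set $S$ of rationally independent weights rather than, say, over rational points, and why the proof passes to the completion $\widehat\cO_{\cX,\xi_J}$, where the monomial ordering induced by $w$ becomes total.
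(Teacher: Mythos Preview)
Your proof is correct and follows essentially the same approach as the paper: construct $\val_\cX$ via monomial valuations in the completed local ring at the generic point of each stratum (deferring well-definedness and continuity across faces to~\cite[\S 3]{siminag}), and prove uniqueness by the density of $\Q$-linearly independent weights combined with the rigidity of monomial valuations at such weights. The only difference is that you spell out the rigidity argument in full, whereas the paper simply cites~\cite[Theorem~3.1]{siminag} for this step.
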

In particular, this shows that $\Log_\cX(X_0)=\D_\cX$, which can thus be viewed as a `global tropicalization' of $X_K$ with respect to the toroidal embedding $X_K\hto\cX$ (compare~\S\ref{sec:degam}). 

\begin{proof} Pick a stratum $E_J$, denote by $\xi_J$ its generic point, and pick a local equations $z_i\in\cO_{\cX,\xi_J}$ of each $E_i$, $i\in J$. By the snc condition, $(z_i)_{i\in J}$ induces a system of coordinates (\ie a regular system of parameters) on $\cO_{\cX,\xi_J}$. After choosing a field of representatives, and each $f\in\cO_{\cX,\xi_J}$ can thus be expanded as a formal power series 
$$
f=\sum_{\a\in\N^J} a_\a z^\a
$$
where each $a_\a$ lies in the residue field\footnote{In fact, in a chosen field of representatives of the residue field.} of $\cX$ at $\xi_J$, \ie the function field of $E_J$. Setting 
$$
\val_\cX(w)(f):=\min_{a_\a\ne 0} \a\cdot w
$$
defines, for every $w\in\rsig_J$, a monomial valuation $\val_\cX(w)$ on the function field of $X_K$. Since $b\cdot w=1$, the restriction of $\val_\cX(w)$ to $K$ coincides with $v_K$, and hence $\val_\cX(w)\in X_0$. For each stratum $E_J$, we thus get a continuous sections $\val_\cX\colon\rsig_J\hto X_0$ of $\Log_\cX$, and one shows that they glue to a continuous section on $\D_\cX$. As in Lemma~\ref{lem:sec}, uniqueness follows from the fact that any valuation centered at the generic point of $E_J$ and whose values $(w_i)_{i\in J}$ on the $E_i$ are $\Q$-linearly independent, is necessarily monomial, and hence equal to $\val_\cX(w)$ (see~\cite[Theorem~3.1]{siminag}). 
\end{proof}

\begin{exam}\label{exam:embver} The embedding $\val_\cX\colon\D_\cX\hto X_0$ takes each vertex $e_i$ to the divisorial valuation $v_{E_i}$ (see~\eqref{equ:vE}). 
\end{exam}
The compact space
$$
\Sk(\cX):=\val_\cX(\D_\cX)\subset X_0
$$
is called the \emph{skeleton} of the snc model $\cX$. It comes with a continuous \emph{retraction}
$$
p_\cX\colon X_0\to\Sk(\cX)\subset X_0, 
$$
obtained by composing $\Log_\cX\colon X_0\to\D_\cX$ with the section $\val_\cX\colon\D_\cX\simto\Sk(\cX)$. 

\begin{rmk}\label{rmk:dualdlt} More generally, the dual complex $\D_\cX$ and the embedding 
$$
\val_\cX\colon\D_\cX\simto\Sk(\cX)\subset X_0
$$
can be defined for any \emph{dlt\footnote{This stands for \emph{divisorially log terminal}.} model} $\cX$, \ie a normal model such that the pair $(\cX,\cX_{0,\redu})$ has dlt singularities in the sense of the Minimal Model Program. Indeed, the latter condition implies in particular that $\cX$ is snc near the generic point of any stratum $E_J$, which is enough to construct $\D_\cX$ and $\val_\cX\colon\D_\cX\simto\Sk(\cX)\subset X_0$. In contrast, to define $\Log_\cX\colon X_0\to\D_\cX$ (or, equivalently, the retraction $p_\cX\colon X_0\to\Sk(\cX)$) we need to further assume that each component $E_i$ is $\Q$-Cartier, in order to make sense of $v(E_i)$ for $v\in X_0$ (compare~\cite[(2.4)]{NXY}). 
\end{rmk}

In order to analyze the dependence of the previous definitions on the snc model $\cX$, it is convenient to identify 
$$
H_\cX=\{w\in\R^I\mid b\cdot w=1\}\subset\R^I\simeq\VCar(\cX)^\vee
$$
with the affine hyperplane of linear forms on $\VCar(\cX)_\R$ that take value $1$ on $\cX_0$. Given any other snc model $\cX'$ that dominates $\cX$, the pullback map $\VCar(\cX)\hto\VCar(\cX')$ induces an affine linear surjection 
$$
\pi_{\cX,\cX'}\colon H_{\cX'}\twoheadrightarrow H_\cX,
$$
and the associated log maps 
$$
\Log_\cX\colon X_0\to H_\cX,\quad \Log_{\cX'}\colon X_0\to H_{\cX'}
$$
are related by 
$$
\Log_\cX=\pi_{\cX,\cX'}\circ\Log_{\cX'}. 
$$
In particular $\pi_{\cX,\cX'}(\D_{\cX'})=\D_\cX$. The uniqueness part of Proposition~\ref{prop:valglob} further implies $p_{\cX'}\circ\val_\cX=\val_\cX$, which is equivalent to 
$$
\Sk(\cX)\subset\Sk(\cX')\subset X_0. 
$$
The following result, originally due to Berkovich, is now an easy consequence of the above constructions (see for instance~\cite[Corollary~3.2]{siminag}). 

\begin{thm} The log maps of all snc models induce a homeomorphism 
$$
X_0\simto\varprojlim_\cX\D_\cX.
$$
\end{thm}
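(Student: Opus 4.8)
The plan is to show that the continuous map $\Phi \colon X_0 \to \varprojlim_\cX \D_\cX$ induced by the collection of log maps $\Log_\cX$ (which is well-defined since $\Log_\cX = \pi_{\cX,\cX'} \circ \Log_{\cX'}$ whenever $\cX'$ dominates $\cX$, and the snc models are cofinal) is a homeomorphism. Since $X_0$ is compact Hausdorff (projectivity of $X_K$, Proposition~\ref{prop:Berktop}) and the projective limit of compact Hausdorff simplicial complexes is compact Hausdorff, it suffices to prove that $\Phi$ is bijective. Both injectivity and surjectivity are most cleanly obtained by exhibiting a continuous inverse, built from the sections $\val_\cX \colon \D_\cX \hto X_0$ of Proposition~\ref{prop:valglob}.

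First I would check compatibility of the sections: given snc models $\cX' $ dominating $\cX$, the uniqueness clause in Proposition~\ref{prop:valglob} forces $p_{\cX'} \circ \val_\cX = \val_\cX$, equivalently $\Log_{\cX'} \circ \val_\cX$ lands in $\D_{\cX'}$ and is a section of $\pi_{\cX,\cX'}$ over $\D_\cX$; hence $\pi_{\cX,\cX'} \circ (\Log_{\cX'} \circ \val_\cX) = \id_{\D_\cX}$. This is exactly the compatibility needed so that, for a point $(w_\cX)_\cX \in \varprojlim_\cX \D_\cX$, the valuations $\val_\cX(w_\cX) \in \Sk(\cX) \subset X_0$ form a net indexed by the directed poset of snc models which, by compactness of $X_0$, has a convergent subnet; one then argues the limit is independent of choices, giving a map $\Psi \colon \varprojlim_\cX \D_\cX \to X_0$ with $\Log_\cX \circ \Psi$ equal to the $\cX$-th projection. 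Concretely: if $v := \lim_\alpha \val_{\cX_\alpha}(w_{\cX_\alpha})$, then for any fixed snc model $\cX$ we have $\Log_\cX(v) = \lim_\alpha \Log_\cX(\val_{\cX_\alpha}(w_{\cX_\alpha})) = \lim_\alpha \pi_{\cX,\cX_\alpha}(w_{\cX_\alpha}) = \lim_\alpha w_\cX = w_\cX$ using continuity of $\Log_\cX$ and the cofinality of those $\cX_\alpha$ dominating $\cX$; this pins down $v$ via $\Phi(v) = (w_\cX)_\cX$, so the limit does not depend on the subnet and $\Psi$ is well-defined, with $\Phi \circ \Psi = \id$.

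It then remains to show $\Psi \circ \Phi = \id_{X_0}$, i.e.\ that a semivaluation $v \in X_0$ is determined by the tuple $(\Log_\cX(v))_\cX$; equivalently, $v = \lim_\cX p_\cX(v)$ in $X_0$, where $p_\cX(v) = \val_\cX(\Log_\cX(v))$ is the retraction to the skeleton of $\cX$. This is the heart of the matter and the step I expect to be the main obstacle. The strategy is: the retractions $p_\cX$ are monotone ($\Sk(\cX) \subset \Sk(\cX')$ when $\cX'$ dominates $\cX$) and $p_{\cX} \circ p_{\cX'} = p_\cX$, so $(p_\cX(v))_\cX$ is a well-behaved net in $X_0$; by compactness it has a limit point $v'$, which necessarily satisfies $\Log_\cX(v') = \Log_\cX(v)$ for all $\cX$, i.e.\ $\Phi(v') = \Phi(v)$. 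So the genuine content is the \emph{injectivity} of $\Phi$: if $v(E_i) = v'(E_i)$ for the components of the central fiber of every snc model, then $v = v'$ as semivaluations on $\cO(X_K)$. To see this, fix a nonzero rational function (or a section $s$ of a line bundle, or more simply an element $f$ of the coordinate ring of an affine chart) and observe that $-\log|s|_{\phi_\cL} - \log|s|_{\phi_{\cL'}}$ is a model function, hence lies in the dense subspace $\DFS(X_0) \subset \Cz(X_0)$ (Proposition~\ref{prop:modelfunc}), and model functions separate points because they are dense in $\Cz(X_0)$; alternatively and more directly, one shows $v(f)$ can be recovered as $\sup_\cX \min_{E_i} (\ord_{E_i}(f)/b_i) \cdot (\text{something})$—more precisely, for a model $\cX$ on which the divisor of $f$ is supported on $\cX_0$ together with horizontal components avoiding a neighbourhood of $c_\cX(v)$, one has $v(f) = \Log_\cX(v) \cdot (\ord_{E_i}(f))_{i}$ by the monomial description near the generic point of the stratum $E_J$ containing $c_\cX(v)$, exactly as in the proof of Proposition~\ref{prop:valglob}; since such $\cX$ are cofinal among snc models, $v(f)$ is determined by $(\Log_\cX(v))_\cX$, giving injectivity. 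Combining: $\Phi$ is a continuous bijection between compact Hausdorff spaces, hence a homeomorphism, and $X_0 \simto \varprojlim_\cX \D_\cX$. $\qed$
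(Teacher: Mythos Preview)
Your argument is correct and is precisely the route the paper intends: the theorem is stated there without proof, as ``an easy consequence of the above constructions'' with a reference to \cite[Corollary~3.2]{siminag}, and the constructions in question are exactly the ones you invoke --- compactness of $X_0$ (Proposition~\ref{prop:Berktop}), the compatibility $\Log_\cX = \pi_{\cX,\cX'}\circ\Log_{\cX'}$, the sections $\val_\cX$ of Proposition~\ref{prop:valglob}, and the density of model functions (Proposition~\ref{prop:modelfunc}) which yields injectivity of $\Phi$.

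One streamlining: your explicit construction of $\Psi$ via subnets is more than you need. Once you observe that $\Phi$ is injective (model functions $\phi_{E_i}$, as $\cX$ ranges over all snc models, span $\DFS(X_0)$ which is dense in $\Cz(X_0)$ and hence separates points) and surjective (your net argument, which is clean and correct), the fact that $\Phi$ is a continuous bijection between compact Hausdorff spaces finishes the proof immediately --- the inverse is automatically continuous. Your ``alternative'' sketch of injectivity (recovering $v(f)$ from $\Log_\cX(v)$ on a model where $\div(f)$ is vertical near the center) is also valid and closer in spirit to the direct argument in \cite{siminag}, though the density route is shorter given what the paper has already established.
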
 
Following Sam Payne~\cite{Pay}, we summarize this by saying that \emph{analytification is the limit of tropicalization}.

%
%
%%%%%%%%%%%%%%%%%%%%%%%%%%%%%%%%%%%%%%%%%%%%
\section{Hybrid model functions and hybrid log maps}\label{sec:hybmod}
%
%
%%%%%%%%%%%%%%%%%%%%%%%%%%%%%%%%%%%%%%%%%%%%
We introduce in this section `hybrid versions' of the classes of non-Archimedean metrics and functions considered in~\S\ref{sec:NAmetrics}, and show that they determine the hybrid topology. 

As in the previous section, we consider a projective meromorphic degeneration $\pi\colon X\to\DD^\times$ over a small enough punctured disc, associated to a projective variety $X_K$ over the field $K=\C\{t\}$ of convergent Laurent series. Recall that $\pi\colon X^\hyb\to\DD$ is proper (see Proposition~\ref{prop:hybcomp}). 

%
%%%%%%%%%%%%%%%%%%%%%%%%%%%%%%%%%%%%%%%%%%%%
\subsection{Hybrid model metrics}\label{sec:hybmodmetr}
Consider a meromorphic line bundle $L$ on $X$, associated to a line bundle $L_K$ on $X_K$. Recall from~\S\ref{sec:NAmodelmetr} that any model $\cL$ of $L$ induces a (continuous, non-Archimedean) model metric $\phi_\cL$ on $L_0$. 

\begin{defi} We define a \emph{hybrid model metric} on $L$ as a family $\phi=(\phi_t)_{t\in\DD}$ of continuous metrics on each $L_t$ such that 
\begin{itemize}
\item $\phi_0$ is a model metric;
\item $\phi|_L$ is the restriction of a smooth metric $\Phi$ on some model $\cL$ of $L$ such that $\phi_0=\phi_\cL$. 
\end{itemize}
\end{defi} 

\begin{lem}\label{lem:unique} For any hybrid model metric $\phi$, the model metric $\phi_0$ is uniquely determined by $\phi|_L$. 
\end{lem}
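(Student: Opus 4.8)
The plan is to reduce the statement, via the additive formalism of model metrics, to the claim that a nonzero \emph{vertical} $\Q$-Cartier divisor on a model cannot carry a continuous metric that is trivial over the generic fibre. Concretely, suppose $\phi=(\phi_t)_{t\in\DD}$ and $\phi'=(\phi'_t)_{t\in\DD}$ are hybrid model metrics on $L$ with $\phi|_L=\phi'|_L$; we must show $\phi_0=\phi'_0$. By definition there are models $\cL,\cL'$ of $L$ and smooth (in particular continuous) metrics $\Phi$ on $\cL$, $\Phi'$ on $\cL'$ with $\Phi|_X=\phi|_L$, $\Phi'|_X=\phi'|_L$, $\phi_0=\phi_\cL$ and $\phi'_0=\phi_{\cL'}$; in particular $\Phi|_X=\Phi'|_X$. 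Since models form a directed set and regular models are cofinal (resolution of singularities, valid in characteristic zero), I would pull everything back to a common regular model $\cX$ dominating the models of $\cL$ and $\cL'$: the pullbacks are again models, the pullbacks of $\Phi,\Phi'$ are continuous metrics with unchanged restriction to $X$ (the maps being isomorphisms over $\DD^\times$), and $\phi_\cL,\phi_{\cL'}$ are unchanged. Replacing $\cL,\cL',\Phi,\Phi'$ by suitable multiples, we may assume $\cL,\cL'$ are honest line bundles. On $\cX$ one then has $\cL'-\cL=\cO_\cX(D)$ for some $D\in\VCar(\cX)$, and $\Psi:=\Phi'-\Phi$ is a continuous metric on $\cO_\cX(D)$. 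Over $X$, where $D$ does not meet, the canonical rational section $\mathbf 1_D$ of $\cO_\cX(D)$ restricts to the constant function $1$ under $(\cL'-\cL)|_X\simeq\cO_X$; since $\Psi|_X=\Phi'|_X-\Phi|_X$ is the trivial metric (because $\Phi|_X=\Phi'|_X$), it follows that $|\mathbf 1_D|_\Psi\equiv 1$, i.e.\ $-\log|\mathbf 1_D|_\Psi\equiv 0$, on $X$.

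The heart of the matter is a local computation near the central fibre. Let $E$ be an irreducible component of $\cX_0$ and set $c:=\ord_E(D)$. Choose a general closed point $\xi\in E$: since $\cX$ is regular, we may pick local analytic coordinates $(z,w_1,\dots,w_n)$ near $\xi$ with $E=\{z=0\}$, no other component of $\cX_0$ passing through $\xi$, and $X=\{z\ne 0\}$ in this chart. Locally $D=c\,\{z=0\}$, so $\mathbf 1_D=z^c\sigma$ for a nowhere-vanishing local frame $\sigma$ of $\cO_\cX(D)$, and hence $-\log|\mathbf 1_D|_\Psi=-c\log|z|+h$ with $h=-\log|\sigma|_\Psi$ continuous near $\xi$, in particular bounded. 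Evaluating along the points $(z,w)=(\varepsilon,0)\in X$ as $\varepsilon\to 0$ and using $-\log|\mathbf 1_D|_\Psi\equiv 0$ on $X$ forces $c\log|\varepsilon|$ to remain bounded, whence $c=0$. Since this holds for every component of $\cX_0$ and $D$ is vertical, $D=0$; therefore $\cL=\cL'$, and $\phi_0=\phi_\cL=\phi_{\cL'}=\phi'_0$.

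I expect the only genuinely delicate step to be the bookkeeping in the reduction — checking that $\Psi=\Phi'-\Phi$ really is a metric on $\cO_\cX(D)$ that is trivial over $X$, which hinges on the cancellation of the two identifications $\cL|_X\simeq L$ and $\cL'|_X\simeq L$. The remaining ingredients are routine: the cofinality of regular models, the density of $X$ in $\cX$ (and the fact that $X$ accumulates onto the generic point of every component of $\cX_0$, so that points $(\varepsilon,0)$ as above exist), and the tensor-product bookkeeping for $\Q$-line bundles. The passage to a regular model is precisely what makes a coordinate $z$ cutting out $E$ available in an analytic neighbourhood, which is essential for the local contradiction.
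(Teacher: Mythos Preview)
Your proof is correct and follows essentially the same approach as the paper: reduce to the vanishing of the vertical $\Q$-Cartier divisor $D=\cL'-\cL$ on a common model by observing that the difference of the smooth metrics gives a continuous metric on $\cO_\cX(D)$ whose canonical section has length $1$ on $X$, then use that $\log|f_D|$ is therefore locally bounded to force each coefficient of $D$ to vanish. The only minor difference is that you pass to a \emph{regular} dominating model and spell out the local coordinate computation, whereas the paper works on an arbitrary common model and handles the possibly non-normal case by passing to the integral closure in the generic fibre (noting that normal already suffices to conclude $D=0$); your choice of a regular model is a harmless strengthening that makes the local argument transparent.
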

\begin{proof} Assume that $\phi|_L$ is the restriction of a smooth metric $\Phi'$ on another model $\cL'$ of $L$. We need to show $\phi_\cL=\phi_{\cL'}$. After pulling back $\cL$, $\cL'$ to a high enough model of $X$, we may assume that $\cL$, $\cL'$ are both determined on the same model $\cX$ of $X$, and hence $\cL'-\cL=D$ for some $D\in\VCar(\cX)$. The zero function, viewed as a metric on $\cO_X\simeq\cO_\cX(D)|_X$ extends to the smooth metric $\Phi-\Phi'$ on $\cO_\cX(D)$. As a consequence,  for any local equation $f_D$ of $D$, $\log|f_D|$ is locally bounded. If $\cX$ is normal, this implies that $D=0$. In the general case, we get that the pullback of $D$ to the integral closure of $\cX$ in its generic fiber vanishes (compare~\cite[Lemma~1.23]{trivval}). This implies $\phi_D=0$, and hence $\phi_{\cL'}=\phi_\cL$. 
\end{proof}

In analogy with Proposition~\ref{prop:FSmodel}, we have: 

\begin{exam}\label{exam:hybFS} Assume $s_0,\dots,s_N\in\Hnot(X_K,mL_K)$ have no common zeroes for some $m\ge 1$, and consider the \emph{hybrid Fubini--Study metric} $\phi=(\phi_t)_{t\in\DD}$ defined by
$$
\phi_0=\tfrac 1m\max_i\log|s_i|,\quad\phi_t=\tfrac{1}{2m}\log\sum_i|s_i|^2,\,t\ne 0
$$
on $L_0$ and $L_t$, respectively. Then $\phi$ is a hybrid model metric. Indeed, the map $X\to\P^N\times\DD^{\times}$ with homogeneous coordinates $(s_i)$ extends to a morphism $\cX\to\P^N\times\DD$ for some model $\cX$ of $X$, and $\phi$ is induced by the pullback by this morphism of $\frac 1m\cO(1)$ with its usual (Hermitian) Fubini--Study metric. 
\end{exam}
By Example~\ref{exam:FShyb}, hybrid Fubini--Study metrics are hybrid continuous (see Definition~\ref{defi:hybridmetric}). More generally: 

\begin{prop}\label{prop:hybmodel} Every hybrid model metric $\phi$ is hybrid continuous. 
\end{prop}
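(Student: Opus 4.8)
The plan is to deduce the statement from the criterion of Lemma~\ref{lem:hybcrit}. Given a hybrid model metric $\phi = (\phi_t)_{t\in\DD}$ on $L$, I would construct a hybrid \emph{continuous} metric $\tphi = (\tphi_t)_{t\in\DD}$ on $L$ with the same central fibre $\tphi_0 = \phi_0$ and with $\phi_t - \tphi_t$ uniformly bounded as $t \to 0$. Since $\phi|_L$ is continuous (being induced by a smooth metric on a model of $L$), Lemma~\ref{lem:hybcrit} then yields at once that $\phi$ is hybrid continuous. The comparison metric $\tphi$ will be assembled out of hybrid Fubini--Study metrics, which are hybrid continuous by Example~\ref{exam:FShyb}.

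First I would reduce to the case where the model $\cL$ of $L$ underlying $\phi$, determined on some model $\cX$ of $X$, is an honest line bundle: passing to a multiple of $L$ replaces $|s|_{\phi^\hyb}$ by one of its powers and a hybrid model metric by a hybrid model metric, so this is harmless. Thus $\cL$ is a line bundle on $\cX$ equipped with a smooth metric $\Phi$ with $\phi|_L = \Phi|_L$ and $\phi_0 = \phi_\cL$. Since $\cX$ is projective over $\cO_K$ it carries an ample line bundle $\cA$; I would fix $m \gg 0$ so that $\cB := m\cA$ and $\cC := m\cA + \cL$ are both very ample over $\cO_K$, and choose finite families of global sections of $\cB$, resp.\ $\cC$, without common zeroes. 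Restricting these to the generic fibre $X_K$, Example~\ref{exam:hybFS} produces hybrid Fubini--Study metrics $\psi^B$ on $B := \cB|_{X_K}$ and $\psi^C$ on $C := \cC|_{X_K}$; by Example~\ref{exam:FShyb} they are hybrid continuous, their restrictions to $B$, $C$ are induced by smooth metrics $\Psi^B$ on $\cB$ and $\Psi^C$ on $\cC$ (the pulled-back Fubini--Study metrics of $\cO(1)$, suitably scaled), and Proposition~\ref{prop:FSmodel} identifies their central fibres with the model metrics $\phi_\cB$, $\phi_\cC$. I would then set $\tphi := \psi^C - \psi^B$, a family of continuous metrics on $C_t - B_t = L_t$. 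Linearity of model metrics in the model gives $\tphi_0 = \phi_\cC - \phi_\cB = \phi_{\cC - \cB} = \phi_\cL = \phi_0$, while $\tphi|_L$ is induced by the smooth metric $\Psi^C - \Psi^B$ on $\cC - \cB = \cL$. That $\tphi$, a difference of hybrid continuous metrics, is itself hybrid continuous is a local computation: over an open set of $X_K$ carrying trivializing sections $s^C$ of $C$ and $s^B := s^C \otimes s^{-1}$ of $B$ for a given trivializing section $s$ of $L$, one has $|s|_{\tphi^\hyb} = |s^C|_{\psi^{C,\hyb}}\,|s^B|_{\psi^{B,\hyb}}^{-1}$ on the hybrid space, a quotient of everywhere positive continuous functions, and a general trivializing section differs from one of this form by a unit, whose hybrid absolute value is continuous by definition of the hybrid topology.

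It then remains to feed the pair $(\phi, \tphi)$ into Lemma~\ref{lem:hybcrit}: $\tphi$ is hybrid continuous, $\phi_0 = \tphi_0$, and $\phi|_L$ is continuous, so it suffices to bound $\phi_t - \tphi_t$ uniformly as $t \to 0$. But $\phi_t - \tphi_t$ is the restriction to $X_t$ of $\Phi - (\Psi^C - \Psi^B)$, a smooth metric on $\cL - \cL = \cO_\cX$, i.e.\ a continuous function on the complex space $\cX$; since the map $\cX \to \DD$ is proper, this function is bounded on the preimage of a small disc $\bar\DD_r$, whence $\phi_t - \tphi_t$ is uniformly bounded for $|t| \le r$ and in particular $\e_t \sup_{X_t}|\phi_t - \tphi_t| \to 0$. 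Lemma~\ref{lem:hybcrit} then gives that $\phi$ is hybrid continuous. The step I expect to require the most care is the construction of the comparison metric $\tphi$ with \emph{exactly} the right central fibre: one must place the two Fubini--Study building blocks on a single model $\cX$ with $\cC - \cB = \cL$, so that no spurious model function survives in $\tphi_0$; the remaining points --- stability of hybrid continuity under differences, and boundedness of $\phi_t - \tphi_t$ via properness --- are routine.
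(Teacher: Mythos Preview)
Your proposal is correct and follows essentially the same approach as the paper: reduce via Lemma~\ref{lem:hybcrit} to producing a hybrid continuous comparison metric with the same central fibre, and build it out of hybrid Fubini--Study metrics on ample models. The only cosmetic difference is that the paper phrases the construction as ``by linearity reduce to $\cL$ ample,'' whereas you write out the difference $\psi^C-\psi^B$ explicitly; the underlying argument is the same.
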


\begin{proof} Pick a model $\cL$ of $L$ to which $\phi$ extends smoothly. Since any two smooth metrics on $\cL$ differ by a smooth function on $\cX$, which is thus bounded near $\cX_0$, it suffices to produce \emph{some} smooth metric on $\cL$ for which the result holds true (see Lemma~\ref{lem:hybcrit}). By linearity, writing $\cL$ as a difference of ample line bundles, we may thus assume without loss that $\cL$ is ample. For $m$ sufficiently divisible, we can then pick a finite set of sections $s_i\in\Hnot(\cX,m\cL)$ without common zeroes, and the corresponding hybrid Fubini--Study metric does the job. 
\end{proof}

%\begin{proof} Pick a model $\cL$ of $L$ determined on a model $\cX$ of $X$ such that $\phi|_L$ extends to a smooth metric $\Phi$ on $\cL$. Given a Zariski open $U_K\subset X_K$, $s\in\Hnot(U_K,L_K)$ and a net $x_i\in U_{t_i}$ with $t_i\to 0$ such that $x_i\to v\in U_0$ in $U^\hyb$, we need to show $|s(x_i)|_{\phi}^{\e_{t_i}}\to |s(v)|_\cL$. Since $\cX\to\DD$ is proper, after passing to a subnet we may assume $x_i\to x\in\cX_0$ in $\cX$. Pick a Zariski open $\cU\subset\cX$ with a trivializing section $\tau\in\Hnot(\cU,\cL)$ such that $x\in\cU_0$. 
%
%Then $\log|\tau|_\Phi$ is bounded near $x$, and hence 
%
%
%
%
%\end{proof}

%
%%%%%%%%%%%%%%%%%%%%%%%%%%%%%%%%%%%%%%%%%%%%
\subsection{Hybrid model functions} 
Recall that the data of a vertical $\Q$-divisor $D\in\VCar(\cX)$ on a model $\cX$ determines a model function $\phi_D\in \Cz(X_0)$, corresponding to the model metric on the trivial line bunde $L=\cO_X$ induced by its model $\cL=\cO_\cX(D)$, and explicitly given by $\phi_D(v)=v(D)$ for $v\in X_0$, see~\S\ref{sec:modelfunc}. 

Now choose a smooth metric $\Psi$ on $\cO_\cX(D)$. This defines a hybrid model metric on the trivial line bundle, which is hybrid continuous by Proposition~\ref{prop:hybmodel}, and hence determines a continuous function 
$$
\phi_D^\hyb\in \Cz(X^\hyb)
$$
which we call a \emph{hybrid model function}. By Example~\ref{exam:conf}, it is explicitly given by 
\begin{equation}\label{equ:hybmod}
\phi_D^\hyb:=\left\{
\begin{array}{ll}
\frac{\log|\sigma_D|_{\Psi}}{\log|t|}& \text{ on }X_t\\ 
\phi_D& \text{ on }X_0
\end{array}
\right. 
\end{equation}
with $\sigma_D$ the canonical meromorphic section of $\cO_\cX(D)$. Strictly speaking, this only makes sense when $D$ is a Cartier divisor, but it is convenient to keep the same notation when $D$ is a $\Q$-divisor, by setting 
$$
|\sigma_D|_\Psi:=|\sigma_{mD}|_{m\Psi}^{1/m}
$$
with $m\in\Z_{>0}$ sufficiently divisible. 

Unlike $\phi_D$, the hybrid model function $\phi_D^\hyb$ does depend on the choice of continuous metric $\Psi$ on $\cO_\cX(D)$, but only up to $O(\e_t)$ (see~\eqref{equ:et}). Furthermore, $D\mapsto\phi_D^\hyb$ is linear, again up to $O(\e_t)$. 

%For later use, we note:
%
%\begin{lem}\label{lem:hybmoddiff} Pick an ample meromorphic line bundle $L$ on $X$. For every hybrid model function $\phi_D^\hyb$, we can then find $m\in\Z_{>0}$, two hybrid model metrics $(\phi_{i,t})_{t\in\DD}$, $i=1,2$, associated to ample models $\cL_i$ of $L$ and strictly psh smooth metrics $\Phi_i$ on $\cL_i$, such that $\phi_D^\hyb=m\e_t(\phi_{1,t}-\phi_{2,t})$ on $X_t$, $t\ne 0$.
%\end{lem}
%\begin{proof} After pulling back $D$ to a higher model, we may assume that $L$ admits an ample model $\cL$ on $\cX$. For $m\gg 1$, $\cL_1:=\cO_\cX(D)+m\cL$ and $\cL_2:=m\cL$ are both ample, and the given smooth metric $\Psi$ on $\cO_\cX(D)$ can be written as a difference of smooth stricly psh metrics $\Phi_1,\Phi_2$ on $\cL_1,\cL_2$. The result follows (see Example~\ref{exam:conf}). 
%\end{proof}
%

As we next show, hybrid model functions characterize the hybrid topology: 

\begin{prop}\label{prop:hybchar} The hybrid topology of $X^\hyb=X\coprod X_0$ is the coarsest topology such that:
\begin{itemize}
\item[(i)] $X\hto X^\hyb$ is an open embedding;
\item[(ii)] every hybrid model function $\phi_D^\hyb$ is continuous on $X^\hyb$. 
\end{itemize}
\end{prop}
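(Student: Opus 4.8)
Write $\mathscr{T}$ for the topology on $X^\hyb$ generated by conditions (i)--(ii). One inclusion is immediate: the hybrid topology makes $X\hto X^\hyb$ an open embedding (Definition~\ref{defi:hybrid}) and, by Proposition~\ref{prop:hybmodel}, makes every hybrid model function $\phi_D^\hyb$ continuous, hence satisfies (i)--(ii); so $\mathscr{T}$ is coarser than the hybrid topology, equivalently $\id\colon X^\hyb\to X^\hyb$ is continuous from the hybrid to the $\mathscr{T}$-topology. The plan is to upgrade this to a homeomorphism, the content being that the two topologies agree near $X_0$ and on the whole (non-compact) $X^\hyb$.

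The key auxiliary observation is that $\mathscr{T}$ already "sees" the structure map $\pi$. Taking $D=0$ in $\VCar(\cX)$ for any model $\cX$ and equipping $\cO_\cX$ with the smooth metric of constant weight $1$ (so $\log|\sigma_0|_\Psi\equiv-1$), formula~\eqref{equ:hybmod} together with~\eqref{equ:et} shows that the corresponding hybrid model function is the function $\e^\hyb$ equal to $\e_t$ on $X_t$ for $t\ne 0$ and to $0$ on $X_0$ (compare Example~\ref{exam:conf}). Thus $\e^\hyb$ is $\mathscr{T}$-continuous, and since $|\pi|=e^{-1/\e^\hyb}$ with the convention $e^{-1/0}=0$ and $s\mapsto e^{-1/s}$ extends continuously to $[0,\infty)$, the function $|\pi|$ is $\mathscr{T}$-continuous. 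In particular $\pi^{-1}(\DD_r)=\{|\pi|<r\}$ is $\mathscr{T}$-open for every $r\in(0,1)$.

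Next I would check that $\mathscr{T}$ is Hausdorff: two points of $X$ are separated by complex-open subsets of $X$, which are $\mathscr{T}$-open by (i); a point of $X$ and a point of $X_0$ are separated by $|\pi|$; and two distinct points of $X_0$ are separated by some model function $\phi_D$, which is the restriction to $X_0$ of the $\mathscr{T}$-continuous function $\phi_D^\hyb$, model functions separating the points of $X_0$ (Propositions~\ref{prop:DFS} and~\ref{prop:modelfunc}). Now fix $r\in(0,1)$: since $X_K$ is projective, $\pi$ is proper in the hybrid topology (Proposition~\ref{prop:hybcomp}), so $\pi^{-1}(\bar\DD_r)$ is hybrid-compact, while $\mathscr{T}$ restricts to it as a coarser Hausdorff topology; hence $\id$ restricts there to a continuous bijection from a compact space onto a Hausdorff space, so a homeomorphism, and $\mathscr{T}$ and the hybrid topology agree on each $\pi^{-1}(\bar\DD_r)$. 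To globalize, let $O$ be hybrid-open and $p\in O$; choose $r$ with $|\pi(p)|<r<1$, so $p\in\pi^{-1}(\DD_r)\subseteq\pi^{-1}(\bar\DD_r)$. Then $O\cap\pi^{-1}(\DD_r)$ is hybrid-open, hence $\mathscr{T}$-open in the subspace $\pi^{-1}(\bar\DD_r)$, so $O\cap\pi^{-1}(\DD_r)=W\cap\pi^{-1}(\bar\DD_r)$ for some $\mathscr{T}$-open $W\subseteq X^\hyb$; intersecting with the $\mathscr{T}$-open set $\pi^{-1}(\DD_r)$ yields $O\cap\pi^{-1}(\DD_r)=W\cap\pi^{-1}(\DD_r)$, a $\mathscr{T}$-open neighbourhood of $p$ contained in $O$. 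Hence $O$ is $\mathscr{T}$-open, so the hybrid topology is coarser than $\mathscr{T}$ and the two coincide.

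I expect the main obstacle to be exactly this last globalization over the non-compact base $\DD$: the compact-to-Hausdorff argument only compares the two topologies on the compact slices $\pi^{-1}(\bar\DD_r)$, and gluing them genuinely forces one to know that $\mathscr{T}$ detects the radial coordinate of $\DD$ --- information that is not obviously carried by the generating data of $\mathscr{T}$, but which is supplied by the single auxiliary hybrid model function $\e^\hyb$. A lesser point needing some care is the Hausdorffness of $\mathscr{T}$ across $X_0$, which rests on separation of points of $X_0$ by model functions rather than by arbitrary continuous functions.
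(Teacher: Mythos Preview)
Your argument is correct and matches the paper's approach: both reduce to showing that $\mathscr{T}$ is Hausdorff on each compact slice $\pi^{-1}(\bar\DD_r)$ and then invoke minimality of compact Hausdorff topologies, using that model functions separate points of $X_0$ (Proposition~\ref{prop:modelfunc}). You are in fact more careful than the paper on the two points you flag---separating $X$ from $X_0$ and globalizing from the slices back to $X^\hyb$---both of which your observation that $\e^\hyb$ is itself a hybrid model function (take $D=0$ with a nontrivial constant metric on $\cO_\cX$) handles cleanly; the paper leaves these steps implicit.
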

The choice of metric on $\cO_\cX(D)$ is immaterial in (ii), since it only affects $\phi_D^\hyb$ by $O(\e_t)$. 
\begin{proof} Denote by $\cT$ the coarsest topology for which (i) and (ii) hold. By continuity of hybrid model functions, $\cT$ is coarser than the hybrid topology. For any closed disc $\bar\DD_r\subset\DD$, the hybrid topology of $\pi^{-1}(\bar\DD_r)\subset X^\hyb$ is compact Hausdorff, by Proposition~\ref{prop:hybcomp}. As is well-known, any compact Hausdorff topology on a given set is minimal among all Hausdorff topologies, and it therefore suffices to show that the topology induced by $\cT$ on $\pi^{-1}(\bar\DD_r)$ is Hausdorff. By (i), $\cT$ separates the points of $X$. It therefore suffices to show that points of $X_0$ are separated by $\cT$, which follows from the fact that the (non-Archimedean) model functions $\phi_D$ separate the points of $X_0$ (see Proposition~\ref{prop:modelfunc}).
\end{proof}

As an interesting consequence, we get:

\begin{cor}\label{cor:dense} For any (non-necessarily projective) meromorphic degeneration $Z\to\DD^\times$ of algebraic varieties, the complex part $Z$ is dense in $Z^\hyb$.
\end{cor}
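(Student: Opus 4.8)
\emph{Overview of the plan.} The strategy is to first prove the statement when $Z_K$ is projective, using the density of divisorial valuations in the Berkovich fibre, and then to reduce the general case to this one by localizing on an affine cover.

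\emph{Reduction to the projective case.} Cover $Z_K$ by finitely many affine open subschemes $U_K^{(\alpha)}$, and for each $\alpha$ pick a projective $K$-variety $Y_K^{(\alpha)}$ containing $U_K^{(\alpha)}$ as a Zariski open subscheme (\eg the closure of $U_K^{(\alpha)}$ in a projective space). After shrinking $\DD$ so that all the hybrid spaces below are defined over it, the open immersions $U_K^{(\alpha)}\hookrightarrow Y_K^{(\alpha)}$ induce open embeddings $U^{(\alpha),\hyb}\hookrightarrow Y^{(\alpha),\hyb}$ (see \S\ref{sec:Berkan}), while the open immersions $U_K^{(\alpha)}\hookrightarrow Z_K$ exhibit the $U^{(\alpha),\hyb}$ as an open cover of $Z^\hyb$: their complex parts cover $Z$, and their central fibres $U_0^{(\alpha)}$ cover $Z_0=Z_K^\an$, which is glued from the analytifications $U_K^{(\alpha),\an}$. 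Since density is a local property with respect to open covers, it is enough to show that each $U^{(\alpha)}$ is dense in $U^{(\alpha),\hyb}$; and as $U^{(\alpha),\hyb}$ is open in $Y^{(\alpha),\hyb}$, with complex part $U^{(\alpha)}=Y^{(\alpha)}\cap U^{(\alpha),\hyb}$, this follows once we know that $Y^{(\alpha)}$ is dense in $Y^{(\alpha),\hyb}$. We are thus reduced to the case where $X=X_K$ is projective over $K$.

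\emph{The projective case: reduction to divisorial valuations.} Let $\overline{X}$ be the closure of $X$ in $X^\hyb$; it is closed and contains $X$, and since $X^\hyb=X\sqcup X_0$ we only have to prove $X_0\subseteq\overline{X}$. Divisorial valuations are dense in $X_0$ (see \S\ref{sec:models}), so it suffices to check that each divisorial valuation $v_E$ lies in $\overline{X}$, \ie is a limit in $X^\hyb$ of a net of complex points.

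\emph{The arc construction.} Write $v_E=b_E^{-1}\ord_E$, where $E$ is an irreducible component of the central fibre $\cX_0$ of a model $\cX$ of $X$ (normal when $X$ is, integrally closed in general) and $b_E=\ord_E(\cX_0)$; since $v_E$ is a genuine divisorial valuation, $\cX$ is regular in a neighbourhood of the generic point $\eta_E$ of $E$. Fix an affine open $\cU\subseteq\cX$ with $\eta_E\in\cU$, and let $U_K\subset X_K$ be its generic fibre, so that $v_E\in U_0$. Choose a \emph{general} closed point $\xi\in\cU\cap E$: a smooth point of $\cX$ lying on no other component of $\cX_0$ and avoiding, for each of finitely many generators $f$ of $\cO(U_K)$, the zero and polar loci on $E$ of $z^{-\ord_E(f)}f$, where $z$ is a local equation of $E$. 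In local coordinates at $\xi$ we have $E=\{z=0\}$, the pullback of the parameter is $t=z^{b_E}u$ with $u$ a local unit, and $f=z^{\ord_E(f)}h_f$ with $h_f$ regular and nonvanishing at $\xi$. Pick a holomorphic arc $\gamma\colon\DD\to\cX$ with $\gamma(0)=\xi$ and $z\circ\gamma(s)=s$; then for small $s\ne 0$, $\gamma(s)$ is a complex point of $X_{t(s)}$ with $t(s)\to 0$ and $\gamma(s)\in U_{t(s)}$, and as $s\to 0$
$$
\frac{\log|f(\gamma(s))|}{\log|t(\gamma(s))|}=\frac{\ord_E(f)\log|s|+\log|h_f(\gamma(s))|}{b_E\log|s|+\log|u(\gamma(s))|}\longrightarrow\frac{\ord_E(f)}{b_E}=v_E(f),
$$
since $\log|s|\to-\infty$ while the remaining terms stay bounded. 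By the description of the hybrid topology in Definition~\ref{defi:hybrid}, this says exactly that $\gamma(s)\to v_E$ in $X^\hyb$, so $v_E\in\overline{X}$; this completes the projective case and hence the proof.

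\emph{Main obstacle.} The one genuinely delicate step is the arc construction: we must produce a \emph{single} complex-analytic arc that detects $v_E$, with the correct normalization rate, simultaneously on all generators of the coordinate ring of the chosen chart. This is what forces the generic choice of $\xi$ on $E$ (avoiding the finitely many ``bad'' loci), the transversality condition $z\circ\gamma=s$, and the use of the regularity of $\cX$ near $\eta_E$. Everything else — the reduction to the projective case, the shrinking of $\DD$, and the locality of density — is routine.
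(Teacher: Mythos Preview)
Your reduction to the projective case is fine and matches the paper. The gap is in the arc construction: you verify
\[
\frac{\log|f(\gamma(s))|}{\log|t(\gamma(s))|}\longrightarrow v_E(f)
\]
only for the finitely many chosen generators of $\cO(U_K)$, and then invoke Definition~\ref{defi:hybrid} to conclude $\gamma(s)\to v_E$. But that definition requires the convergence for \emph{every} $f\in\cO(U_K)$, and convergence on algebra generators does not propagate to sums. Concretely, pick two generators $f_1,f_2\in\cO(\cU)$ with $\ord_E(f_i)=0$, so that by your genericity condition $f_i(\xi)\ne 0$; set $g:=f_1-\tfrac{f_1(\xi)}{f_2(\xi)}\,f_2\in\cO(U_K)$. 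Then $v_E(g)=0$ (since $g|_E\not\equiv 0$), yet $g(\gamma(0))=0$, so $\tfrac{\log|g(\gamma(s))|}{\log|t|}$ tends to some $m/b_E>0$. What your arc actually converges to in $X^\hyb$ is the \emph{arc valuation} $w_\gamma(f)=b_E^{-1}\ord_{s=0}(f\circ\gamma)$, whose center on $\cX$ is the closed point $\xi$, not the generic point $\eta_E$; hence $w_\gamma\ne v_E$.

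The approach is salvageable, but needs an extra step you do not supply: you have shown $w_{\gamma_\xi}\in\overline{X}$ for each such $\xi$, and one can then let $\xi$ vary along a net indexed by finite subsets $S\subset\cO(U_K)$, choosing $\xi_S$ off $\bigcup_{f\in S}\{h_f|_E=0\}$. Then $w_{\gamma_{\xi_S}}(f)=v_E(f)$ once $f\in S$, so $w_{\gamma_{\xi_S}}\to v_E$ in $X_0$ and $v_E\in\overline{X}$. The paper's own proof takes a different, more global route: it deduces density directly from the characterization of the hybrid topology by hybrid model functions (Proposition~\ref{prop:hybchar}) together with the fact that these are uniquely determined by their restriction to $X$ (Lemma~\ref{lem:unique}), bypassing any explicit arc construction.
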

For algebraic families over an algebraic curve, this also follows from~\cite[Theorem~C']{Jon}.

\begin{proof} The general case reduces to the affine case, which in turn reduces to projective case by passing to the closure in an ambient projective space. The result now follows from Proposition~\ref{prop:hybchar}, together with the fact that hybrid model functions are uniquely determined by their restriction to $X$, as a consequence of Lemma~\ref{lem:unique}. 
\end{proof}

This yields in turn the following generalization of Lemma~\ref{lem:unique}. 

\begin{cor}\label{cor:unique} For any hybrid continuous metric $\phi$, the non-Archimedean metric $\phi_0\in \Cz(L_0)$ is uniquely determined by the metrics $\phi_t\in \Cz(L_t)$ with $t\ne 0$. 
\end{cor}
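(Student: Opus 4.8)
The plan is to deduce this from the density of the complex part $X\subset X^\hyb$ (Corollary~\ref{cor:dense}) together with the very definition of hybrid continuity. The point is that a hybrid continuous metric encodes, locally over trivializing sections, a genuine continuous function on $X^\hyb$, and any such function is determined by its restriction to the dense subset $X$.

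First I would reduce to a local assertion. Fix an affine Zariski open $U_K\subset X_K$ over which $L_K$ admits a trivializing section $s\in\Hnot(U_K,L_K)$. By Definition~\ref{defi:hybridmetric}, hybrid continuity of $\phi$ means precisely that the function $|s|_{\phi^\hyb}\colon U^\hyb\to\R_{\ge 0}$, equal to $|s|_{\phi_0}$ on $U_0$ and to $|s|_{\phi_t}^{\e_t}$ on $U_t$ for $t\ne 0$, is continuous for the hybrid topology. Since a continuous metric on $L_0$ is uniquely determined by the functions $|s|$ as $s$ ranges over trivializing sections of an affine open cover of $X_K$, it suffices to show that $|s|_{\phi_0}$ is determined by the collection $\{|s|_{\phi_t}\}_{t\ne 0}$.

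Next I would invoke Corollary~\ref{cor:dense}, applied to the (affine, non-projective) meromorphic degeneration $U\to\DD^\times$: its complex part $U=\coprod_{t\ne 0}U_t$ is dense in $U^\hyb$. Consequently, the continuous function $|s|_{\phi^\hyb}$ on $U^\hyb$ is uniquely determined by its restriction to $U$, i.e.\ by the functions $|s|_{\phi_t}^{\e_t}$ on $U_t$ for $t\ne 0$. As $\e_t\in(0,1)$ for $t\ne 0$, these recover the $|s|_{\phi_t}$, and conversely. In particular the restriction $|s|_{\phi_0}=|s|_{\phi^\hyb}|_{U_0}$ is determined by the metrics $\phi_t$, $t\ne 0$. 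Letting $U_K$ and $s$ range over a trivializing affine open cover of $X_K$ then pins down $\phi_0$ entirely; this also recovers Lemma~\ref{lem:unique} as the special case of a hybrid model metric.

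There is no serious obstacle here — the content is carried entirely by Corollary~\ref{cor:dense}. The only points requiring a little care are to make sure the density statement is applied in the affine setting (legitimate, since Corollary~\ref{cor:dense} is stated for arbitrary meromorphic degenerations), and to observe that hybrid continuity is exactly what upgrades the family $(\phi_t)_{t\in\DD}$ to a bona fide continuous function on $X^\hyb$ to which density can be applied.
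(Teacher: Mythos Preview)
Your proof is correct and follows exactly the approach intended in the paper: the corollary is stated there as an immediate consequence of Corollary~\ref{cor:dense} (density of $X$ in $X^\hyb$), and you have simply spelled out the local reduction to trivializing sections that makes this inference precise. The paper gives no further details beyond the phrase ``This yields in turn the following generalization of Lemma~\ref{lem:unique}'', so your write-up is a faithful expansion of the implicit argument.
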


\begin{rmk} Arguing as in Lemma~\ref{lem:unique}, one can show that the restriction to $X$ of a hybrid model function $\phi_D^\hyb$ only extends continuously to some model $\cX$ of $X$ in the trivial case $\phi_D=0$. Conversely, the restriction to $X$ of a continuous function on a model $\cX$ typically does not continuously extend to $X^\hyb$: if for instance $X=Z\times\DD^\times$ is a constant family with fiber $X_t=Z$ and $f\in \Cz(X)$ factors through $Z$, then $f$ extends to the trivial model $\cX=Z\times\DD$, but $f$ extends to $X^\hyb$ iff $f$ is constant (this can be checked using for instance Proposition~\ref{prop:cxvshyb} below). 
\end{rmk}

%
%
%%%%%%%%%%%%%%%%%%%%%%%%%%%%%%%%%%%%%%%%%%%%
\subsection{Hybrid and local log maps} 
From now on, we assume that $X_K$ is smooth, \ie $\pi\colon X\to\DD^\times$ is a submersion (after perhaps shrinking $\DD$). 

As in~\S\ref{sec:snc}, consider an snc model $\cX$ with central fiber $\cX_0=\sum_{i\in I} b_i E_i$. The canonical sections 
$$
\sigma_i\in\Hnot(\cX,\cO_\cX(E_i))
$$
satisfy $t=\prod_{i\in I} \sigma_i^{b_i}$. Pick a smooth metric $\Psi_i$ on each line bundle $\cO_\cX(E_i)$ over $\cX$ such that $|t|=\prod_i|\sigma_i|_{\Psi_i}^{b_i}$, and denote by $\phi_{E_i}^\hyb\in \Cz(X^\hyb)$ the associated hybrid model functions, see~\eqref{equ:hybmod}. 

\begin{defi} The \emph{hybrid log map} 
$$
\Log_\cX\colon X^\hyb\to H_\cX=\{b\cdot w=1\}\subset\R^I
$$
is defined as the map with components $\phi^\hyb_{E_i}$, $i\in I$ (see~\eqref{equ:affhyp}). 
\end{defi}
The hybrid log map is thus continuous, its restriction to $X_0$ coincides with the (canonical) non-Archimedean log map (Definition~\ref{defi:NAlog}), while its restriction to $X$ has components
$$
\frac{\log|\sigma_i|_{\Psi_i}}{\log|t|},
$$
see~\eqref{equ:hybmod}. In analogy with~\S\ref{sec:degam}, one can view $\Log_\cX(X_t)\subset H_\cX$ as a `global amoeba' of $X_K$, converging to the `global tropicalization' $\Log_\cX(X_0)=\D_\cX$ as $t\to 0$. 

As with hybrid model functions, the hybrid log map depends on the choice of metrics $\Psi_i$, but only up to $O(\e_t)$ on $X_t$. For any higher snc model $\cX'$, we futher have 
\begin{equation}\label{equ:hyblogcomp}
\Log_\cX=\pi_{\cX,\cX'}\circ\Log_{\cX'}+O(\e_t),
\end{equation}
with $\pi_{\cX,\cX'}\colon H_{\cX'}\twoheadrightarrow H_\cX$ the induced affine map. 

As a consequence of Proposition~\ref{prop:hybchar}, hybrid log maps also characterize the hybrid topology: 

\begin{prop}\label{prop:logchar} The hybrid topology of $X^\hyb=X\coprod X_0$ is the coarsest topology such that 
\begin{itemize}
\item[(i)] $X\hto X^\hyb$ is an open embedding;
\item[(ii)] for each snc model $\cX$, the hybrid log map $\Log_\cX\colon X^\hyb\to H_\cX$ is continuous. 
\end{itemize}
\end{prop}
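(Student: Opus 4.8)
The plan is to deduce this from Proposition~\ref{prop:hybchar}, which already characterizes the hybrid topology via hybrid model functions. Denote by $\cT_{\Log}$ the coarsest topology for which (i) and (ii) of the present statement hold, and by $\cT_{\mathrm{hyb}}$ the hybrid topology. Since each hybrid log map $\Log_\cX$ has components the hybrid model functions $\phi^\hyb_{E_i}$, which are continuous on $X^\hyb$ by Proposition~\ref{prop:hybmodel} (or directly from their definition via a hybrid model metric), the hybrid log maps are continuous for $\cT_{\mathrm{hyb}}$; hence $\cT_{\Log}\subset\cT_{\mathrm{hyb}}$.

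For the reverse inclusion, it suffices by Proposition~\ref{prop:hybchar} to show that every hybrid model function $\phi_D^\hyb$ is continuous for $\cT_{\Log}$. First I would observe that, given $D\in\VCar(\cX)$ for some model $\cX$, one may pass to an snc model $\cX'$ dominating $\cX$ (snc models are cofinal), and the pullback of $D$ to $\cX'$ lies in $\VCar(\cX')$ and defines the same hybrid model function up to $O(\e_t)$; since modifications by $O(\e_t)$ on $X_t$ do not affect continuity at points of $X_0$ (and are harmless on the open part $X$), it is enough to treat $D\in\VCar(\cX)$ for $\cX$ snc. When $\cX$ is snc with $\cX_0=\sum_{i\in I}b_iE_i$, the space $\VCar(\cX)$ is spanned by the $E_i$, so by linearity (again up to $O(\e_t)$) it suffices to show each $\phi_{E_i}^\hyb$ is $\cT_{\Log}$-continuous. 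But $\phi_{E_i}^\hyb$ is precisely the $i$-th component of $\Log_\cX\colon X^\hyb\to H_\cX\subset\R^I$, which is continuous for $\cT_{\Log}$ by construction; composing with the continuous projection $\R^I\to\R$ gives the claim.

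Thus $\cT_{\mathrm{hyb}}$ is coarser than $\cT_{\Log}$ as well, and the two topologies coincide. The main point to be careful about is the reduction to snc models and the bookkeeping of the $O(\e_t)$ ambiguities: one must check that adding a term bounded by a constant times $\e_t$ on each $X_t$ does not destroy continuity across the central fiber, which is immediate since $\e_t\to 0$ and such a term extends by $0$ on $X_0$, remaining continuous on $X^\hyb$ (this is exactly the content of Example~\ref{exam:conf} and Lemma~\ref{lem:hybcrit}). No genuinely hard new input is needed; the result is essentially a repackaging of Proposition~\ref{prop:hybchar} together with the fact that on an snc model the components of $\cX_0$ generate $\VCar(\cX)$.
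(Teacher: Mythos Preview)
Your proof is correct and follows essentially the same route as the paper: reduce to Proposition~\ref{prop:hybchar}, pass to an snc model (where $\VCar(\cX)$ is spanned by the $E_i$), and observe that $\phi_D^\hyb=\sum_i a_i\phi_{E_i}^\hyb+O(\e_t)$ so that continuity of all hybrid model functions is equivalent to continuity of the hybrid log maps. One small sharpening: pulling back $D$ together with its metric to a higher model leaves $\phi_D^\hyb$ \emph{exactly} unchanged (not just up to $O(\e_t)$), so the only genuine $O(\e_t)$ ambiguity enters in the linearity step, as in the paper.
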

Again, the implicit choice of metrics is immaterial in (ii), since $\Log_\cX$ is unique up to $O(\e_t)$. 

\begin{proof} According to Proposition~\ref{prop:hybchar}, it suffices to see that (ii) is equivalent to the continuity of all hybrid model functions, assuming (i) holds. Each such function is of the form $\phi_D^\hyb$ for a vertical $\Q$-Cartier divisor $D$ on a model $\cX$ and a smooth metric on $\cO_\cX(D)$. Pulling back $D$ and the metric to a higher model does not modify $\phi_D^\hyb$, and we may thus assume that $\cX$ is snc. Then $D=\sum_{i\in I} a_i E_i$ with $a_i\in\Q$, and hence $\phi_D^\hyb=\sum_i a_i \phi^\hyb_{E_i}+O(\e_t)$. The continuity of the hybrid log map $\Log_\cX=(\phi_{E_i}^\hyb)_{i\in I}$ is thus equivalent to that of $\phi_D^\hyb$ for all $D\in\VCar(\cX)$, and the result follows. 
\end{proof}

Hybrid log maps admit a more concrete description in terms of local log maps. To this end, pick $x\in\cX_0$. Denote by $J_x\subset I$ the set of $E_i$'s containing $x$, and by $E_x:=E_{J_x}$ he unique stratum containing $x$ in its relative interior, with dual face $\D_x:=\D_{J_x}$. 

\begin{defi}\label{defi:adapted} An \emph{adapted chart} at $x$ is defined as a holomorphic coordinate chart 
$$
\cU\simeq\DD^{n+1}=\DD^{J_x}\times\DD^{J_x^c}
$$
of $\cX$ centered at $x$ such that $t=\prod_{i\in J_x} z_i^{b_i}$, with $z_i$ a local equation of $E_i$. 
\end{defi}
The fibers $\cU_t=\cU\cap X_t$, $t\ne 0$, are thus modelled on the product of the toric model $Z_t$ of~\S\ref{sec:toric} with $\DD^{J_x^c}$, while
$$
\cU_0\cap E_{J}\simeq\{0\}\times\DD^{J^c}\subset\DD^{J}\times\DD^{J^c}=\DD^{n+1}
$$
for any stratum $E_{J}$ containing $E_{J_z}$, \ie $J\subset J_x$. We define the associated \emph{local log map} 
$$
\Log_t\colon\cU_t\to\rsig_x
$$
as the map with components $\frac{\log|z_i|}{\log|t|}$, $i\in J_x$, \ie the pullback of the toric log map $\Log_t\colon Z_t\to\R^{J_x}$ from~\S\ref{sec:toric}. 

For each $i\in J_x$, we have $\log|\sigma_i|_{\Psi_i}=\log|z_i|+O(1)$ locally uniformly near each point of $\cU_0$, and hence 
\begin{equation}\label{equ:logloc}
\Log_\cX=\Log_t+O(\e_t)
\end{equation}
on $\cU_t$. This implies:

\begin{lem}\label{lem:contained} Assume $\dim\D_x=n$, \ie $E_x=\{x\}$, and pick a compact subset $\Sigma\subset\rsig_x$. Then $\Log_\cX^{-1}(\Sigma)\cap X_t$ is contained in $\cU_t$ for all $t$ small enough. 
\end{lem}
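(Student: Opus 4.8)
The plan is to argue by contradiction, combining the compactness of the hybrid space with the description of centers through the non-Archimedean log map.

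Suppose the assertion fails. Then for every $r>0$ there is some $t$ with $0<|t|<r$ and a point of $\Log_\cX^{-1}(\Sigma)\cap X_t$ lying outside $\cU_t$, so I would first produce a net $(x_j)$ with $x_j\in X_{t_j}$, $t_j\to 0$, $\Log_\cX(x_j)\in\Sigma$, and $x_j\notin\cU_{t_j}$. Fixing a closed disc $\bar\DD_r\subset\DD$, the map $\pi\colon X^\hyb\to\DD$ is proper (Proposition~\ref{prop:hybcomp}, using that $X_K$ is projective), and so is $\pi\colon\cX\to\DD$; hence $\pi^{-1}(\bar\DD_r)$ is compact both in $X^\hyb$ and in $\cX$. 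Passing to a subnet, I may then assume $\Log_\cX(x_j)\to w$ in the compact set $\Sigma\subset\rsig_x$ and $x_j\to v$ in $X^\hyb$, with $v\in X_0$ since $\pi(x_j)=t_j\to 0$.

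The key step is to locate the center $c_\cX(v)$. By continuity of $\Log_\cX\colon X^\hyb\to H_\cX$ together with the fact that its restriction to $X_0$ is the non-Archimedean log map, $w=\Log_\cX(v)=(v(E_i))_{i\in I}$; and since $w$ lies in the open face $\rsig_x=\mathring{\D}_{J_x}$, this means $v(E_i)>0$ for $i\in J_x$ and $v(E_i)=0$ for $i\in I\setminus J_x$. Invoking the characterization of centers in~\S\ref{sec:modelfunc} (namely $v(E_i)>0$ iff $c_\cX(v)\in E_i$), the center lies in $\bigcap_{i\in J_x}E_i=E_{J_x}$ and in none of the remaining components, i.e.\ $c_\cX(v)\in\rE_{J_x}$. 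Here the hypothesis $\dim\D_x=n$ enters decisively: it forces $E_x=E_{J_x}=\{x\}$, whence $c_\cX(v)=x$ and $Z_\cX(v)=\{x\}$.

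Finally I would invoke Lemma~\ref{lem:center}: every limit point of $(x_j)$ in $\cX$ for the complex topology lies in $Z_\cX(v)=\{x\}$. Since $(x_j)$ is eventually contained in the compact set $\pi^{-1}(\bar\DD_r)\subset\cX$, it has at least one such limit point, and all of them equal $x$; as a net in a compact Hausdorff space whose cluster points all coincide converges to their common value, we get $x_j\to x$ in $\cX$. But $\cU$ is an open neighborhood of $x$ in $\cX$, so $x_j\in\cU$ for $j$ large, hence $x_j\in\cU\cap X_{t_j}=\cU_{t_j}$, contradicting the choice of $(x_j)$. The main obstacle is the center computation in the third paragraph: one must use $\dim\D_x=n$ exactly to collapse $\rE_{J_x}$ to the single point $x$, while keeping careful track of the equivalence $v(E_i)>0\Leftrightarrow c_\cX(v)\in E_i$ at the level of scheme points and supports; the surrounding extraction of limits is routine, the only delicate topological point being that a net with a unique cluster point in a compact Hausdorff space converges. (Note that the local comparison~\eqref{equ:logloc} is not needed for this statement, only for its intended use.)
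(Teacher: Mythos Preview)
Your argument is correct. The paper gives no explicit proof, only the phrase ``This implies:'' after~\eqref{equ:logloc}, so any comparison is to an implicit argument. The route the paper presumably has in mind is more elementary than yours: on $X_t$ the $i$-th component of $\Log_\cX$ is $\tfrac{\log|\sigma_i|_{\Psi_i}}{\log|t|}$, and the common zero locus of the functions $|\sigma_i|_{\Psi_i}$ for $i\in J_x$ on $\cX$ is $E_{J_x}=\{x\}$; by compactness of $\pi^{-1}(\bar\DD_r)\setminus\cU$ one has $\max_{i\in J_x}|\sigma_i|_{\Psi_i}\ge c>0$ there, so any $y\in X_t\setminus\cU_t$ satisfies $\phi_{E_i}^\hyb(y)\le\tfrac{\log c}{\log|t|}\to 0$ for some $i\in J_x$, which forces $\Log_\cX(y)\notin\Sigma$ for $|t|$ small since $\Sigma\Subset\rsig_x$ stays at positive distance from the boundary. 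Your approach---extracting a hybrid limit $v\in X_0$, reading off $c_\cX(v)=x$ from $\Log_\cX(v)\in\rsig_x$, and then invoking Lemma~\ref{lem:center} plus the unique-cluster-point-in-a-compact criterion to get $x_j\to x$ in $\cX$---is a valid and conceptually pleasant illustration of how the hybrid and complex topologies interact, but it imports more machinery (Proposition~\ref{prop:hybcomp}, Lemma~\ref{lem:center}, net arguments) than the statement needs. Your closing remark is on the mark: the relevant input is really the global vanishing behaviour of the $|\sigma_i|_{\Psi_i}$, not the local comparison~\eqref{equ:logloc} itself.
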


For later use we also note: 
\begin{lem}\label{lem:retrlim} For all $f\in \Cz(X^\hyb)$ and $g\in \Cz(\D_x)$ we have 
$$
\limsup_{t\to 0}\sup_{\cU_t}|f-g\circ\Log_t|\le\sup_{p_\cX^{-1}(\D_x)}|f-g\circ p_\cX|,
$$
with $p_\cX\colon X_0\to\D_\cX$ the canonical retraction map. 
\end{lem}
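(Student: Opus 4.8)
The plan is to reduce the statement to the convergence behavior encoded by the hybrid and non-Archimedean log maps, exploiting that $g\circ\Log_t$ on $\cU_t$ is (up to $O(\e_t)$) the restriction of the hybrid continuous function $g\circ\Log_\cX$ by~\eqref{equ:logloc}. First I would observe that, since $g$ is uniformly continuous on the compact simplex $\D_x$ and $\Log_\cX=\Log_t+O(\e_t)$ on $\cU_t$, we have $\sup_{\cU_t}|g\circ\Log_t-g\circ\Log_\cX|\to 0$ as $t\to 0$; thus it suffices to prove
$$
\limsup_{t\to 0}\sup_{\cU_t}|f-g\circ\Log_\cX|\le\sup_{p_\cX^{-1}(\D_x)}|f-g\circ p_\cX|.
$$
Now both $f$ and $g\circ\Log_\cX$ are continuous on the compact space $\pi^{-1}(\bar\DD_r)\subset X^\hyb$ (for $r$ small enough that $\cX$ is defined over $\bar\DD_r$), so $h:=f-g\circ\Log_\cX$ is a continuous function on a compact Hausdorff space.

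The key step is a standard compactness argument for the $\limsup$ of a sup of a continuous function along a degenerating family. Suppose the inequality fails: then there is $\delta>0$, a net $t_j\to 0$, and points $x_j\in\cU_{t_j}$ with $|h(x_j)|\ge\sup_{p_\cX^{-1}(\D_x)}|f-g\circ p_\cX|+\delta$ for all $j$. By properness of $\pi\colon X^\hyb\to\DD$ (Proposition~\ref{prop:hybcomp}), after passing to a subnet the $x_j$ converge in $X^\hyb$ to some $v\in X_0$; moreover $v=\lim_j x_j$ lies in $\overline{\Log_\cX^{-1}(\D_x)}$, and since $\Log_\cX$ is continuous and $\Log_\cX(x_j)\in\overline{\rsig_x}=\D_x$ for $j$ large (as $x_j\in\cU_{t_j}$, whose image under the local log map lies in $\rsig_x$, cf.~Lemma~\ref{lem:contained} and the discussion of adapted charts), we get $\Log_\cX(v)=\Log_\cX|_{X_0}(v)\in\D_x$, i.e.\ $\Log_\cX(v)=\Log_\cX(p_\cX(v))$. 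By definition of the retraction, $p_\cX(v)\in\Sk(\cX)$ has the same image under $\Log_\cX$ as $v$, so $v\in p_\cX^{-1}(\D_x)$ provided I check that $p_\cX(v)$ indeed corresponds to a point of $\D_x$; this holds because $p_\cX=\val_\cX\circ\Log_\cX$ and $\val_\cX(\D_x)\subset\Sk(\cX)$ is the image of the face $\D_x$.

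Finally, passing to the limit in $|h(x_j)|\ge C+\delta$: by continuity of $f$ on $X^\hyb$ we have $f(x_j)\to f(v)$, and since $\Log_\cX(x_j)\to\Log_\cX(v)=\Log_\cX(p_\cX(v))$ with $g$ continuous, $g(\Log_\cX(x_j))\to g(\Log_\cX(p_\cX(v)))=(g\circ p_\cX)(v)$ — here I use that $g\circ p_\cX$ on $X_0$ equals $g\circ\Log_\cX|_{X_0}$ under the identification $\D_\cX\simeq\Sk(\cX)$, and that on $\D_x\subset\D_\cX$ the two agree. Hence $|h(x_j)|\to|f(v)-(g\circ p_\cX)(v)|\le\sup_{p_\cX^{-1}(\D_x)}|f-g\circ p_\cX|=C$, contradicting $|h(x_j)|\ge C+\delta$. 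The main obstacle I anticipate is the bookkeeping in the previous paragraph, namely verifying carefully that the $\limsup$-point $v$ lands in $p_\cX^{-1}(\D_x)$ rather than merely in $p_\cX^{-1}(\D_\cX)$ — this requires knowing that $\Log_\cX(x_j)$ stays in the closed face $\D_x$, which is exactly what the adapted-chart structure of $\cU_t$ and~\eqref{equ:logloc} provide, since the local log map has image in $\overline{\rsig_x}=\D_x$.
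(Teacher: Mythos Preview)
Your proposal is correct and follows essentially the same route as the paper: reduce $\Log_t$ to $\Log_\cX$ via~\eqref{equ:logloc}, then use compactness of $X^\hyb$ over a closed disc together with continuity of $f-g\circ\Log_\cX$ to control the $\limsup$, and finally identify the central-fiber contribution via $\Log_\cX|_{X_0}=p_\cX$ (up to the $\val_\cX$-identification). The paper states the compactness step in one line as $\limsup_{t\to 0}\sup_{F\cap X_t}|f-g\circ\Log_\cX|\le\sup_{F\cap X_0}|f-g\circ\Log_\cX|$ with $F:=\Log_\cX^{-1}(\D_x)$, whereas you unfold it as a subnet-extraction argument; one small point you left implicit but the paper makes explicit is that $g$ must first be extended continuously from $\D_x$ to $H_\cX$, so that $g\circ\Log_\cX$ is actually defined on $\cU_t$ (where $\Log_\cX$ takes values only $O(\e_t)$-close to $\D_x$, not in it).
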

\begin{proof} Set $F:=\Log_\cX^{-1}(\D_x)\subset X^\hyb$, and pick any continuous extension of $g$ to $H_\cX$. Since $f$ and $g\circ\Log_\cX$ are continuous on $X^\hyb$, we have 
$$
\limsup_{t\to 0}\sup_{F\cap X_t}|f-g\circ\Log_\cX|\le\sup_{F\cap X_0}\sup|f-g\circ\Log_\cX|, 
$$
and the result follows using~\eqref{equ:logloc} and $\Log_\cX|_{X_0}=p_\cX$. 
\end{proof}

%
%
%%%%%%%%%%%%%%%%%%%%%%%%%%%%%%%%%%%%%%%%%%%%
\section{Hybrid convergence of measures}\label{sec:cvmeas}
As above, $\pi\colon X\to\DD^\times$ denotes the projective meromorphic degeneration associated to a smooth projective variety $X_K$. In this section we study the weak convergence in the hybrid space $X^\hyb=X\coprod X_0$ of measures on the complex fibers $X_t$ to a measure on the Berkovich space $X_0$. In particular, we provide a detailed treatment of~\cite{konsoib}, which deals with families of smooth volumes with analytic singularities, with Calabi--Yau degenerations as a main special case.

%
%
%%%%%%%%%%%%%%%%%%%%%%%%%%%%%%%%%%%%%%%%%%%%
\subsection{General convergence criteria}
In what follows, we consider a family of measures\footnote{All measures in these notes are Radon measures, and are also positive unless otherwise specified.}  $\mu_t$ on the fibers $X_t$, $t\ne 0$, of the degeneration $X\to\DD^\times$. As a consequence of Propositions~\ref{prop:hybchar}, \ref{prop:logchar}, weak convergence in the hybrid space can be characterized as follows: 

\begin{lem}\label{lem:cvmeas} Given a measure $\mu_0$ on $X_0$, the following are equivalent:
\begin{itemize}
\item[(i)] $\mu_t\to\mu_0$ weakly in $X^\hyb$ as $t\to 0$; 
\item[(ii)] $\int_{X_t}\phi_D^\hyb\,\mu_t\to\int_{X_0}\phi_D^\hyb\,\mu_0$ for every hybrid model function $\phi_D^\hyb$;
\item[(iii)] for each snc model $\cX$, $(\Log_\cX)_\star\mu_t\to(\Log_\cX)_\star\mu_0$ weakly in $H_\cX$.
\end{itemize}
\end{lem}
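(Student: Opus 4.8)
The plan is to fix $r\in(0,1)$ small enough that $Y:=\pi^{-1}(\bar\DD_r)\subset X^\hyb$ is compact Hausdorff (Proposition~\ref{prop:hybcomp}), and to note that each $\mu_t$ ($0<|t|\le r$) and $\mu_0$ is then a finite positive measure on $Y$, so that weak convergence in $X^\hyb$ as $t\to 0$ is just weak-$\star$ convergence in $\Cz(Y)^\star$. I would then prove the cycle (i)$\Rightarrow$(iii)$\Rightarrow$(ii)$\Rightarrow$(i). A running remark used throughout: for any snc model $\cX$ the hybrid log map $\Log_\cX\colon X^\hyb\to H_\cX$ is continuous, so $K:=\Log_\cX(Y)$ is a fixed compact subset of $H_\cX$ containing $\D_\cX=\Log_\cX(X_0)$ and all $\Log_\cX(X_t)$, $|t|\le r$; this keeps the non-compactness of $H_\cX$ harmless.

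For (i)$\Rightarrow$(iii) I would fix an snc model $\cX$ and simply push forward: given $g\in C^0_c(H_\cX)$, the function $g\circ\Log_\cX$ is continuous on $Y$, hence extends to $C^0_c(X^\hyb)$, and (i) gives $\int g\,d(\Log_\cX)_\star\mu_t\to\int g\,d(\Log_\cX)_\star\mu_0$. For (iii)$\Rightarrow$(ii) I would take a hybrid model function $\phi^\hyb_D$, reduce to the snc case (pulling $D$ back to an snc model dominating the given one changes $\phi^\hyb_D$ only by $O(\e_t)$ on $X_t$), so $D=\sum_{i\in I}a_iE_i$ and, as in the proof of Proposition~\ref{prop:logchar}, $\phi^\hyb_D=\ell\circ\Log_\cX+O(\e_t)$ on $X_t$ with $\ell(w)=\sum_i a_iw_i$. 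Since every $(\Log_\cX)_\star\mu_t$ ($|t|\le r$) and $(\Log_\cX)_\star\mu_0$ is carried by $K$, testing (iii) against a cutoff $\equiv 1$ on $K$ gives $\sup_{|t|\le r}\mu_t(X_t)<\infty$, testing it against a compactly supported function agreeing with $\ell$ on $K$ gives $\int(\ell\circ\Log_\cX)\,\mu_t\to\int(\ell\circ\Log_\cX)\,\mu_0$, and the $O(\e_t)$ term contributes at most $O(\e_t)\sup_t\mu_t(X_t)\to 0$; combining yields (ii).

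The substantive step is (ii)$\Rightarrow$(i). First I would note that choosing the smooth metric on $\cO_\cX(\cX_0)\simeq\cO_\cX$ under which the canonical section corresponds to $t$ makes $\phi^\hyb_{\cX_0}\equiv 1$ on $X^\hyb$; then (ii) with $D=\cX_0$ gives $\mu_t(X_t)\to\mu_0(X_0)$, so after shrinking $r$ we may assume $\sup_{|t|\le r}\mu_t(X_t)=:M<\infty$. By Banach--Alaoglu the set of positive measures on $Y$ of mass $\le M$ is weak-$\star$ compact, so it suffices to show that every weak-$\star$ limit point $\nu$ of the net $(\mu_t)_{t\to 0}$ equals $\mu_0$. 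The key geometric point is that $\nu$ is carried by $X_0$: for any $\chi\in C^0_c(X\cap Y)$ with $0\le\chi\le 1$ one has $\supp\chi\subset\pi^{-1}(\{s\le|t|\le r\})$ for some $s>0$, so $\int\chi\,\mu_t=0$ once $|t|<s$ and hence $\int\chi\,d\nu=0$; as $\chi$ ranges over such functions this forces $\nu(X\cap Y)=0$, so $\nu$ is a finite measure on $X_0$. Then for every hybrid model function $\phi^\hyb_D$, using $\supp\nu\subset X_0$ and the subnet convergence, $\int_{X_0}\phi_D\,\nu=\int_Y\phi^\hyb_D\,d\nu=\lim_t\int_{X_t}\phi^\hyb_D\,\mu_t=\int_{X_0}\phi_D\,\mu_0$ by (ii). Since every model function on $X_0$ is the restriction of a hybrid model function, and model functions span a dense subspace of $\Cz(X_0)$ (Proposition~\ref{prop:modelfunc}), we get $\nu=\mu_0$, and (i) follows.

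I expect the real difficulty to be concentrated in (ii)$\Rightarrow$(i), and specifically in the fact that one \emph{cannot} conclude by a naive density argument on $Y$: on the complex fibers the hybrid model functions equal $\log|\sigma_D|_\Psi/\log|t|$ and depend only on finitely many coordinates of a log map, so they are ``too flat'' to span a dense subspace of $\Cz(Y)$. The escape-to-$X_0$ argument is what circumvents this, by first forcing any limit measure onto the compact space $X_0$, where density of model functions (through $\DFS(X_0)$, Proposition~\ref{prop:modelfunc}) does hold. The rest is bookkeeping: controlling the $O(\e_t)$ discrepancies from the auxiliary metrics and ensuring all pushforwards to $H_\cX$ sit inside a fixed compact set.
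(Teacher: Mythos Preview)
Your proof is correct and essentially matches what the paper has in mind. The paper does not give an explicit proof of this lemma; it simply states that the equivalence is ``a consequence of Propositions~\ref{prop:hybchar}, \ref{prop:logchar}'', and you have carefully unpacked what this means. Your cycle (i)$\Rightarrow$(iii)$\Rightarrow$(ii)$\Rightarrow$(i), the reduction to the compact set $Y=\pi^{-1}(\bar\DD_r)$, the use of Banach--Alaoglu together with the observation that any limit measure is carried by $X_0$, and the final appeal to density of model functions in $\Cz(X_0)$ (Proposition~\ref{prop:modelfunc}) are exactly the right ingredients. Your identification of the genuine content---that hybrid model functions are too coarse to be dense in $\Cz(Y)$, so one must first force limit measures onto $X_0$---is precisely the point hidden behind the paper's one-line citation.
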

Recal that $\Log_\cX\colon X^\hyb\to H_\cX$ denotes the hybrid log map, uniquely defined up to $O(\e_t)$, whose restriction $\Log_\cX\colon X_0\to\D_\cX$ is canonically defined and admits a canonical section $\val_\cX\colon\D_\cX\hto X_0$ with image the skeleton $\Sk(\cX)$. The case where the limit measure $\mu$ is supported in a skeleton can then be characterized as follows:

\begin{lem}\label{lem:cvmeas2} Assume given a compact set $\Sigma\subset X_0$ such that, for any sufficiently high snc model $\cX$, we have
\begin{itemize}
\item[(a)] $\Sigma$ is contained in $\Sk(\cX)$;
\item[(b)] $(\Log_\cX)_\star\mu_t$ converges weakly in $H_\cX$ to a measure $\sigma_\cX$ with support in $\Log_\cX(\Sigma)\subset\D_\cX$. 
\end{itemize}
Then $\mu_0:=(\val_\cX)_\star\sigma_\cX$ is independent of $\cX$, and $\mu_t\to\mu_0$ weakly in $X^\hyb$. 
\end{lem}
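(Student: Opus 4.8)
The plan is to build the candidate limit measure from a single sufficiently high snc model, show it is independent of that model, and then deduce hybrid convergence from Lemma~\ref{lem:cvmeas}. So I would first fix a sufficiently high snc model $\cX$, so that (a) and (b) hold for it. Since $\Log_\cX\colon X_0\to\D_\cX$ admits the continuous section $\val_\cX$ (Proposition~\ref{prop:valglob}), and $\sigma_\cX$ is a finite measure supported on the compact set $\Log_\cX(\Sigma)\subset\D_\cX$, the push-forward $\mu_0:=(\val_\cX)_\star\sigma_\cX$ is a well-defined finite Radon measure on $X_0$. As $\Sigma\subset\Sk(\cX)$, the retraction $p_\cX=\val_\cX\circ\Log_\cX$ restricts to the identity on $\Sigma$, so $\val_\cX(\Log_\cX(\Sigma))=\Sigma$ and $\supp\mu_0\subset\Sigma$.

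Next I would prove that $\mu_0$ does not depend on the choice of sufficiently high model. Using that the poset of snc models is inductive, it suffices to compare $\cX$ with a sufficiently high snc model $\cX'$ dominating it. Applying (b) to both $\cX$ and $\cX'$, and noting from \eqref{equ:hyblogcomp} that $(\Log_\cX)_\star\mu_t$ and $(\pi_{\cX,\cX'})_\star(\Log_{\cX'})_\star\mu_t$ have the same weak limit (the $O(\e_t)$-term not affecting it, and $(\pi_{\cX,\cX'})_\star(\Log_{\cX'})_\star\mu_t\to(\pi_{\cX,\cX'})_\star\sigma_{\cX'}$), one gets $\sigma_\cX=(\pi_{\cX,\cX'})_\star\sigma_{\cX'}$. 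On $X_0$ one also has the exact relation $\Log_\cX=\pi_{\cX,\cX'}\circ\Log_{\cX'}$ (see~\S\ref{sec:snc}), so for every $v\in\Sigma\subset\Sk(\cX)\cap\Sk(\cX')$,
\[
\val_\cX\bigl(\pi_{\cX,\cX'}(\Log_{\cX'}(v))\bigr)=\val_\cX(\Log_\cX(v))=p_\cX(v)=v=p_{\cX'}(v)=\val_{\cX'}(\Log_{\cX'}(v)),
\]
i.e.\ $\val_\cX\circ\pi_{\cX,\cX'}=\val_{\cX'}$ on $\Log_{\cX'}(\Sigma)=\supp\sigma_{\cX'}$. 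Combining the two facts,
\[
(\val_{\cX'})_\star\sigma_{\cX'}=(\val_\cX\circ\pi_{\cX,\cX'})_\star\sigma_{\cX'}=(\val_\cX)_\star(\pi_{\cX,\cX'})_\star\sigma_{\cX'}=(\val_\cX)_\star\sigma_\cX,
\]
so the definition of $\mu_0$ is unambiguous.

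To conclude, I would invoke Lemma~\ref{lem:cvmeas}: it is enough to check that $(\Log_{\cX_1})_\star\mu_t\to(\Log_{\cX_1})_\star\mu_0$ weakly in $H_{\cX_1}$ for every snc model $\cX_1$. When $\cX_1$ is sufficiently high, $\Log_{\cX_1}\circ\val_{\cX_1}=\id$ on $\D_{\cX_1}$ and $\supp\sigma_{\cX_1}\subset\D_{\cX_1}$ yield
\[
(\Log_{\cX_1})_\star\mu_0=(\Log_{\cX_1}\circ\val_{\cX_1})_\star\sigma_{\cX_1}=\sigma_{\cX_1}=\lim_{t\to 0}(\Log_{\cX_1})_\star\mu_t
\]
by (b). For an arbitrary snc model $\cX_1$, I would pick a sufficiently high snc model $\cX$ dominating it and reduce to this case via $\Log_{\cX_1}=\pi_{\cX_1,\cX}\circ\Log_\cX+O(\e_t)$ on $X_t$ and $\Log_{\cX_1}=\pi_{\cX_1,\cX}\circ\Log_\cX$ on $X_0$, which give $(\Log_{\cX_1})_\star\mu_t\to(\pi_{\cX_1,\cX})_\star\sigma_\cX=(\pi_{\cX_1,\cX})_\star(\Log_\cX)_\star\mu_0=(\Log_{\cX_1})_\star\mu_0$. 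This verifies (iii) in Lemma~\ref{lem:cvmeas}, hence (i).

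The heart of the argument is the simple observation that $\val_\cX$ is a section of $\Log_\cX$, together with hypothesis (a); everything else is formal juggling of push-forwards. The step I expect to require the most care is the passage to weak limits in the two places where it occurs — showing that the $O(\e_t)$-discrepancies between the hybrid log maps of comparable models disappear, and that push-forward under the (non-proper) affine surjections $\pi_{\cX,\cX'}$ is compatible with these limits. Both are routine once one notes that the measures $(\Log_\cX)_\star\mu_t$ are tight (their weak limits have compact support, and the masses $\mu_t(X_t)$ remain bounded, as is implicit in (b)) and that the relevant test functions can be taken uniformly continuous; this is what I would be careful to spell out in a complete write-up.
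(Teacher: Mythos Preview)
Your proposal is correct and follows essentially the same approach as the paper: establish $(\pi_{\cX,\cX'})_\star\sigma_{\cX'}=\sigma_\cX$ from~\eqref{equ:hyblogcomp}, use that $p_\cX$ is the identity on $\Sigma\subset\Sk(\cX)$ to deduce $(\val_{\cX'})_\star\sigma_{\cX'}=(\val_\cX)_\star\sigma_\cX$, and conclude via Lemma~\ref{lem:cvmeas}. The paper's proof is much terser (it simply writes ``the result follows'' after the independence step), whereas you spell out the verification of condition~(iii) of Lemma~\ref{lem:cvmeas} for an arbitrary snc model and flag the tightness issue needed to handle the $O(\e_t)$ terms; this extra care is appropriate but does not constitute a different argument.
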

\begin{proof} For any $\cX'$ dominating $\cX$, \eqref{equ:hyblogcomp} shows that $(\pi_{\cX,\cX'})_\star\sigma_{\cX'}=\sigma_\cX$. Thus $(\val_{\cX'})_\star\sigma_{\cX'}$ is a measure supported in $\Sigma\subset\Sk(\cX)\subset\Sk(\cX')$, and mapped to $(\val_\cX)_\star\sigma_\cX$ by the retraction $p_\cX\colon X_0\to\Sk(\cX)$. The result follows. 
\end{proof}

%
%
%%%%%%%%%%%%%%%%%%%%%%%%%%%%%%%%%%%%%%%%%%%%
\subsection{From complex to hybrid convergence}
In practice for us, condition (iii) of Lemma~\ref{lem:cvmeas} will be obtained from the following local criterion, inspired by~\cite[Theorem~B]{PS}. 

\begin{lem}\label{lem:PS} Pick an snc model $\cX$, and assume that:
\begin{itemize}
\item[(i)] $\mu_t$ converges weakly in the complex space $\cX$ to a measure $\mu_{\cX_0}$ on $\cX_0$;
\item[(ii)] for each stratum $E_J$ such that $\mu_{\cX_0}(\rE_J)>0$, there exists a probability measure $\sigma_J$ on $\mathring{\D}_J$ such that, for some (hence any) adapted chart $\cU$ at some point $x\in\rE_J$ with associated local log map $\Log_t\colon\cU_t\to\rsig_J$ and any $\chi\in C^0_c(\cU)$ we have
$$
(\Log_t)_\star(\chi\mu_t)\to\left(\int_\cU\chi\,\mu_{\cX_0}\right)\sigma_J
$$
weakly in $\D_J$. 
\end{itemize}
Then $(\Log_\cX)_\star\mu_t$ converges weakly in $H_\cX$ to $\sigma_\cX:=\sum_J\mu_{\cX_0}(\rE_J)\sigma_J$, where the sum ranges over all strata such that $\mu_{\cX_0}(\rE_J)>0$. 
\end{lem}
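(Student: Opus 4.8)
The plan is to verify condition (iii) of Lemma~\ref{lem:cvmeas}, namely that $(\Log_\cX)_\star\mu_t$ converges weakly in $H_\cX$ to $\sigma_\cX=\sum_J\mu_{\cX_0}(\rE_J)\sigma_J$. Since $H_\cX$ is a locally closed subset of $\R^I$ and the pushforward measures all have support in the compact polytope $\D_\cX$, it suffices to test against an arbitrary $g\in\Cz(\D_\cX)$, i.e.\ to show
$$
\int_{X_t}(g\circ\Log_\cX)\,\mu_t\longrightarrow\sum_J\mu_{\cX_0}(\rE_J)\int_{\D_J}g\,\sigma_J.
$$
First I would fix a finite open cover of the compact space $\cX_0$ by adapted charts: using Definition~\ref{defi:adapted}, around each point $x\in\cX_0$ (belonging to $\rE_{J_x}$) choose an adapted chart $\cU^{(x)}$, extract a finite subcover $\cU^{(1)},\dots,\cU^{(m)}$ of $\cX_0$, and pick a partition of unity $\chi_1,\dots,\chi_m\in C^0_c(\cX)$ subordinate to it with $\sum_\ell\chi_\ell\equiv 1$ on a neighborhood of $\cX_0$. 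Because all $\mu_t$ eventually concentrate near $\cX_0$ by hypothesis (i) (the total mass away from $\cX_0$ tending to $0$, as $\mu_t\to\mu_{\cX_0}$ weakly in $\cX$ with $\mu_{\cX_0}$ supported on $\cX_0$), we may decompose $\int_{X_t}(g\circ\Log_\cX)\mu_t=\sum_\ell\int(\chi_\ell\,g\circ\Log_\cX)\,\mu_t+o(1)$ and treat each chart separately.

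For a fixed chart $\cU=\cU^{(\ell)}$ adapted at a point of $\rE_J$, I would use~\eqref{equ:logloc}, which gives $\Log_\cX=\Log_t+O(\e_t)$ on $\cU_t$, together with the uniform continuity of (a continuous extension of) $g$, to replace $g\circ\Log_\cX$ by $g\circ\Log_t$ at the cost of an error $o(1)$ uniform over $\cU_t$. This reduces the chart contribution to $\int_{\cU_t}(\chi_\ell\,g\circ\Log_t)\,\mu_t$. Now I would invoke hypothesis (ii): if $\mu_{\cX_0}(\rE_J)=0$ then, since $\chi_\ell$ is compactly supported in $\cU$ and $\mu_t\to\mu_{\cX_0}$ in $\cX$, the mass $\int\chi_\ell\,\mu_t\to\int\chi_\ell\,\mu_{\cX_0}$ is supported away from $\rE_J$ in the limit; a short argument (covering a neighborhood of the part of $\cU_0$ hitting $\rE_J$ by a smaller cutoff) shows this term contributes $o(1)$ plus a bounded multiple of $\mu_{\cX_0}$-mass on strata $\rE_{J'}$ with $J\subsetneq J'$—these get reassigned to the correct summand via the compatibility of adapted charts (the stratum $\rE_{J'}$ for $J'\supset J$ lies in $\cU_0$ and carries its own $\sigma_{J'}$ by hypothesis (ii), because $\cU$ is also, after recentering, adapted at points of $\rE_{J'}$). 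If instead $\mu_{\cX_0}(\rE_J)>0$, then hypothesis (ii) applied with $\chi=\chi_\ell$ directly yields $(\Log_t)_\star(\chi_\ell\mu_t)\to(\int_\cU\chi_\ell\,\mu_{\cX_0})\,\sigma_J$ weakly in $\D_J$, hence $\int_{\cU_t}(\chi_\ell\,g\circ\Log_t)\,\mu_t\to(\int_\cU\chi_\ell\,\mu_{\cX_0})\int_{\D_J}g\,\sigma_J$.

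Summing over $\ell$ and bookkeeping the cutoffs: for each stratum $E_J$ with $\mu_{\cX_0}(\rE_J)>0$, the charts meeting $\rE_J$ contribute in total $\sum_{\ell:\,\cU^{(\ell)}\cap\rE_J\neq\emptyset}(\int_{\cU^{(\ell)}}\chi_\ell\,\mu_{\cX_0})\int_{\D_J}g\,\sigma_J$, and since $\sum_\ell\chi_\ell\equiv1$ near $\cX_0\supset\rE_J$ and $\mu_{\cX_0}$ is supported on $\cX_0$, the coefficient sums to $\mu_{\cX_0}(\rE_J)$; the remaining charts and the $O(\e_t)$, off-$\cX_0$, and vanishing-stratum errors all go to $0$. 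This gives exactly $\int(g\circ\Log_\cX)\,\mu_t\to\sum_J\mu_{\cX_0}(\rE_J)\int_{\D_J}g\,\sigma_J$, proving (iii) and hence the lemma. The main obstacle I anticipate is the bookkeeping around strata: making rigorous the claim that mass sitting in deeper strata $\rE_{J'}$ ($J'\supsetneq J$) seen through a chart adapted at a point of the shallower $\rE_J$ gets assigned to $\sigma_{J'}$ rather than $\sigma_J$—this hinges on the fact that, near the relevant points, such a chart is simultaneously adapted to $E_{J'}$ (after permuting/relabeling coordinates and recentering), so that the "hence any" clause in hypothesis (ii) forces consistency between the $\sigma_{J'}$ computed in different charts. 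Everything else is a standard partition-of-unity and weak-convergence argument.
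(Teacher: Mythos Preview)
Your partition-of-unity strategy is natural, but the bookkeeping obstacle you flag is real and your proposed resolution does not work as stated. First, the inclusion goes the other way: an adapted chart $\cU$ centered at $x\in\rE_J$ only meets strata $\rE_{J'}$ with $J'\subset J$ (shallower, not deeper), since only the $E_i$ with $i\in J$ pass through $\cU$. Second, such a chart cannot simply be ``recentered'' to become adapted at a point of $\rE_{J'}$ with $J'\subsetneq J$: the local log maps have different targets ($\rsig_J$ versus $\rsig_{J'}$, of different dimensions), so hypothesis~(ii) for $J$ and for $J'$ are not directly comparable on the overlap. Concretely, when $\mu_{\cX_0}(\rE_J)>0$, hypothesis~(ii) attributes \emph{all} of $\int\chi_\ell\,\mu_{\cX_0}$---including any mass on $\rE_{J'}\cap\cU^{(\ell)}$---to the factor $\sigma_J$, whereas your final summation needs that mass to land on $\sigma_{J'}$. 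Your claim that ``the coefficient sums to $\mu_{\cX_0}(\rE_J)$'' therefore fails without further argument.

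The paper sidesteps this entirely by proving only a one-sided bound. It first observes that the total masses match, $\sigma_\cX(H_\cX)=\mu_{\cX_0}(\cX_0)=\lim_t\mu_t(X_t)$, so by weak compactness it suffices to show $\liminf_t\int(f\circ\Log_\cX)\,\mu_t\ge\int f\,\sigma_\cX$ for every $0\le f\in C^0_c(H_\cX)$. For each $J$ one then fixes a compact $\Sigma_J\subset\rE_J$, covers it by adapted charts with cutoffs $\chi_\a$ satisfying $0\le\sum_\a\chi_\a\le 1$ and $\equiv 1$ on $\Sigma_J$; since $f\ge 0$, one gets $\int(f\circ\Log_\cX)\mu_t\ge\sum_\a\int(f\circ\Log_\cX)\chi_\a\mu_t$, and hypothesis~(ii) plus~\eqref{equ:logloc} gives the limit $(\int\chi\,\mu_{\cX_0})\int_{\D_J}f\,\sigma_J\ge\mu_{\cX_0}(\Sigma_J)\int_{\D_J}f\,\sigma_J$. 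Because the $\Sigma_J$ for different $J$ are disjoint, one can choose all the cutoffs simultaneously with $\sum_{J,\a}\chi_{J,\a}\le 1$, sum over $J$, and then let $\Sigma_J\nearrow\rE_J$. The point is that for a lower bound it does not matter if $\int\chi_\a\,\mu_{\cX_0}$ over-counts mass from neighboring strata---one only uses $\int\chi_\a\,\mu_{\cX_0}\ge\mu_{\cX_0}(\Sigma_J\cap\supp\chi_\a)$. This is the missing idea in your argument.
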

Note that (ii) trivially holds when $E_J=E_i$ is a component of $\cX_0$, since $\D_J$ is then reduced to a point. 

\begin{proof} Since $\cX_0=\coprod_J\rE_J$, we have
$$
\sigma_\cX(H_\cX)=\sum_{\mu_{\cX_0}(\rE_J)>0}\mu_{\cX_0}(\rE_J)=\mu_{\cX_0}(\cX_0)=\lim_{t\to 0}\mu_t(X_t)=\lim_{t\to 0}\left((\Log_\cX)_\star\mu_t\right)(H_\cX). 
$$
By a standard weak compactness argument, it therefore suffices to show that each test function $0\le f\in C^0_c(H_\cX)$ satisfies $\liminf_{t\to 0}\int_{H_\cX} f\,(\Log_\cX)_\star\mu_t\ge\int_{H_\cX} f\,\sigma_\cX$, \ie
\begin{equation}\label{equ:liminfmes}
\liminf_{t\to 0}\int_{H_\cX}( f\circ\Log_\cX)\mu_t\ge\sum_J\mu_{\cX_0}(\rE_J)\int_{\D_J} f\,\sigma_J.
\end{equation}
Pick $J$ such that $\mu_{\cX_0}(\rE_J)>0$, and a compact subset $\Sigma_J\subset\rE_J$. We can then find finitely many adapted charts $\cU_\a$ for $\rE_J$ and $0\le\chi_\a\in C^\infty_c(\cU_\a)$ such that $\chi:=\sum_\a\chi_\a$ satisfies $0\le\chi\le 1$ and $\chi\equiv 1$ on $\Sigma_J$. Then 
$$
\int_{\Sigma_J}( f\circ\Log_\cX)\mu_t\ge\sum_\a\int_{\Sigma_J}( f\circ\Log_\cX)\chi_\a\mu_t. 
$$
Since $\Log_\cX=\Log_t+O(\e_t)$ uniformly on the support of $\chi_\a$ (see~\eqref{equ:logloc}), (ii) yields for each $\a$ 
$$
\int_{\Sigma_J}( f\circ\Log_\cX)\chi_\a\mu_t\to\left(\int_{\Sigma_J}\chi_\a\,\mu_{\cX_0}\right)\left(\int_{\D_J} f\,\sigma_J\right).
$$
Summing over $\a$, we get
$$
\liminf_{t\to 0}\int_{H_\cX}( f\circ\Log_\cX)\mu_t\ge\left(\int_{\cX_0}\chi\mu_{\cX_0}\right)\left(\int_{\D_J} f\,\sigma_J\right)\ge\mu_{\cX_0}(\Sigma_J)
\int_{\D_J} f\,\sigma_J. 
$$
Taking the supremum over $\Sigma_J$ and summing over $J$ yields~\eqref{equ:liminfmes}, and concludes the proof. 
\end{proof}

As a consequence, we recover the following result, originally due to Pille-Schneider~\cite[Theorem~B]{PS}, which will later be used to motivate the definition of the non-Archimedean Monge--Amp\`ere operator (see Theorem~\ref{thm:ddchyb}). 

\begin{prop}\label{prop:cxvshyb} Let $\mu_t$ be a family of measures on the fibers $X_t$, and assume given an snc model $\cX$ of $X$ such that: 
\begin{itemize}
\item[(i)] $\mu_t$ converges weakly in $\cX$ to a measure $\mu_{\cX_0}$ supported in $\cX_0$;
\item[(ii)] $\mu_{\cX_0}$ puts no mass on any nowhere dense Zariski closed subset of $\cX_0$. 
\end{itemize}
Then $\mu_t$ converges weakly in $X^\hyb$ to the atomic measure $\mu_0:=\sum_i\mu_{\cX_0}(E_i)\d_{v_{E_i}}$ where $E_i$ ranges over the irreducible components of $\cX_0$, with associated divisorial valuations $v_{E_i}\in X_0$. 
\end{prop}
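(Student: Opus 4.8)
The plan is to combine Lemma~\ref{lem:PS}, applied on $\cX$ and on higher snc models, with Lemma~\ref{lem:cvmeas2}. Set $\Sigma:=\{v_{E_i}:i\in I\}\subset\Sk(\cX)$, a finite (hence compact) set. Since the snc models dominating $\cX$ are cofinal and $\Sk(\cX)\subset\Sk(\cX')$ for every such $\cX'$, condition (a) of Lemma~\ref{lem:cvmeas2} holds automatically, and it suffices to establish condition (b): for every snc model $\cX'$ dominating $\cX$, the measures $(\Log_{\cX'})_\star\mu_t$ converge weakly in $H_{\cX'}$ to a measure supported in $\Log_{\cX'}(\Sigma)$. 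Writing $E'_i$ for the strict transform of $E_i$ in $\cX'$ — the unique component of $\cX'_0$ dominating $E_i$, which carries the same multiplicity $b_{E_i}$, so that its vertex $e'_i\in\D_{\cX'}$ satisfies $\val_{\cX'}(e'_i)=v_{E'_i}=v_{E_i}$ — the target set is $\Log_{\cX'}(\Sigma)=\{e'_i:i\in I\}$.

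For the base case $\cX'=\cX$: the assumption that $\mu_{\cX_0}$ charges no nowhere dense Zariski closed subset of $\cX_0$ forces $\mu_{\cX_0}(\rE_J)\le\mu_{\cX_0}(E_J)=0$ whenever $|J|\ge 2$, since $E_J$ then has positive codimension in $\cX_0$; in particular, as $E_i\setminus\rE_i$ is a finite union of such $E_J$'s, $\mu_{\cX_0}(\rE_i)=\mu_{\cX_0}(E_i)$. Thus the only strata carrying $\mu_{\cX_0}$-mass are the components $E_i$, for which $\D_{\{i\}}$ is a point so that hypothesis (ii) of Lemma~\ref{lem:PS} is vacuous. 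Lemma~\ref{lem:PS} then gives $(\Log_\cX)_\star\mu_t\to\sigma_\cX:=\sum_i\mu_{\cX_0}(E_i)\,\d_{e_i}$, supported on the vertices of $\D_\cX$.

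For $\cX'$ dominating $\cX$, with induced morphism $g\colon\cX'\to\cX$, I would argue along subsequences. Given $t_k\to 0$, properness of $\cX'$ over $\DD$ provides, after extraction, a weak limit $\mu_{t_k}\to\nu$ in $\cX'$ with $g_\star\nu=\mu_{\cX_0}$, so that $\nu$ is supported on $\cX'_0$. Pushing forward the mass hypothesis: if a component $E'$ of $\cX'_0$ does not dominate a component of $\cX_0$ (e.g. a component created by blowing up the central fibre), then $g(E')$ is a Zariski closed subset of $\cX_0$ of positive codimension, whence $\nu(E')\le\mu_{\cX_0}(g(E'))=0$; the same estimate shows that $\nu$ charges no stratum $E'_J$ with $|J|\ge 2$. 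Hence $\nu$ is concentrated on $\coprod_i\rE'_i$, so Lemma~\ref{lem:PS} applied to $\cX'$ along $(t_k)$ yields $(\Log_{\cX'})_\star\mu_{t_k}\to\sum_i\nu(\rE'_i)\,\d_{e'_i}$. Pushing this forward by the affine surjection $\pi_{\cX,\cX'}\colon H_{\cX'}\to H_\cX$ — legitimate since all these measures are supported in a fixed compact subset of $H_{\cX'}$ — and comparing with the base case through \eqref{equ:hyblogcomp} (the $O(\e_t)$ term disappearing in the limit), together with $\pi_{\cX,\cX'}(e'_i)=e_i$, gives $\sum_i\nu(\rE'_i)\,\d_{e_i}=\sigma_\cX$; since the $e_i$ are distinct vertices, $\nu(\rE'_i)=\mu_{\cX_0}(E_i)$ for every $i$, independently of the subsequence. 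Therefore $(\Log_{\cX'})_\star\mu_t\to\sum_i\mu_{\cX_0}(E_i)\,\d_{e'_i}$, supported in $\Log_{\cX'}(\Sigma)$, which is (b). Lemma~\ref{lem:cvmeas2} then yields $\mu_t\to\mu_0$ weakly in $X^\hyb$ with $\mu_0=(\val_{\cX'})_\star\big(\sum_i\mu_{\cX_0}(E_i)\,\d_{e'_i}\big)=\sum_i\mu_{\cX_0}(E_i)\,\d_{v_{E_i}}$.

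The main obstacle is this passage to higher models: only subsequential weak convergence of $\mu_t$ in $\cX'$ is available a priori, so one must show both that any subsequential limit $\nu$ inherits the `no mass on bad loci' property — handled by pushing forward along $g$ and using that the extra components of $\cX'_0$ map to higher-codimension loci of $\cX_0$ — and that the resulting limit of $(\Log_{\cX'})_\star\mu_t$ is independent of the subsequence, which is forced by its compatibility with $\sigma_\cX$ under $\pi_{\cX,\cX'}$. A routine but necessary verification is the identification $\val_{\cX'}(e'_i)=v_{E_i}$, i.e. that the strict transform of $E_i$ is the unique component of $\cX'_0$ lying over $E_i$ and retains the multiplicity $b_{E_i}$.
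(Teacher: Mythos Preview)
Your argument is correct and follows the same strategy as the paper: show that on every higher snc model $\cX'$ the limit measure is concentrated on the strict transforms $E'_i$, so that Lemma~\ref{lem:PS} applies trivially and yields convergence of $(\Log_{\cX'})_\star\mu_t$ to the expected atomic measure on vertices. The only organizational differences are that the paper asserts weak convergence of $\mu_t$ on $\cX'$ directly (``one easily checks'') where you supply a subsequence argument, and that the paper concludes via Lemma~\ref{lem:cvmeas} rather than Lemma~\ref{lem:cvmeas2}; both routes are equivalent here.
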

\begin{proof} Pick an snc model $\cX'$ dominating $\cX$. Using (ii), one easily checks that $\mu_t$, viewed as a measure on $\cX'$, converges weakly to the unique measure $\mu_{\cX'}$ on $\cX'_0$ that coincides with $\mu_0$ on the strict transform $E'_i$ of each component $E_i$ of $\cX_0$, and is zero elsewhere. In particular, a stratum $E'_J$ of $\cX'_0$ satisfies $\mu_{\cX'}(\rE'_J)>0$ iff $E'_J=E_i'$ for some $E_i$. As noted above, (ii) of Lemma~\ref{lem:PS} trivially holds in that case, and we infer $(\Log_{\cX'})_\star\mu_t\to\sum_i \mu_{\cX'}(E_i')\d_{e_i}=(\Log_{\cX'})_\star\mu_0$ (see Example~\ref{exam:embver}). We conclude by Lemma~\ref{lem:cvmeas}. 
\end{proof}

%
%
%
%%%%%%%%%%%%%%%%%%%%%%%%%%%%%%%%%%%%%%%%%%%%
\subsection{Families of volume forms with analytic singularities}\label{sec:volan}
Recall that, on any complex manifold $M$, there is a 1--1 correspondence, which we denote by
$$
\p\mapsto e^{2\p},
$$
between smooth Hermitian metrics $\p$ on the canonical bundle $K_M$ and (smooth, positive) volume forms on $M$. It satisfies 
\begin{equation}\label{equ:volform}
e^{2\p}=\frac{|\Om|^2}{|\Om|_\p^2}
\end{equation} 
for each local trivialization $\Om$ of $K_M$, where 
$$
|\Om|^2:=i^{(\dim M)^2}\Om\wedge\overline{\Om}
$$
is the associated volume form and $|\Om|_\p$ is the pointwise length of $\Om$ in the metric $\p$. 

\medskip

Following~\cite{konsoib}, we next consider a smooth family $(\sigma_t)$ of volume forms on the fibers $X_t$ of our degeneration $\pi\colon X\to\DD^\times$, assumed to have \emph{analytic singularities} at $t=0$, in the sense that the corresponding smooth Hermitian metric on the relative canonical bundle $K_{X/\DD^\times}$ extends to a smooth metric $\Psi$ on some model $\cK$ of $K_{X/\DD^\times}$, determined on a model $\cX$ of $X$ that can and will be assumed to be snc. In other words, $\sigma_t=e^{2\p_t}$ for a hybrid model metric $(\p_t)_{t\in\DD}$ on $K_{X/\DD^{\times}}$ with non-Archimedean limit the model metric $\p_0=\phi_\cK$, see~\S\ref{sec:hybmodmetr}.

\begin{exam}\label{exam:CY} Assume that $X$ is a \emph{Calabi--Yau degeneration}, in the sense that $K_{X/\DD^{\times}}\simeq\cO_X$ is trivial. A trivialization of the canonical bundle of $X_K$ induces a holomorphic family of holomorphic volume forms $\Om_t$ on the fibers $X_t$, and the associated family of smooth volume forms $\sigma_t:=|\Om_t|^2$ has analytic singularities at $t=0$ (with $\cK\simeq\cO_\cX$ equipped with the trivial metric). 
\end{exam}

Returning to the above general setup, we will prove: 

\begin{thm}\label{thm:konsoib}\cite[Theorem~A]{konsoib} The total mass of $\sigma_t$ satisfies 
$$
\sigma_t(X_t)\sim c|t|^{2\kappa}\e_t^{-d}
$$
with $c\in\R_{>0}$, $\kappa\in\Q$ and $d\in\{0,\dots,n\}$. Furthermore, the rescaled measure 
$$
\mu_t:=|t|^{-2\kappa}\e_t^d\sigma_t
$$
converges weakly both in $\cX$ and in $X^\hyb$, the limit in $X^\hyb$ being a Lebesgue-type measure on the top-dimensional part of a $d$-dimensional subcomplex $\D_\cK^\ess\subset\D_\cX\hto X_0$. 
 \end{thm}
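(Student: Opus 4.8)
The strategy is to reduce the global statement to the toric computation of \S\ref{sec:toric}, strata by strata, and then assemble the pieces via Lemma~\ref{lem:PS} and Lemma~\ref{lem:cvmeas2}. First I would set up notation: since $\sigma_t = e^{2\psi_t}$ for a hybrid model metric on $K_{X/\DD^\times}$ with limit $\psi_0 = \phi_\cK$, and $\cK$ is determined on the snc model $\cX$, I write $\cK = \sum_{i\in I} a_i E_i + (\text{horizontal part})$, relating this to the log discrepancies of the pair. The key numerical invariant is $\kappa := \min_i \{ (a_i+1)/b_i \}$ (the log-canonical-threshold-type quantity governing the leading order of $\int_{X_t}\sigma_t$), and $\D_\cK^\ess$ should be the subcomplex spanned by those strata $E_J$ for which $(a_i+1)/b_i = \kappa$ for all $i\in J$; its dimension is $d$. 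I would recall from \cite{KS,MN,NX} that this subcomplex, and the invariant $\kappa$, are independent of the snc model, which is what makes the limiting measure canonical.

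\textbf{Local analysis.} The heart is the local computation near a point $x\in \rE_J$. In an adapted chart $\cU \simeq \DD^{J}\times\DD^{J^c}$ with $t = \prod_{i\in J} z_i^{b_i}$, I would write $\sigma_t$ in coordinates: pulling back $|\Om|^2$ for a local trivialization $\Om$ of $K_\cX$ and dividing by $|dt/t|$ to get the relative form on $\cU_t$, the metric $\Psi$ contributes a factor $\prod_i |z_i|^{-2a_i}$ times a smooth nonvanishing function. Combining with the toric identity~\eqref{equ:Omt} (applied to the $Z_t$-factor) and integrating out the $S^1$-fibers of $\Log_t$ and the transverse $\DD^{J^c}$-directions, the pushforward $(\Log_t)_\star(\chi\mu_t)$ becomes, up to lower order, an integral over $\rsig_J$ of $\exp$ of an affine function of $w$ — concretely, after the rescaling $|t|^{-2\kappa}\e_t^d$, a Laplace-type integral $\int_{\rsig_J} e^{-\e_t^{-1}\ell_J(w)}(\cdots)\,dw$ where $\ell_J(w) = \sum_i (a_i+1-\kappa b_i) w_i \ge 0$ vanishes exactly on $\D_\cK^\ess \cap \D_J$. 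A Laplace/Watson-lemma argument then shows that if $\dim(\D_\cK^\ess\cap\D_J) = d$ this converges (after the stated normalization) to a nonzero multiple of Lebesgue measure on that face, giving both the asymptotic $\sigma_t(X_t)\sim c|t|^{2\kappa}\e_t^{-d}$ by summing over maximal strata, and condition (ii) of Lemma~\ref{lem:PS} with $\sigma_J$ a normalized Lebesgue measure; if $\dim < d$ the contribution is negligible after rescaling, so $\sigma_J$ is supported where it should be.

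\textbf{Globalization.} Having (i) (weak convergence in $\cX$ to $\mu_{\cX_0}$, a measure supported on the union of top-dimensional strata meeting $\D_\cK^\ess$, with the explicit density coming from the leading Laplace coefficient) and (ii) of Lemma~\ref{lem:PS}, I conclude that $(\Log_\cX)_\star\mu_t$ converges weakly in $H_\cX$ to a measure $\sigma_\cX$ supported in $\D_\cK^\ess \subset \D_\cX$. Since $\D_\cK^\ess = \Sk(\cX)$-restricted is contained in the skeleton and this holds for all sufficiently high snc models (using model-independence of $\kappa$ and $\D_\cK^\ess$, plus compatibility~\eqref{equ:hyblogcomp}), Lemma~\ref{lem:cvmeas2} applies with $\Sigma = \val_\cX(\D_\cK^\ess)$, yielding a well-defined limit $\mu_0 = (\val_\cX)_\star\sigma_\cX$ independent of $\cX$ and the convergence $\mu_t\to\mu_0$ in $X^\hyb$. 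The value $d$ is then $\dim\D_\cK^\ess$ and $\mu_0$ is Lebesgue-type on its top-dimensional part.

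\textbf{Main obstacle.} The delicate point is the local Laplace asymptotics: one must show that the transverse integrals (over $\DD^{J^c}$ and over the $S^1$-fibers, with the smooth nonvanishing prefactor) do not destroy the leading-order behavior, i.e.\ that the convergence $(\Log_t)_\star(\chi\mu_t) \to (\int\chi\,\mu_{\cX_0})\,\sigma_J$ is uniform enough and that the error terms $O(\e_t)$ in $\Log_\cX = \Log_t + O(\e_t)$ and in the metric comparison are genuinely subleading after multiplying by $\e_t^{-d}$ — this requires care since $\e_t^{-d}$ blows up. I would handle this by a dominated-convergence argument on the Laplace integral, using that the phase $\ell_J$ is affine with rational coefficients so its minimum locus is a rational polytope and the asymptotics are clean, and by noting that the $O(\e_t)$ perturbations shift $w$ by $O(\e_t)$, hence change $e^{-\e_t^{-1}\ell_J(w)}$ by a bounded factor. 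This also delivers the precise constant $c$ and rationality of $\kappa$.
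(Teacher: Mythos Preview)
Your proposal is correct and follows exactly the paper's route: a local Laplace-type computation in adapted charts (the paper's Theorem~\ref{thm:Laplace}, which implements the change of variables separating essential from non-essential directions that you describe heuristically), followed by globalization via Lemma~\ref{lem:PS} and Lemma~\ref{lem:cvmeas2} (the paper's Theorem~\ref{thm:konsoib2}). One bookkeeping caveat: the paper normalizes by writing $K_{\cX/\DD}^{\log}-\cK=\sum_i a_i E_i$ and sets $\kappa=\min_i a_i/b_i$, so your formula $(a_i+1)/b_i$ reflects a different convention for the $a_i$---make sure you reconcile this before writing down the phase $\ell_J$.
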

The invariants $\kappa$, $\D_\cK^\ess$, and hence also $d=\dim\D_\cK^\ess$, which we call the \emph{essential dimension} of the degeneration $(\sigma_t)$, only depend on $\cK$, and can be described as follows. Denote as usual by $\cX_0=\sum_{i\in I} b_i E_i$ the irreducible decomposition of the central fiber. Consider the log canonical divisors
$$
K^{\log}_\cX:=K_\cX+\sum_i E_i,\quad K^{\log}_\DD:=K_\DD+[0]
$$
of the pairs $(\cX,\cX_{0,\redu})$, $(\DD,[0])$, and the \emph{relative log canonical bundle}
$$
K_{\cX/\DD}^{\log}=K^{\log}_\cX-\pi^\star K_{\DD}^{\log}=K_{\cX/\DD}+\cX_{0,\redu}-\cX_0. 
$$
Since $\cK$ and $K_{\cX/\DD}^{\log}$ are both models of $K_{X/\DD^\times}$, their difference can be uniquely written as 
\begin{equation}\label{equ:ai}
K_{\cX/\DD}^{\log}-\cK=\sum_i a_i E_i
\end{equation}
with $a_i\in\Q$. The above invariant $\kappa$ is then given by 
$$
\kappa:=\min_i \kappa_i\quad\text{with}\quad\kappa_i:=a_i/b_i,
$$
while $\D_\cK^\ess\subset\D_\cX$ is the subcomplex formed by the faces $\D_J$ of $\D_\cX$ that are \emph{essential}, in the sense that $\kappa_i=\kappa$ for all $i\in J$. 

\begin{exam}\label{exam:candeg} Given any snc model $\cX$ of $X$, the choice of a smooth metric on $\cK:=K_{\cX/\DD}^{\log}$ gives rise to a family of volume forms $\sigma_t$ with analytic singularities, whose total mass grows like $\e_t^{-\dim\D_\cX}$ (see~\cite[Theorem~3.6]{BHJ2} and~\cite[Lemma~7.6]{wYTD} for applications to the asymptotics of the Mabuchi K-energy functional along geodesic rays). 
\end{exam}

The image of $\D_\cK^\ess\subset\D_\cX$ under the canonical embedding $\val_\cX\colon\D_\cX\hto X_0$ is denoted by
$$
\Sk^\ess(\cK)\subset\Sk(\cX)\subset X_0, 
$$
and called the \emph{essential skeleton} of $\cK$. It satisfies the following important invariance property: 

\begin{lem}\label{lem:essinv} The essential skeleton $\Sk^\ess(\cK)\subset X_0$ only depends on the model metric $\phi_\cK$; in particular, it is invariant under pulling back $\cK$ to a higher snc model $\cX'\to\cX$. 
\end{lem}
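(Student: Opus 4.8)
The plan is to reduce to a single birational morphism of snc models and then compare the two essential skeleta divisor by divisor. First, one reduces to the pullback case: if $\cK_1$ is a model of $K_{X/\DD^\times}$ determined on an snc model $\cX_1$ with $\phi_{\cK_1}=\phi_\cK$, choose an snc model $\cX'$ dominating both $\cX$ and $\cX_1$, with morphisms $\mu\colon\cX'\to\cX$ and $\mu_1\colon\cX'\to\cX_1$. Then $\mu^\star\cK$ and $\mu_1^\star\cK_1$ are $\Q$-line bundles on $\cX'$ inducing the same model metric on $K_{X_K}^\an$, so their difference is a vertical $\Q$-Cartier divisor $D_0\in\VCar(\cX')$ with $\phi_{D_0}\equiv 0$; evaluating at the divisorial valuations attached to the components of $\cX'_0$ forces $D_0=0$ by normality of $\cX'$, exactly as in the proof of Lemma~\ref{lem:unique}. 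Hence $\mu^\star\cK=\mu_1^\star\cK_1$, and it suffices to prove $\Sk^\ess(\mu^\star\cK)=\Sk^\ess(\cK)$ for an arbitrary morphism $\mu\colon\cX'\to\cX$ of snc models.

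Next, one recasts the essential skeleton as a minimum locus of a model function. Set $D_\cK:=K_{\cX/\DD}^{\log}-\cK=\sum_i a_i E_i\in\VCar(\cX)$, so that $\kappa_i=v_{E_i}(D_\cK)$, and the model function $\ell_\cK:=\phi_{D_\cK}\in\Cz(X_0)$ restricts to a piecewise affine function on $\Sk(\cX)\simeq\D_\cX$ which is affine on each face $\D_J$ with value $\kappa_i$ at the vertex $e_i$. Therefore $\kappa=\min_{\D_\cX}\ell_\cK$ and $\D_\cK^\ess=\{w\in\D_\cX\mid\ell_\cK(w)=\kappa\}$. Moreover $\ell_\cK$, being a model function on $\cX$, factors through $\Log_\cX$, and since $\Log_\cX\circ p_\cX=\Log_\cX$ one has $\ell_\cK=\ell_\cK\circ p_\cX$; in particular $\ell_\cK\ge\kappa$ on $\Sk(\cX)$ with $\Sk^\ess(\cK)=\{v\in\Sk(\cX)\mid\ell_\cK(v)=\kappa\}$. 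The same description applies to $\cK':=\mu^\star\cK$ on $\cX'$, with $D_{\cK'}:=K_{\cX'/\DD}^{\log}-\cK'$, minimum $\kappa'$, and $\ell_{\cK'}:=\phi_{D_{\cK'}}$.

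The comparison then rests on the identity $D_{\cK'}=\mu^\star D_\cK+R$, where $R:=K_{\cX'/\DD}^{\log}-\mu^\star K_{\cX/\DD}^{\log}\in\VCar(\cX')$ is the relative log discrepancy divisor of $\mu$. Since $(\cX,\cX_{0,\redu})$ is log smooth, $R$ is effective, its prime components being exactly the divisors over $\cX$ with positive log discrepancy for $(\cX,\cX_{0,\redu})$; in particular $R$ vanishes along every log canonical place of $(\cX,\cX_{0,\redu})$, that is, along every quasimonomial valuation of $\cX$, so $\phi_R\ge 0$ on $X_0$ and $\phi_R\equiv 0$ on $\Sk(\cX)$. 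As model functions are unchanged by pullback, $\phi_{\mu^\star D_\cK}=\ell_\cK$, whence $\ell_{\cK'}=\ell_\cK+\phi_R=\ell_\cK\circ p_\cX+\phi_R\ge\kappa$; evaluating at the strict transform $E'_{j_0}$ of a component $E_{i_0}$ with $\kappa_{i_0}=\kappa$ (so that $v_{E'_{j_0}}=v_{E_{i_0}}\in\Sk(\cX)$, where $\phi_R=0$) gives $\kappa'=\kappa$. For $v\in\Sk^\ess(\cK')\subset\Sk(\cX')$ the equality $\kappa=\ell_{\cK'}(v)=\ell_\cK(p_\cX(v))+\phi_R(v)$, with the two summands bounded below by $\kappa$ and $0$ respectively, forces $\ell_\cK(p_\cX(v))=\kappa$ — i.e.\ $p_\cX(v)\in\Sk^\ess(\cK)$ — and $\phi_R(v)=0$; the latter, together with $v$ being quasimonomial for $\cX'$, forces $v$ to be quasimonomial for $\cX$, i.e.\ $v\in\Sk(\cX)$, so $v=p_\cX(v)\in\Sk^\ess(\cK)$. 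Conversely, any $v\in\Sk^\ess(\cK)\subset\Sk(\cX)\subset\Sk(\cX')$ satisfies $\ell_{\cK'}(v)=\ell_\cK(v)+\phi_R(v)=\kappa+0=\kappa'$, hence $v\in\Sk^\ess(\cK')$. This gives $\Sk^\ess(\cK')=\Sk^\ess(\cK)$, and applying the two pullback identities gives $\Sk^\ess(\cK_1)=\Sk^\ess(\cK)$.

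The delicate point — and the one I would have to justify carefully — is the geometric input on $R$: that it is effective with support disjoint from every log canonical place of $(\cX,\cX_{0,\redu})$, and, crucially, that a valuation quasimonomial for $\cX'$ on which $R$ vanishes is already quasimonomial for $\cX$. This amounts to the statement that the log canonical places of the log smooth pair $(\cX,\cX_{0,\redu})$ are precisely the quasimonomial valuations of $\cX$ and that these are stable under toroidal (stratum-blow-up) modifications, which I would invoke from~\cite{MN} and~\cite[\S3]{siminag}; everything else is bookkeeping with model functions and the retractions $p_\cX$.
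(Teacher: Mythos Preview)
Your proof is correct and rests on the same geometric input as the paper's, namely that for a morphism $\mu\colon\cX'\to\cX$ of snc models the relative log discrepancy divisor $R=K_{\cX'/\DD}^{\log}-\mu^\star K_{\cX/\DD}^{\log}$ satisfies $\phi_R\ge 0$ on $X_0$ with equality precisely on $\Sk(\cX)$. The difference is one of packaging: you compare two models directly (after reducing to the pullback case via Lemma~\ref{lem:unique}), showing $\ell_{\cK'}=\ell_\cK+\phi_R$ and reading off $\Sk^\ess(\cK')=\Sk^\ess(\cK)$ from the sign and zero locus of $\phi_R$; the paper instead assembles the model metrics $\phi_\cX$ attached to $K_{\cX/\DD}^{\log}$ into the canonical Temkin metric $A_X:=\sup_\cX\phi_\cX$ and observes that $\Sk^\ess(\cK)=\{A_X-\phi_\cK=\inf\}$, which makes the dependence on $\phi_\cK$ alone manifest without any reduction step. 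Your route is more hands-on and self-contained; the paper's is more conceptual and yields the intrinsic characterization~\eqref{equ:Dmin} as a byproduct. The ``delicate point'' you flag is exactly what the paper cites from~\cite[Proposition~5.10]{konsoib}: $\phi_{\cX'}\ge\phi_\cX$ with equality precisely on $\Sk(\cX)$, which is indeed the statement that the log canonical places of the log smooth pair $(\cX,\cX_{0,\redu})$ are the quasimonomial valuations on $\cX$.
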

Thus $\Sk^\ess(\cK)$ is uniquely determined by the family of volume forms $\sigma_t=e^{2\p_t}$ (see Lemma~\ref{lem:unique}). 

\begin{proof} The result follows from the existence of a canonically defined singular lsc metric $A_X$ on the (non-Archimedean) analytification $K_X^\an$ of the canonical bundle of $X_K$, known as the \emph{Temkin metric} \cite{MN,Tem}, such that the corresponding lsc function $A_X-\phi_\cK$ on $X_0$ achieves its infimum precisely on $\Sk^\ess(\cK)$. More specifically, denote by $\phi_\cX$ the model metric of $K_X^\an$ defined by the model $K_{\cX/\DD}^{\log}$. For any higher snc model $\rho\colon\cX'\to\cX$, 
$$
K_{\cX'/\DD}^{\log}-\rho^\star K_{\cX/\DD}^{\log}=(K_{\cX'}+\cX'_{0,\redu})-\rho^\star(K_\cX+\cX_{0,\redu})
$$
is uniquely represented by a $\rho$-exceptional divisor, whose coefficients compute the \emph{log discrepancies} of the pair $(\cX,\cX_{0,\redu})$. Since this pair is log smooth, standard properties of log discrepancies imply that $\phi_{\cX'}\ge\phi_\cX$, with equality precisely on $\Sk(\cX)$ (see for instance~\cite[Proposition~5.10]{konsoib}). The Temkin metric can be described as the limit 
$$
A_X:=\sup_\cX\phi_\cX
$$
of the non-decreasing net of model metrics $(\phi_\cX)_\cX$, where $\cX$ ranges over the poset all snc models of $X$. It is now easy to see that the function $A_X-\phi_\cK\colon X_0\to\R\cup\{+\infty\}$ is lsc, affine linear on each face of $\D_\cX\simeq\Sk(\cX)$, and satisfies 
\begin{equation}\label{equ:Dmin}
\Sk^\ess(\cK)=\left\{v\in X_0\mid (A_X-\phi_\cK)(v)=\inf_{X_0}(A_X-\phi_\cK)=\kappa\right\}. 
\end{equation}
\end{proof}
More generally, \eqref{equ:Dmin} remains true for any dlt model $\cX$ (see Remark~\ref{rmk:dualdlt}). This is especially useful in the Calabi--Yau case. Indeed, assume as in Example~\ref{exam:CY} that $K_X$ is trivial, and pick a trivialization $\Om$ of the canonical bundle of $X_K$. Then $\log|\Om|=\phi_\cK$ is a model metric. Since $\Om$ is unique up to a scalar in $K=\C\{t\}$, $\phi_\cK$ is unique up to an additive constant; thus~\eqref{equ:Dmin} shows that 
$$
\Sk^\ess(X):=\Sk^\ess(\cK)
$$
is independent of the choice of $\Om$, and called the \emph{essential skeleton} of the Calabi--Yau degeneration $X$. The Minimal Model Program (MMP) further guarantees the existence of a \emph{minimal dlt model} $\cX$, such that the pair $(\cX,\cX_{0,\redu})$ is dlt and its log canonical divisor $K_\cX^{\log}$ is trivial, and we then have 
\begin{equation}\label{equ:esssk}
\Sk^\ess(X)=\Sk(\cX),
\end{equation}
compare~\cite{NX}. 

\begin{exam}\label{exam:Fermat} Consider the family of Calabi--Yau Fermat hypersurfaces 
$$
X_t:=\{z_0\dots z_{n+1}+t(z_0^{n+2}+\dots+z_{n+1}^{n+2})\}\subset\P^{n+1}. 
$$
The given equation of $X$ provides a minimal dlt model $\cX\subset\P^{n+1}\times\DD$, and $\Sk^\ess(X)=\Sk(\cX)\simeq\D_\cX$ can thus be identified with the dual intersection complex of the union of the coordinate hyperplanes, \ie the boundary of an $(n+1)$-simplex. 
\end{exam}

%
%
%%%%%%%%%%%%%%%%%%%%%%%%%%%%%%%%%%%%%%%%%%%%
\subsection{Residual measures and local convergence}
We return here to the general setup of~\S\ref{sec:volan}. After replacing $\cK$ with $\cK+\kappa\cX_0$ and $\sigma_t$ with $|t|^{-2\kappa}\sigma_t$, we may and do assume from now on that $\kappa=0$; equivalently,
$$
\min_i a_i=0
$$ 
with $a_i$ defined by~\eqref{equ:ai}. A face $\D_J$ of $\D_\cX$ is thus essential, \ie a face of $\D_\cK^\ess$, iff $a_i=0$ for all $i\in J$; we then also say that the corresponding stratum $E_J$ is essential.

\begin{lem}\label{lem:res} For each essential stratum $E_J$, there is a canonical identification 
$$
\cK|_{E_J}=K_{E_J}+B_J\quad\text{where}\quad B_J:=\sum_{i\in I\setminus J} \left(1-a_i\right) E_i|_{E_J}. 
$$
Furthermore, $B_J$ has coefficients $<1$ iff $E_J$ is minimal as an essential stratum, \ie $\D_J$ is a maximal face of $\D_\cK^\ess$. 
\end{lem}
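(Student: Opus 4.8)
The plan is to obtain the identification by iterated adjunction along the strata of $\cX$, being careful about which vertical divisors restrict to the trivial line bundle on $E_J$. The key input is the log-adjunction formula: since $\cX$ is smooth and $\cX_{0,\redu}=\sum_{i\in I} E_i$ has simple normal crossings, repeated adjunction along the $E_i$ with $i\in J$ gives a canonical identification
$$
\bigl(K_\cX+\textstyle\sum_{i\in I} E_i\bigr)\big|_{E_J}=K_{E_J}+\sum_{i\in I\setminus J} E_i\big|_{E_J},
$$
where each $E_i|_{E_J}$ with $i\notin J$ is either the zero divisor (when $E_i\cap E_J=\emptyset$) or the smooth irreducible divisor $E_{J\cup\{i\}}$ on $E_J$.

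I would then assemble the first assertion in three steps. First, since $E_J\subset\cX_0$, the map $\pi|_{E_J}\colon E_J\to\DD$ factors through the point $0$, so $\pi^\star K_\DD^{\log}|_{E_J}$ is canonically trivial; in particular $K_{\cX/\DD}^{\log}|_{E_J}=K_\cX^{\log}|_{E_J}$, and the displayed formula yields $K_{\cX/\DD}^{\log}|_{E_J}=K_{E_J}+\sum_{i\in I\setminus J}E_i|_{E_J}$. (The term $-\cX_0$ in $K_{\cX/\DD}^{\log}$ contributes nothing here: $\cX_0$ is a principal divisor, so it restricts to the trivial line bundle on $E_J$, even though as a divisor it meets $E_J$; this is exactly why the coefficients will come out involving the $a_i$ and not the $b_i$.) Second, using $\cK=K_{\cX/\DD}^{\log}-\sum_i a_i E_i$ and restricting, $\cK|_{E_J}=K_{E_J}+\sum_{i\in I\setminus J}E_i|_{E_J}-\sum_{i\in I} a_i E_i|_{E_J}$. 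Third, the essentiality of $E_J$ means $a_i=0$ for all $i\in J$, so the last sum only runs over $i\in I\setminus J$, and we get $\cK|_{E_J}=K_{E_J}+\sum_{i\in I\setminus J}(1-a_i)E_i|_{E_J}=K_{E_J}+B_J$.

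For the second assertion I would argue purely combinatorially. After the normalisation $\kappa=0$ one has $a_i\ge 0$ for all $i$ (indeed $\min_i a_i=0$), so every coefficient $1-a_i$ of $B_J$ is $\le 1$, and the coefficient of $B_J$ along $E_{J\cup\{i\}}$ equals $1$ if and only if $a_i=0$. Since $J$ is essential, $a_k=0$ for $k\in J$, so for $i\in I\setminus J$ with $E_{J\cup\{i\}}\ne\emptyset$, having $a_i=0$ is exactly the statement that $\D_{J\cup\{i\}}$ is an essential face of $\D_\cX$ strictly containing $\D_J$. Running over all such $i$, $B_J$ has a coefficient equal to $1$ iff $\D_J$ is not a maximal face of $\D_\cK^\ess$, \ie $E_J$ is not minimal among essential strata; equivalently, $B_J$ has all coefficients $<1$ iff $E_J$ is minimal as an essential stratum.

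I do not expect any genuine computational difficulty; the two points to watch are (i) that the $-\cX_0$ term contributes nothing to $\cK|_{E_J}$ — this is what makes the coefficients $1-a_i$ rather than something involving the multiplicities $b_i$ — and (ii) the translation, in the second part, between index sets $J'\supsetneq J$ with $E_{J'}\ne\emptyset$ and faces of the dual complex strictly containing $\D_J$, which is what makes "no coefficient equal to $1$" match "$\D_J$ maximal in $\D_\cK^\ess$".
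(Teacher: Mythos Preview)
Your proposal is correct and follows essentially the same approach as the paper: the paper's proof invokes the adjunction formula $K_{E_J}=(K_\cX+\sum_{i\in J}E_i)|_{E_J}$ together with~\eqref{equ:ai}, which is exactly your log-adjunction argument written more tersely, and for the second point the paper simply observes that minimality forces $a_i>0$ for each $E_i$ meeting $E_J$ properly. Your write-up is more explicit about why $\pi^\star K_\DD^{\log}$ and $\cO_\cX(\cX_0)$ restrict trivially to $E_J$, which is a useful clarification but not a different idea.
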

\begin{proof} The first point follows from~\eqref{equ:ai} together with the adjunction formula $K_{E_J}=\left(K_\cX+\sum_{i\in J} E_i\right)|_{E_J}$. If $E_J$ is minimal, each $E_i$ meeting $E_J$ properly necessarily satisfies $a_i>0$, which yields the second point. 
\end{proof}
Recall that $\Psi$ denotes the given smooth metric on $\cK$. For each essential stratum $E_J$, Lemma~\ref{lem:res} shows that $\Psi_J:=\Psi|_{E_J}$ induces a smooth metric on $K_{E_J}+B_J$, and hence a smooth metric on $K_{\rE_J}$, using the canonical trivialization of $\cO(B_J)$ on $\rE_J$. 

\begin{defi} The \emph{residual measure} of an essential stratum $E_J$ is defined as the volume form $e^{2\Psi_J}$ on $\rE_J$ associated to the smooth metric $\Psi_J$ on $K_{\rE_J}$. 
\end{defi}
Since $\Psi_J$ extends to a smooth metric on $K_{E_J}+B_J$, and $B_J$ has snc support, the residual measure $e^{2\Psi_J}$ has finite total mass iff $B_J$ has coefficients $<1$, \ie iff $E_J$ is a minimal essential stratum (cf.~Lemma~\ref{lem:res}). 

\medskip

We next analyze the local behaviour of the measures $\sigma_t$ and their residual measures near a given point of $x\in\cX_0$. Denote by $J_x\subset I$ the set of $E_i$'s containing $x$, so that $E_x:=E_{J_x}$ is the unique (non-necessarily essential) stratum containing $x$ in its relative interior. Pick an adapted chart 
$$
\cU\simeq\DD^{n+1}=\DD^{J_x}\times\DD^{J_x^c}
$$
at $x$, with local log map 
$$
\Log_t\colon\cU_t\to\D_x:=\D_{J_x}=\{w\in\R_{\ge 0}^{J_x}\mid b\cdot w=1\},
$$
\ie $\Log_t(z)=(\frac{\log|z_i|}{\log|t|})$ (see Definition~\ref{defi:adapted}). The meromorphic form
\begin{equation}\label{equ:Om}
\Om:=\bigwedge_{i\in {J_x}} d\log z_i\wedge\bigwedge_{j\in {J_x^c}} dz_j
\end{equation}
on $\cU$ induces a holomorphic family of holomorphic volume forms $\Om_t$ on the fibers $\cU_t$, such that $\Om_t\wedge\frac{dt}{t}=\Om$. Denoting as above by
$$
|\Om_t|^2=i^{n^2}\Om_t\wedge\overline{\Om_t}
$$
the associated smooth positive volume form on $\cU_t$, we then have:
\begin{lem}\label{lem:localnu} There exists $\rho\in C^\infty(\cU,\R_{>0})$ such that
\begin{equation}\label{equ:nuloc}
\sigma_t=\rho\prod_{i\in {J_x}}|z_i|^{2a_i}|\Om_t|^2\quad\text{on}\quad\cU_t, 
\end{equation}
and 
\begin{equation}\label{equ:resloc}
e^{2\p_{J}}=\rho\prod_{i\in J_x\setminus J}|z_i|^{2a_i}|d\log z_i|^2\prod_{j\in J_x^c} |dz_j|^2\quad\text{on}\quad\cU\cap E_{J}=\{0\}\times\DD^{(J_x\setminus J)\cup J_x^c}
\end{equation}
for any essential stratum $E_{J}$ containing $E_x$, \ie $J\subset J_x$. 
\end{lem}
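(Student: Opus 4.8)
The plan is to read off both formulas directly from the definition $\sigma_t=e^{2\p_t}$ in the adapted chart $\cU$, after producing there an explicit local frame of $\cK$. First I would note that on $\cU$ the only components of $\cX_0$ that occur are the $E_i$ with $i\in J_x$, with local equations $z_i$. The form $\Om$ of~\eqref{equ:Om} is a local frame of $K_\cX^{\log}=K_\cX+\cX_{0,\redu}$ over $\cU$ (as a rational section of $K_\cX$ it has divisor $-\sum_{i\in J_x}E_i$), and the defining relation $\Om_t\wedge\tfrac{dt}{t}=\Om$, together with the fact that $\tfrac{dt}{t}$ is a frame of $\pi^\star K_\DD^{\log}=\pi^\star K_\DD\otimes\cO_\cX(\cX_0)$, exhibits $\Om_t$ as a local frame of $K_{\cX/\DD}^{\log}=K_\cX^{\log}-\pi^\star K_\DD^{\log}$ over $\cU$ (in particular it extends as a frame across $\cX_{0,\sing}$). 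Since $\cK=K_{\cX/\DD}^{\log}-\sum_i a_iE_i$ by~\eqref{equ:ai}, the section $s_\cK:=\prod_{i\in J_x}z_i^{a_i}\,\Om_t$ is then a local frame of $\cK$ over $\cU$ (the rational powers entering only through $|z_i|^{2a_i}$, so no multivaluedness occurs; one may pass to a divisible multiple of $\cK$ if preferred), and I set $\rho:=|s_\cK|_\Psi^{-2}$, which is smooth and \emph{positive} on all of $\cU$ because $\Psi$ is a smooth metric on $\cK$ and $s_\cK$ a frame over $\cU$.

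For~\eqref{equ:nuloc}, recall $\sigma_t=e^{2\p_t}$ with $\p_t=\Psi|_{X_t}$ the smooth metric on $K_{X_t}=\cK|_{X_t}$. For $t\ne 0$ the section $s_\cK$ restricts to a nowhere-vanishing holomorphic $n$-form on $\cU_t$ (all $z_i$, $i\in J_x$, are nonzero there), so~\eqref{equ:volform} gives $\sigma_t=|s_\cK|^2/|s_\cK|_\Psi^2=\rho\prod_{i\in J_x}|z_i|^{2a_i}\,|\Om_t|^2$ on $\cU_t$.

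For~\eqref{equ:resloc}, fix an essential stratum $E_J$ with $J\subset J_x$, so $a_i=0$ for $i\in J$ and hence $s_\cK=\prod_{i\in J_x\setminus J}z_i^{a_i}\,\Om_t$ carries no $z_i$-factor for $i\in J$; thus $s_\cK|_{E_J}$ is defined directly on $\cU\cap E_J$. The canonical identification $\cK|_{E_J}=K_{E_J}+B_J$ of Lemma~\ref{lem:res} is, over $\cU\cap E_J$, the one induced by the iterated Poincar\'e residue $\Res_{E_J}$ along the $E_i$, $i\in J$: under it $\Om_t|_{E_J}$ corresponds to $\Res_{E_J}\Om=\pm\bigwedge_{i\in J_x\setminus J}d\log z_i\wedge\bigwedge_{j\in J_x^c}dz_j$, so $s_\cK|_{E_J}$ becomes the frame $\prod_{i\in J_x\setminus J}z_i^{a_i-1}\,w$ of $K_{E_J}+B_J$, where $w:=\bigwedge_{i\in J_x\setminus J}dz_i\wedge\bigwedge_j dz_j$ is the standard frame of $K_{E_J}$ and $\prod_{i\in J_x\setminus J}z_i^{\,1-a_i}$ a local equation of $B_J$. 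Unwinding the definition of the residual metric $\p_J$ — restrict $\Psi$ to $E_J$, then use $\cK|_{E_J}=K_{E_J}+B_J$ and the canonical trivialization of $\cO(B_J)$ on $\rE_J$ — the frame $w$ of $K_{\rE_J}$ over $\rE_J\cap\cU$ has $|w|_{\p_J}^2=\big|\prod_{i\in J_x\setminus J}z_i^{\,1-a_i}\big|^2\,|s_\cK|_\Psi^2\big|_{E_J}=\prod_{i\in J_x\setminus J}|z_i|^{2(1-a_i)}\,\rho|_{E_J}^{-1}$, whence by~\eqref{equ:volform} $e^{2\p_J}=|w|^2/|w|_{\p_J}^2=\rho|_{E_J}\prod_{i\in J_x\setminus J}|z_i|^{2a_i}|d\log z_i|^2\prod_{j\in J_x^c}|dz_j|^2$ on $\cU\cap E_J$ — a smooth measure on $\rE_J$, where $z_i\ne 0$ for $i\in J_x\setminus J$.

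The one genuinely delicate point is the claim used in the last step: that the identification $\cK|_{E_J}=K_{E_J}+B_J$ furnished by Lemma~\ref{lem:res} (obtained from~\eqref{equ:ai}, adjunction, and the canonical triviality of $\cX_0|_{E_J}$ coming from $\cX_0=\div_\cX(t)$) coincides in the adapted chart with the one induced by the iterated residue of $\Om$ — equivalently, that the \emph{same} $\rho$ as above governs~\eqref{equ:resloc}, with no spurious local factor. This reduces to the compatibility between the Poincar\'e residue, the trivialization $dt$ of $K_\DD$, the trivialization of $\cO_\cX(\cX_0)$ by $t$, and the relation $\Om=\Om_t\wedge\tfrac{dt}{t}$ (which amounts to $\tfrac{dt}{t}=dt\otimes(\text{frame of }\cO_\cX(\cX_0))$ under $\pi^\star K_\DD^{\log}=\pi^\star K_\DD\otimes\cO_\cX(\cX_0)$); here the essentiality hypothesis $a_i=0$ on $J$ is exactly what makes $s_\cK$ restrict to $E_J$ unambiguously and makes the residue be taken along divisors along which $\Om$ has a genuine logarithmic pole. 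Everything else is a mechanical application of~\eqref{equ:volform} together with the definitions of model and residual metrics.
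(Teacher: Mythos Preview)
Your proof is correct and follows essentially the same approach as the paper: the paper's trivializing section $\tau:=\prod_{i\in J_x}z_i^{a_i}\,\Om\otimes(dt/t)^{-1}$ of $\cK$ is exactly your $s_\cK=\prod_{i\in J_x}z_i^{a_i}\,\Om_t$, and both set $\rho:=|\tau|_\Psi^{-2}=|s_\cK|_\Psi^{-2}$. You give more detail on the Poincar\'e-residue step for~\eqref{equ:resloc}, which the paper dispatches in one line, but the argument is the same.
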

\begin{proof} On the one hand, \eqref{equ:volform} yields $\sigma_t=|\Om_t|_{\p_t}^{-2} |\Om_t|^2$ with $|\Om_t|_{\p_t}\in C^\infty(\cU_t,\R_{>0})$ the pointwise length of $\Om_t$ in the metric $\p_t\in C^\infty(K_{X_t})$. On the other hand, $\Om$ defines a trivialization of $K_\cX^{\log}$ on $\cU$, which induces a trivialization 
$$
\tau:=\prod_{i\in {J_x}} z_i^{a_i}\Om\otimes (dt/t)^{-1}
$$
of $\cK=K_{\cX/\DD}^{\log}-\sum_i a_i E_i$ (see~\eqref{equ:ai}). Strictly speaking, this only makes sense after passing to some power of $\tau$, since we allow $a_i\in\Q$, but $|\tau|_{\p}\in C^\infty(\cU,\R_{>0})$ is well-defined, and satisfies $|\tau|_\p=\prod_{i\in {J_x}} |z_i|^{a_i}|\Om_t|_{\p}$ on $\cU_t$. This proves~\eqref{equ:nuloc} with $\rho:=|\tau|_\p^{-2}$, and~\eqref{equ:resloc} similarly follows by realizing the adjunction formula $K_{E_{J}}=(K_\cX+\sum_{i\in J} E_i)|_{E_J}$ in terms of Poincar\'e residues. 
\end{proof}

The stratum $E_x=E_{J_x}$ is contained in an essential stratum iff 
$$
J_x^\ess:=\{i\in J_x\mid a_i=0\}
$$
is nonempty, in which case $E_x^\ess:=E_{J_x^\ess}$ is the smallest essential stratum containing $x$. Assuming this, denote by $e^{2\p_x}:=e^{2\p_{J_x}}$ the residual measure of $E_x^\ess$, 
$$
\D_x^\ess:=\D_{J_x^\ess}\subset\D_x=\D_{J_x}
$$
its dual face, and by $d_x:=\dim\D_x^\ess=|J_x^\ess|-1$ and $\sigma_x^\ess$ the dimension and normalized (to mass $1$) Lebesgue measure of $\D_x^\ess$, respectively. We may now state the following local version of Theorem~\ref{thm:konsoib}: 

\begin{thm}\label{thm:Laplace} If $J_x^\ess$ is empty, then $\sigma_t\to 0$ weakly in $\cU$. If not, then
\begin{equation}\label{equ:loccv}
\e_t^{d_x}\sigma_t\to c_x e^{2\p_{x}}
\end{equation}
weakly in $\cU$, where $c_x=c_{J_x}\in\R_{>0}$ only depends on $J_x$. For any $\chi\in C^0_c(\cU)$, we further have 
\begin{equation}\label{equ:loclogcv}
\e_t^{d_x}(\Log_t)_\star(\chi \sigma_t)\to c_x\left(\int_{E_x^\ess}\chi e^{2\p_x}\right)\sigma_x^\ess
\end{equation}
weakly on $\D_x$. 
\end{thm}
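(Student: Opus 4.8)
\medskip

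The plan is to reduce everything to a local model computation on the toric chart $\cU\simeq\DD^{J_x}\times\DD^{J_x^c}$ and then apply a Laplace-type asymptotic analysis. Using Lemma~\ref{lem:localnu}, write $\sigma_t=\rho\prod_{i\in J_x}|z_i|^{2a_i}|\Om_t|^2$ on $\cU_t$. The first observation is that the factor $\prod_{i\in J_x}|z_i|^{2a_i}$ can be rewritten, via $\log|z_i|=\e_t^{-1}(\Log_t)_i\log|t|$, as $|t|^{2\sum_i a_i(\Log_t)_i/\e_t}$—but since we have normalized $\kappa=0$, i.e.\ $\min_i a_i=0$, and on $\D_x$ one has $b\cdot w=1$, this is bounded above by $1$; more precisely it concentrates exponentially fast, as $\e_t\to 0$, on the locus $\{w\in\D_x\mid a_i w_i=0\ \forall i\in J_x\}$, which is exactly $\D_x^\ess$ when $J_x^\ess\neq\emptyset$ and is empty otherwise. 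This already gives the first assertion: if $J_x^\ess=\emptyset$, then $\sum_i a_i w_i\ge\delta>0$ uniformly on $\D_x$ for some $\delta$, hence $\prod_i|z_i|^{2a_i}\le|t|^{2\delta/\e_t}\to 0$ superpolynomially, so $\sigma_t\to 0$ weakly in $\cU$ (the $|\Om_t|^2$ only contributes the polynomial factor $\e_t^{-(\dim\D_x)}$ through~\eqref{equ:Omt}, which is beaten).

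\medskip

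For the main case $J_x^\ess\neq\emptyset$, I would disintegrate $|\Om_t|^2$ along the local log map. Set $p=|J_x|-1=\dim\D_x$ and $q=|J_x^c|$. Using~\eqref{equ:Omt} applied to the toric factor and Fubini in the $\DD^{J_x^c}$ directions, $|\Om_t|^2$ pushes forward under $\Log_t$ (after integrating out the $S^1$-fibers and the transverse $z_j$, $j\in J_x^c$) to $(2\pi)^p\e_t^{-p}$ times Lebesgue measure $\sigma_{\D_x}$ on $\D_x$, times a smooth density coming from $\rho$ and the transverse integration. Concretely, for $\chi\in C^0_c(\cU)$,
$$
\e_t^{d_x}(\Log_t)_\star(\chi\sigma_t)
=(2\pi)^p\e_t^{d_x-p}\int_{\text{fibers}}\chi\,\rho\,|t|^{2\langle a,w\rangle/\e_t}\,(\text{transverse vol})\ \cdot\ \sigma_{\D_x}(dw),
$$
where $\langle a,w\rangle=\sum_{i\in J_x}a_i w_i$ and the integral over $w$ near $\D_x^\ess$ dominates. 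Now perform the Laplace analysis: parametrize $\D_x$ near its face $\D_x^\ess$ by coordinates $(w',u)$ with $w'\in\D_x^\ess$ and $u\ge 0$ the $|J_x\setminus J_x^\ess|$ transverse barycentric-type coordinates, so that $\langle a,w\rangle=\sum_{i\notin J_x^\ess} a_i u_i +O(|u|^2)$ with all relevant $a_i>0$. Substituting $s_i=u_i/\e_t$, each transverse integral $\int_0^\infty e^{-(2/\e_t)\sum a_i u_i}\,\varphi(u)\,du\sim\e_t^{|J_x\setminus J_x^\ess|}\varphi(0)\prod(2a_i)^{-1}$, which exactly eats the remaining power $\e_t^{d_x-p}=\e_t^{-|J_x\setminus J_x^\ess|}$ and leaves a finite nonzero constant $c_x$ depending only on the combinatorial data $(a_i,b_i)_{i\in J_x}$, i.e.\ only on $J_x$. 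The limiting $w'$-density is $\rho$ restricted to $u=0$ times the transverse Jacobian, which by~\eqref{equ:resloc} is precisely $e^{2\p_x}$ disintegrated over $\D_x^\ess$ against normalized Lebesgue measure $\sigma_x^\ess$. This simultaneously yields~\eqref{equ:loccv} (take $\chi$ to exhaust $\cU$ and use that $e^{2\p_x}$ lives on $E_x^\ess$) and~\eqref{equ:loclogcv}.

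\medskip

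The main obstacle, and the step requiring genuine care rather than bookkeeping, is the Laplace asymptotics near the \emph{boundary} of $\D_x^\ess$ inside $\D_x$: the transverse coordinates $u$ are constrained (the $w$ stay in a simplex), the phase $\langle a,w\rangle$ is only piecewise-affine-to-leading-order, and one must justify uniform convergence—in particular that no mass escapes to the relative boundary of $\D_x^\ess$ and that the $O(|u|^2)$ corrections to the phase and the smoothness of $\rho$ genuinely produce the claimed limit density with the stated constant. A clean way to handle this is to first prove the statement for the explicit toric model $Z_t$ of~\S\ref{sec:toric} with $\rho\equiv 1$ (where everything is a product of elementary gamma-type integrals and~\eqref{equ:Omt} is exact), then bootstrap to general $\rho\in C^\infty(\cU,\R_{>0})$ by a standard sandwiching argument: on a small neighborhood $\rho$ is trapped between constants $\rho(x)\pm\e$, apply the model computation to the constant bounds, and let the neighborhood shrink; continuity of $e^{2\p_x}$ and of the limit density then upgrades this to the weak convergence of $(\Log_t)_\star(\chi\sigma_t)$ for all $\chi\in C^0_c(\cU)$, and hence to the weak convergence~\eqref{equ:loccv} in $\cU$ via a partition of unity. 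Finally, identifying the constant $c_x=c_{J_x}$ as depending only on $J_x$ (not on the chart) is automatic since it is computed entirely from the $\gamma$-integrals $\prod_{i\in J_x\setminus J_x^\ess}(2a_i)^{-1}$ times the normalization of Lebesgue measure on $\D_x$ relative to $\D_x^\ess$, data intrinsic to the combinatorics $(a_i,b_i)_{i\in J_x}$.
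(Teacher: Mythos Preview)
Your strategy is the paper's: use~\eqref{equ:nuloc} to express $\sigma_t$ locally, push forward by $\Log_t$ via the toric identity~\eqref{equ:Omt}, rescale the coordinates of $\D_x$ transverse to $\D_x^\ess$ by $\e_t$, and pass to the limit. (The paper first reduces by Fubini to the case $J_x^c=\emptyset$, which cleans up the bookkeeping.)

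There is, however, one genuine slip in your Laplace step. After substituting $s=u/\e_t$ you invoke $\varphi(\e_t s)\to\varphi(0)$ and place the resulting factor $\prod_{i\in J_x\setminus J_x^\ess}(2a_i)^{-1}$ into $c_x$. But $\varphi$ is the fiber integral of $\chi\rho$ over $\Log_t^{-1}(w)$, and this depends on $t$: the fiber over $(w',\e_t s)$ has $|z_i|=|t|^{\e_t s_i}=e^{-s_i}$ for $i\in J_x\setminus J_x^\ess$, so its $t\to 0$ limit is a nontrivial function $\Phi(s)$, namely the Haar integral of $\chi\rho$ over $\{0\}^{J_x^\ess}\times\{|z_i|=e^{-s_i}\}\times\DD^{J_x^c}$. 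The surviving integral $\int_{\R_{\ge 0}^{J_x\setminus J_x^\ess}}e^{-2\sum a_i s_i}\Phi(s)\,ds$ is then, in logarithmic polar coordinates, precisely $\int_{E_x^\ess}\chi\,e^{2\p_x}$ up to a purely combinatorial constant, because $e^{2\p_x}$ already carries the weight $\prod_{i\in J_x\setminus J_x^\ess}|z_i|^{2a_i}|d\log z_i|^2$ by~\eqref{equ:resloc}. In particular the $(2a_i)^{-1}$ factors are \emph{not} part of $c_x$; they are absorbed into the residual measure. Your sandwiching-by-constants would eventually repair this (for constant $\chi\rho$ the function $\Phi$ is constant in $s$ and the two answers coincide), but tracking $\Phi(s)$ directly, as the paper does, is both shorter and avoids the partition-of-unity bootstrap.

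Two side remarks: the phase $\langle a,w\rangle=\sum_{i\in J_x}a_iw_i$ is exactly linear, so there are no $O(|u|^2)$ corrections to worry about; and your displayed exponent should read $|t|^{2\langle a,w\rangle}=e^{-2\e_t^{-1}\langle a,w\rangle}$, not $|t|^{2\langle a,w\rangle/\e_t}$.
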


\begin{proof} Assume first $J_x':=J_x^\ess\ne\emptyset$. In what follows, $c_x$ denotes a positive constant only depending on $J_x$, which is allowed to vary from line to line. It is enough to establish~\eqref{equ:loclogcv}, which implies~\eqref{equ:loccv} by taking the total mass over $\D_x$. Setting $J_x'':=J_x\setminus J_x'$ and $f:=\chi \rho\in C^0_c(\cU)$ with $\rho$ as in Lemma~\ref{lem:localnu}, \eqref{equ:loclogcv} amounts to
$$
\e_t^{d_x}\int_{\cU_t}(\f\circ\Log_t)\chi \sigma_t\to c_x\left(\int_{\{0\}\times\DD^{J_x''\cup J_x^c}} f\prod_{i\in J_x''} |z_i|^{2a_i}|d\log z_i|^2\prod_{j\in J_x^c} |dz_j|^2\right)\left(\int_{\D_x^\ess}\f\,\sigma_x^\ess\right). 
$$
for any $\f\in \Cz(\D_x)$,  by~\eqref{equ:resloc}. Since $\Log_t\colon\cU_t\to\D_x$ factors through $\cU_t\to\DD^{J_x}$, arguing on each fiber $\DD^{J_x}\times\{y\}\subset\cU$ with $y\in\DD^{J_x^c}$, we may and do assume for notational simplicity $J_x^c=\emptyset$, \ie $|J_x|=n+1$ and $E_x=\{x\}$. We are thus in the setup of the model toric case of~\S\ref{sec:toric}, and~\eqref{equ:Omt} yields $|\Om_t|^2=c_x\e_t^{-n}\Log_t^\star\sigma_x$ with $\sigma_x$ the Lebesgue measure of $\D_x$ normalized to mass $1$. Combining this with~\eqref{equ:nuloc}, we infer 
$$
\sigma_t=c_x \e_t^{-n} g\Log_t^\star\left(e^{-2\e_t^{-1} a\cdot w} \sigma_x\right), 
$$
and hence
$$
\e_t^{d_x}\int_{\cU_t}(\f\circ\Log_t)\chi\sigma_t=c\e_t^{d_x-n}\int_{\D_x}\f(w) e^{-2\e_t^{-1} a\cdot w} \sigma_x(dw)\int_{\Log_t^{-1}(w)} f\rho_{t,w}, 
$$
where $\rho_{t,w}$ denotes the Haar measure of the fiber $\Log_t^{-1}(w)$.  Fix $t\in\DD^\times$. Writing $w=(w',w'')\in\R^{J_x}=\R^{J_x'}\times\R^{J_x''}$, the change of variables
$$
w'=(1-\e_t b''\cdot v'')v',\quad w''=\e_t v'' 
$$
with 
$$
v'\in\D_x^\ess,\quad v''\in\D_{x,t}'':=\R_{\ge 0}^{J_x''}\cap\left\{b''\cdot v''\le\e_t^{-1}\right\}
$$
yields a diffeomorphism $\D_x\simeq\D_x^\ess\times\D_{x,t}''$ with respect to which
$$
\sigma_x(dw)=c_x(1-\e_t b''\cdot v'')^{d_x}\e_t^{n-d_x} \sigma_x^\ess(dv')\otimes dv''. 
$$
with $dv''$ the Lebesgue measure of $\R^{J_x''}$. Since $a_i=0$ for $i\in J_x'$, we get
$$
\e_t^{d_x-n}\int_{\D_x}\f(w) e^{-2\e_t^{-1} a\cdot w} \sigma_x(dw)\int f\rho_{t,w}
$$
$$
=c_x\int_{\D_x^\ess\times\D_{x,t}''}\f\left((1-\e_t b''\cdot v'')v',\e_t v''\right) \sigma_x^\ess(dv') (1-\e_t b''\cdot v'')^{d_x} e^{-2 a''\cdot v''} dv''\int f\rho_{t,v',v''}
$$
where $\rho_{t,v',v''}=\rho_{t,w}$ under the above change of variables. As $t\to 0$ this converges to 
$$
\left(\int_{\D_x^\ess}\f(v',0)\sigma_x^\ess(dv')\right)\left(\int_{\R_{\ge 0}^{J_x''}} e^{-2 a''\cdot v''} dv''\int f\rho_{v''}\right)
$$
where $\rho_{v''}$ denotes the Haar measure of the fiber over $v''$ of $\Log\colon(\C^\times)^{J_x''}\to\R^{J_x''}$. Passing to logarithmic polar coordinates further yields
$$
\int_{\R_{\ge 0}^{J_x''}} e^{-2 a''\cdot v''} dv''\int f\rho_{v''}=c_x\int_{\{0\}\times\DD^{J_x''}} f \prod_{i\in J_x''}|z_i|^{2a_i} |d\log z_i|^2, 
$$
and the result follows. 

Finally, assume $J_x'=\emptyset$, \ie $\kappa_x:=\min_{i\in J_x}\kappa_i>0$. The computation of the first part of the proof then applies to $|t|^{-2\kappa_x}\sigma_t$, and shows that $|t|^{-2\kappa_x}\e_t^{\tilde d_x}\sigma_t$ converges weakly in $\cU$ for some $\tilde d_x\in\{0,\dots,n\}$. Since $|t|^{2\kappa_x}\e_t^{-\tilde d_x}=|t|^{2\kappa_x}(-\log|t|)^{\tilde d_x}\to 0$, we infer $\sigma_t\to 0$ weakly in $\cU$. 
\end{proof}

%
%
%%%%%%%%%%%%%%%%%%%%%%%%%%%%%%%%%%%%%%%%%%%%
\subsection{Hybrid convergence}
We are now in a position to prove the following more precise version of Theorem~\ref{thm:konsoib}. Recall $d=\dim\D_\cK^\ess$ denotes the essential dimension of the family $(\sigma_t)$ at $t=0$. 

\begin{thm}\label{thm:konsoib2} Consider the measures on $\cX_0$ and $\D_\cK^\ess$ respectively defined by 
$$
\mu_{\cX_0}:=\sum_J c_J e^{2\Psi_J},\quad\sigma_\cX:=\sum_J\left(\int_{\rE_J} e^{2\Psi_J}\right)\sigma_J,
$$
where both sums range over all $d$-dimensional faces $\D_J$ of $\D_\cK^\ess$, the constants $c_J>0$ are given by Theorem~\ref{thm:Laplace}, and $\sigma_J$ is the Lebesgue measure of $\D_J$, normalized to mass $1$. Then:
\begin{itemize}
\item[(i)] the rescaled measure $\mu_t=\e_t^d\sigma_t$ converges weakly to $\mu_{\cX_0}$ in $\cX$;
\item[(ii)] $\mu_t\to\mu_0:=(\val_\cX)_\star\sigma_\cX$ weakly in $X^\hyb$. 
\end{itemize}
\end{thm}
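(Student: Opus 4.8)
The plan is to reduce both assertions to the local Laplace-type asymptotics of Theorem~\ref{thm:Laplace} by a partition-of-unity argument, and then to upgrade complex convergence to hybrid convergence using the general criteria of Lemma~\ref{lem:PS} and Lemma~\ref{lem:cvmeas2}. Throughout one works not only with the given snc model $\cX$ but with an arbitrary snc model $\cX'$ dominating it: since $\cK$ and the metric $\Psi$ pull back to $\cX'$, the family $(\sigma_t)$ still has analytic singularities with respect to $\cX'$, and by the log-discrepancy monotonicity $\phi_{\cX'}\ge\phi_\cX$ underlying Lemma~\ref{lem:essinv} the normalization $\kappa=0$, the essential subcomplex, and the essential skeleton are unchanged under this refinement.

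First I would prove (i). Fix $\cX'$ as above, cover $\cX'_0$ by finitely many adapted charts $\cU_\a$, and choose $0\le\chi_\a\in C^\infty_c(\cU_\a)$ with $\sum_\a\chi_\a\equiv 1$ near $\cX'_0$. Any compact subset of $\cX'\setminus\cX'_0=X$ carries no $\mu_t$-mass once $|t|$ is small, so $\int f\,\mu_t=\sum_\a\int f\chi_\a\,\mu_t+o(1)$ for every $f\in C^0(\cX')$. By \eqref{equ:loccv} in Theorem~\ref{thm:Laplace}, each term $\int f\chi_\a\,\mu_t=\e_t^d\int f\chi_\a\,\sigma_t$ converges: to $0$ if the centre of $\cU_\a$ lies on a stratum with $d_x<d$, and otherwise to the integral of $f\chi_\a$ against a constant multiple of the residual measure $e^{2\Psi_J}$ of the $d$-dimensional essential stratum $E_J$ it determines. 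Summing over $\a$ gives the weak convergence $\mu_t\to\mu_{\cX'_0}$ in $\cX'$ with $\mu_{\cX'_0}=\sum_J c_J e^{2\Psi_J}$; the constants are automatically coherent because the global limit is unique.

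Next, for (ii) I would invoke Lemma~\ref{lem:cvmeas2} with $\Sigma:=\Sk^\ess(\cK)$. Condition (a), that $\Sigma\subset\Sk(\cX')$ for all sufficiently high $\cX'$, is precisely Lemma~\ref{lem:essinv}. For condition (b) I would apply Lemma~\ref{lem:PS} to $\cX'$: its hypothesis (i) is the convergence $\mu_t\to\mu_{\cX'_0}$ just established, and its hypothesis (ii) --- the equidistribution of $(\Log_t)_\star(\chi\mu_t)$ along the fibres of the local log maps, for each stratum $E_J$ with $\mu_{\cX'_0}(\rE_J)>0$ (necessarily a $d$-dimensional essential face) --- is exactly what \eqref{equ:loclogcv} of Theorem~\ref{thm:Laplace} provides, the limiting fibre distribution being the normalized Lebesgue measure $\sigma_J$ of $\D_J$. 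Lemma~\ref{lem:PS} then yields that $(\Log_{\cX'})_\star\mu_t$ converges weakly in $H_{\cX'}$ to $\sigma_{\cX'}=\sum_J\mu_{\cX'_0}(\rE_J)\,\sigma_J$, supported on the top-dimensional part of $\Log_{\cX'}(\Sigma)$; this is condition (b). Lemma~\ref{lem:cvmeas2} now gives that $\mu_0:=(\val_{\cX'})_\star\sigma_{\cX'}$ is independent of $\cX'$ and that $\mu_t\to\mu_0$ weakly in $X^\hyb$; specializing $\cX'=\cX$ identifies $\mu_0$ with $(\val_\cX)_\star\sigma_\cX$, as claimed. (As a consistency check, in the case $d=0$ this reduces to Proposition~\ref{prop:cxvshyb}, and pushing $\mu_0$ forward to $\cX_0$ recovers (i).)

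The main obstacle I anticipate is not a single hard estimate --- those are packaged in Theorem~\ref{thm:Laplace} --- but the bookkeeping forced by Lemma~\ref{lem:cvmeas2}, which demands that the hypotheses be verified \emph{simultaneously} over all sufficiently high snc models: one must be certain that refining $\cX$ neither shifts the normalization $\kappa$, nor alters the essential subcomplex $\D_\cK^\ess$, nor moves the essential skeleton $\Sk^\ess(\cK)$, so that $\Sigma$ and the family of limits $\sigma_{\cX'}$ of step (b) remain mutually compatible. That stability is exactly the content of Lemma~\ref{lem:essinv} (and the inequality $\phi_{\cX'}\ge\phi_\cX$ behind it); granting it, everything else is the routine gluing of the already-proven local convergence of Theorem~\ref{thm:Laplace}.
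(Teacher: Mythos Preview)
Your proposal is correct and follows essentially the same route as the paper: deduce (i) from the local asymptotics of Theorem~\ref{thm:Laplace}, feed this together with~\eqref{equ:loclogcv} into Lemma~\ref{lem:PS} to obtain $(\Log_{\cX'})_\star\mu_t\to\sigma_{\cX'}$ for every high enough snc model $\cX'$, invoke Lemma~\ref{lem:essinv} for the support condition, and conclude via Lemma~\ref{lem:cvmeas2}. The paper's proof is terser but structurally identical; your explicit partition-of-unity argument for (i) and your emphasis on the bookkeeping across refinements simply spell out what the paper leaves implicit.
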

In particular, as $t\to 0$ most of the mass of $\sigma_t$ on $X_t$ accumulates towards the essential strata of minimal dimension $n-d$. 

\begin{proof} The first point is a direct consequence of Theorem~\ref{thm:Laplace}, which also yields $(\Log_\cX)_\star\mu_t\to\sigma_\cX$, by Lemma~\ref{lem:PS}. For any higher snc model $\rho\colon\cX'\to\cX$, $(\Log_{\cX'})_\star\mu_t$ similarly converges to $\sigma_{\cX'}$, with support in $\D_{\rho^\star\cK}^\ess=\Log_{\cX'}(\Sk^\ess(\cK))$ (see Lemma~\ref{lem:essinv}). The second point now follows from Lemma~\ref{lem:cvmeas2}. 
\end{proof}

 The integer $d\in\{0,\dots,n\}$ measures the `degree of degeneration' of the family $(\sigma_t)$ at $t=0$. We illustrate this in the two extreme cases: 
 
\begin{exam}\label{exam:mindeg} Consider the \emph{minimally degenerate} case $d=0$. Each essential stratum must then be equal to some $E_i$, and the limit measures in $\cX$ and $X^\hyb$ are respectively of the form
$$
\mu_{\cX_0}=\sum_i \mu_{E_i},\quad \mu_0=\sum_i\mu_i(\rE_i)\d_{v_{E_i}}
$$
where $\mu_{E_i}$ is a smooth positive volume form of finite mass on $\rE_i$. Furthermore, it follows from~\eqref{equ:nuloc} that $\mu_t\to\mu_{E_i}$ smoothly on $\cX$ near each point of $\rE_i$. 
\end{exam}

\begin{exam}\label{exam:maxdeg} Consider now the \emph{maximally degenerate} case $d=n$. Each minimal dimensional essential stratum is then reduced to a point $E_J=\{x_J\}$, corresponding to an $n$-dimensional faces $\D_J$ of $\D_\cK^\ess$, and the limit measures are of the form
$\mu_{\cX_0}=\sum_J m_J\d_{x_J}$, $\mu_0=\sum_J m_J\sigma_J$ with $m_J\in\R_{>0}$. For any adapted chart $\cU\simeq\DD^J$ at $x_J$ with local log map $\Log_t\colon\cU_t\to\D_J$, \eqref{equ:nuloc} and~\eqref{equ:Omt} further yield
\begin{equation}\label{equ:maxdeg}
\mu_t=g_J\Log_t^\star\sigma_J
\end{equation}
for some density $g_J\in C^\infty(\cU,\R_{>0})$, which necessarily satisfies $g_J(x_J)=m_J$. 
\end{exam}

%
%
%%%%%%%%%%%%%%%%%%%%%%%%%%%%%%%%%%%%%%%%%%%%
\section{From complex to non-Archimedean pluripotential theory}\label{sec:pluripot}
In this section we survey a number of fundamental facts from complex pluripotential theory and its non-Archimedean analogue, emphasizing their interactions through the hybrid space. In particular, we prove general convergence result for Monge--Amp\`ere measures of psh metrics, slightly extending a result of Favre~\cite{Fav}. 
%
%
%%%%%%%%%%%%%%%%%%%%%%%%%%%%%%%%%%%%%%%%%%%%
\subsection{A biased review of the complex case}

We assume here that $X$ is a smooth projective complex variety equipped with an ample $\Q$-line bundle $L$, of volume $V:=(L^n)$. Given a continuous Hermitian metric $\phi$ on $L$, we denote\footnote{We emphasize that this is a purely notational device, as this form is of course not $\ddc$-exact.} by $\ddc\phi$ its curvature current, normalized so as to represent the Chern class $c_1(L)$. The metric $\phi$ is \emph{plurisubharmonic} (\emph{psh} for short) if the closed $(1,1)$-current $\ddc\phi$ is semipositive, which amounts to requiring that the local weight $-\log|\tau|_\phi$ of $\phi$ with respect to any local trivialization $\tau$ of $L$ is a psh function. 

The set $\CPSH(L)$ of continuous psh metrics on $L$ is closed in $\Cz(L)$ for the topology of uniform convergence. As a key consequence of the Bouche--Catlin--Tian--Zelditch theorem~\cite{Bou90,Cat,Tia,Zel}, we have:

\begin{thm}\label{thm:CPSH} The set $\CPSH(L)$ coincides with the closure in $\Cz(L)$ of the set $\FS(L)$ of Fubini--Study metrics. \end{thm}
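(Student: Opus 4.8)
The plan is to prove the two inclusions separately. The inclusion $\overline{\FS(L)}\subset\CPSH(L)$ is elementary: any Fubini--Study metric has local weights of the form $\tfrac1{2m}\log\sum_i|f_i|^2$ with the $f_i$ holomorphic, hence plurisubharmonic, so $\FS(L)\subset\CPSH(L)$; and $\CPSH(L)$ is closed in $\Cz(L)$ for uniform convergence (being a decreasing limit of closed conditions on curvature currents, or simply because a uniform limit of psh weights is psh). Hence $\overline{\FS(L)}\subset\CPSH(L)$.

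For the reverse inclusion, fix $\phi\in\CPSH(L)$. First I would reduce to the strictly positively curved case. Since $L$ is ample, pick a smooth metric $\phi_0$ with $\ddc\phi_0=\om_0>0$ and set $\phi_\e:=(1-\e)\phi+\e\phi_0$ for $\e\in(0,1)$; then $\phi_\e\in\CPSH(L)$ with $\ddc\phi_\e\ge\e\om_0$, and $\phi_\e\to\phi$ uniformly as $\e\to 0$. As $\overline{\FS(L)}$ is closed, it suffices to approximate each $\phi_\e$, so we may assume $\ddc\phi\ge\delta\om_0$ for some $\delta>0$.

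Now for $m$ large and divisible (so $mL$ is an honest base-point-free line bundle), equip $\Hnot(X,mL)$ with the $L^2$-inner product induced by the weight $m\phi$ and a fixed smooth volume form, choose an orthonormal basis $(s_i)$, and let $\phi_m:=\tfrac1{2m}\log\sum_i|s_i|^2$, which is the pullback of the standard Fubini--Study metric of $\P^{N_m}$ under the morphism defined by the $s_i$, hence an element of $\FS(L)$. Writing $\rho_m:=\sum_i|s_i|^2_{m\phi}$ for the associated Bergman function, one has $\phi_m=\phi+\tfrac1{2m}\log\rho_m$, so the claim reduces to $\tfrac1m\log\rho_m\to 0$ uniformly on $X$. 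The upper bound $\limsup_m\sup_X\tfrac1m\log\rho_m\le 0$ is elementary: given $\e>0$, uniform continuity of $\phi$ provides a radius $r>0$ (independent of $m$) with $\phi\le\phi(x)+\e$ on each ball $B(x,r)$, and applying the sub-mean value inequality for the psh function $|s|^2$ in a coordinate chart yields $|s(x)|^2_{m\phi}\le C r^{-2n}e^{2m\e}\|s\|^2_{L^2(m\phi)}$ with $C$ uniform, whence $\tfrac1m\log\rho_m\le 2\e+o(1)$. The matching lower bound $\liminf_m\inf_X\tfrac1m\log\rho_m\ge 0$ is the non-elementary input: using $\ddc\phi\ge\delta\om_0$ one builds, for each $x\in X$, a peak section $s\in\Hnot(X,mL)$ with $\|s\|_{L^2(m\phi)}$ bounded and $|s(x)|^2_{m\phi}\ge c\,m^n$ with uniform constants — this is the content of the Bouche--Catlin--Tian--Zelditch theorem, whose relevant (rough) part follows from H\"ormander's $L^2$-estimate for $\bar\partial$, applicable verbatim for the continuous weight $m\phi$. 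Combining the two bounds gives $\phi_m\to\phi$ uniformly, so $\phi\in\overline{\FS(L)}$.

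The main obstacle is precisely this last point: one must either invoke the Bergman kernel lower bound in a form valid for merely continuous weights (which, as noted, H\"ormander's estimate provides), or else interpose an extra regularization step (Demailly's, or B\l ocki--Ko\l odziej's, combined with Dini's theorem) to reduce to the classically stated smooth case before applying BCTZ. The elementary upper bound above shows that only the $L^2$-solvability giving the lower bound genuinely requires care in the continuous setting; everything else is formal.
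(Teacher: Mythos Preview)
Your proposal is correct and follows exactly the route the paper points to but does not spell out: the paper gives no proof of its own, merely stating the theorem as ``a key consequence of the Bouche--Catlin--Tian--Zelditch theorem'' with references. Your argument via the Bergman function asymptotics $\tfrac{1}{m}\log\rho_m\to 0$ (elementary sub-mean-value upper bound, H\"ormander/BCTZ lower bound), together with the preliminary reduction to strictly positive curvature and the acknowledged need either to run H\"ormander with a continuous weight or to interpose a smooth regularization step, is precisely the standard derivation the citation is meant to evoke.
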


The \emph{Monge--Amp\`ere measure} of a smooth psh metric $\phi$ is defined as the smooth probability measure 
$$
\MA(\phi):=V^{-1}(\ddc\phi)^n.
$$
The Monge--Amp\`ere operator $\phi\mapsto\MA(\phi)$ satisfies the following basic Lipschitz estimate, known as the \emph{Chern--Levine--Nirenberg inequality}. 
\begin{lem}\label{lem:CLN} Assume $\phi_1,\dots\phi_4$ are smooth psh metrics on $L$. Then 
\begin{equation}\label{equ:CLN}
\left|\int_X(\phi_1-\phi_2)\left(\MA(\phi_3)-\MA(\phi_4)\right)\right|\le 2n\sup_X|\phi_3-\phi_4|.
\end{equation}
\end{lem}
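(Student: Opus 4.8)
\emph{Proof plan.} The plan is to reduce the estimate to an integration-by-parts argument combined with the fact that all the relevant closed $(1,1)$-forms are cohomologous. Set $u := \phi_1-\phi_2$ and $\psi := \phi_3-\phi_4$; since these are differences of smooth metrics on the same line bundle, they are smooth real-valued functions on $X$. First I would record the telescoping identity
$$(\ddc\phi_3)^n - (\ddc\phi_4)^n = \ddc\psi\wedge T, \qquad T := \sum_{k=0}^{n-1}(\ddc\phi_3)^k\wedge(\ddc\phi_4)^{n-1-k},$$
where $T$ is a smooth closed $(n-1,n-1)$-form which is moreover \emph{positive}, being a sum of wedge products of the semipositive smooth forms $\ddc\phi_3,\ddc\phi_4$. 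Consequently, using that $X$ is compact without boundary (so that $\ddc$-integration by parts is the standard consequence of Stokes' theorem),
$$\int_X u\bigl((\ddc\phi_3)^n-(\ddc\phi_4)^n\bigr) = \int_X u\,\ddc\psi\wedge T = \int_X \psi\,\ddc u\wedge T.$$

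Next I would exploit that $\ddc u = \ddc\phi_1 - \ddc\phi_2$ is a difference of semipositive smooth forms, so that $\ddc\phi_j\wedge T$ ($j=1,2$) are positive measures on $X$, and estimate
$$\left|\int_X \psi\,\ddc u\wedge T\right| \le \sup_X|\psi|\left(\int_X \ddc\phi_1\wedge T + \int_X \ddc\phi_2\wedge T\right).$$
Finally, since $\ddc\phi_j$, $\ddc\phi_3$, $\ddc\phi_4$ all represent the class $c_1(L)$ in $H^2(X,\R)$ (passing to a positive multiple of $L$ if $L$ is merely a $\Q$-line bundle), cohomological invariance of the intersection pairing gives $\int_X \ddc\phi_j\wedge(\ddc\phi_3)^k\wedge(\ddc\phi_4)^{n-1-k} = (L^n) = V$ for every $k$, hence $\int_X \ddc\phi_j\wedge T = nV$. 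Dividing by $V$ yields
$$\left|\int_X(\phi_1-\phi_2)\bigl(\MA(\phi_3)-\MA(\phi_4)\bigr)\right| \le V^{-1}\cdot 2nV\sup_X|\psi| = 2n\sup_X|\phi_3-\phi_4|,$$
as claimed.

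I do not expect a genuine obstacle here: because all four metrics are assumed smooth, every step is ordinary differential-form calculus, and the only points requiring (mild) care are the sign and positivity bookkeeping in the integration by parts and the elementary observation that wedge products of semipositive smooth $(1,1)$-forms are positive. (The sharper version of this inequality valid for merely continuous, or bounded, psh metrics would require Bedford--Taylor theory and an approximation argument, but that is not needed for the present statement.)
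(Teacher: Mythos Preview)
Your proof is correct and follows essentially the same approach as the paper: the same telescoping identity for $(\ddc\phi_3)^n-(\ddc\phi_4)^n$, the same integration by parts swapping $u$ and $\psi$, and the same split $\ddc u=\ddc\phi_1-\ddc\phi_2$ into positive pieces. The only cosmetic difference is that the paper phrases the final step by observing that each $V^{-1}\ddc\phi_j\wedge(\ddc\phi_3)^k\wedge(\ddc\phi_4)^{n-1-k}$ is a probability measure, whereas you invoke cohomological invariance to compute $\int_X\ddc\phi_j\wedge T=nV$; these are the same fact.
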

\begin{proof} The identity $(\ddc\phi_3)^n-(\ddc\phi_4)^n=\ddc(\phi_3-\phi_4)\wedge\sum_{j=0}^{n-1}(\ddc\phi_3)^j\wedge(\ddc\phi_4)^{n-j-1}$ yields 
\begin{align*}
\int(\phi_1-\phi_2)\left(\MA(\phi_3)-\MA(\phi_4)\right) & =V^{-1}\sum_{j=0}^{n-1}\int (\phi_1-\phi_2)\ddc(\phi_3-\phi_4)\wedge\sum_{j=0}^{n-1}(\ddc\phi_3)^j\wedge(\ddc\phi_4)^{n-j-1}\\
& =V^{-1}\sum_{j=0}^{n-1}\int (\phi_3-\phi_4)\ddc(\phi_1-\phi_2)\wedge(\ddc\phi_3)^j\wedge(\ddc\phi_4)^{n-j-1}\\
& =\sum_{j=0}^{n-1}V^{-1}\int(\phi_3-\phi_4)\ddc\phi_1\wedge(\ddc\phi_3)^j\wedge(\ddc\phi_4)^{n-j-1}\\
& -\sum_{j=0}^{n-1}V^{-1}\int(\phi_3-\phi_4)\ddc\phi_2\wedge(\ddc\phi_3)^j\wedge(\ddc\phi_4)^{n-j-1},
\end{align*}
by integration-by-parts. For each $j$, $V^{-1}\ddc\phi_1\wedge(\ddc\phi_3)^j\wedge(\ddc\phi_4)^{n-j-1}$ is a probability measure, and hence 
$$
\left|V^{-1}\int(\phi_3-\phi_4)\ddc\phi_1\wedge(\ddc\phi_3)^j\wedge(\ddc\phi_4)^{n-j-1}\right|\le\sup|\phi_3-\phi_4|.
$$
The same holds for $V^{-1}\ddc\phi_2\wedge(\ddc\phi_3)^j\wedge(\ddc\phi_4)^{n-j-1}$, and the result follows. 
\end{proof}
As a key consequence, we get: 

\begin{prop} The Monge--Amp\`ere operator $\phi\mapsto\MA(\phi)$ admits a unique continuous extension to $\CPSH(L)$.
\end{prop}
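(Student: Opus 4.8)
The plan is to build the extension by a completion argument, using the Chern--Levine--Nirenberg inequality (Lemma~\ref{lem:CLN}) as the sole analytic input. By Theorem~\ref{thm:CPSH}, $\CPSH(L)$ is the uniform closure of $\FS(L)$; since Fubini--Study metrics are smooth and psh, the smooth psh metrics form a dense subset of $\CPSH(L)$. Uniqueness of a continuous extension is then automatic, since two continuous maps into the (Hausdorff) space of probability measures on $X$ agreeing on a dense set coincide.

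For existence, first observe that the test functions occurring in Lemma~\ref{lem:CLN} span a dense subspace of $\Cz(X)$. Let $D\subseteq\Cz(X)$ be the linear span of the differences $\psi_1-\psi_2$ of smooth psh metrics on $L$. Fixing a smooth metric $\phi_0$ on $L$ with positive curvature (available since $L$ is ample) and, for $f\in C^\infty(X)$, choosing $\e>0$ small enough that $\phi_0+\e f$ is still psh, the identity $f=\e^{-1}\big((\phi_0+\e f)-\phi_0\big)$ shows $C^\infty(X)\subseteq D$, so $D$ is dense in $\Cz(X)$. Now fix $\phi\in\CPSH(L)$ and a sequence of smooth psh metrics $\phi_j\to\phi$ uniformly. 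For $f=\sum_i c_i(\psi_{1,i}-\psi_{2,i})\in D$, Lemma~\ref{lem:CLN} gives $\big|\int_X f\,(\MA(\phi_j)-\MA(\phi_k))\big|\le 2n\big(\sum_i|c_i|\big)\sup_X|\phi_j-\phi_k|\to 0$, so $\int_X f\,\MA(\phi_j)$ is Cauchy in $\R$. Since the $\MA(\phi_j)$ are probability measures on the compact space $X$, convergence of $\int_X f\,\MA(\phi_j)$ for all $f$ in the dense set $D$ forces weak convergence $\MA(\phi_j)\to\mu_\phi$ for some probability measure $\mu_\phi$; interlacing two approximating sequences shows $\mu_\phi$ depends only on $\phi$, and coincides with $\MA(\phi)$ when $\phi$ is smooth. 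We define $\MA(\phi):=\mu_\phi$.

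It remains to prove that $\phi\mapsto\MA(\phi)$ is continuous on $\CPSH(L)$. For $\phi,\phi'\in\CPSH(L)$ and $f=\psi_1-\psi_2$ with $\psi_1,\psi_2$ smooth psh, approximate $\phi,\phi'$ uniformly by smooth psh metrics and pass to the limit in Lemma~\ref{lem:CLN}: using the weak continuity just established along smooth approximating sequences and the fact that $\sup_X|\cdot|$ passes to the limit, one obtains the extended estimate $\big|\int_X f\,(\MA(\phi)-\MA(\phi'))\big|\le 2n\sup_X|\phi-\phi'|$, hence $\big|\int_X f\,(\MA(\phi)-\MA(\phi'))\big|\le 2n(\sum_i|c_i|)\sup_X|\phi-\phi'|$ for general $f\in D$. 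Thus if $\phi^{(k)}\to\phi$ uniformly in $\CPSH(L)$, then $\int_X f\,\MA(\phi^{(k)})\to\int_X f\,\MA(\phi)$ for every $f\in D$, and therefore for every $f\in\Cz(X)$ by density together with the uniform bound $\MA(\phi^{(k)})(X)=1$; that is, $\MA(\phi^{(k)})\to\MA(\phi)$ weakly. The main point requiring care is not analytic---it is the bookkeeping needed to upgrade the Chern--Levine--Nirenberg inequality from quadruples of smooth psh metrics first to a well-defined probability measure $\MA(\phi)$ (independence of the approximating sequence, positivity, unit mass) and then to an estimate valid for all continuous psh metrics, everything else being a soft density argument.
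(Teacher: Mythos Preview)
Your proof is correct and follows essentially the same approach as the paper: both rely on the density of smooth psh metrics in $\CPSH(L)$ (via Theorem~\ref{thm:CPSH}), the observation that any smooth function is a multiple of a difference of smooth psh metrics on $L$, and the Chern--Levine--Nirenberg inequality~\eqref{equ:CLN} as the sole analytic input. The paper's proof is a two-sentence sketch of exactly this strategy; you have simply filled in the bookkeeping (Cauchy-ness of integrals, independence of approximating sequence, positivity and unit mass of the limit, extension of~\eqref{equ:CLN} to all of $\CPSH(L)$) that the paper leaves implicit.
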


\begin{proof} Since $\MA(\phi)$ is a probability measure, it suffices to show that $\phi\mapsto\int f\,\MA(\phi)$ admits a continuous extension to $\CPSH(L)$ for any smooth function $f\in C^\infty(X)$. A small multiple of $f$ can be written as a difference of smooth psh metrics on $L$, and the result now follows from~\eqref{equ:CLN}, sincethe set of smooth psh metrics on $L$ is dense in $\CPSH(L)$, as a consequence of Theorem~\ref{thm:CPSH}. 
\end{proof}

For any two $\phi,\p\in\CPSH(L)$, we next introduce the \emph{Aubin energy}
$$
\ii(\phi,\p):=\int(\phi-\p)\left(\MA(\p)-\MA(\phi)\right). 
$$
This functional is symmetric, invariant under translation of $\phi,\p$ by a constant, and it satisfies 
\begin{equation}\label{equ:I}
0\le\ii(\phi,\p)\le 2\sup|\phi-\p|.
\end{equation}
The right-hand inequality is indeed trivial. By density, it is enough to show nonnegativity when $\phi,\p$ are smooth, in which case Stokes yields 
$$
\ii(\phi,\p)=V^{-1}\sum_{j=0}^{n-1}\int d(\phi-\p)\wedge d^c(\phi-\p)\wedge(\ddc\phi)^j\wedge(\ddc\p)^{n-1-j}\ge 0. 
$$
Less trivially, the energy functional $\ii$ satisfies a quasi-triangle inequality
$$
\ii(\phi_1,\phi_2)\le C_n\left(\ii(\phi_1,\phi_3)+\ii(\phi_2,\phi_3)\right)
$$
for all $\phi_1,\phi_2,\phi_3\in\CPSH(L)$, and the Chern--Levine--Nirenberg inequality can be refined into the following H\"older estimate with respect to the energy
\begin{equation}\label{equ:Hodge} 
\left|\int(\phi_1-\phi_2)\left(\MA(\phi_3)-\MA(\phi_4)\right)\right|\le C_n\ii(\phi_1,\phi_2)^{\a_n}\ii(\phi_3,\phi_4)^{1/2}\max_i\ii(\phi_i,\phi_1)^{1/2-\a_n}
\end{equation}
for all $\phi_1,\dots,\phi_4\in\CPSH(L)$, with $\a_n:=2^{-n}$ (see~\cite{BEGZ} and~\cite{synthetic}). In particular, this implies the following uniqueness result:

\begin{cor}\label{cor:unique} For all $\phi,\p\in\CPSH(L)$, the following are equivalent:
\begin{itemize}
\item[(i)] $\MA(\phi)=\MA(\p)$;
\item[(ii)] $\ii(\phi,\p)=0$;
\item[(iii)] $\phi-\p$ is constant. 
\end{itemize}
\end{cor}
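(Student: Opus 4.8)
The plan is to prove the cycle of implications (i)~$\Rightarrow$~(ii)~$\Rightarrow$~(iii)~$\Rightarrow$~(i). Two of these are immediate from the definitions and should be disposed of first: if $\MA(\phi)=\MA(\p)$ then $\ii(\phi,\p)=\int_X(\phi-\p)\bigl(\MA(\p)-\MA(\phi)\bigr)=0$, which gives (i)~$\Rightarrow$~(ii); and if $\phi-\p$ is constant then $\ddc\phi=\ddc\p$, hence $(\ddc\phi)^n=(\ddc\p)^n$ and $\MA(\phi)=\MA(\p)$, which gives (iii)~$\Rightarrow$~(i).

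The substance is therefore (ii)~$\Rightarrow$~(iii), and here I would use the H\"older estimate~\eqref{equ:Hodge}. Apply it with $\phi_1:=\phi$, $\phi_2:=\p$ and $\phi_3,\phi_4$ ranging over $\CPSH(L)$. All the factors $\ii(\phi_3,\phi_4)$ and $\ii(\phi_i,\phi)$ are finite, since by~\eqref{equ:I} one has $\ii\le 2\sup|\cdot|$ and continuous metrics on $L$ differ by a bounded function; and by hypothesis $\ii(\phi,\p)=0$, with $\a_n=2^{-n}>0$. Hence the right-hand side of~\eqref{equ:Hodge} vanishes, and
$$
\int_X(\phi-\p)\,\MA(\phi_3)=\int_X(\phi-\p)\,\MA(\phi_4)\qquad\text{for all }\phi_3,\phi_4\in\CPSH(L).
$$
It remains to deduce that $u:=\phi-\p$ is constant. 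For this, fix a smooth metric $\phi_0$ on $L$ whose curvature form $\om:=\ddc\phi_0$ is K\"ahler (possible since $L$ is ample). For any $f\in C^\infty(X)$ and $|t|$ small enough, $\phi_0+tf$ is a smooth psh metric on $L$, so $t\mapsto\int_X u\,\MA(\phi_0+tf)$ is constant near $t=0$. Since $\MA(\phi_0+tf)=V^{-1}(\om+t\,\ddc f)^n$ depends polynomially on $t$ while $u$ is a fixed continuous function, this function of $t$ is a polynomial, so its derivative at $0$ vanishes; after integrating by parts against the smooth closed form $\om^{n-1}$ this yields $\int_X f\,\ddc u\wedge\om^{n-1}=0$ for every $f\in C^\infty(X)$. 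Thus the signed measure $\ddc u\wedge\om^{n-1}$, which is a nonzero constant multiple of $(\Delta_\om u)\,\om^n$, vanishes, i.e.\ $u$ is weakly $\om$-harmonic on the compact manifold $X$; by elliptic regularity $u$ is smooth and by the maximum principle $u$ is constant, completing the cycle.

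The main obstacle I anticipate is making the last paragraph fully rigorous given that $\phi,\p$ are only assumed continuous: one must check that $\int_X u\,\MA(\phi_0+tf)$ really is polynomial in $t$ (it is, the measures being smooth with $t$-polynomial coefficients and $u$ a fixed continuous integrand), and that the current $\ddc u\wedge\om^{n-1}$ and the integration by parts against $f$ are legitimate (they are: locally $u$ is a difference of continuous psh functions, so $\ddc u$ is a difference of positive closed currents, and wedging with the smooth form $\om^{n-1}$ is harmless). An alternative to the Laplacian step, if one prefers to avoid elliptic theory, is to invoke the Calabi--Yau theorem to identify $\{\MA(\phi_3):\phi_3\in\CPSH(L)\}$ as containing all smooth positive-density probability measures, which are weakly dense among probability measures; then $\int_X u\,\nu$ is independent of the probability measure $\nu$, and testing against Dirac masses forces $u$ to be constant.
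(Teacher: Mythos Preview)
Your argument is correct. The easy implications and the use of the H\"older estimate~\eqref{equ:Hodge} to show that $\int_X(\phi-\p)\,\MA(\rho)$ is independent of $\rho\in\CPSH(L)$ match the paper exactly. Where you diverge is in the final step: the paper simply invokes Yau's theorem to conclude that every smooth volume form arises as some $\MA(\rho)$, whence $\int_X(\phi-\p)\,\mu=0$ for all such $\mu$ and $\phi-\p$ is constant---this is precisely your ``alternative'' paragraph. Your main route instead differentiates along the family $\phi_0+tf$ to extract $\int_X u\,\ddc f\wedge\om^{n-1}=0$, integrates by parts, and finishes with weak harmonicity, elliptic regularity and the maximum principle. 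This is a legitimate and more elementary substitute for the appeal to Yau's theorem: you trade a deep existence result for linear elliptic theory, at the cost of a slightly longer argument. Either route is fine; the paper's is shorter because Yau's theorem is already part of the ambient toolkit.
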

\begin{proof} (iii)$\Rightarrow$(i)$\Rightarrow$(ii) is clear. Assume (ii), and normalize $\phi,\p$ so that $\int(\phi-\p)\MA(\phi)=0$. Since $\ii(\phi,\p)=0$, \eqref{equ:Hodge} shows that $\int(\phi-\p)\,\MA(\rho)=0$ for all $\rho\in\CPSH(L)$. By Yau's theorem, this implies $\int(\phi-\p)\,\mu=0$ for all volume forms $\mu$, and it follows that $\phi-\p$ is constant. 
\end{proof}
This yields in turn the following \emph{domination principle}. 

\begin{cor}\label{cor:dom} Assume $\phi,\p\in\CPSH(L)$ satisfy $\p\le\phi$ $\MA(\phi)$-a.e. Then $\p\le\phi$ everywhere. 
\end{cor}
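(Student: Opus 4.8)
The plan is to deduce the domination principle from the uniqueness result of Corollary~\ref{cor:unique} by a standard comparison argument involving the maximum of two psh metrics. The key observation is that $\max\{\phi,\p\}$ is again a continuous psh metric on $L$, since the maximum of two psh functions is psh and the metrics are continuous; call it $\rho:=\max\{\phi,\p\}$, so $\rho\ge\phi$ everywhere with equality precisely on the set $\{\p\le\phi\}$.

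First I would show that $\MA(\rho)=\MA(\phi)$. The inequality $\rho\ge\phi$ gives, by the comparison/locality properties of the Monge--Amp\`ere operator (or more elementarily, by the fact that $\MA(\rho)$ puts no mass where $\rho>\phi$, since there $\rho=\p$ locally and the two currents agree), that $\MA(\rho)=\one_{\{\rho=\phi\}}\MA(\rho)+\one_{\{\rho=\p\}}\MA(\p)$ off the contact set, while on the interior of $\{\phi<\p\}$ one has $\rho=\p$, hence $\MA(\rho)=\MA(\p)$ there. Combining with the hypothesis that $\p\le\phi$ holds $\MA(\phi)$-a.e.\ (so $\MA(\phi)$ is carried by the contact set $\{\rho=\phi\}$), one checks that $\MA(\rho)$ and $\MA(\phi)$ agree: indeed $\MA(\rho)\ge\one_{\{\rho=\phi\}}\MA(\phi)$ by the comparison principle, and since both are probability measures equality of total mass forces $\MA(\rho)=\MA(\phi)$.

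Then Corollary~\ref{cor:unique} applies to the pair $\rho,\phi\in\CPSH(L)$: since $\MA(\rho)=\MA(\phi)$, the metric $\rho-\phi$ is constant. As $\rho\ge\phi$ this constant is $\ge 0$; but $\rho=\phi$ on the nonempty set where the hypothesis $\p\le\phi$ holds (which has positive $\MA(\phi)$-mass, hence is nonempty), so the constant is $0$. Therefore $\rho=\phi$ everywhere, which is exactly the statement $\p\le\phi$ everywhere.

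The main obstacle is making rigorous the identity $\MA(\rho)=\one_{\{\rho=\phi\}}\MA(\phi)+\one_{\{\rho=\p\}}\MA(\p)$ (and the one-sided bound $\MA(\rho)\ge\one_{\{\rho=\phi\}}\MA(\phi)$): this is the standard locality of the complex Monge--Amp\`ere operator on plurisubharmonic functions, together with the fact that $\{\rho>\phi\}$ is open and $\rho$ coincides with $\p$ there, but one must invoke the Bedford--Taylor theory (locality of $\MA$ and the comparison principle for equal boundary values / global comparison on a compact K\"ahler manifold) to justify it. Granting that standard input, the argument is immediate.
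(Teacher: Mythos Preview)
Your proof is correct, but it takes a longer route than the paper's and imports machinery that is not needed. Both arguments begin by passing to $\rho=\max\{\phi,\p\}$ and end by invoking Corollary~\ref{cor:unique}; the difference lies entirely in the middle step.

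You aim to prove $\MA(\rho)=\MA(\phi)$ directly, and for this you need the Bedford--Taylor inequality $\MA(\max\{\phi,\p\})\ge\one_{\{\phi\ge\p\}}\MA(\phi)$ (together with the total-mass argument). This is a genuine external input, as you acknowledge. The paper instead bypasses this entirely by computing the Aubin energy: after replacing $\p$ with $\max\{\phi,\p\}$ (so now $\p\ge\phi$ with equality $\MA(\phi)$-a.e.), one has
\[
0\le\ii(\phi,\p)=\int(\phi-\p)\bigl(\MA(\p)-\MA(\phi)\bigr)=\int(\phi-\p)\,\MA(\p)\le 0,
\]
since $\int(\phi-\p)\,\MA(\phi)=0$ by the hypothesis and $\phi-\p\le 0$. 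Thus $\ii(\phi,\p)=0$, and Corollary~\ref{cor:unique} (via condition (ii) rather than (i)) gives the conclusion. This uses nothing beyond the nonnegativity of $\ii$ already established in the text, so it is strictly more self-contained than your argument.
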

\begin{proof} After replacing $\p$ with $\max\{\phi,\p\}$, we may assume $\p\ge\phi$, with equality $\MA(\phi)$-a.e. We then need to show $\p=\phi$. Since 
$$
0\le\ii(\phi,\p)=\int(\phi-\p)(\MA(\p)-\MA(\phi))=\int(\phi-\p)\MA(\p)\le 0,
$$
Corollary~\ref{cor:unique} shows that $\phi-\p$ is constant, and hence $0$ since $\phi-\p=0$ $\MA(\phi)$-a.e. 
\end{proof}
%
%
%%%%%%%%%%%%%%%%%%%%%%%%%%%%%%%%%%%%%%%%%%%%
\subsection{The non-Archimedean case}
In this section, we return to the setting of a projective meromorphic degeneration $X\to\DD^\times$ associated to a smooth projective variety $X_K$ over the field $K=\C\{t\}$ of convergent Laurent series, with Berkovich analytification $X_0:=X_K^\an$. 

Consider first a (relatively) ample meromorphic line bundle $L$ on $X$, associated to an ample line bundle $L_K$ on $X_K$. In the non-Archimedean context, we use the analogue of Theorem~\ref{thm:CPSH} as a definition.  

\begin{defi}\label{defi:CPSH}The space $\CPSH(L_0)$ of \emph{continuous psh metrics} on $L_0$ is defined as the closure of the set $\FS(L_0)$ of Fubini--Study metrics on $L$ with respect to uniform convergence. 
\end{defi}
Recall that a metric $\phi$ on $L_0$ lies in $\FS(L_0)$ iff $\phi=\phi_\cL$ is a model metric associated to a semiample model $\cL$ of $L$ (see Proposition~\ref{prop:FSmodel}). For more general model metrics, the psh condition can be characterized as follows (see~\cite[Corollary~7.9]{BE}). 

\begin{lem}\label{lem:modelpsh} Pick a model metric $\phi$ on $L_0$, and write $\phi=\phi_\cL$ for a model $(\cX,\cL)$ of $(X,L)$. Then $\phi$ is psh iff $\cL$ is (relatively) nef, \ie $\cL\cdot C\ge 0$ for every curve $C\subset\cX_0$. 
\end{lem}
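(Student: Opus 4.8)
The plan is to prove Lemma~\ref{lem:modelpsh} by reducing to the Fubini--Study approximation characterization of $\CPSH(L_0)$ (Definition~\ref{defi:CPSH}) together with standard positivity facts for $\Q$-line bundles on models, which we may invoke from the references. We are asked to show that a model metric $\phi=\phi_\cL$, with $\cL$ a model of $L$ determined on a model $\cX$ of $X$, is psh if and only if $\cL$ is relatively nef over $\cO_K$, i.e. $\cL\cdot C\ge 0$ for every complete curve $C\subset\cX_0$.

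First I would treat the ``if'' direction. Assume $\cL$ is relatively nef on $\cX$. After passing to a higher model, we may assume $\cX$ is snc, and we fix an ample model $\cA$ of $L$ on (a higher model, still called) $\cX$; replacing $\cA$ by $\cA+m\cX_0$ changes the metric only by a constant, so the model function $\phi_\cA-\phi_\cL$ is unaffected up to constants. For each rational $\e>0$, $\cL+\e\cA$ is relatively nef and relatively big, hence semiample after a further modification by Kleiman's criterion together with the base-point-free type statements available over the DVR $\cO_K$ (this is exactly the content invoked in~\cite{BE}), so $\phi_{\cL+\e\cA}\in\FS(L_0+\e\cdot 0)$, and letting $\e\to 0$ we get $\phi_\cL\in\CPSH(L_0)$ as a uniform limit. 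Here I should be a little careful about the normalization of $\FS$ and the passage to $\Q$-coefficients, but this is routine using Proposition~\ref{prop:FSmodel} and the fact that semiample models have psh model metrics.

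For the ``only if'' direction, suppose $\phi_\cL$ is psh, i.e. $\phi_\cL\in\CPSH(L_0)$, and suppose for contradiction that $\cL\cdot C<0$ for some irreducible complete curve $C\subset\cX_0$. Pick a divisorial valuation $v=v_E$ attached to a component $E$ of the central fiber of an snc model dominating $\cX$ with $C$ lying on (the image of) $E$; more precisely I would use that the model metrics $\phi_\cL$ are computed by intersection numbers of the form $\cL\cdot C$ along curves in special fibers, and that negativity of such an intersection number forces, after blowing up $C$, the metric $\phi_\cL$ to fail the local psh inequality — concretely, one exhibits a model function $\phi_D$ such that $\phi_\cL-\e\phi_D<\phi_\cL$ somewhere while $\cL-\e D$ has strictly smaller degree along $C$, violating the fact (from the definition of $\CPSH(L_0)$ as a closure of $\FS$) that psh metrics are stable under regularized sup and satisfy the comparison/maximum principle with semiample model metrics. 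Equivalently and more cleanly: by~\cite[Corollary~7.9]{BE} the nef locus is detected by restricting $\cL$ to curves in $\cX_0$, and the psh metrics coming from Fubini--Study metrics restrict to semipositive metrics on every such curve $C\simeq\P^1$ or its normalization, which forces $\deg(\cL|_C)\ge 0$; uniform limits of such metrics retain $\deg(\cL|_C)\ge 0$ because the degree is a continuous (indeed locally constant) invariant of the model.

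The main obstacle is the ``only if'' direction: extracting the numerical inequality $\cL\cdot C\ge 0$ from the purely metric/pluripotential statement ``$\phi_\cL$ is a uniform limit of Fubini--Study metrics'' requires knowing that psh metrics restrict well to curves in the central fiber and that the resulting restriction is a genuine semipositive (hence nonnegative-degree) metric — this is precisely where one needs the functorial behavior of $\CPSH$ under restriction to subvarieties and the compatibility of model metrics with intersection numbers, as developed in~\cite{BE}. Since the statement is explicitly attributed to \cite[Corollary~7.9]{BE}, in the write-up I would give the short conceptual argument in the ``if'' direction in full and cite \loccit{} for the ``only if'' direction, perhaps sketching the curve-restriction argument for the reader's benefit but not reproving the base-point-free lemmas over $\cO_K$.
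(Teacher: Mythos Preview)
The paper does not prove this lemma; it simply states it with the parenthetical citation ``(see \cite[Corollary~7.9]{BE})''. So there is no argument in the paper to compare against, and your plan to cite \cite{BE} for the hard direction is exactly what the paper itself does.

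Your ``if'' direction is essentially correct, but can be simplified: if $\cL$ is relatively nef and $\cA$ is relatively ample, then $\cL+\e\cA$ is relatively \emph{ample} by Kleiman's criterion (nef $+$ ample $=$ ample), not merely nef and big; no base-point-free theorem is needed. Also, $\cL+\e\cA$ is a model of $(1+\e)L$, not of $L$, so you should write the approximation as $\frac{1}{1+\e}\phi_{\cL+\e\cA}\in\FS(L_0)$ converging uniformly to $\phi_\cL$ as $\e\to 0$.

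Your ``only if'' direction has a genuine gap. The phrase ``psh metrics restrict to semipositive metrics on every such curve $C$'' does not make literal sense: $\phi_\cL$ is a metric on $L_0=L_K^\an$, the Berkovich analytification of the \emph{generic} fiber, while $C$ lives in the \emph{special} fiber $\cX_0$; there is no direct restriction map. The connection between psh-ness of $\phi_\cL$ and intersection numbers $\cL\cdot C$ on $\cX_0$ goes through more indirect means (e.g.\ the intersection-theoretic formula for mixed Monge--Amp\`ere measures, or the characterization of psh model metrics via semipositive envelopes as in \cite{BE}). Your first attempt via blow-ups and model functions is too vague to constitute an argument. Since the paper itself just cites \cite{BE}, this is fine for a write-up, but you should not present the curve-restriction heuristic as if it were a proof sketch.
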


Next pick a tuple of (non-necessarily ample) line bundles $L_1,\dots,L_n$ on $X$, and model metrics $\phi_1,\dots,\phi_n$ on $L_{1,0},\dots,L_{n,0}$. Then $\phi_i=\phi_{\cL_i}$ for models $\cL_1,\dots,\cL_n$ of $L_1,\dots,L_n$, which can be chosen to be determined on the same snc model $\cX$ of $X$. Writing as usual $\cX_0=\sum_i b_i E_i$, we define a signed atomic measure on $X_0$ by setting
\begin{equation}\label{equ:mixedMA}
\ddc\phi_1\winter\ddc\phi_n:=\sum_i b_i (\cL_1\inter\cL_n\cdot E_i)\,\d_{v_{E_i}}
\end{equation}
with $v_{E_i}\in X_0$ the divisorial valuation associated to $E_i$, see~\eqref{equ:vE}. Using the projection formula, this definition is easily seen to be invariant under pulling back the $\cL_i$'s to a higher model of $X$, and hence only depends on the model metrics $\phi_i$. 

One further has
\begin{equation}\label{equ:totalmass}
\int_{X_0}\ddc\phi_1\winter\ddc\phi_n=(L_1\inter L_n), 
\end{equation}
and the integration-by-parts formula
\begin{equation}\label{equ:NAIPP}
\int f\,\ddc g\wedge\ddc\phi_2\winter\ddc\phi_n=\int g\,\ddc f\wedge\ddc\phi_2\winter\ddc\phi_n.
\end{equation}
holds for all model functions $f,g\in \Cz(X_0)$ and model metrics $\phi_i\in\Cz(L_i)$. 

Finally, when all $L_i$ are ample Lemma~\ref{lem:modelpsh} yields
$$
\phi_1,\dots,\phi_n\text{ psh }\Longrightarrow\ddc\phi_1\winter\ddc\phi_n\ge 0.
$$
From the perspective of the present notes, \eqref{equ:mixedMA} is best justified as follows. 

\begin{thm}\label{thm:ddchyb} Assume given a hybrid model metric $(\phi_{i,t})_{t\in\DD}$ on $L_i$, $i=1,\dots,n$. Then 
$$
\ddc\phi_{1,t}\winter\ddc\phi_{n,t}\to\ddc\phi_{1,0}\winter\ddc\phi_{n,0}
$$
weakly in $X^\hyb$ as $t\to 0$. 
\end{thm}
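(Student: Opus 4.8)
The plan is to combine the Archimedean-to-hybrid convergence principle of Proposition~\ref{prop:cxvshyb} with the convergence, in the complex space underlying a model of $X$, of the complex mixed Monge--Amp\`ere currents of the $\phi_{i,t}$ towards a measure on the central fibre that charges no nowhere dense Zariski closed subset. First I would reduce to a common model: by definition of a hybrid model metric (\S\ref{sec:hybmodmetr}), each $\phi_i$ is the restriction of a smooth metric on a model $\cL_i$ of $L_i$ with $\phi_{i,0}=\phi_{\cL_i}$, and after pulling back to a common dominating model, then to an snc model, I may assume all the $\cL_i$ are determined on a single snc model $\cX$, with $\cX_0=\sum_{i\in I}b_i E_i$, carrying smooth metrics $\Phi_i$ such that $\phi_{i,t}=\Phi_i|_{X_t}$ for $t\ne 0$. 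The curvature forms $\ddc\Phi_i$ are then globally defined smooth \emph{closed} $(1,1)$-forms on $\cX$ representing $c_1(\cL_i)$, and, viewing $\mu_t:=\ddc\phi_{1,t}\winter\ddc\phi_{n,t}$ as a (signed) measure on $\cX$ carried by $X_t$, we have $\mu_t=[X_t]\wedge\ddc\Phi_1\winter\ddc\Phi_n$.

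\emph{Convergence in $\cX$.} Fixing a closed subdisc $\bar\DD_r$, over which $\cX$ is compact, the cycles of the flat proper family $\cX\to\DD$ satisfy $[X_t]\to[\cX_0]=\sum_i b_i[E_i]$ weakly as currents when $t\to 0$ --- this is the classical continuity of the family of analytic cycles, which one may also verify directly in adapted coordinates, where it reduces to the toric-type computation for $\{\prod_i z_i^{b_i}=t\}$ as $t\to 0$. Since wedging with a fixed smooth form preserves weak convergence of currents ($\langle T_t\wedge\beta,\gamma\rangle=\langle T_t,\beta\wedge\gamma\rangle$), it follows that $\mu_t$ converges weakly in $\cX$ to $\mu_{\cX_0}:=\sum_i b_i\,(\ddc\Phi_1\winter\ddc\Phi_n)|_{E_i}$, a measure supported on $\cX_0$ which on each $E_i$ is given by a smooth $(n,n)$-form, hence is absolutely continuous with respect to the Lebesgue measure class of $E_i$. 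In particular $\mu_{\cX_0}$ puts no mass on any nowhere dense Zariski closed subset of $\cX_0$, since the latter meets each $E_i$ properly.

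\emph{Passage to $X^\hyb$ and identification of the limit.} Proposition~\ref{prop:cxvshyb} is stated for positive measures, so I would first fix a K\"ahler form $\eta$ on $\cX$ (over $\bar\DD_r$; e.g. restricted from an ambient $\P^N\times\DD$) and choose $N\gg 1$ with $\omega_i:=\ddc\Phi_i+N\eta\ge 0$; expanding $\mu_t=\bigwedge_i(\omega_i-N\eta)|_{X_t}$ multilinearly exhibits $\mu_t$ as a signed combination of the positive measures $\nu^S_t:=\big(\bigwedge_{i\in S}\omega_i\wedge\eta^{\wedge(n-|S|)}\big)|_{X_t}$, each of which enjoys the two properties established above. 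Applying Proposition~\ref{prop:cxvshyb} to each $\nu^S_t$ and recombining with signs yields $\mu_t\to\sum_i\mu_{\cX_0}(E_i)\,\d_{v_{E_i}}$ weakly in $X^\hyb$. It remains to compute $\mu_{\cX_0}(E_i)=b_i\int_{E_i}(\ddc\Phi_1\winter\ddc\Phi_n)|_{E_i}=b_i\,(\cL_1\inter\cL_n\cdot E_i)$ by de~Rham, so that the limit is $\sum_i b_i\,(\cL_1\inter\cL_n\cdot E_i)\,\d_{v_{E_i}}=\ddc\phi_{1,0}\winter\ddc\phi_{n,0}$, the last equality being the definition~\eqref{equ:mixedMA}.

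The step I expect to be the main obstacle is the first input of the convergence in $\cX$, namely the continuity of $t\mapsto[X_t]$ at $t=0$ for the family $\cX\to\DD$; the reduction of Proposition~\ref{prop:cxvshyb} from positive to signed measures, carried out above by the $\eta$-trick, is a routine technicality, and everything else is formal.
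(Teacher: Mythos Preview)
Your proposal is correct and follows essentially the same route as the paper: reduce to a common snc model, establish weak convergence in $\cX$ via $[X_t]\to[\cX_0]$ wedged with the smooth curvature forms, then apply Proposition~\ref{prop:cxvshyb} and identify the masses $\mu_{\cX_0}(E_i)$ as intersection numbers. The only cosmetic difference is your positivity reduction: the paper writes each $\Phi_j$ as a difference of smooth strictly psh metrics on ample models and invokes multilinearity to assume all $\cL_j$ ample and $\Phi_j$ psh, whereas you add a K\"ahler form and expand multilinearly---both are standard variants of the same trick.
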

\begin{proof} By definition, each $(\phi_{j,t})_{t\in\DD^\times}$ is the restriction of a smooth metric $\Phi_j$ on a model $\cL_j$ of $L_j$, and $\phi_{j,0}=\phi_{\cL_j}$ (see~\S\ref{sec:hybmodmetr}). After passing to a higher model, we may assume all $\cL_j$ are determined on the same snc model $\cX$, with central fiber $\cX_0=\sum_i b_i E_i$. After perhaps shrinking the base $\DD$ slightly, each $\Phi_j$ can further be written as a difference of smooth strictly psh metrics on ample line bundles; by multilinearity, we may thus assume that each $\cL_j$ is ample and $\Phi_j$ is (strictly) psh. Denoting by $[X_t]$ the integration current on $X_t$, the positive measure 
$$
\mu_t:=\ddc\phi_{1,t}\winter\ddc\phi_{n,t}=\ddc\Phi_1\winter\ddc\Phi_n\wedge[X_t]
$$
then converges weakly in $\cX$ to 
$$
\mu_{\cX_0}:=\ddc\Phi_1\winter\ddc\Phi_n\wedge[\cX_0]=\sum_i b_i\,\ddc\Phi_1\winter\ddc\Phi_n\wedge[E_i],
$$
which puts no mass on any nowhere dense Zariski closed subset of $\cX_0$, and has mass
$$
\mu_{\cX_0}(E_i)=b_i(\cL_1\inter\cL_n\cdot E_i)
$$
on each component $E_i$ of $\cX_0$. By Proposition~\ref{prop:cxvshyb}, we conclude, as desired, that $\mu_t$ converges weakly in $X^\hyb$ to $\sum_i\mu_{\cX_0}(E_i)\d_{v_{E_i}}=\ddc\phi_{1,0}\winter\ddc\phi_{n,0}$. 
\end{proof}

Assume as above $L$ is an ample line bundle on $X$, of volume $V=(L^n)$. By~\eqref{equ:totalmass}, for each psh model metric $\phi$ on $L_0$, the \emph{non-Archimedean Monge--Amp\`ere measure}
$$
\MA(\phi):=V^{-1}(\ddc\phi)^n
$$
is an atomic probability measure on $X_0$. Using~\eqref{equ:NAIPP}, the proof of Lemma~\ref{lem:CLN} goes through without change to show that the Chern--Levine--Nirenberg inequality~\eqref{equ:CLN} holds for psh model metrics $\phi_1,\dots,\phi_4$ on $L_0$. By Proposition~\ref{prop:modelfunc}, this yields the next result, originally proved in~\cite{CL}. 

\begin{prop} The non-Archimedean Monge--Amp\`ere operator $\phi\mapsto\MA(\phi)$ admits a unique continuous extension to $\CPSH(L_0)$.
\end{prop}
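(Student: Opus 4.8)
The plan is to follow verbatim the argument used for the complex Monge--Amp\`ere operator earlier in this section. Since, for every psh model metric $\phi$ on $L_0$, the measure $\MA(\phi)$ is a \emph{probability} measure on the compact space $X_0$ (by \eqref{equ:totalmass} together with positivity of intersection numbers of nef classes, via Lemma~\ref{lem:modelpsh}), it suffices to show that for $f$ ranging over a dense subspace of $\Cz(X_0)$ the functional $\phi\mapsto\int_{X_0}f\,\MA(\phi)$ is uniformly continuous on the set of psh model metrics with respect to uniform convergence. By Definition~\ref{defi:CPSH} it then extends uniquely to $\CPSH(L_0)$, and a standard weak compactness argument (all the measures involved being probability measures on a fixed compact space) upgrades this to the desired extension of $\MA$ itself, which is automatically unique.

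For the dense family of test functions I would take $\DFS(X_0)$, which by Proposition~\ref{prop:modelfunc} is exactly the space of model functions and is dense in $\Cz(X_0)$. The point is to realize each $f\in\DFS(X_0)$ as a difference of psh model metrics on $L_0$: writing $f=\phi_D$ for a vertical $\Q$-Cartier divisor $D$ on an snc model $\cX$ which, after passing to a higher model, carries an ample model $\cL$ of $L$, ampleness is preserved under the perturbation $\cL\mapsto\cL+m^{-1}D$ for $m\gg1$, so both $\phi_\cL$ and $\phi_{\cL+m^{-1}D}$ are Fubini--Study metrics (Proposition~\ref{prop:FSmodel}), hence psh by Lemma~\ref{lem:modelpsh}, and $f=m(\phi_{\cL+m^{-1}D}-\phi_\cL)$. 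Then I would invoke the non-Archimedean Chern--Levine--Nirenberg inequality: as already noted, the proof of Lemma~\ref{lem:CLN} carries over unchanged, replacing complex Stokes by the integration-by-parts formula \eqref{equ:NAIPP}, to give \eqref{equ:CLN} for psh model metrics $\phi_1,\dots,\phi_4$ on $L_0$. Taking $\phi_1-\phi_2=m^{-1}f$ yields, for any two psh model metrics $\p,\p'$ on $L_0$,
\[
\left|\int_{X_0}f\,\bigl(\MA(\p)-\MA(\p')\bigr)\right|\le 2nm\,\sup_{X_0}|\p-\p'|,
\]
so $\p\mapsto\int_{X_0}f\,\MA(\p)$ is Lipschitz on the set of psh model metrics, in particular on $\FS(L_0)$.

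Finally I would assemble the extension. Given $\phi\in\CPSH(L_0)$ and a sequence $\p_j\in\FS(L_0)$ with $\p_j\to\phi$ uniformly, the above estimate shows $\int_{X_0}f\,\MA(\p_j)$ is Cauchy for each $f\in\DFS(X_0)$; since these functions are dense and the $\MA(\p_j)$ are probability measures on the compact set $X_0$, the sequence $\MA(\p_j)$ converges weakly to a limit independent of the chosen sequence, which we declare to be $\MA(\phi)$. The same estimate gives continuity of this extension on $\CPSH(L_0)$, and uniqueness of a continuous extension is immediate from the density of $\FS(L_0)$. I expect the only step requiring genuine care to be the realization of model functions as differences of psh (equivalently nef) model metrics on $L_0$ — this is precisely where ampleness of $L$ is used, and is the non-Archimedean analogue of writing a test function as a difference of K\"ahler potentials; once \eqref{equ:NAIPP} is available, everything else is formal.
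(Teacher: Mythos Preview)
Your proposal is correct and follows essentially the same route as the paper: the paper simply observes that the proof of Lemma~\ref{lem:CLN} goes through via the integration-by-parts formula~\eqref{equ:NAIPP}, and then invokes Proposition~\ref{prop:modelfunc} (density of model functions, written as differences of Fubini--Study metrics on $L_0$) to conclude. Your write-up spells out these steps in more detail than the paper does, but the argument is the same.
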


The following non-Archimedean analogue of Yau's theorem is the main result\footnote{For families over an algebraic curve, the general case being proved in~\cite{BGJKM,BGM}.} of~\cite{nama}. 
\begin{thm}\label{thm:nama} For any probability measure $\mu$ of $X_0$ supported on the skeleton $\Sk(\cX)$ of some snc model $\cX$, there exists $\phi\in\CPSH(L_0)$ such that $\MA(\phi)=\mu$.
\end{thm}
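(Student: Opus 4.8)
The plan is to follow the variational approach of \cite{nama}, reducing the solvability of the Monge--Amp\`ere equation to a maximization problem for an energy-type functional. First I would introduce the Monge--Amp\`ere energy $\en(\phi)$ for $\phi\in\CPSH(L_0)$, defined on model metrics by $\en(\phi)=\frac{1}{(n+1)V}\sum_{j=0}^n\int_{X_0}(\phi-\phi_{\mathrm{ref}})\,\ddc\phi^j\wedge\ddc\phi_{\mathrm{ref}}^{\,n-j}$ relative to a fixed reference Fubini--Study metric $\phi_{\mathrm{ref}}$, and extended by monotone regularization; its directional derivative is $\langle\en'(\phi),f\rangle=\int_{X_0}f\,\MA(\phi)$, so that critical points of $F_\mu(\phi):=\en(\phi)-\int_{X_0}(\phi-\phi_{\mathrm{ref}})\,\mu$ on $\CPSH(L_0)$ are exactly solutions of $\MA(\phi)=\mu$. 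The key point is to restrict attention to metrics in $\CPSH(L_0)$ that are \emph{$\Sk(\cX)$-supported} in the sense that $\phi=\phi\circ p_\cX$, where $p_\cX\colon X_0\to\Sk(\cX)$ is the retraction; on this subspace, after normalizing $\sup(\phi-\phi_{\mathrm{ref}})=0$, one has a compactness statement: the set of such normalized metrics is compact for uniform convergence, since it is identified with a bounded, closed, equicontinuous family of convex-type functions on the finite simplicial complex $\Sk(\cX)$.

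Next I would carry out the maximization. Using $\mu(X_0)=1$ and the fact that $\mu$ is supported on $\Sk(\cX)$, one shows $F_\mu$ is upper semicontinuous and coercive on the normalized $\Sk(\cX)$-supported metrics (coercivity uses $\en(\phi)\le\sup(\phi-\phi_{\mathrm{ref}})=0$ together with an estimate $\int(\phi-\phi_{\mathrm{ref}})\,\mu\ge -C$ coming from the uniform bound on such $\phi$ on the compact skeleton, or more precisely from the $\alpha$-invariant / Hölder estimate analogue). Hence $F_\mu$ attains its maximum at some $\phi_\star$. The remaining work is to prove that $\phi_\star$ is a genuine critical point, i.e.\ that $\MA(\phi_\star)=\mu$ as measures on all of $X_0$: varying $\phi_\star$ inside the $\Sk(\cX)$-supported metrics gives $\MA(\phi_\star)=\mu$ \emph{after push-forward by $p_\cX$}, i.e.\ $(p_\cX)_\star\MA(\phi_\star)=\mu$; one then invokes the fact (from \cite{nama}) that $\MA$ of a $\Sk(\cX)$-supported psh metric is already supported on $\Sk(\cX)$ and invariant under $p_\cX$, which upgrades this to the full equality. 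Finally, an approximation/regularization argument shows $\phi_\star\in\CPSH(L_0)$, not merely a bounded usc envelope.

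The main obstacle I expect is the differentiability of the energy functional and the associated \emph{orthogonality} (projection) property: one must show that the maximizer $\phi_\star$ cannot be strictly increased by competing against test functions $f$ with $\int f\,\mu>\int f\,\MA(\phi_\star)$, which requires either (a) a projection theorem — the operation $\phi\mapsto P(\phi):=\sup\{\psi\in\CPSH(L_0):\psi\le\phi\}$ is energy-nondecreasing and satisfies $\MA(P(\phi))$ is concentrated on the contact set $\{P(\phi)=\phi\}$ — or (b) direct one-sided differentiability estimates for $t\mapsto\en(P(\phi_\star+tf))$ at $t=0^+$. Establishing this envelope/differentiability package in the non-Archimedean setting is the technical heart of \cite{nama}; everything else (existence of a maximizer, compactness on the skeleton, continuity of $\MA$, and the reduction to the skeleton-supported case) is comparatively routine given the tools assembled above. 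A secondary subtlety is ensuring the solution has \emph{continuous} rather than merely bounded potential, which follows from a regularity bootstrap specific to the case where $\mu$ lives on a skeleton — there the equation becomes, locally on $\Sk(\cX)$, a real Monge--Amp\`ere equation with continuous data, whose solutions are continuous.
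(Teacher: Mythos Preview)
The paper does not prove this theorem: it is stated, without proof, as ``the main result of~\cite{nama}'' (with the case of a general non-Archimedean base handled in~\cite{BGJKM,BGM}). There is thus nothing in the paper to compare your proposal against; what you have written is a sketch of the argument in~\cite{nama} itself.

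Your outline captures the overall variational structure correctly---define the Monge--Amp\`ere energy, maximize $F_\mu$, and identify the envelope/orthogonality package as the technical heart. However, the specific route you propose, namely restricting the maximization to \emph{retraction-invariant} metrics $\phi=\phi\circ p_\cX$, is not how~\cite{nama} proceeds and contains a gap. In~\cite{nama} the maximization is carried out over the full finite-energy class $\cE^1(L_0)$, with compactness coming from the $\cE^1$-topology, not from reducing to functions on the finite complex $\Sk(\cX)$. Your compactness claim for normalized retraction-invariant metrics would need an equicontinuity argument that is not automatic. More seriously, the step ``$\MA$ of a $\Sk(\cX)$-supported psh metric is already supported on $\Sk(\cX)$'' is not available a priori: Vilsmeier's result (Theorem~\ref{thm:Vil}) only treats the interior of $n$-dimensional faces, and gives no control over lower-dimensional faces or over mass escaping the skeleton. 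Without it, varying $\phi_\star$ only among retraction-invariant competitors yields merely $(p_\cX)_\star\MA(\phi_\star)=\mu$, which does not imply $\MA(\phi_\star)=\mu$. So as written, your restricted variational problem need not produce a solution to the original equation.
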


Using this in place of Yau's theorem, Corollary~\ref{cor:unique} remains valid in this context. In particular, any $\phi\in\CPSH(L_0)$ is uniquely determined by $\MA(\phi)$ up to an additive constant. 

%
%%%%%%%%%%%%%%%%%%%%%%%%%%%%%%%%%%%%%%%%%%%%
\subsection{Hybrid continuity of the Monge--Amp\`ere operator}
As above, we fix an ample line bundle $L$ on $X$. Using Theorem~\ref{thm:ddchyb}, we now establish:

\begin{thm}\label{thm:cvMA} Pick a hybrid continuous family of metrics $(\phi_t)_{t\in\DD}$ on the fibers $L_t$, and assume $\phi_t\in\CPSH(L_t)$ for all $t$. Then $\MA(\phi_t)\to\MA(\phi_0)$ weakly in $X^\hyb$ as $t\to 0$. 
\end{thm}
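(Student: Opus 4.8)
The plan is to reduce the statement to a single scalar limit for each hybrid model function by a weak-compactness argument, and then to settle that limit by approximating $\phi_0$ uniformly by non-Archimedean Fubini--Study metrics and estimating the resulting error through an integration by parts in the spirit of Lemma~\ref{lem:CLN}, carefully tracking the factors $\e_t$.

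\smallskip

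\textbf{Reduction to hybrid model functions.} Fix a closed disc $\bar\DD_r\subset\DD$; since $X_K$ is projective, $\pi^{-1}(\bar\DD_r)$ is compact (Proposition~\ref{prop:hybcomp}), so the probability measures $\MA(\phi_t)$, $|t|\le r$, form a relatively weakly compact family. If $\mu_\star$ is any weak limit point of $(\MA(\phi_t))_{t\to 0}$, then $\pi_\star\MA(\phi_t)=\d_t\to\d_0$ forces $\pi_\star\mu_\star=\d_0$, so $\mu_\star$ is a probability measure on $X_0$. For a hybrid model function $\phi_D^\hyb\in\Cz(X^\hyb)$, attached to $D\in\VCar(\cX)$ on an snc model $\cX$ and a smooth metric $\Psi$ on $\cO_\cX(D)$, we have $\phi_D^\hyb|_{X_0}=\phi_D$, hence $\int_{X_0}\phi_D\,\mu_\star=\lim_i\int\phi_D^\hyb\,\MA(\phi_{t_i})$ along the relevant subnet. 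Since model functions are dense in $\Cz(X_0)$ (Proposition~\ref{prop:modelfunc}), it therefore suffices to prove that
\begin{equation}\label{equ:goalcvMA}
\lim_{t\to 0}\int_{X_t}\phi_D^\hyb\,\MA(\phi_t)=\int_{X_0}\phi_D\,\MA(\phi_0)
\end{equation}
for every such $D$: this pins down $\mu_\star=\MA(\phi_0)$ for every limit point, and the claimed convergence follows by compactness.

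\smallskip

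\textbf{Approximation.} By Definition~\ref{defi:CPSH} there are Fubini--Study metrics $\psi^{(k)}_0\in\FS(L_0)$ with $\delta_k:=\sup_{X_0}|\phi_0-\psi^{(k)}_0|\to 0$, which (clearing the constants as in~\S\ref{sec:NAFS}) we may write as $\psi^{(k)}_0=\tfrac{1}{m_k}\max_i\log|s^{(k)}_i|$ with $s^{(k)}_i\in\Hnot(X_K,m_kL_K)$ without common zeroes. The associated hybrid Fubini--Study metric $\psi^{(k)}=(\psi^{(k)}_t)_{t\in\DD}$ of Example~\ref{exam:hybFS} is then a hybrid model metric, with $\psi^{(k)}_t\in\CPSH(L_t)$ for all $t$. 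Applying Theorem~\ref{thm:ddchyb} with all $n$ inputs equal to $\psi^{(k)}$ gives $\MA(\psi^{(k)}_t)\to\MA(\psi^{(k)}_0)$ weakly in $X^\hyb$, hence $\int_{X_t}\phi_D^\hyb\,\MA(\psi^{(k)}_t)\to\int_{X_0}\phi_D\,\MA(\psi^{(k)}_0)$; and the continuity of the non-Archimedean Monge--Amp\`ere operator on $\CPSH(L_0)$ gives $\int_{X_0}\phi_D\,\MA(\psi^{(k)}_0)\to\int_{X_0}\phi_D\,\MA(\phi_0)$ as $k\to\infty$. Thus \eqref{equ:goalcvMA} will follow once we establish the error bound
\begin{equation}\label{equ:errcvMA}
\limsup_{t\to 0}\left|\int_{X_t}\phi_D^\hyb\left(\MA(\phi_t)-\MA(\psi^{(k)}_t)\right)\right|\le C\,\delta_k,
\end{equation}
with $C$ depending on $D,\Psi,L$ but not on $k$.

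\smallskip

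\textbf{The error estimate.} On $X_t$ we have $\phi_D^\hyb=-\e_t\,h$ with $h:=\log|\sigma_D|_\Psi$, which is smooth on $X=\cX\setminus\cX_0$ (the section $\sigma_D$ being invertible there). Integrating by parts as in Lemma~\ref{lem:CLN} (valid for continuous psh metrics by Bedford--Taylor theory, approximating by smooth Fubini--Study metrics via Theorem~\ref{thm:CPSH}),
\begin{equation}\label{equ:ibpcvMA}
\int_{X_t}\phi_D^\hyb\left(\MA(\phi_t)-\MA(\psi^{(k)}_t)\right)=-\frac{\e_t}{V}\int_{X_t}(\phi_t-\psi^{(k)}_t)\,\ddc h\wedge S_t,\quad S_t:=\sum_{j=0}^{n-1}(\ddc\phi_t)^j\wedge(\ddc\psi^{(k)}_t)^{n-1-j}.
\end{equation}
Now $\ddc h=-c_1(\cO_\cX(D),\Psi)$ is a fixed smooth form on $\cX$, so for a fixed K\"ahler form $\om$ near $\cX_0$ one has $\pm\ddc h\le C_D\,\om$ on $X_t$ for $t$ small; since $S_t\ge 0$ is closed, $|\ddc h\wedge S_t|\le C_D\,\om\wedge S_t$ and $\int_{X_t}\om\wedge S_t=n\,[\om|_{X_t}]\cdot c_1(L)^{n-1}=:C_1$ is a cohomological intersection number independent of $t$ and $k$. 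On the other hand, $\phi$ and $\psi^{(k)}$ being hybrid continuous, their difference is a hybrid continuous family of functions (Example~\ref{exam:conf}), so the function equal to $\e_t(\phi_t-\psi^{(k)}_t)$ on $X_t$ and to $\phi_0-\psi^{(k)}_0$ on $X_0$ is continuous on the compact space $\pi^{-1}(\bar\DD_r)$; as it is $\le\delta_k$ in modulus on $X_0$, for every $\eta>0$ we get $\sup_{X_t}|\phi_t-\psi^{(k)}_t|\le(\delta_k+\eta)\e_t^{-1}$ once $t$ is small. Plugging these bounds into~\eqref{equ:ibpcvMA}, the two powers of $\e_t$ cancel and we obtain $|\int_{X_t}\phi_D^\hyb(\MA(\phi_t)-\MA(\psi^{(k)}_t))|\le V^{-1}C_DC_1(\delta_k+\eta)$ for $t$ small; letting $\eta\to 0$ gives~\eqref{equ:errcvMA}, and the proof is complete.

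\smallskip

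The main obstacle is exactly this last estimate: hybrid continuity only controls $\e_t\sup_{X_t}|\phi_t-\psi^{(k)}_t|$, which is the reason the plain Chern--Levine--Nirenberg inequality (Lemma~\ref{lem:CLN}) is too weak; one must use the precise form $\phi_D^\hyb|_{X_t}=-\e_t h$ of a hybrid model function together with the $t$-uniform bounds on $\ddc h$ and on the mass $\int_{X_t}\om\wedge S_t$ in order to absorb the divergent factor $\e_t^{-1}$. A secondary technical point is the justification of the integration by parts~\eqref{equ:ibpcvMA} for merely continuous psh metrics, which is standard Bedford--Taylor theory given the uniform approximability by smooth Fubini--Study metrics.
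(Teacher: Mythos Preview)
Your argument is correct and follows essentially the same route as the paper: reduce to hybrid model functions, approximate $\phi_0$ by non-Archimedean Fubini--Study metrics, invoke Theorem~\ref{thm:ddchyb} for the approximants, and control the error via an integration-by-parts (CLN-type) estimate in which the factor $\e_t$ built into $\phi_D^\hyb|_{X_t}$ exactly cancels the $\e_t^{-1}$ coming from $\sup_{X_t}|\phi_t-\psi^{(k)}_t|$.

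The only noteworthy packaging difference is in how the error term~\eqref{equ:errcvMA} is handled. The paper first writes the model function as $\phi_D=\phi_{1,0}-\phi_{2,0}$ with $\phi_{i,0}\in\FS(L_0)$ (Proposition~\ref{prop:modelfunc}), extends each $\phi_{i,0}$ to a hybrid Fubini--Study metric $\phi_{i,t}$, so that $\phi_D^\hyb|_{X_t}=\e_t(\phi_{1,t}-\phi_{2,t})+O(\e_t)$, and then applies Lemma~\ref{lem:CLN} verbatim to get $\e_t|\int(\phi_{1,t}-\phi_{2,t})(\MA(\phi_t)-\MA(\psi^{(k)}_t))|\le 2n\,\e_t\sup|\phi_t-\psi^{(k)}_t|$. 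You instead keep $\phi_D^\hyb|_{X_t}=-\e_t h$ with $h$ smooth on $\cX$, integrate by parts, and bound $\pm\ddc h\le C_D\om$ for a fixed K\"ahler form on the model. Both are the same computation under the hood; your version trades the algebraic decomposition of $\phi_D$ for the choice of an auxiliary K\"ahler form and a cohomological mass bound on $\int_{X_t}\om\wedge S_t$. In particular, your closing remark that ``plain CLN is too weak'' is slightly misleading: Lemma~\ref{lem:CLN} is precisely what the paper uses, once $\phi_D^\hyb|_{X_t}$ has been written as $\e_t$ times a difference of psh metrics on $L_t$.
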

This slightly extends~\cite[Theorem~4.2]{Fav}, which implicitly considers families that are psh with respect to $t$ small enough (compare~\cite[Remark~2.9]{Li23}). 

\begin{proof} By Lemma~\ref{lem:cvmeas}, it suffices to show $\int_{X_t}\phi_D^\hyb\,\MA(\phi_t)\to\int_{X_0}\phi_D^\hyb\,\MA(\phi_0)$ for every hybrid model function $\phi_D^\hyb$, where $D\in\VCar(\cX)$ for some model $\cX$. By Proposition~\ref{prop:DFS}, after replacing $D$ by a positive multiple we can write the model function $\phi_D=\phi_D^\hyb\big|_{X_0}$ as $\phi_D=\phi_{1,0}-\phi_{2,0}$ with $\phi_{i,0}\in\FS(L_0)$, $i=1,2$. Each $\phi_{i,0}$ extends to a hybrid Fubini--Study metric $(\phi_{i,t})_{t\in\DD}$ (see Example~\ref{exam:hybFS}). After perhaps passing to a higher model, it follows that $(\phi_{1,t}-\phi_{2,t})$ is a hybrid model metric induced by a smooth metric on $\cO_\cX(D)$, and hence 
$$
\phi_D^\hyb\big|_{X_t}=\e_t(\phi_{1,t}-\phi_{2,t})+O(\e_t). 
$$
see Example~\ref{exam:conf}. We are thus reduced to showing 
$$
\e_t\int_{X_t}(\phi_{1,t}-\phi_{2,t})\MA(\phi_t)\to\int_{X_0}(\phi_{1,0}-\phi_{2,0})\MA(\phi_0).
$$
Pick $\d>0$, and $\p_0\in\FS(L_0)$ such that $\sup_{X_0}|\phi_0-\p_0|<\d$ (see Definition~\ref{defi:CPSH}). By the non-Archimedean  version of the Chern--Levine--Nirenberg inequality, we have
$$
\left|\int (\phi_{1,0}-\phi_{2,0})\left(\MA(\p_0)-\MA(\phi_0)\right)\right|\le 2n\d.
$$
Example~\ref{exam:FShyb} yields again an extension of $\p_0$ to a hybrid Fubini--Study metric $(\p_t)_{t\in\DD}$. Since $(\p_t)$ is in particular a hybrid model metric, Theorem~\ref{thm:ddchyb} implies $\MA(\p_t)\to\MA(\p_0)$ weakly in $X^\hyb$, and hence 
$$
\left|\e_t\int_{X_t}(\phi_{1,t}-\phi_{2,t})\MA(\p_t)-\int_{X_0}(\phi_{1,0}-\phi_{2,0})\MA(\p_0)\right|\le\d
$$
for $t$ small enough. Here we have used that $(\phi_t-\p_t)$ is a hybrid continuous metric on the trivial line bundle (see Example~\ref{exam:conf}), which also implies 
$$
\e_t\sup_{X_t}|\phi_t-\p_t|\to\sup_{X_0}|\phi_0-\p_0|. 
$$
For $t$ small enough we thus have $\e_t\sup_{X_t}|\phi_t-\p_t|<\d$, and the complex Chern--Levine--Nirenberg inequality yields
$$
\e_t\left|\int(\phi_{1,t}-\phi_{2,t})\left(\MA(\phi_t)-\MA(\p_t)\right)\right|\le 2n\d.
$$
Combining these estimates, we get
$$
\left|\int \e_t(\phi_{1,t}-\phi_{2,t})\MA(\phi_t)-\int(\phi_{1,0}-\phi_{2,0})\,\MA(\phi_0)\right|\le (4n+1)\d
$$
for $t$ small enough, and the result follows. 
\end{proof}

%
%%%%%%%%%%%%%%%%%%%%%%%%%%%%%%%%%%%%%%%%%%%%
\subsection{Non-Archimedean vs.~real Monge--Amp\`ere operator}\label{sec:NAMA}

Due to the intersection theoretic definition of the non-Archimedean Monge--Amp\`ere operator that we have adopted, the latter appears to be more elusive than its complex counterpart. We next formulate a result due to Vilsmeier~\cite{Vil}, which relates it to the \emph{real} Monge--Amp\`ere operator under a retraction invariance assumption. 

In what follows, we pick a model $\cL$ of $L$ determined on an snc model $\cX$ of $X$. Recall that the dual complex $\D_\cX\hto X_0$ admits a canonical retraction $p_\cX\colon X_0\to\D_\cX$, provided by the non-Archimedean log map (see~\S\ref{sec:snc}). 

Using the model metric $\phi_\cL$ as a reference metric allows to identify any other continuous metric $\phi$ on $L_0$ with the continuous function $\f:=\phi-\phi_\cL$. 

\begin{defi}\label{defi:Lpsh} We say that $\f\in \Cz(X_0)$ is \emph{$\cL$-psh} if the corresponding metric $\phi_\cL+\f$ is psh. 
\end{defi}
The set $\CPSH(\cL)\subset\Cz(X_0)$ of continuous $\cL$-psh functions is in 1--1 correspondence with $\CPSH(L_0)$. We define the Monge--Amp\`ere measure of $\f\in\CPSH(\cL)$ as that of the corresponding metric, \ie 
$$
\MA(\f):=\MA(\phi_\cL+\f). 
$$
As a first basic fact, we have (see~\cite[Proposition~3.12]{siminag}): 

\begin{prop}\label{prop:pshconvex} The restriction of any $\f\in\CPSH(\cL)$ to each face $\D_J$ of $\D_\cX\hto X_0$ is convex. 
\end{prop}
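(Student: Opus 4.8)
The plan is to reduce to the case of a Fubini--Study metric by approximation, and then to exploit the local toric picture of an snc degeneration (as developed in~\S\ref{sec:toric} and~\S\ref{sec:snc}), where psh metrics restrict to convex functions on the relevant log-affine charts. First I would recall that, by Definition~\ref{defi:CPSH}, $\CPSH(L_0)$ is the uniform closure of $\FS(L_0)$, and that convexity of a continuous function on the simplex $\D_J$ is preserved under uniform limits; hence it suffices to prove the statement for $\f=\phi-\phi_\cL$ where $\phi\in\FS(L_0)$. By Proposition~\ref{prop:FSmodel}, such a $\phi$ is a model metric $\phi_{\cL'}$ attached to a semiample model $\cL'$, which after passing to a common higher snc model we may assume is determined on the same $\cX$ as $\cL$; thus $\f=\phi_D$ for some $D\in\VCar(\cX)$, and I want to show that the model function $\phi_D$ restricts to a convex (in fact affine-linear, piecewise) function on each face $\D_J$. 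Actually, since $\phi_{\cL'}$ is psh, Lemma~\ref{lem:modelpsh} tells us $\cL'$ is relatively nef; combined with the fact (Proposition~\ref{prop:valglob}) that $v=\val_\cX(w)$ for $w\in\mathring\D_J$ is the monomial valuation, the value $\phi_{\cL'}(\val_\cX(w))$ is computed by a minimum of affine-linear functions of $w$ over the relevant local sections, and I would verify that the nef condition forces this piecewise-affine function to be \emph{concave} — wait, this would give the wrong sign, so the cleaner route is the following.

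The better approach, which I would actually carry out, goes through the complex side via the hybrid space, mirroring the proof of Theorem~\ref{thm:ddchyb}. I would fix a point $w_0\in\mathring\D_J$ and an adapted chart $\cU$ at the generic point $\xi_J$ of $E_J$ (or rather work on an snc model where $E_J$ is a point after slicing, as in the proof of Theorem~\ref{thm:Laplace}), so that $\Log_\cX$ restricted to a neighborhood in $X_0$ is modeled on the toric non-Archimedean log map, and $\D_J$ is an open piece of the affine hyperplane $H=\{b\cdot w=1\}$. Pick a hybrid model metric $(\phi_t)_{t\in\DD}$ extending $\phi_\cL+\f$ (using that $\phi$ is Fubini--Study and hence, after Example~\ref{exam:hybFS}, extends hybrid-continuously, with non-Archimedean limit $\phi_\cL+\f$; for general $\f\in\CPSH(\cL)$ one first approximates). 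Then $\phi_t$ is a psh metric on $L_t$ for $t\ne 0$ (here is where we use that $\f$, or its Fubini--Study approximant, is $\cL$-psh: the Fubini--Study representative is psh in the complex sense too). The restriction of $\phi_t$ to the $G$-invariant subset $\Log_t^{-1}(\cV)\subset X_t$, for $\cV\subset H$ a small convex open, pulls back from a convex function on $\cV$ \emph{after averaging over the torus fibers} — indeed an $(S^1)^{|J|}$-invariant psh function on an open piece of $(\C^\times)^{|J|}$ is exactly the pullback of a convex function under $\Log$ (this is the classical toric fact, cf.~\S\ref{sec:toric} and~\eqref{equ:MAlog}). Let $g_t\colon\cV\to\R$ be the convex function obtained by taking $\e_t$ times the torus-average of $\phi_t\circ(\text{local trivialization})$; convexity of $g_t$ is preserved. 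Finally, I would show $g_t\to \f|_{\D_J}$ (up to the affine-linear contribution of $\phi_\cL$, which is itself affine on $\D_J$ since $\phi_\cL = $ the model metric and $\val_\cX$ restricted to $\D_J$ is monomial) locally uniformly on $\cV$ as $t\to0$, using hybrid continuity of $(\phi_t)$ together with Lemma~\ref{lem:retrlim} or the estimate~\eqref{equ:logloc} relating $\Log_\cX$ and $\Log_t$: concretely, for a net $x_i\in X_{t_i}$ with $\Log_{t_i}(x_i)\to w\in\mathring\D_J$, the hybrid convergence $x_i\to \val_\cX(w)$ combined with the definition of hybrid continuity gives $\e_{t_i}\phi_{t_i}(x_i)\to(\phi_\cL+\f)(\val_\cX(w))$, and the torus-averaging does not disturb this limit since it is an average over a shrinking set in the log picture. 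A uniform locally-bounded estimate on $g_t$ (from hybrid continuity, the family being uniformly bounded after the $\e_t$ rescaling on compacts) upgrades pointwise to locally uniform convergence for convex functions, so the limit $\f|_{\D_J}$ — up to the affine-linear piece coming from $\phi_\cL$ — is convex, hence $\f|_{\D_J}$ itself is convex.

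The main obstacle I expect is the last step: carefully justifying that the torus-averaged rescaled complex potentials $g_t$ converge to the restriction of the non-Archimedean potential to $\D_J$, \emph{and} that this identification is compatible with the affine structure so that the affine-linear contribution of the reference metric $\phi_\cL$ can be cleanly separated off (one needs $\phi_\cL|_{\D_J}$ to be affine, which follows from $\phi_\cL$ being the model metric of $\cL$ determined on $\cX$ and the description of $\val_\cX$ on $\mathring\D_J$ via monomial valuations, but this should be spelled out). A secondary technical point is that the averaging over torus fibers must be done on the \emph{local} chart and glued, since $L_t$ is only locally trivial; one handles this exactly as in the proof of Theorem~\ref{thm:ddchyb}, reducing by multilinearity/positivity to an ample model and a strictly psh hybrid Fubini--Study metric, where the local toric computation of~\S\ref{sec:toric} applies verbatim. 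Alternatively — and this may be the cleanest write-up — one avoids the hybrid machinery entirely: for $\phi=\phi_{\cL'}$ a model metric with $\cL'$ nef, directly compute $\phi_{\cL'}(\val_\cX(w))$ for $w\in\D_J$ and observe it equals $\min$ of finitely many affine functions of $w$ (from the trivializing sections of $\cL'$ near $\xi_J$), and the relative nef condition $\cL'\cdot C\ge0$ for curves $C\subset E_J$ translates into precisely the inequalities making $-\phi_{\cL'}|_{\D_J}$ concave, i.e.~$\phi_{\cL'}|_{\D_J}$ convex; then pass to limits for general $\CPSH$. I would present the second, intersection-theoretic argument as the primary proof and remark that it also follows from the hybrid convergence of Monge--Amp\`ere measures.
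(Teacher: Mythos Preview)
The paper does not actually give a proof of this proposition: it is stated with a reference to \cite[Proposition~3.12]{siminag}. So there is no ``paper's own proof'' to compare against, and your proposal must be assessed on its own merits.

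Your reduction to Fubini--Study metrics by uniform approximation is correct and is the right first move. But you abandon the direct computation prematurely because of a sign worry that is in fact unfounded. Here is the clean argument you almost wrote down. Take $\phi=\tfrac1m\max_i\log|s_i|\in\FS(L_0)$, pick a local trivializing section $\tau$ of $\cL$ near the generic point $\xi_J$ of $E_J$, and write $f_i:=s_i/\tau^m\in\cO_{\cX,\xi_J}$. Since $|\tau|_{\phi_\cL}\equiv1$ on the preimage of this chart under the center map, for $w\in\mathring\D_J$ we get
\[
\f(\val_\cX(w))=(\phi-\phi_\cL)(\val_\cX(w))=\tfrac1m\max_i\bigl(-\val_\cX(w)(f_i)\bigr).
\]
By the explicit monomial formula (Proposition~\ref{prop:valglob}), $\val_\cX(w)(f_i)=\min_\a\{\a\cdot w:a_\a\ne0\}$ is concave in $w$, so $-\val_\cX(w)(f_i)$ is convex; a maximum of convex functions is convex. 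This finishes the Fubini--Study case in one line, and the general case follows by uniform approximation. Your parenthetical remark ``wait, this would give the wrong sign'' is simply mistaken: the two sign flips (from $|\cdot|$ to $v$, and from $\min$ defining $\val_w$ to $\max$ defining the FS metric) combine to give convexity, not concavity.

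Your hybrid approach via torus-averaging of complex psh potentials is a valid alternative and would work, but it is considerably heavier machinery for what is, once the sign is sorted out, a two-line tropical computation. The intersection-theoretic route you sketch at the end (nef $\Rightarrow$ convex across the walls of a subdivision) is closer in spirit to how \cite{siminag} actually proceeds, but for the present statement the direct FS computation above already suffices and avoids invoking Lemma~\ref{lem:modelpsh}.
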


%\begin{defi} Pick $\phi\in\CPSH(L)$, and set $f:=\phi-\phi_\cL$. 
%\begin{itemize}
%\item[(i)] We define the positive measure $\MA(\phi|_{\rsig})$ on $\rsig$ as the real Monge--Amp\`ere measure of the convex function $f|_{\rsig}$. 
%\item[(ii)] We say that $\phi$ is \emph{$p_\cX$-invariant over $\rsig$} if $f=f\circ p_\cX$ on the open set $p_\cX^{-1}(\rsig)\subset X_0$. 
%\end{itemize}
%\end{defi}
%Any other choice of model $\cL$ on $\cX$ is of the form $\phi_D$ with $D\in\VCar(\cX)$. Since $\phi_D$ is affine linear on each face of $\D_\cX$ and satisfies $\phi_D=\phi_D\circ p_\cX$ on $X_0$, it follows that (i) and (ii) are independent of the choice of $\cL$.
%
%
We may now state: 

\begin{thm}\label{thm:Vil}\cite[Theorem~1.2]{Vil} Pick $\f\in\CPSH(\cL)$, an $n$-dimensional face $\D_J$ of $\D_\cX$, and assume the retraction invariance property $\f=\f\circ p_\cX$ over $p_\cX^{-1}(\rsig_J)$. Then 
\begin{equation}\label{equ:Vil}
\MA(\f)=V^{-1}\MA_\R(\f|_{\rsig_J})\text{ on } p_\cX^{-1}(\rsig_J).
\end{equation} 
\end{thm}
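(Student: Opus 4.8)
The plan is to reduce the claimed equality \eqref{equ:Vil} to the hybrid convergence result for Monge--Amp\`ere measures (Theorem~\ref{thm:cvMA}), applied on the torus‑fibration chart over $\rsig_J$, and then to compare the resulting limits by two independent routes: on the complex side, using the relation \eqref{equ:MAlog} between the complex and real Monge--Amp\`ere operators under $\Log_t$; on the non‑Archimedean side, using that the limiting measure is supported on $\Sk(\cX)$ and the retraction‑invariance hypothesis identifies it with the pushforward of $V^{-1}\MA_\R(\f|_{\rsig_J})$ under $\val_\cX$.

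\textbf{Step 1: construct a hybrid psh family adapted to $\f$.} Since $\D_J$ is $n$‑dimensional the stratum $E_J=\{x\}$ is a point, and an adapted chart $\cU\simeq\DD^{J}$ at $x$ provides the local log map $\Log_t\colon\cU_t\to\rsig_J$ of \S\ref{sec:toric}. Because $\f$ is $\cL$‑psh, it is convex on $\D_J$ (Proposition~\ref{prop:pshconvex}), hence locally the increasing limit of smooth convex functions; by a standard approximation I may assume $\f|_{\rsig_J}$ smooth and convex on a neighborhood of the compact set we care about. Over $p_\cX^{-1}(\rsig_J)$ we have $\f=\f\circ p_\cX$ by hypothesis, i.e.\ $\f$ is pulled back from $\D_J$. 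The plan is to produce a \emph{hybrid continuous} family $(\phi_t)_{t\in\DD}$ with $\phi_0=\phi_\cL+\f$ and $\phi_t=\phi_{\cL,t}+\Log_t^\star(\e_t^{-1}\f|_{\rsig_J})$ on the part of $\cU_t$ lying over a slightly shrunk interior subset of $\rsig_J$, glued to $\phi_{\cL,t}$ elsewhere; here $\phi_{\cL,t}$ is a fixed hybrid model metric extending $\phi_\cL$ (Example~\ref{exam:hybFS}). Hybrid continuity of this family follows from Example~\ref{exam:conf}/Lemma~\ref{lem:hybcrit} and the fact that $\Log_\cX=\Log_t+O(\e_t)$ on $\cU$ (equation~\eqref{equ:logloc}), so that $\e_t\phi_t-\e_t\phi_{\cL,t}\to\f\circ p_\cX=\f$ in the hybrid topology. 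Psh‑ness of $\phi_t$ on the relevant open set follows because $\Log_t^\star(\e_t^{-1}\f|_{\rsig_J})$ is psh when $\f|_{\rsig_J}$ is convex (\S\ref{sec:toric}, the remark containing \eqref{equ:MAlog}), added to the psh model metric $\phi_{\cL,t}$; to make this a \emph{global} psh metric on $L_t$ one localizes the modification using convexity of max and the fact that near $\partial\rsig_J$ one can interpolate between $\phi_t$ and $\phi_{\cL,t}$ by taking a max with a suitable affine (hence psh under $\Log_t$) function — this is where the $n$‑dimensionality of $\D_J$ and Lemma~\ref{lem:contained} are used, guaranteeing that $\Log_\cX^{-1}(\Sigma)\cap X_t\subset\cU_t$ for compact $\Sigma\subset\rsig_J$.

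\textbf{Step 2: pass to the limit.} By Theorem~\ref{thm:cvMA}, $\MA(\phi_t)\to\MA(\phi_0)=\MA(\f)$ weakly in $X^\hyb$. On the complex side, over the chart and by \eqref{equ:MAlog}, on the open set where $\phi_t=\phi_{\cL,t}+\Log_t^\star(\e_t^{-1}\f|_{\rsig_J})$ one computes $(\ddc\phi_t)^n$ as a mixed expression; the dominant term as $t\to 0$ is $\bigl(\ddc\Log_t^\star(\e_t^{-1}\f)\bigr)^n=\e_t^{-n}\e_t^n\Log_t^\star\MA_\R(\f|_{\rsig_J})=\Log_t^\star\MA_\R(\f|_{\rsig_J})$ — the scaling factors cancel exactly — while the cross terms involving $\ddc\phi_{\cL,t}$ carry extra powers of $\e_t$ and vanish in the limit (after the $\e_t$‑rescaling built into the hybrid topology they contribute mass concentrated away from $\rsig_J$, by Lemma~\ref{lem:retrlim}). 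Hence for any $g\in\Cz(\rsig_J)$, testing against $g\circ\Log_\cX$ and using $\Log_\cX=\Log_t+O(\e_t)$ gives
\[
\int_{X_t} (g\circ\Log_\cX)\,V^{-1}(\ddc\phi_t)^n \;\longrightarrow\; \int_{\rsig_J} g\; V^{-1}\MA_\R(\f|_{\rsig_J}),
\]
so the limit measure $\MA(\f)$, restricted to $p_\cX^{-1}(\rsig_J)$, equals the pushforward $(\val_\cX)_\star\bigl(V^{-1}\MA_\R(\f|_{\rsig_J})\bigr)$. Since $\MA(\f)$ is supported on $\Sk(\cX)$ (it is an atomic measure on divisorial points by \eqref{equ:mixedMA}, and more generally lies on the skeleton) and $\val_\cX\colon\rsig_J\to X_0$ is a homeomorphism onto its image intersected with $p_\cX^{-1}(\rsig_J)$, the identification $(\val_\cX)_\star V^{-1}\MA_\R(\f|_{\rsig_J})$ is precisely the content of \eqref{equ:Vil}.

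\textbf{Main obstacle.} The delicate point is Step~1: globalizing the locally defined psh metric $\phi_{\cL,t}+\Log_t^\star(\e_t^{-1}\f|_{\rsig_J})$ to an honest continuous psh metric on all of $L_t$ while (i) keeping it hybrid continuous with the prescribed non‑Archimedean limit $\phi_\cL+\f$, and (ii) not perturbing it over the compact piece of $p_\cX^{-1}(\rsig_J)$ we are integrating over. The gluing must be engineered so that the modification lives in $\Log_\cX^{-1}(\Sigma)\cap X_t\subset\cU_t$ (using Lemma~\ref{lem:contained}), and so that on the overlap region the metric is a max of the modified metric with $\phi_{\cL,t}$ plus a large negative constant — which is psh and agrees with the modification on the interior; verifying psh‑ness of this max near $\partial\rsig_J$, and that the error terms are uniformly $O(\e_t)$ after rescaling, is the technical heart. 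The cancellation of the $\e_t$ powers in Step~2 and the vanishing of cross terms, by contrast, are routine once the family is in place, given \eqref{equ:MAlog} and \eqref{equ:Omt}.
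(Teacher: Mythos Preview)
The paper does not prove this theorem; it states it as \cite[Theorem~1.2]{Vil} and only adds a remark that it is really a local identity $(\ddc f\circ p_\cX)^n=\MA_\R(f)$ on $p_\cX^{-1}(\rsig_J)$, which in the Chambert--Loir--Ducros formalism holds essentially by definition. So there is no ``paper's proof'' to compare against, and your proposal should be judged on its own.

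Your hybrid strategy is appealing, but there are two genuine gaps.

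\emph{First, the support claim at the end of Step~2 is wrong as written.} You assert that $\MA(\f)$ is supported on $\Sk(\cX)$ ``by \eqref{equ:mixedMA}'', but that formula is only for \emph{model} metrics; for an arbitrary $\f\in\CPSH(\cL)$ the measure $\MA(\f)$ need not lie on any skeleton. What your computation actually yields (assuming the rest works) is $(\Log_\cX)_\star\bigl(\MA(\f)|_{p_\cX^{-1}(\rsig_J)}\bigr)=V^{-1}\MA_\R(\f|_{\rsig_J})$, i.e.\ equality of the pushforwards to $\D_\cX$. This is strictly weaker than \eqref{equ:Vil}, which asserts equality of measures on $p_\cX^{-1}(\rsig_J)$. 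To upgrade, you would need to test against a family of functions separating points of $p_\cX^{-1}(\rsig_J)$, e.g.\ all hybrid model functions $\phi_D^\hyb$ for $D$ on \emph{higher} snc models $\cX'$ dominating $\cX$; but then the local computation in the chart $\cU$ no longer reduces to $g\circ\Log_t$, since the pullbacks of the exceptional divisors of $\cX'\to\cX$ produce genuinely new functions on $\cU_t$ that are not pulled back from $\rsig_J$. Closing this gap essentially amounts to proving that retraction invariance forces $\MA(\f)$ onto the skeleton over $\rsig_J$ --- which is part of Vilsmeier's content, so the argument risks circularity.

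\emph{Second, the gluing in Step~1 does not produce the hybrid limit you need.} If $\phi_t$ equals $\phi_{\cL,t}$ outside the modification region, then its hybrid limit equals $\phi_\cL$ there, not $\phi_\cL+\f$; so Theorem~\ref{thm:cvMA} gives $\MA(\phi_t)\to\MA(\tilde\phi_0)$ for some $\tilde\phi_0$ that agrees with $\phi_\cL+\f$ only over the chosen compact $\Sigma\subset\rsig_J$. One can try to invoke locality of the non-Archimedean Monge--Amp\`ere operator to say $\MA(\tilde\phi_0)=\MA(\f)$ over $p_\cX^{-1}(\Sigma)$, but you then need $\tilde\phi_0$ to be psh and to coincide with $\phi_\cL+\f$ on an \emph{open} neighborhood of $p_\cX^{-1}(\Sigma)$, and your max-gluing across the transition annulus will typically alter the function there. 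Finally, the dismissal of the cross terms $\om_{\cL,t}^k\wedge(\e_t^{-1}\ddc\Log_t^\star\f)^{n-k}$ via Lemma~\ref{lem:retrlim} is not a valid argument: that lemma controls sup-norms of test functions, not masses of mixed Monge--Amp\`ere measures, and for $1\le k\le n-1$ these cross terms do not obviously carry extra $\e_t$-factors after integration.
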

As before, $V=(L_K^n)$ is the fiberwise volume of $L$, and $\MA_\R(\f|_{\rsig_J})$ is the real Monge--Amp\`ere measure of the convex function $\f|_{\rsig_J}$, viewed as a measure on $p_\cX^{-1}(\rsig_J)$ supported on $\rsig_J$. 

\begin{rmk} This is in fact a local result: for any convex function $f\colon\rsig_J\to\R$, $f\circ p_\cX$ is psh on the open set $p_\cX^{-1}(\rsig_J)$, and~\cite[Theorem~1.2]{Vil} computes its non-Archimedean Monge--Amp\`ere measure as
$$
\left(\ddc f\circ p_\cX\right)^n=\MA_\R(f),
$$
a non-Archimedean analogue of~\eqref{equ:MAlog} where $\MA_\R(f)$ is viewed as above as a  measure on $p_\cX^{-1}(\rsig_J)$ supported on $\rsig_J$. In the approach of Chambert-Loir and Ducros~\cite{CLD}, this identity holds (almost) by definition. 
\end{rmk}

%DISCUSS global vs. partial retraction invariance, mention NAGreen. 

%
%
%%%%%%%%%%%%%%%%%%%%%%%%%%%%%%%%%%%%%%%%%%%%
\subsection{Maximal hybrid extension}\label{sec:maxext} 
In this final section we show that every non-Archimedean metric $\phi_0\in\CPSH(L_0)$ can be extended to a hybrid continuous family of metrics $\phi_t\in\CPSH(L_t)$ for $t$ in a slightly shrunken disc. This follows from the following more precise result, a `continuous version' of~\cite[Theorem~4.3.3]{Reb} (which reduces to~\cite[Theorem~6.6]{YTD} when $X=Z\times\DD^\times$). 

\begin{thm}\label{thm:maxext} Fix a closed disc $\bar\DD_r\subset\DD$ and a continuous family of reference metrics $\phi_{\re,t}\in\CPSH(L_t)$ for $|t|=r$. Then every non-Archimedean metric $\phi\in\CPSH(L_0)$ admits a (unique) largest hybrid continuous extension $(\phi_t)_{t\in\bar\DD_r}$ such that 
\begin{itemize}
\item[(i)] the family $(\phi_t)_{0<|t|<r}$ is psh; 
\item[(ii)] $\phi_t=\phi_{\re,t}$ for $|t|=r$. 
\end{itemize}
\end{thm}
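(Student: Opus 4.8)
The plan is to construct $\phi_t$ as an upper envelope of psh extensions, following the classical strategy for the Dirichlet problem in pluripotential theory, adapted to the hybrid setting. Concretely, for each $t \in \bar\DD_r$, let $\mathcal{F}$ denote the set of all hybrid continuous families $(\psi_t)_{t \in \bar\DD_r}$ of metrics with $\psi_t \in \CPSH(L_t)$ for $0 < |t| < r$, subject to $\psi_t \le \phi_{\re,t}$ for $|t| = r$ and $\psi_0 \le \phi$ on $X_0$; then set
$$
\phi_t := \sup\{\psi_t \mid (\psi_s)_{s} \in \mathcal{F}\}
$$
with the upper semicontinuous regularization taken fiberwise, and show this is the desired extension. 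The first step is to check that $\mathcal{F}$ is nonempty and that the sup is bounded above: nonemptiness follows from Theorem~\ref{thm:maxext}'s input, since by the maximal hybrid extension in the weaker (non-continuous) sense, or simply by taking a hybrid model metric $\le \phi$ plus a large negative constant (using Theorem~\ref{thm:CPSH} and the density of $\FS(L_0)$ to approximate $\phi$ from below by Fubini--Study metrics, each of which extends by Example~\ref{exam:hybFS}), one produces a competitor; the upper bound comes from the maximum principle applied on the family over the annulus $0 < |t| \le r$ with boundary data $\phi_{\re,t}$, together with uniform control near $X_0$ using a hybrid model metric dominating $\phi$.

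Next I would address the two core analytic points: that $\phi_t \in \CPSH(L_t)$ for $0 < |t| < r$ (psh regularity of the envelope), and that the family $(\phi_t)_{t \in \bar\DD_r}$ is hybrid continuous with $\phi_0 = \phi$. The psh regularity on each complex fiber is the standard fact that the usc regularization of a family of psh metrics bounded above is psh; here one must work on the total space $X$ over $0 < |t| < r$ so that the envelope is psh in all variables, then restrict. The boundary condition $\phi_t = \phi_{\re,t}$ for $|t| = r$ should follow from comparison with barriers built from $\phi_{\re,t}$ (subtracting $C\log(r/|t|)$-type terms), which forces the envelope to attain the boundary data continuously. The identification $\phi_0 = \phi$ is where one uses Theorem~\ref{thm:cvMA} or, more directly, the characterization of hybrid continuity via Lemma~\ref{lem:hybcrit}: approximate $\phi$ from below in $\Cz(L_0)$ by $\psi_0^{(j)} \in \FS(L_0)$, extend each to a hybrid Fubini--Study metric $(\psi_t^{(j)})$, note these lie in $\mathcal{F}$ (after the harmless boundary correction), so $\phi_0 \ge \psi_0^{(j)} \to \phi$; the reverse inequality $\phi_0 \le \phi$ is built into the definition of $\mathcal{F}$, using upper semicontinuity of the hybrid limit.

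The hard part will be establishing hybrid continuity of the envelope $(\phi_t)_{t \in \bar\DD_r}$ as a genuine object on $X^{\hyb}$ — i.e.\ that the associated function on the hybrid total space is continuous, not merely that each fiber is continuous and the fibers "converge" in some weak sense. Upper semicontinuity of $(x,t) \mapsto |s|_{\phi^{\hyb}}$ follows formally from the envelope being a supremum of hybrid continuous functions (and taking usc regularization). Lower semicontinuity is the real obstacle: one must show that near a point $(v_0, 0) \in X_0 \subset X^{\hyb}$, the value $\phi_t(x_t)$ does not drop below $\phi_0(v_0)$ along nets $(x_t, t) \to (v_0, 0)$. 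The strategy would be to exhibit, for any $\delta > 0$, an explicit competitor in $\mathcal{F}$ that is hybrid continuous and exceeds $\phi - \delta$ near $v_0$: take a Fubini--Study approximation $\psi_0 \in \FS(L_0)$ with $\psi_0 \ge \phi - \delta$ near $v_0$ (glued, via $\max$, with a global lower barrier to keep it a global competitor — using that $\FS$ and $\CPSH$ are stable under $\max$), extend it to a hybrid model metric, correct the boundary values at $|t| = r$ by a controlled psh modification, and invoke Lemma~\ref{lem:hybcrit} to see the correction is $O(\e_t)$ hence preserves hybrid continuity. Then $\phi_t \ge (\text{this competitor})_t$, and hybrid continuity of the competitor gives the needed lower bound in the limit. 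Uniqueness of the largest extension is immediate from the definition as a supremum over $\mathcal{F}$. Throughout, the projectivity hypothesis (so $X^{\hyb} \to \DD$ is proper, Proposition~\ref{prop:hybcomp}) is what makes the envelope well-behaved and the compactness arguments go through.
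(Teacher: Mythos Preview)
Your starting point coincides with the paper's: define $\phi_t$ as the upper envelope over the same class $\mathcal F$ of hybrid continuous competitors dominated by $\phi$ at $t=0$ and by $\phi_{\re,t}$ at $|t|=r$. The divergence is in how hybrid continuity of this envelope is established. You attempt it directly, via barriers and Fubini--Study competitors; the paper instead proceeds in two steps. First, for $\phi_0=\phi_\cL$ a model metric determined by an ample model, the result is imported from Reboulet~\cite[Theorem~1.4.4]{Reb}. Second, the general case is reduced to this by a Lipschitz estimate on the envelope operator $P_t$: from monotonicity $\phi_0\le\phi'_0\Rightarrow P_t(\phi_0)\le P_t(\phi'_0)$ and the scaling identity $P_t(\phi_0+c)=P_t(\phi_0)+c\log(r/|t|)$, one gets
\[
\left|P_t(\phi_0)-P_t(\phi_{0,j})\right|\le\d_j\log\tfrac{r}{|t|}
\]
whenever $\sup_{X_0}|\phi_0-\phi_{0,j}|\le\d_j$, with $\phi_{0,j}$ model metrics approximating $\phi_0$. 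Since $\e_t\log(r/|t|)\to 1$ as $t\to 0$, the hybrid-rescaled envelopes converge uniformly on $X^\hyb$, and hybrid continuity passes to the limit.

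Your direct argument is not wrong in outline, but the step you flag as ``harmless boundary correction'' is exactly where the $\log(r/|t|)$ weight enters: a competitor $\psi^{(j)}$ extending $\psi_0^{(j)}\in\FS(L_0)$ will typically overshoot $\phi_{\re,t}$ at $|t|=r$ by some constant $c_j$, and subtracting $c_j\log(r/|t|)$ to fix this shifts the value at $t=0$ by $c_j$ --- so the correction is \emph{not} $O(\e_t)$ in the unrescaled sense, and controlling it is tantamount to proving the paper's scaling property~(b). The paper's route cleanly isolates the genuinely hard analytic content (continuity of the envelope for a single model metric, delegated to~\cite{Reb}) from the soft uniform-approximation layer; your approach merges the two, which is feasible but would require you to essentially reprove the model-metric case inside your hybrid-continuity argument.
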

Here we say that a family of metrics $(\phi_t)_{t\in U}$ with $U\subset\DD^\times$ open is \emph{psh} if the corresponding metric on $L|_{\pi^{-1}(U)}$ is psh. 

\begin{proof}[Proof of Theorem~\ref{thm:maxext} (sketch)] We claim that the desired extension $(\phi_t)_{t\in\bar\DD_r}$ coincides with the pointwise supremum $\left(P_t(\phi_0)\right)_{t\in\bar\DD_r}$ of the family of all hybrid continuous metrics $(\p_t)_{t\in\bar\DD_r}$ such that 
\begin{itemize}
\item $(\p_t)_{0<|t|<r}$ is psh; 
\item $\p_t\le\phi_{\re,t}$ for $|t|=r$;
\item $\p_0\le\phi_0$.
\end{itemize}
When $\phi_0$ is a model metric determined by an ample model $\cL$ of $L$, this can be deduced from~\cite[Theorem~1.4.4]{Reb}. In the general case, note that for any $\phi'_0\in\CPSH(L_0)$ and $c\in\R$ we have 
\begin{itemize}
\item[(a)] $\phi_0\le\phi'_0\Longrightarrow P_t(\phi_0)\le P_t(\phi'_0)$; 
\item[(b)] $P_t(\phi_0+c)=P_t(\phi_0)+c\log\frac{r}{|t|}$. 
\end{itemize}
As an easy consequence of Definition~\ref{defi:CPSH}, we can find a sequence of model metrics $\phi_{0,j}$, each determined by an ample model of $L$, such that $\d_j:=\sup_{X_0}|\phi_0-\phi_{0,j}|$ tends to $0$. By the first step, $(P_t(\phi_{0,j}))_{0<|t|<r}$ is psh, $P_t(\phi_{0,j})=\phi_{\re,t}$ for $|t|=r$ and $P_0(\phi_{0,j})=\phi_{0,j}$. Since $\phi_0-\d_j\le \phi_{0,j}\le \phi_0+\d_j$, (a) and (b) imply
$$
\left|P_t(\phi_{0,j})-P_t(\phi_0)\right|\le\d_j\log\frac{r}{|t|}, 
$$
and the result easily follows. 
\end{proof}

%
%
%%%%%%%%%%%%%%%%%%%%%%%%%%%%%%%%%%%%%%%%%%%%
\section{Hybrid stability for Monge--Amp\`ere equations}\label{sec:hybstab}
%
%%%%%%%%%%%%%%%%%%%%%%%%%%%%%%%%%%%%%%%%%%%%
Closely following~\cite{LiSYZ}, we establish in this section a general hybrid continuity result for solutions to Monge--Amp\`ere equations, under a partial retraction invariance assumption. As in Yang Li's work, this relies on a crucial `asymmetric version' of Ko\l{}odziej's stability theorem for complex Monge--Amp\`ere equations, which is presented in the Appendix. 

In what follows, $X\to\DD^\times$ is a projective meromorphic degeneration relatively ample meromorphic line bundle $L$ on $X$, associated to a smooth projective variety $X_K$ over $K=\C\{t\}$ with an ample line bundle $L_K$, of volume $V=(L_K)^n=(L_t^n)$. 
%
%%%%%%%%%%%%%%%%%%%%%%%%%%%%%%%%%%%%%%%%%%%%
\subsection{General setup}\label{sec:genset}
As in~\S\ref{sec:volan}, we consider the family of volume forms $\sigma_t=e^{2\p_t}$ on the fibers $X_t$ associated to a hybrid model metric $(\p_t)$ on $K_{X/\DD^\times}$, \ie the restriction of a smooth metric $\Psi$ on some model $\cK$ of $K_{X/\DD^\times}$, determined on an snc model $\cX$ of $X$. Recall that this includes the case of Calabi--Yau degenerations.  

As a consequence of Theorem~\ref{thm:konsoib2}, the associated probability measures 
$$
\mu_t:=\frac{\sigma_t}{\sigma_t(X_t)}
$$
converge weakly in $X^\hyb$ to a Lebesgue-type probability measure $\mu_0$ with support the top-dimensional part of the essential complex 
$$
\D_\cK^\ess\subset\D_\cX\hto X_0,
$$
whose dimension 
$$
d=\dim\D_\cK^\ess
$$
measures the degree of degeneration of the family of volume forms $(\sigma_t)$. For each $t\in\DD$, Yau's theorem and its non-Archimedean version (Theorem~\ref{thm:nama}) yield a continuous psh metric $\phi_t\in\CPSH(L_t)$, unique up to normalization, such that 
$$
\MA(\phi_t)=\mu_t.
$$
Here $\phi_t$ is smooth and strictly psh for $t\ne 0$, and it depends smoothly on $t\ne 0$ modulo normalization. It is then natural to conjecture the following hybrid stability property. 

\begin{conj}\label{conj:hybstab} For an appropriate choice of normalization, the family $\phi=(\phi_t)_{t\in\DD}$ is hybrid continuous. 
\end{conj}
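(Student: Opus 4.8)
The natural strategy is to \emph{produce} the right normalization of the family $(\phi_t)$ via the maximal hybrid extension of Theorem~\ref{thm:maxext}, and then identify it with the solutions of the Monge--Amp\`ere equations using the asymmetric stability estimate of Yang Li (Appendix~\ref{sec:stab}). First I would fix a closed disc $\bar\DD_r\subset\DD$ and a hybrid continuous reference family $(\phi_{\re,t})_{t\in\DD}$ (e.g.\ a hybrid Fubini--Study metric, cf.\ Example~\ref{exam:hybFS}), so that the non-Archimedean metric $\phi_0\in\CPSH(L_0)$ with $\MA(\phi_0)=\mu_0$ has a largest hybrid continuous extension $(\psi_t)_{t\in\bar\DD_r}$ which is psh for $0<|t|<r$ and agrees with $\phi_{\re,t}$ on $|t|=r$. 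This $(\psi_t)$ is the candidate for the normalized family, after subtracting off its boundary values; the content of the conjecture is then that the true solutions $\phi_t$ of $\MA(\phi_t)=\mu_t$ (renormalized so that, say, $\sup_{X_t}(\phi_t-\psi_t)=0$, or matched in some other uniform way) satisfy $\sup_{X_t}|\phi_t-\psi_t|=o(\e_t^{-1})$ as $t\to 0$, which by Example~\ref{exam:conf} and Lemma~\ref{lem:hybcrit} is exactly hybrid continuity of $(\phi_t)$.

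The key steps, in order, are: (i) establish a uniform a priori bound $\sup_{X_t}|\phi_t-\phi_{\re,t}|\le C$ independent of $t$ for $t$ small, using Ko\l{}odziej-type $L^\infty$ estimates applied to the densities $\mu_t/\MA(\phi_{\re,t})$, together with the convergence $\sigma_t(X_t)\sim c|t|^{2\kappa}\e_t^{-d}$ of Theorem~\ref{thm:konsoib} to control the relevant $L^p$ norms; (ii) use Theorem~\ref{thm:cvMA} to get, along any subsequence $t_j\to0$ for which $\phi_{t_j}$ converges uniformly on $X$ (which exists after the bound in (i), using compactness in $\CPSH$ via Theorem~\ref{thm:CPSH}) that the limit is hybrid continuous and its restriction to $X_0$ lies in $\CPSH(L_0)$; (iii) show the non-Archimedean limit solves $\MA(\cdot)=\mu_0$ on $X_0$ --- this is the delicate passage to the limit of the Monge--Amp\`ere measures through the hybrid space, where one feeds in the asymmetric stability theorem to upgrade weak-$*$ convergence of $\MA(\phi_t)\to\mu_0$ to convergence of potentials, controlling the error by $\ii(\phi_t,\psi_t)$ or by an $L^\infty$ comparison with the rate $\e_t$; (iv) invoke uniqueness (Corollary~\ref{cor:unique} and Theorem~\ref{thm:nama}) to conclude the non-Archimedean limit equals $\phi_0$ up to a constant, hence is independent of the subsequence, giving full convergence and the sought hybrid continuity.

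The main obstacle is step (iii): the rate of convergence. Weak convergence $\mu_t\to\mu_0$ alone does not force $\sup_{X_t}|\phi_t-\psi_t|$ to grow slower than $\e_t^{-1}$; one genuinely needs a \emph{quantitative} stability statement saying that if $\MA(\phi_t)$ and $\MA(\psi_t)$ are close in a suitable norm (measured with the degenerating weight $|t|^{2\kappa}\e_t^{-d}$, and crucially allowing the two measures to have very different supports --- one spreading over $X_t$, the other concentrating near the essential skeleton), then $\phi_t-\psi_t$ is uniformly small at scale $o(\e_t^{-1})$. This is precisely the role of Yang Li's asymmetric version of Ko\l{}odziej stability, and matching the hypotheses of that theorem to the present family --- in particular verifying the required $L^p$ or capacity bounds for the density of $\mu_t$ with respect to a fixed volume form on $\cX$, uniformly as $t\to0$, possibly only after the retraction-invariance reduction that lets one compare with the real Monge--Amp\`ere operator (Theorem~\ref{thm:Vil}) on the faces of $\D_\cK^\ess$ --- is where the real work lies. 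A secondary difficulty is that without a partial retraction-invariance assumption on $\phi_0$ (as flagged in the section heading), one may only be able to prove the conjecture conditionally, which is presumably the actual theorem proved in~\S\ref{sec:hybstab}.
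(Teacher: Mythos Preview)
The statement you are attempting to prove is a \emph{Conjecture}, explicitly labelled as such, and the paper does not prove it. What the paper does prove is the strictly weaker Theorem~\ref{thm:d1cv}, under two additional hypotheses: the maximally degenerate assumption $d=n$, and the partial retraction invariance Assumption~\ref{ass:inv}. Even then the conclusion is only $\dd_1(\f_t,\tf_t)\to 0$ together with the one-sided bound $\limsup_t\sup_{X_t}(\tf_t-\f_t)\le 0$, not the two-sided uniform estimate~\eqref{equ:unifcv} that is equivalent to the conjecture. (The paper also notes in a footnote that a general solution in the Calabi--Yau case was only announced after these notes were written.) So there is no ``paper's own proof'' of the conjecture to compare against; your final paragraph correctly anticipates this.

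On the substance of your strategy, two points. First, step~(i) is misstated: the uniform bound that is actually available (Corollary~\ref{cor:bd}, via Li's uniform Skoda estimate and~\eqref{equ:mucap}) is on the \emph{rescaled} potential $\f_t=\e_t(\phi_t-\phi_{\re,t})$, not on $\phi_t-\phi_{\re,t}$ itself. At the scale relevant to hybrid continuity this gives $O(1)$, not $o(1)$, so it only prevents blow-up rather than proving convergence. Second, and more seriously, your compactness-plus-uniqueness route (ii)--(iv) does not close the gap: weak convergence $\MA(\phi_t)\to\mu_0$ in $X^\hyb$ (which you already have from Theorem~\ref{thm:konsoib2}) simply does not imply that $\e_t(\phi_t-\tphi_t)\to 0$ uniformly without a quantitative stability input, and your step~(iii) restates this difficulty rather than resolving it.

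The paper's argument for the weaker Theorem~\ref{thm:d1cv} is quite different from your outline. It does not pass to subsequential limits or invoke uniqueness on $X_0$; instead it works entirely on the complex fibers $X_t$. Using retraction invariance and Vilsmeier's identity (Theorem~\ref{thm:Vil}), one knows that $\f_0|_{\rsig_J}$ solves a real Monge--Amp\`ere equation~\eqref{equ:RMA}, so $\Log_t^\star\f_0$ has the correct complex Monge--Amp\`ere measure~\eqref{equ:MAf0} on the chart $\cU_{J,t}$. Lemma~\ref{lem:modcomp} then glues $\Log_t^\star\f_0$ into the global comparison potential $\tf_t$ to produce $\tp_t\in\CPSH_0(\om_t)$ with $\MA(\tp_t)$ close to $\mu_t=\MA(\f_t)$ in total variation, after which Li's asymmetric stability (Theorem~\ref{thm:stab}) gives $\dd_1(\tp_t,\f_t)$ small and the one-sided bound. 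The retraction invariance is what makes this work: it is the mechanism that converts the non-Archimedean solution into an approximate complex solution on each fiber, and without it no route to the conjecture is currently known.
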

In~\S\ref{sec:maxdegcase}, slightly extending~\cite{LiSYZ} we are going to establish a weaker version of this conjecture in the maximally degenerate case $d=n$, under a partial retraction invariance assumption. Note that when $0<d<n$ the conjecture is fully established in~\cite{Li23} for a certain class of Calabi--Yau degenerations\footnote{While these lectures notes were nearing completion, a general solution to the conjecture for Calabi-Yau degenerations was announced by Y.~Li, see~\cite{Li25}.} 

Returning to the general setup, we first choose (after slightly shrinking $\DD$) a hybrid continuous family of comparison metrics $\tphi_t\in\CPSH(L_t)$ such that $\tphi_0=\phi_0$ (see Theorem~\ref{thm:maxext}). By Lemma~\ref{lem:hybcrit}, the conjecture is then equivalent to
\begin{equation}\label{equ:conj}
\e_t\sup_{X_t}|\phi_t-\tphi_t|\to 0
\end{equation}
for an appropriate choice of normalization. To fix the normalization, pick a reference hybrid model metric $(\phi_{\re,t})$ on $L$, determined by a smooth strictly psh metric on an ample model $\cL$ of $L$, which can be assumed to be determined on the given snc model $\cX$ after perhaps passing to a higher model. For each $t\in\DD$ we then normalize $\phi_t$ by 
$$
\int(\phi_t-\phi_{\re,t})\MA(\phi_{\re,t})=0. 
$$
Since $\MA(\phi_{\re,t})\to\MA(\phi_{\re,0})$ weakly in $X^\hyb$ (see Theorem~\ref{thm:cvMA}), the comparison metrics $\tphi_t$ satisfy
$$
\e_t\int(\tphi_t-\phi_{\re,t})\MA(\phi_{\re,t})\to\int(\tphi_0-\phi_{\re,0})\MA(\phi_{\re,0})=\int(\phi_0-\phi_{\re,0})\MA(\phi_{\re,0})=0. 
$$
and can thus be normalized without loss so as to also satisfy
$$
\int(\tphi_t-\phi_{\re,t})\MA(\phi_{\re,t})=0
$$
for all $t$. 

In what follows it will be more convenient to deal with potentials. For $t\ne 0$ we introduce the rescaled K\"ahler form and potentials
$$
\om_t:=\e_t\ddc\phi_{\re,t},\quad\f_t:=\e_t(\phi_t-\phi_{\re,t}),\quad\tf_t:=\e_t(\tphi_t-\phi_{\re,t}). 
$$ 
Thus $\f_t,\tf_t$ both lie in the space of normalized $\om_t$-psh functions 
$$
\CPSH_0(\om_t):=\{\f\in\CPSH(X_t,\om_t)\mid\int_{X_t}\f\,\om_t^n=0\}, 
$$
with
\begin{equation}\label{equ:MAft}
\MA(\f_t)=\e_t^{-n}V^{-1}(\om_t+\ddc\f_t)^n=\mu_t.
\end{equation}
Conjecture~\ref{conj:hybstab} now amounts to
\begin{equation}\label{equ:unifcv}
\sup_{X_t}|\f_t-\tf_t|\to 0. 
\end{equation}
For $t=0$ we similarly introduce the potential 
$$
\f_0:=\phi_0-\phi_{\re,0}\in\CPSH(\cL), 
$$
which satisfies $\MA(\f_0)=\mu_0$. Recall that $\f_0$ is convex on each face of $\D_\cX$ (see Proposition~\ref{prop:pshconvex}). 

As a first key step towards~\eqref{equ:unifcv}, Yang Li established in~\cite{LiSko} the following general result. 

\begin{thm} The exists $\a,C>0$ such that the measures $\mu_t$, $t\ne 0$, satisfy a uniform Skoda estimate
$$
\int_{X_t} e^{-\a u}\,d\mu_t\le C
$$
for all $u\in\CPSH(\om_t)$ normalized by $\sup_{X_t} u=0$.  
\end{thm}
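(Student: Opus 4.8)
The plan is to prove the uniform Skoda estimate by reducing it, via the structure of the family $(\sigma_t)$ coming from Theorem~\ref{thm:konsoib2}, to a local statement near the essential strata, and then invoke the classical Skoda integrability theorem together with a compactness argument over the disc. The key point is that the constants $\alpha, C$ must be chosen uniformly in $t$, so the argument cannot simply apply the Skoda theorem fiberwise; it must exploit the analytic-singularity structure of the volume forms and the weak convergence $\mu_t\to\mu_0$ in $X^\hyb$.

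\medskip

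First I would recall the local description of $\sigma_t$ from Lemma~\ref{lem:localnu}: in an adapted chart $\cU\simeq\DD^{n+1}$ at a point $x\in\cX_0$ one has $\sigma_t=\rho\prod_{i\in J_x}|z_i|^{2a_i}|\Om_t|^2$ with $\rho\in C^\infty(\cU,\R_{>0})$ and $a_i\ge 0$. Since the $a_i$ can be $\ge 1$, the measures $\mu_t$ need not have bounded density against a fixed smooth volume form, but their singularities are \emph{model-type} and uniformly controlled: after the rescaling $\mu_t=\e_t^d\sigma_t/\sigma_t(X_t)$, the key fact (from the proof of Theorem~\ref{thm:Laplace}) is that in logarithmic-polar coordinates the mass of $\mu_t$ concentrates towards the essential strata with a density that, on the non-degenerate directions, pushes forward to a finite measure. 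Covering $\cX_0$ by finitely many adapted charts, the problem localizes to each chart. On the complement of a neighborhood of $\cX_0$ the family $\mu_t$ is (after rescaling) uniformly bounded above by a fixed smooth volume form, for which the ordinary uniform Skoda estimate for $\om_t$-psh functions (with $\om_t=\e_t\ddc\phi_{\re,t}\to\ddc\phi_{\re,0}$ a fixed reference class in the limit) applies by a standard compactness argument in the Hölder/capacity estimates of Ko\l{}odziej-type theory.

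\medskip

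The substantive step is to handle the neighborhood of $\cX_0$. Here I would argue that, for $u\in\CPSH(\om_t)$ normalized by $\sup_{X_t} u=0$, the standard Skoda estimate gives $\int e^{-\alpha_0 u}\,dV_t\le C_0$ for a \emph{fixed} smooth volume form $dV_t$ and uniform $\alpha_0, C_0$ (this uses that $\{u\in\CPSH(\om_t):\sup u=0\}$ is, after passing to snc models and using $\om_t\to\om_0$, a precompact family of quasi-psh functions with uniformly bounded singularities — the Lelong-number bounds and hence the exponential-integrability constant are uniform in $t$). Then one must absorb the extra weights $\prod|z_i|^{2a_i}$ and the rescaling factor $\e_t^d/\sigma_t(X_t)\sim c$: writing $d\mu_t=h_t\,dV_t$ with $h_t$ an explicit (model-type, $t$-dependent) density, I would estimate $\int e^{-\alpha u}h_t\,dV_t$ by Hölder's inequality, splitting the exponent as $\alpha=\alpha_0/(1+\delta)$ and requiring $h_t\in L^{1+1/\delta}(dV_t)$ uniformly in $t$ for some $\delta>0$. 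The latter is exactly where the analytic-singularity hypothesis enters: the $L^p$-integrability of $h_t$ for some $p>1$ uniform in $t$ follows because the essential constraint $\kappa=0$ (i.e. $\min a_i=0$ after normalization) and the structure of the residual measures (Lemma~\ref{lem:res}) guarantee that, near each essential stratum, the exponents $a_i$ on the non-degenerate directions are $<1$ after the appropriate rescaling, while the degenerate directions contribute only the Lebesgue factor on $\D_\cK^\ess$. A Fubini argument chart by chart, integrating the degenerate directions first (producing the bounded Lebesgue density on the faces of $\D_\cK^\ess$) and then the transverse directions (where $|z_i|^{2a_i}$ with $a_i<1$ is $L^p$ for $p<1/a_i$), yields the uniform $L^{1+1/\delta}$ bound.

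\medskip

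\textbf{The main obstacle} I expect is precisely making the two uniformities compatible: the Skoda constant $\alpha_0$ for the fixed volume form must be taken uniform over $t\in\bar\DD_r$, and the $L^p$-bound on $h_t$ must hold for a single $p>1$ simultaneously over all $t$. The first uniformity requires a careful compactness statement for $\om_t$-psh functions as $\om_t$ degenerates (the class $[\om_t]=\e_t c_1(L)$ shrinks, but after normalizing the potentials this is encoded in the snc geometry via Lemma~\ref{lem:essinv} and the retraction $p_\cX$); one should phrase it by pulling everything back to a fixed snc model $\cX$ and using that the singularities of the rescaled potentials are controlled by convexity on the faces of $\D_\cX$ (Proposition~\ref{prop:pshconvex}) plus a uniform bound on Lelong numbers. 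The second uniformity is the genuinely family-theoretic input and is where one must be most careful that the constant $c_J>0$ in Theorem~\ref{thm:Laplace} and the total mass $\sigma_t(X_t)\sim c\,\e_t^{-d}$ do not degenerate — but these are precisely controlled by Theorem~\ref{thm:konsoib2}. Once both are in hand, Hölder's inequality closes the argument with $\alpha=\alpha_0/(1+\delta)$ and $C$ the product of the two uniform constants.
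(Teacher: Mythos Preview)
The paper does not actually prove this theorem: it is quoted as a result of Yang Li \cite{LiSko} and used as a black box. So there is no ``paper's own proof'' to compare against beyond what is in that reference.

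That said, your proposal has a genuine gap. You split the problem into (a) a uniform Skoda estimate against a \emph{smooth} fiberwise volume form $dV_t$, and (b) a H\"older step absorbing the analytic-singularity density $h_t$. But step (a) is already the entire difficulty: the fibers $X_t$ degenerate, and there is no ``standard compactness argument'' giving a uniform Skoda constant for $\om_t$-psh functions on a degenerating family. Your justification appeals to ``precompactness'' and ``uniform Lelong-number bounds'', and even to Proposition~\ref{prop:pshconvex}---but that proposition concerns \emph{non-Archimedean} $\cL$-psh functions on $X_0$, not complex $\om_t$-psh functions on $X_t$, so it gives no control here. Bounded Lelong numbers alone do not yield a uniform Skoda constant on manifolds whose geometry collapses; the classical proofs of Skoda/Zeriahi-type estimates use local potential theory on a fixed background, and that is exactly what fails as $t\to 0$.

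The mechanism in \cite{LiSko} is different in kind: one extends each $u\in\CPSH(\om_t)$ from the fiber $X_t$ to a quasi-psh function on (an open set of) the fixed total space $\cX$, with controlled sup and curvature bound, and then applies a Skoda-type integrability theorem once on the fixed ambient manifold $\cX$; Fubini over the fibers then gives the fiberwise estimate with uniform constants. Your instinct to ``pull everything back to a fixed snc model $\cX$'' points in this direction, but the crucial and non-trivial ingredient---the uniform fiberwise-to-total-space extension of psh functions---is entirely absent from your sketch, and without it the compactness you invoke is not available.
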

By Lemma~\ref{lem:cap2}, we thus have 
\begin{equation}\label{equ:mucap}
\mu_t\le A\Capa_{\om_t}^2
\end{equation}
for a uniform constant $A>0$, and Theorem~\ref{thm:Kolo1} yields:

\begin{cor}\label{cor:bd} The potentials $\f_t\in\CPSH_0(\om_t)$ are uniformly bounded. 
\end{cor}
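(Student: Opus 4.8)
The plan is to deduce the bound from the capacity estimate~\eqref{equ:mucap} together with Ko\l{}odziej's a priori $L^\infty$-estimate (Theorem~\ref{thm:Kolo1}), the only point requiring attention being the uniformity of the resulting constant in $t$.

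First I would reformulate the claim in a normalization-free way. Since $\om_t^n$ is a smooth positive volume form, of total mass $\int_{X_t}\om_t^n=\e_t^nV>0$, the normalization $\int_{X_t}\f_t\,\om_t^n=0$ forces $\inf_{X_t}\f_t\le 0\le\sup_{X_t}\f_t$: indeed $\inf_{X_t}\f_t>0$ would give $\int_{X_t}\f_t\,\om_t^n>0$ and $\sup_{X_t}\f_t<0$ would give $\int_{X_t}\f_t\,\om_t^n<0$. Consequently $\sup_{X_t}|\f_t|\le\operatorname{osc}_{X_t}\f_t:=\sup_{X_t}\f_t-\inf_{X_t}\f_t$, so it suffices to bound the oscillation of $\f_t$ uniformly in $t\in\DD^\times$.

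Next I would apply Theorem~\ref{thm:Kolo1} to the Monge--Amp\`ere equation~\eqref{equ:MAft}, rewritten as $(\om_t+\ddc\f_t)^n=\e_t^nV\,\mu_t$ with $\f_t\in\CPSH(\om_t)$. The two inputs needed are uniform: on the one hand the background volume satisfies $\int_{X_t}\om_t^n=\e_t^nV\le V$ because $\e_t\in(0,1)$; on the other hand the density obeys the capacity bound $\mu_t\le A\,\Capa_{\om_t}^2$ of~\eqref{equ:mucap} with $A$ independent of $t$. Ko\l{}odziej's theorem then yields an oscillation estimate $\operatorname{osc}_{X_t}\f_t\le C$, with $C$ depending only on $X_K$, $L$, the volume bound $V$, and the constant $A$ — hence on nothing depending on $t$. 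Combined with the first step this gives $\sup_{X_t}|\f_t|\le C$ for all $t\in\DD^\times$, which is the assertion.

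The one genuinely delicate issue is that the reference form $\om_t=\e_t\ddc\phi_{\re,t}$ degenerates as $t\to 0$, so that an estimate stated in terms of the geometry of $\om_t$ would a priori not be uniform. This is, however, exactly the difficulty resolved by passing to the capacity formulation~\eqref{equ:mucap}: the dependence on $\om_t$ is absorbed into the (scale-compatible) capacity $\Capa_{\om_t}$ on the right-hand side and into the volume $\int_{X_t}\om_t^n\le V$, both controlled uniformly thanks to Yang Li's Skoda estimate and Lemma~\ref{lem:cap2}. No further analysis beyond invoking Theorem~\ref{thm:Kolo1} with these uniform inputs is therefore needed.
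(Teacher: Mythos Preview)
Your proof is correct and follows the paper's approach, which is simply to invoke Theorem~\ref{thm:Kolo1} on $(X_t,\om_t)$ together with the uniform capacity bound~\eqref{equ:mucap}, the constant $M=M(n,A)$ there being stated to depend only on the dimension and on $A$. Note that your oscillation detour is unnecessary: since $\f_t$ already lies in $\CPSH_0(\om_t)$ and solves $\MA(\f_t)=\mu_t$, Theorem~\ref{thm:Kolo1} gives $\sup_{X_t}|\f_t|\le M(n,A)$ directly, without passing through the oscillation.
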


%
%%%%%%%%%%%%%%%%%%%%%%%%%%%%%%%%%%%%%%%%%%%
\subsection{The maximally degenerate case}\label{sec:maxdegcase}
Fron now on, we assume that the family of volume forms $(\sigma_t)$ is \emph{maximally degenerate}, \ie its essential complex $$
\D_\cK^\ess\subset\D_\cX\hto X_0\subset X^\hyb
$$
has maximal dimension
$$
\dim\D_\cK^\ess=n.
$$
As usual, we denote by $\cX_0=\sum_{i\in I} b_i E_i$ the central fiber of $\cX$. Each $n$-dimensional face $\D_J$ of $\D_\cK^\ess$, $J\subset I$, corresponds to an essential $0$-dimensional stratum $x_J\in\cX_0$, and we have the weak convergence of measures
\begin{equation}\label{equ:mulim}
\mu_t\to\sum_J m_J\d_{x_J}\text{  in  }\cX,\quad\mu_t\to\mu_0:=\sum_J\mu_J\text{  in  }X^\hyb, 
\end{equation}
where both sums run over the $n$-dimensional faces $\D_J$ of $\D_\cK^\ess$ and $\mu_J$ denotes the Lebesgue measure of $\D_J$ normalized to mass $m_J>0$ (see Example~\ref{exam:maxdeg}). 

We also pick disjoint adapted charts $\cU_J\simeq\DD^J$ at all $x_J$'s. Each associated log map 
$$ 
\Log_t\colon\cU_{J,t}\to\rsig_J
$$
is a principal bundle with respect to the compact Lie group 
$$
G_J=\{\theta\in(\R/\Z)^J\mid\sum_{i\in J} b_i \theta_i=0\}.
$$
By~\eqref{equ:maxdeg}, we have 
\begin{equation}\label{equ:mulog}
\Log_t^\star\mu_J=(1+f_J)\mu_t
\end{equation} 
on $\cU_{J,t}$, where $f_J\in C^\infty(\cU_J)$ vanishes at $x_J$, 

\medskip

As in~\cite{LiSYZ}, we make the following regularity assumption on the solution $\f_0\in\CPSH(\cL)$ to the non-Archimedean Monge--Amp\`ere equation $\MA(\f_0)=\mu_0$. 

\begin{ass}[Partial retraction invariance]\label{ass:inv} For each $n$-dimensional face $\D_J$ of $\D_\cK^\ess$, we have $\f_0=\f_0\circ p_\cX$ on $p_\cX^{-1}(\rsig_J)$.
\end{ass}

\begin{exam} In~\cite{HJMM} this assumption is established for the Fermat family 
$$
X_t:=\{z_0\dots z_{n+1}+t(z_0^{n+2}+\dots+z_{n+1}^{n+2})\}\subset\P^{n+1}
$$
by constructing the solution $\f_0$ in terms of the solution to a real Monge--Amp\`ere equation on the essential skeleton, realized as the boundary of the unit simplex in $\R^{n+1}$, see Example~\ref{exam:Fermat}.
\end{exam}

\begin{rmk} It is in general not true that a function $\f\in\CPSH(L_0)$ such that $\MA(\f)$ is supported in a dual complex is \emph{globally} retraction invariant, \ie satisfies $\f=\f\circ p_\cX$ on $X_0$ for some snc model $\cX$, see~\cite{NAGreen}.
\end{rmk}
Recall that $\f_0$ is convex on $\D_J$ (see Proposition~\ref{prop:pshconvex}). Thus $\Log_t^\star\f_0$ is psh on $\cU_{J,t}$ and satisfies 
\begin{equation}\label{equ:MAlog2}
\left(\ddc\Log_t^\star\f_0\right)^n=\e_t^n\Log_t^\star\MA_\R(\f_0), 
\end{equation}
see~\eqref{equ:MAlog}. As a first consequence of Assumption~\ref{ass:inv}, the comparison potential $\tf_t$ is $C^0$-close to $\Log_t^\star\f_0$ in $\cU_{J,t}$, \ie 
\begin{equation}\label{equ:complog}
\sup_{\cU_{J,t}}|\tf_t-\Log_t^\star\f_0|\to 0,
\end{equation}
see Lemma~\ref{lem:retrlim}. Thanks to Theorem~\ref{thm:Vil}, the assumption also crucially guarantees that the convex function $\f_0|_{\rsig_J}$ solves the real Monge--Amp\`ere equation 
\begin{equation}\label{equ:RMA}
\MA_\R(\f_0)=V\mu_J
\end{equation}
on $\rsig_J$. By~\eqref{equ:mulog} and~\eqref{equ:MAlog2}, this yields 
\begin{equation}\label{equ:MAf0}
\e_t^{-n}V^{-1}\left(\ddc\Log_t^\star\f_0\right)^n=(1+f_J)\mu_t
\end{equation}
on $\cU_{J,t}$. By the regularity theory for the real Monge--Amp\`ere equation~\cite{Caf,Moo}, \eqref{equ:RMA} further implies that $\f_0$ is smooth and strictly convex on an open subset 
$$
\D_J^{\reg}\subset\rsig_J
$$
of full Lebesgue measure. Basically following~\cite{LiSYZ}, we are then going to establish the following weaker form of~\eqref{equ:unifcv}. 

\begin{thm}\label{thm:d1cv} Under Assumption~\ref{ass:inv}, we have
$$
\dd_1(\f_t,\tf_t)\to 0,\quad\limsup_{t\to 0}\sup_{X_t}(\tf_t-\f_t)\le 0.
$$ 
\end{thm}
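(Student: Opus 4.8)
The plan is to prove the two assertions of Theorem~\ref{thm:d1cv} by combining a uniform $L^1$-compactness argument with the asymmetric stability theorem of Yang Li (reviewed in the Appendix). First I would recall, from Corollary~\ref{cor:bd}, that the potentials $\f_t$ are uniformly bounded; together with the uniform bound on $\tf_t$ coming from the hybrid continuity of $(\tphi_t)$, this gives enough compactness to reduce the first statement $\dd_1(\f_t,\tf_t)\to 0$ to the following: along any sequence $t_k\to 0$ one can pass to a subsequence such that $\f_{t_k}-\tf_{t_k}$ converges (in $L^1$, or $\dd_1$-wise) to some limit, and that limit must be $0$. The key to identifying the limit is to transport the problem to the skeleton: using the adapted charts $\cU_J$ and the local log maps $\Log_t\colon\cU_{J,t}\to\rsig_J$, equations~\eqref{equ:MAlog2}, \eqref{equ:MAf0} say that $\Log_t^\star\f_0$ is \emph{almost} a solution of the same Monge--Amp\`ere equation that $\f_t$ solves, namely $\MA(\f_t)=\mu_t$, up to the density factor $(1+f_J)$ with $f_J(x_J)=0$. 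Meanwhile~\eqref{equ:complog} says $\tf_t$ is $C^0$-close to $\Log_t^\star\f_0$ on $\cU_{J,t}$.

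Next I would set up the comparison as an honest Monge--Amp\`ere stability estimate. On the region $\Log_t^{-1}(\D_J^{\reg})\subset\cU_{J,t}$ the function $\Log_t^\star\f_0$ is smooth and strictly psh (by the regularity theory~\cite{Caf,Moo} for the real equation), so it is a genuine competitor, and the measures $\MA(\f_t)=\mu_t$ and $\e_t^{-n}V^{-1}(\ddc\Log_t^\star\f_0)^n=(1+f_J)\mu_t$ are mutually absolutely continuous with density $\to 1$ as $t\to 0$. Combined with the uniform domination $\mu_t\le A\Capa^2_{\om_t}$ from~\eqref{equ:mucap}, this is exactly the input required by the asymmetric Ko\l{}odziej-type stability theorem: a uniform $\om_t$-psh potential whose Monge--Amp\`ere measure is comparable, with density close to $1$, to that of a fixed competitor, must be $C^0$-close (or at least $\dd_1$-close) to that competitor, with a modulus that is uniform in $t$. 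Applying this on each chart $\cU_J$, and observing that $\mu_t$ asymptotically concentrates all of its mass on $\bigcup_J\Log_t^{-1}(\D_J)$ (by~\eqref{equ:mulim}), one concludes that $\f_t$ is $\dd_1$-close to $\tf_t$ globally: the contributions away from the essential charts carry vanishing mass and contribute negligibly to $\dd_1$, which only sees the potentials weighted against the Monge--Amp\`ere measures up to the usual energy estimates.

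For the second, one-sided assertion $\limsup_{t\to 0}\sup_{X_t}(\tf_t-\f_t)\le 0$, I would argue by a domination/comparison principle in the spirit of Corollary~\ref{cor:dom}, made quantitative. The idea is that $\tf_t$ is, up to a small error, a sub-solution: $\MA(\tf_t)$ (or rather $\MA(\Log_t^\star\f_0)$) dominates a measure comparable to $\mu_t=\MA(\f_t)$ on the essential region, while off that region $\mu_t$ is tiny. A Ko\l{}odziej-type $L^\infty$ estimate for the complex Monge--Amp\`ere equation—again using~\eqref{equ:mucap} to control the relevant capacities uniformly—then bounds $\sup_{X_t}(\tf_t-\f_t)$ by a quantity that tends to $0$: one writes the comparison $\{\tf_t-\f_t>\delta\}$, estimates its capacity, and feeds in the uniform density comparison plus~\eqref{equ:complog}. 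The normalizations $\int(\f_t-\f_{\re,t})\,\om_t^n=\int(\tf_t-\f_{\re,t})\,\om_t^n$ (inherited from~\S\ref{sec:genset}) pin down the additive constant so that the one-sided estimate is not vacuous.

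The main obstacle I expect is the uniformity in $t$ of the stability estimate across the degeneration: the K\"ahler forms $\om_t=\e_t\ddc\phi_{\re,t}$ degenerate (their volume is bounded but their geometry collapses along the fibers of $\Log_t$), so one cannot simply quote stability on a fixed manifold. This is precisely why the \emph{asymmetric} form of Ko\l{}odziej's theorem—where the two measures being compared are allowed to differ and only a one-sided density bound plus the capacity domination~\eqref{equ:mucap} are used—is essential, and checking that all constants in that theorem depend only on $A$, $\alpha$, $n$ and the fixed cohomology class, and not on $t$, will be the delicate point. A secondary difficulty is patching the local estimates on the charts $\cU_J$ into a global statement while controlling the $\dd_1$-contribution of the transition and non-essential regions; here one leans on the fact, from Theorem~\ref{thm:konsoib2}, that $\mu_t$ puts asymptotically full mass on the essential strata, so that the uncontrolled regions are negligible for the $\dd_1$-distance.
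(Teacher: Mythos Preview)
Your proposal correctly identifies the main ingredients --- the uniform capacity bound~\eqref{equ:mucap}, the asymmetric stability Theorem~\ref{thm:stab}, and the local comparison with $\Log_t^\star\f_0$ via~\eqref{equ:MAf0} and~\eqref{equ:complog} --- but there is a genuine gap in how you apply the stability theorem. Theorem~\ref{thm:stab} is a \emph{global} statement on $(X_t,\om_t)$: it requires two functions in $\CPSH_0(\om_t)$ defined on all of $X_t$, and its conclusion is in terms of the global $\dd_1$-distance and a global sup. The pull-back $\Log_t^\star\f_0$, however, only lives on the chart $\cU_{J,t}$; you cannot ``apply this on each chart $\cU_J$'' and then patch, because the stability theorem does not localize. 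Nor do you have any control over $\MA(\tf_t)$ itself: the comparison metric $\tf_t$ comes from Theorem~\ref{thm:maxext} and nothing is known about its Monge--Amp\`ere measure.

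The missing idea, which is the content of Lemma~\ref{lem:modcomp} in the paper, is to \emph{globalize} the local model $\Log_t^\star\f_0$ by a cut-off and max construction. One picks $V_J\Subset\D_J^{\reg}$ and, using the strict convexity of $\f_0$ on $\D_J^{\reg}$, perturbs $\f_0$ by a bump $c\chi_J$ so that $\Log_t^\star(\f_0+c\chi_J)-c/2$ exceeds $\tf_t$ on $\Log_t^{-1}(V_J)$ but drops below it near $\partial\cU_{J,t}$; the max $\tp_t:=\max\{\tf_t,\Log_t^\star(\f_0+c\chi_J)-c/2\}$ is then a \emph{global} element of $\CPSH_0(\om_t)$, $C^0$-close to $\tf_t$, and equal (up to a constant) to $\Log_t^\star\f_0$ on $\Log_t^{-1}(V_J)$. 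Now~\eqref{equ:MAf0} gives $\MA(\tp_t)\ge(1-\e)\mu_t$ on $\bigcup_J\Log_t^{-1}(V_J)$, and since $\mu_t$ concentrates there, a mass-balance yields $\int_{X_t}|\MA(\tp_t)-\MA(\f_t)|\le 4\e$. Only then can Theorem~\ref{thm:stab} be invoked to obtain $\dd_1(\tp_t,\f_t)\le\d$ and $\tp_t\le\f_t+\d$, which transfers to $\tf_t$ since $|\tp_t-\tf_t|\le\d$. Your ``secondary difficulty'' of patching is in fact the primary one, and the compactness argument you propose at the start does not circumvent it.
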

Here $\dd_1$ denotes the Darvas metric on $\om_t$-psh functions, the first point being equivalent to
$$
\int_{X_t}|\f_t-\tf_t|\,d\mu_t\to 0,\quad\int_{X_t}|\f_t-\tf_t|\,\MA(\tf_t)\to 0,
$$
see~\eqref{equ:d1}. Combining this with~\eqref{equ:complog} we get
\begin{equation}\label{equ:flog1}
\int_{\cU_{J,t}}|\f_t-\Log_t^\star\f_0|\,d\mu_t\to 0,\quad\limsup_t\sup_{\cU_{J,t}}(\Log_t^\star\f_0-\f_t)\le 0. 
\end{equation}
 In the `generic region' $\Log_t^{-1}(\D_J^{\reg})$, this will then be upgraded to smooth convergence:

\begin{cor}\label{cor:smgen} For each compact subset $\Sigma\subset\D_J^{\reg}$, $\f_t-\Log_t^\star\f_0$ tends to $0$ in $C^\infty$-topology on $\Log_t^{-1}(\Sigma)$ as $t\to 0$.
\end{cor}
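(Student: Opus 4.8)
The plan is to bootstrap from the weak convergence results of Theorem~\ref{thm:d1cv} to a local $C^\infty$ estimate via the elliptic regularity theory for the complex Monge--Amp\`ere equation, following the scheme of~\cite{LiSYZ}. The starting point is that on the adapted chart $\cU_{J,t}$ both $\f_t$ and $\Log_t^\star\f_0$ are (after rescaling the K\"ahler form by $\e_t$) solutions of complex Monge--Amp\`ere equations whose densities are mutually comparable: by~\eqref{equ:MAft} the potential $\f_t$ satisfies $\e_t^{-n}V^{-1}(\om_t+\ddc\f_t)^n=\mu_t$, while by~\eqref{equ:MAf0} the potential $\Log_t^\star\f_0$ satisfies $\e_t^{-n}V^{-1}(\ddc\Log_t^\star\f_0)^n=(1+f_J)\mu_t$ on $\cU_{J,t}$, with $f_J$ small near $x_J$. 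Since $\f_0$ is smooth and strictly convex on $\D_J^\reg$, the function $\Log_t^\star\f_0$ has, on $\Log_t^{-1}(\Sigma)$ for $\Sigma\Subset\D_J^\reg$, a complex Hessian that is \emph{uniformly} bounded above and below (in the rescaled metric $\om_t$, or equivalently after the toric change of coordinates $z_i\mapsto\log|z_i|/\log|t|$ which is a fixed-geometry model on the preimage): this is the content of~\eqref{equ:MAlog} together with strict convexity of $\f_0|_\Sigma$. Thus $\Log_t^\star\f_0$ serves as a background metric with uniformly bounded geometry on $\Log_t^{-1}(\Sigma')$ for a slightly larger $\Sigma'\Subset\D_J^\reg$.

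The key steps, in order, are as follows. First I would record the $C^0$ estimate: from~\eqref{equ:flog1} we have $\limsup_t\sup_{\cU_{J,t}}(\Log_t^\star\f_0-\f_t)\le0$ and $\int_{\cU_{J,t}}|\f_t-\Log_t^\star\f_0|\,d\mu_t\to0$, and combining the first of these with the uniform boundedness of $\f_t$ from Corollary~\ref{cor:bd} and a local maximum principle / comparison argument (using that $\f_t$ solves a Monge--Amp\`ere equation with density bounded above by $C$ times that of $\Log_t^\star\f_0$ on $\Log_t^{-1}(\Sigma')$), I would promote the $L^1$-smallness to $\sup_{\Log_t^{-1}(\Sigma)}|\f_t-\Log_t^\star\f_0|\to0$. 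Concretely, one uses that a bounded psh function whose Monge--Amp\`ere mass is controlled and which is $L^1$-small must be uniformly small on a slightly shrunk domain, via the comparison principle and the capacity estimates from the Appendix (e.g.~the sub-mean-value type bounds behind Ko\l{}odziej's theorem). Second, with $\f_t-\Log_t^\star\f_0$ now uniformly small on $\Log_t^{-1}(\Sigma')$, I would invoke the local a~priori estimates for complex Monge--Amp\`ere: since $\om_t+\ddc\f_t$ and $\ddc\Log_t^\star\f_0$ have comparable volume forms (the ratio being $(1+f_J)^{-1}$, uniformly close to $1$ and uniformly smooth), and since the reference metric $\ddc\Log_t^\star\f_0$ has uniformly bounded geometry (bounded curvature, bounded below) on $\Log_t^{-1}(\Sigma')$ in the rescaled picture, the Evans--Krylov / Caffarelli-type interior $C^{2,\alpha}$ estimate gives a uniform bound on $\|\f_t-\Log_t^\star\f_0\|_{C^{2,\alpha}(\Log_t^{-1}(\Sigma))}$ once one has the interior Laplacian estimate (Yau/Aubin-type, localized), for which the $C^0$ control and the density comparison suffice. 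Third, I would bootstrap: differentiating the equation and using Schauder gives uniform $C^{k}$ bounds for all $k$ on $\Log_t^{-1}(\Sigma)$. Finally, the uniform $C^{k}$ bounds plus $C^0$-convergence to $0$ upgrade (by interpolation, or by Arzel\`a--Ascoli applied to any subsequence) to $C^\infty$-convergence of $\f_t-\Log_t^\star\f_0$ to $0$ on $\Log_t^{-1}(\Sigma)$.

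The main obstacle I expect is making the interior estimates genuinely \emph{uniform in $t$}, because the domains $\Log_t^{-1}(\Sigma)\subset\cU_{J,t}$ degenerate as $t\to0$ (they are long thin tubes collapsing onto the skeleton) and the metric $\om_t=\e_t\ddc\phi_{\re,t}$ is itself collapsing. The right viewpoint is to pull everything back under the torus covering $\Log_t$: on the cover the problem becomes a Monge--Amp\`ere equation on a fixed region $\Sigma'\times(\R/\Z)^J\cap\{\textstyle\sum b_i\theta_i=0\}$-bundle with $t$-independent geometry, the $\om_t$-scaling being exactly absorbed by the $\e_t$-rescaling built into $\Log_t$ (this is the content of~\eqref{equ:Omt}, \eqref{equ:MAlog}). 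One must check that the reference potential $\Log_t^\star\f_0$ and the solution $\f_t$, when pulled back, have $t$-uniform ellipticity constants there — for $\Log_t^\star\f_0$ this is automatic from strict convexity of $\f_0|_{\Sigma'}$, but for $\f_t$ it requires precisely the $C^0$-closeness established in the first step together with the density comparison. A secondary technical point is that $\f_t$ is only defined globally on $X_t$ and need not a~priori be $G_J$-invariant on $\cU_{J,t}$; however, one does not need invariance — only the local estimates, applied on the cover to the (possibly non-invariant) pulled-back potential, which is fine since the cover has bounded geometry regardless of invariance. Once the uniform ellipticity on the cover is in hand, the rest is the standard Yau--Evans--Krylov--Schauder machine quoted from the references.
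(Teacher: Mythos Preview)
Your overall plan---first establish the $C^0$ estimate $\sup_{\Log_t^{-1}(\Sigma)}|\f_t-\Log_t^\star\f_0|\to 0$, then invoke elliptic regularity---matches the paper's, and for the second step the paper simply cites Savin's small-perturbation theorem~\cite{Sav}, which short-circuits the Evans--Krylov/Schauder bootstrap you outline (though your route would also work).

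The substantive difference is in how the $C^0$ estimate is obtained. Your proposal to promote the $L^1$-smallness of $\f_t-\Log_t^\star\f_0$ to $C^0$-smallness via ``comparison principle and capacity estimates'' is where the argument is thin: the heuristic ``bounded psh, controlled Monge--Amp\`ere mass, $L^1$-small $\Rightarrow$ uniformly small on a shrunk domain'' is not a standard lemma, and in this collapsing geometry it is not clear how to make it precise. The difficulty is that $\f_t-\Log_t^\star\f_0$ is not itself psh, and the one-sided bound you already have from~\eqref{equ:flog1} only gives $\Log_t^\star\f_0\le\f_t+o(1)$; it is the \emph{other} inequality $\f_t\le\Log_t^\star\f_0+o(1)$ that needs a genuine idea.

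The paper's trick is to pass to the base via fiber averaging. Writing $\f_{J,t}:=\e_t\rho_J+\f_t$ (so that $\om_t+\ddc\f_t=\ddc\f_{J,t}$ locally), one integrates $\f_{J,t}$ along the torus fibers of $\Log_t\colon\cU_{J,t}\to\rsig_J$ to produce a function $f_{J,t}$ on $\rsig_J$. Plurisubharmonicity of $\f_{J,t}$ gives both that $f_{J,t}$ is convex and, by the mean-value inequality, that $\f_{J,t}\le\Log_t^\star f_{J,t}$. The one-sided bound and the $L^1$-bound from~\eqref{equ:flog1} then translate into $\f_0\le f_{J,t}+\d$ and $\int_{\rsig_J}(f_{J,t}-\f_0)\,\mu_J\le 2\d$ on the base; since $f_{J,t}$ and $\f_0$ are both convex, this forces $\sup_\Sigma|f_{J,t}-\f_0|\le C\d$ by elementary convex analysis, and hence $\f_{J,t}\le\Log_t^\star f_{J,t}\le\Log_t^\star\f_0+C\d$ on $\Log_t^{-1}(\Sigma)$. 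This fiber-averaging step is the missing ingredient in your sketch, and it replaces the vague pluripotential argument with a clean reduction to real convex functions on $\rsig_J$.
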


%We are then going to establish the following weak form of Conjecture~\ref{conj:hybstab}. 
%
%\begin{thm}\label{thm:hybstab} For each compact $S\subset V$ we have $\f_t\to\f_0$ uniformly on $\Log_\cX^{-1}(S)$. 
%\end{thm}
%
%
%%%%%%%%%%%%%%%%%%%%%%%%%%%%%%%%%%%%%%%%%%%%
\subsection{Proof of Theorem~\ref{thm:d1cv}}
The plan is to make a $C^0$-small modification of the comparison potential $\tf_t$ in the generic region in order to apply Yang Li's stability result (Theorem~\ref{thm:stab}). This is based on the following slightly simplified version of~\cite[Lemma~4.2]{LiSYZ}. 

\begin{lem}\label{lem:modcomp} For each $J$ pick $V_J\Subset\D_J^{\reg}$ open. For any $0<c\ll 1$, we can then find $\tp_t\in\CPSH_0(\om_t)$ for all $t$ small enough such that 
\begin{itemize}
\item[(i)] $|\tp_t-\tf_t|\le c$ on $X_t$;
\item[(ii)] $\tp_t-\Log_t^\star\f_0$ is constant on $\bigcup_J\Log_t^{-1}(V_J)$. 
\end{itemize} 
\end{lem}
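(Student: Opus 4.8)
The idea is to interpolate between the comparison potential $\tf_t$ and the pullback $\Log_t^\star\f_0$ on the regions $\Log_t^{-1}(V_J)$, in such a way that the result stays $\om_t$-psh and $C^0$-close to $\tf_t$. Since $\f_0$ is smooth and strictly convex on $\D_J^{\reg}\supset\overline{V_J}$, the function $\Log_t^\star\f_0$ is strictly $\om_t$-psh on $\Log_t^{-1}(V_J)$ with a definite amount of strict positivity: concretely, there is $\delta_J>0$, independent of $t$, such that $\ddc(\Log_t^\star\f_0)\ge\delta_J\,\e_t\ddc\phi_{\re,t}=\delta_J\,\om_t$ on $\Log_t^{-1}(V_J)$ for all small $t$ (this uses that $\Log_t$ is a Riemannian submersion in logarithmic coordinates, as in~\S\ref{sec:toric}, together with $\e_t\ddc\phi_{\re,t}$ being comparable to the flat metric $\Log_t^\star\sigma_J$ near $x_J$ by~\eqref{equ:maxdeg}). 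First I would fix, for each $J$, a slightly larger open set $W_J$ with $V_J\Subset W_J\Subset\D_J^{\reg}$ and a cutoff $\chi_J\in C^\infty_c(W_J)$ with $0\le\chi_J\le 1$ and $\chi_J\equiv 1$ on a neighborhood of $\overline{V_J}$.

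**Main construction.** On $\Log_t^{-1}(W_J)$ one forms, for a small parameter $c>0$ to be chosen, a glued potential of the form
$$
\psi_t:=\left(1-(\chi_J\circ\Log_t)\right)\tf_t+(\chi_J\circ\Log_t)\bigl(\Log_t^\star\f_0+a_{J,t}\bigr)
$$
for suitable real constants $a_{J,t}$, and $\psi_t:=\tf_t$ outside $\bigcup_J\Log_t^{-1}(W_J)$. Because of~\eqref{equ:complog}, $\sup_{\cU_{J,t}}|\tf_t-\Log_t^\star\f_0|\to 0$, so if we choose $a_{J,t}$ to match (say $a_{J,t}$ a suitable average of $\tf_t-\Log_t^\star\f_0$ over $\Log_t^{-1}(V_J)$, which $\to 0$), then $|\psi_t-\tf_t|\le\eta_t$ on $X_t$ with $\eta_t\to 0$; in particular for $t$ small this is $\le c/2$, giving a version of (i). For the psh property: on the core region where $\chi_J\circ\Log_t\equiv 1$ we get exactly $\Log_t^\star\f_0+a_{J,t}$, which is $\om_t$-psh; outside $\bigcup_J\Log_t^{-1}(W_J)$ we get $\tf_t\in\CPSH(\om_t)$; in the transition annulus $\Log_t^{-1}(W_J\setminus\overline{V_J})$ the difference $\tf_t-(\Log_t^\star\f_0+a_{J,t})$ has $C^2$-norm (measured against $\om_t$) that is $o(1)$ as $t\to 0$ — this is where one needs Corollary~\ref{cor:smgen} (smooth convergence of $\f_t-\Log_t^\star\f_0$ on $\Log_t^{-1}(\Sigma)$) together with the analogous smooth comparison for $\tf_t$, valid because $\tf_t$ is a fixed hybrid continuous family restricting smoothly on $\cU_J$ away from the degeneration locus. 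Since $\ddc(\Log_t^\star\f_0)\ge\delta_J\om_t$ with $\delta_J$ independent of $t$, the $o(1)$ perturbation coming from $\ddc\bigl((\chi_J\circ\Log_t)(\tf_t-\Log_t^\star\f_0-a_{J,t})\bigr)$ — whose $\om_t$-size involves the bounded Hessian of $\chi_J$ against the rescaled log coordinates, hence is $O(\e_t)+o(1)$ — is absorbed, so $\psi_t\in\CPSH(\om_t)$ for $t$ small. Finally one renormalizes $\tp_t:=\psi_t-V^{-1}\int_{X_t}\psi_t\,\om_t^n$ to land in $\CPSH_0(\om_t)$; the normalization shift is $O(\eta_t)$, hence for $t$ small $|\tp_t-\tf_t|\le c$, which is (i), while (ii) holds because on $\bigcup_J\Log_t^{-1}(V_J)$ we have $\tp_t=\Log_t^\star\f_0+a_{J,t}-(\text{global const})$.

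**Main obstacle.** The delicate point is the $C^2$-control in the transition annuli: one must know that $\tf_t$ (not just the solution $\f_t$) converges smoothly to $\Log_t^\star\f_0$ on $\Log_t^{-1}(\Sigma)$ for $\Sigma\Subset\D_J^{\reg}$, so that cutting off between $\tf_t$ and $\Log_t^\star\f_0$ produces only a negligible curvature defect relative to the uniform strict positivity $\delta_J\om_t$ of $\Log_t^\star\f_0$. For the comparison family this follows from the explicit hybrid model nature of $\tphi_t$ near $x_J$ (it extends smoothly to $\cK$ or rather to an ample model, and on the adapted chart $\cU_J$ away from the central fiber the metric is a fixed smooth object pulled back, so $\tf_t-\Log_t^\star\f_0\to0$ in $C^\infty_{\loc}$ on $\cU_{J,t}\cap\Log_t^{-1}(\D_J^{\reg})$ by the same toric computation as in~\S\ref{sec:toric} combined with smoothness of $\f_0$ there). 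Once this smooth comparison is in hand, the gluing is a standard cutoff-and-renormalize argument; the constants $a_{J,t},\eta_t\to0$ make (i) automatic, and (ii) is immediate from the construction.
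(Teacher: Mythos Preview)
Your approach has a genuine gap in the ``main obstacle'' step, and the paper's proof avoids it by a different, simpler construction.

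The problem is the $C^2$ control of $\tf_t-\Log_t^\star\f_0$ in the transition annuli. You invoke Corollary~\ref{cor:smgen} for this, but that result is about the \emph{solution} $\f_t$, not the comparison potential $\tf_t$; more seriously, Corollary~\ref{cor:smgen} is proved \emph{using} Theorem~\ref{thm:d1cv}, which in turn is proved using the present lemma, so the argument is circular. Your fallback justification, that $\tphi_t$ ``extends smoothly to $\cK$ or rather to an ample model'', is also not correct: the comparison metric $(\tphi_t)$ is produced by Theorem~\ref{thm:maxext} as a maximal hybrid continuous psh extension of the non-Archimedean solution $\phi_0$, and there is no reason for it to be smooth or to come from a model metric (indeed $\phi_0$ is generally not a model metric). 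So only $C^0$ information on $\tf_t-\Log_t^\star\f_0$ is available, via~\eqref{equ:complog}, and a convex-combination gluing cannot be made $\om_t$-psh with that alone.

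The paper sidesteps this entirely by using a \emph{max} construction rather than a convex combination. Pick $\chi_J\in C^\infty_c(\D_J^{\reg})$ with $0\le\chi_J\le 1$ and $\chi_J\equiv 1$ on $V_J$. Since $\f_0$ is strictly convex on $\D_J^{\reg}\supset\supp\chi_J$, the perturbation $\f_0+c\chi_J$ remains convex on all of $\D_J$ for $0<c\ll 1$, so $\Log_t^\star(\f_0+c\chi_J)$ is psh on $\cU_{J,t}$. Using~\eqref{equ:complog} to get $|\tf_t-\Log_t^\star\f_0|<c/2$ for small $t$, one sets
\[
\tp_t:=\max\{\tf_t,\ \Log_t^\star(\f_0+c\chi_J)-c/2\}.
\]
This is $\om_t$-psh as a maximum of two $\om_t$-psh functions, equals $\Log_t^\star\f_0+c/2$ on $\Log_t^{-1}(V_J)$ (since there $\chi_J=1$ and the second term dominates), equals $\tf_t$ outside $\Log_t^{-1}(\supp\chi_J)$ (since there $\chi_J=0$ and the first term dominates), and satisfies $|\tp_t-\tf_t|<c$. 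Then extend by $\tf_t$ off $\bigcup_J\cU_{J,t}$ and renormalize. The point is that a max needs no derivative control, only the $C^0$ closeness~\eqref{equ:complog}; the strict convexity of $\f_0$ on $\D_J^{\reg}$ is used not to absorb curvature errors but simply to keep $\f_0+c\chi_J$ convex.
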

\begin{proof} For each $J$ pick a cut-off function $\chi_J\in C^\infty_c(\D_J^{\reg})$ such that $0\le\chi_J\le 1$ and $\chi_J\equiv 1$ on $V_J$. Since $\f_0$ is convex on $\D_J$, and smooth and strictly convex on $\D_J^{\reg}\supset\supp\chi_J$, $\f_0+c\chi_J$ is convex on $\D_J$ for $0<c\ll 1$, and $\Log_t^\star(\f_0+c\chi_J)$ is thus psh on $\cU_{J,t}$. By~\eqref{equ:complog}, for $t$ small enough we have $|\tf_t-\Log_t^\star\f_0|<c/2$ on $\cU_{J,t}$. Since $\om_t\ge 0$, the function
$$
\tp_t:=\max\{\tf_t,\Log_t^\star(\f_0+c\chi_J)-c/2\}
$$
is $\om_t$-psh on $\cU_{J,t}$. It satisfies $|\tp_t-\tf_t|<c$, coincides with $\Log_t^\star\f_0+c/2$ on $\Log_t^{-1}(V_J)$, and with $\tf_t$ outside $\Log_t^{-1}(\supp\chi_J)$. It can thus be extended to an $\om_t$-psh on $X_t$ by setting $\tp_t:=\tf_t$ outside $\bigcup_J\cU_{J,t}$. Since $|\tp_t-\tf_t|<c$ and $\int\tf_t\,\om_t^n=0$, we can then modify $\tp_t$ by a small additive constant to further ensure $\int\tp_t\,\om_t^n=0$, \ie $\tp_t\in\CPSH_0(\om_t)$. 
\end{proof}

Now pick $\e,\d>0$. By~\eqref{equ:mulim} and~\eqref{equ:mulog} we can choose $V_J\subset\D_J^{\reg}$ such that 
$$
\mu_t\big(X_t\setminus\bigcup_J\Log_t^{-1}(V_J)\big)<\e
$$
for $t$ small enough. Pick $\tp_t$ as in Lemma~\ref{lem:modcomp} with $0<c<\d$, so that $|\tp_t-\tf_t|<\d$. For $t$ small enough, the normalized Monge--Amp\`ere measure 
$$
\MA(\tp_t)=\e_t^{-n} V^{-1}(\om_t+\ddc\tp_t)^n
$$
satisfies 
\begin{equation}\label{equ:MApt}
\MA(\tp_t)\ge\e_t^{-n} V^{-1}(\ddc\Log_t^\star\f_0)^n\ge (1-\e)\mu_t
\end{equation}
on $\bigcup_J\Log_t^{-1}(V_J)$, by Lemma~\ref{lem:modcomp}~(ii) and~\eqref{equ:MAf0}.  Since $\MA(\tp_t)$ and $\mu_t=\MA(\f_t)$ both have mass $1$, we infer
$$
\int_{X_t}\left|\MA(\tp_t)-\MA(\f_t)\right|\le 4\e
$$
for all $t$ small enough. Thanks to the uniform estimates
$$
\mu_t\le A\Capa_{\om_t}^2,\quad |\tp_t|\le |\tf_t|+1\le M,
$$
see~\eqref{equ:mucap} and Corollary~\ref{cor:bd}, Theorem~\ref{thm:stab} now shows that for $\e\le\e_0(\d,n,A,M)$ we have 
$$
\dd_1(\tp_t,\f_t)\le\d,\quad\tp_t\le\f_t+\d
$$
for all $t$ small enough, and hence 
$$
\dd_1(\tf_t,\f_t)\le 2\d,\quad\tf_t\le\f_t+2\d\text{ on }X_t, 
$$
since  $\sup_{X_t}|\tp_t-\tf_t|\le\d$. This concludes the proof of Theorem~\ref{thm:d1cv}.

%%%%%%%%%%%%%%%%%%%%%%%%%%%%%%%%%%%%%%%%%%%%
\subsection{Smooth convergence in the generic region}
For each $J$ let $\rho_J\in C^\infty(\cU_J)$ be a local weight of the reference metric $\Phi_\re$ on $\cL$ that defines the rescaled K\"ahler form $\om_t=\e_t\ddc\Phi_{\re,t}$, so that $\om_t=\e_t\ddc\rho_J$ on $\cU_{J,t}$.

In the generic region $\Log_t^{-1}(\D_J^{\reg})$, both functions $\Log_t^\star\f_0$ and $\f_{J,t}:=\e_t\rho_J+\f_t$ are smooth and strictly psh, and they satisfy
$$
\left(\ddc\Log_t^\star\f_0\right)^n=(1+f_J)\left(\ddc\f_{J,t}\right)^n
$$
with $f_J\in C^\infty(\cU_J)$ vanishing at $x_J$, see~\eqref{equ:MAf0}. By general elliptic regularity theory~\cite{Sav}, the $C^\infty$-estimates of Corollary~\ref{cor:smgen} are thus reduced to the following $C^0$-estimate: 

\begin{lem} For any compact $\Sigma\subset\D_J^{\reg}$ we have $\sup_{\Log_t^{-1}(\Sigma)}|\Log_t^\star\f_0-\f_{J,t}|\to 0$. 
\end{lem}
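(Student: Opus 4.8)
The plan is to adapt the proof of Theorem~\ref{thm:d1cv}, extracting the desired $C^0$-estimate in the generic region by running the asymmetric stability estimate of Theorem~\ref{thm:stab} in \emph{both} directions against the modified comparison potential $\tp_t$ of Lemma~\ref{lem:modcomp}. Two preliminary reductions first: since $\f_{J,t}=\e_t\rho_J+\f_t$ and the sets $\Log_t^{-1}(\Sigma)$ shrink to $\{x_J\}$ as $t\to0$, along which $\rho_J$ remains bounded, one has $\e_t\sup_{\Log_t^{-1}(\Sigma)}|\rho_J|\to0$, so it suffices to prove $\sup_{\Log_t^{-1}(\Sigma)}|\f_t-\Log_t^\star\f_0|\to0$; and since $\Log_t^{-1}(\Sigma)\subset\cU_{J,t}$, the bound $\Log_t^\star\f_0-\f_t\le o(1)$ there is already part of~\eqref{equ:flog1}, so only the opposite inequality $\f_t\le\Log_t^\star\f_0+o(1)$ on $\Log_t^{-1}(\Sigma)$ remains to be shown.

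Fix $\delta>0$, let $\e=\e_0(\delta,n,A',M)$ be the threshold of Theorem~\ref{thm:stab} for the accuracy $\delta$ (with $A'$ the uniform constant discussed below and $M$ as in the proof of Theorem~\ref{thm:d1cv}, which also bounds $\f_t$ by Corollary~\ref{cor:bd}), and choose for each $J$ an open set $V_J$ with $\Sigma\Subset V_J\Subset\D_J^{\reg}$ and $\sum_J\mu_J(\D_J\setminus V_J)<\e/8$. Let $\tp_t$ be the comparison potential produced by Lemma~\ref{lem:modcomp} from this $V_J$ and some $0<c<\e$, so that $\tp_t=\Log_t^\star\f_0+c/2$ over $\Log_t^{-1}(V_J)$ and, by the argument proving Theorem~\ref{thm:d1cv}, $\tp_t\le\f_t+\delta$ on $X_t$ for $t$ small. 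Over $\Log_t^{-1}(V_J)$, where $\f_0$ is smooth and strictly convex, the complex Hessian of $\Log_t^\star\f_0$ is $\gtrsim\e_t^2|t|^{-2\sigma}$ for a constant $\sigma=\sigma(V_J)>0$, hence dominates $\om_t=\e_t\ddc\rho_J$ since $|t|^{-2\sigma}\gg\e_t^{-1}$; combined with~\eqref{equ:MAf0} this gives $\MA(\tp_t)=\e_t^{-n}V^{-1}(\om_t+\ddc\Log_t^\star\f_0)^n=(1+o(1))(1+f_J)\mu_t$ on $\Log_t^{-1}(V_J)$. Consequently, for $t$ small, $\MA(\tp_t)\le2\mu_t$ and $\mu_t\ge(1-\e)\MA(\tp_t)$ there, while $\MA(\tp_t)(\bigcup_J\Log_t^{-1}(V_J))=(1+o(1))\sum_J\mu_t(\Log_t^{-1}(V_J))\to\sum_J\mu_J(V_J)>1-\e/8$, so $\MA(\tp_t)$ puts mass $<\e/4$ outside $\bigcup_J\Log_t^{-1}(V_J)$ and $\int_{X_t}|\MA(\f_t)-\MA(\tp_t)|<\e$. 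Granting the uniform bound $\MA(\tp_t)\le A'\Capa_{\om_t}^2$, Theorem~\ref{thm:stab} --- applied now with the roles of $\f_t$ and $\tp_t$ interchanged relative to the proof of Theorem~\ref{thm:d1cv} --- gives $\f_t\le\tp_t+\delta$ on $X_t$ for $t$ small. Together with $\tp_t\le\f_t+\delta$ this yields $|\f_t-\tp_t|\le\delta$ on $X_t$, whence $|\f_t-\Log_t^\star\f_0|\le\delta+c/2\le2\delta$ on $\Log_t^{-1}(\Sigma)\subset\Log_t^{-1}(V_J)$. As $\delta>0$ was arbitrary, $\sup_{\Log_t^{-1}(\Sigma)}|\f_t-\Log_t^\star\f_0|\to0$, and with the first reduction $\sup_{\Log_t^{-1}(\Sigma)}|\f_{J,t}-\Log_t^\star\f_0|\to0$, as claimed.

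The genuinely delicate point --- which I expect to be the main obstacle --- is the uniform estimate $\MA(\tp_t)\le A'\Capa_{\om_t}^2$ needed to apply Theorem~\ref{thm:stab} in this second direction. Over $\Log_t^{-1}(V_J)$ it follows from $\MA(\tp_t)\le2\mu_t$ and~\eqref{equ:mucap}. Over the transition regions $\Log_t^{-1}(\supp\chi_J\setminus V_J)$, on which $\f_0+c\chi_J$ is smooth and convex, $\MA(\tp_t)$ is dominated by the measure $\e_t^n(2\pi)^{-n}V^{-1}\Log_t^\star(\rho\,\mathrm{Leb})$ with $\rho$ bounded (by~\eqref{equ:Omt}), a measure of the same toric nature as $\mu_t$, to which the uniform Skoda-type estimate of~\cite{LiSko} applies. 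Finally, away from the charts $\MA(\tp_t)=\MA(\tf_t)$, and there the required bound may be arranged by a suitable choice of the comparison family $\tf_t$. This capacity control is the technical core of~\cite{LiSYZ}; granted it, everything above is a routine variant of the proof of Theorem~\ref{thm:d1cv}.
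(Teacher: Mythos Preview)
Your approach has a genuine gap at exactly the point you flag: the capacity bound $\MA(\tp_t)\le A'\Capa_{\om_t}^2$. Two of your three regions are problematic. In the transition region $\Log_t^{-1}(\supp\chi_J\setminus V_J)$, the potential $\tp_t$ is a \emph{maximum} of $\tf_t$ and $\Log_t^\star(\f_0+c\chi_J)-c/2$, and the Monge--Amp\`ere measure of a max is only controlled from \emph{below} by the measures of the two pieces; it can concentrate mass on the coincidence set, so domination by a toric pullback measure does not follow. Outside the charts, you write that the bound ``may be arranged by a suitable choice of the comparison family $\tf_t$'', but the paper gives no such construction: $\tf_t$ comes from Theorem~\ref{thm:maxext} as a Perron-type envelope, and there is no reason its Monge--Amp\`ere measure should be dominated by a multiple of $\Capa_{\om_t}^2$. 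Without this capacity bound, you cannot run Theorem~\ref{thm:stab} in the reversed direction, and the argument stalls.

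The paper sidesteps this difficulty entirely by a short, elementary argument that uses only the outputs of Theorem~\ref{thm:d1cv} (namely~\eqref{equ:flog1}) rather than re-running stability. The key idea is to average $\f_{J,t}$ along the fibers of the torus bundle $\Log_t\colon\cU_{J,t}\to\rsig_J$, producing a \emph{convex} function $f_{J,t}$ on $\rsig_J$ with $\f_{J,t}\le\Log_t^\star f_{J,t}$ by the submean-value property of psh functions. The one-sided bound from~\eqref{equ:flog1} gives $\f_0\le f_{J,t}+\d$, while the $L^1$-bound gives $\int_{\rsig_J}(f_{J,t}-\f_0)\,\mu_J\le 2\d$. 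Since $f_{J,t}$ and $\f_0$ are both convex and one lies above the other up to $\d$ with nearly equal integrals, elementary convex analysis forces $\sup_\Sigma|f_{J,t}-\f_0|\le C\d$ on any compact $\Sigma\subset\rsig_J$, whence $\f_{J,t}\le\Log_t^\star f_{J,t}\le\Log_t^\star\f_0+C\d$. This gives the missing upper bound on $\f_t-\Log_t^\star\f_0$ directly, with no need for capacity estimates on the comparison potential.
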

\begin{proof} Pick $\d>0$. For $t$ small enough, \eqref{equ:flog1} implies
$$
\Log_t^\star\f_0\le\f_{J,t}+\d,\quad\int_{\cU_{J,t}}|\f_{J,t}-\Log_t^\star\f_0|\,d\mu_t\le\d
$$
Since $|\f_{J,t}|,|\f_0|\le M$ and $\Log_t^\star\mu_J=(1+f_J)\mu_t$, this yields
$$
\int_{\cU_{J,t}}\f_{J,t}\Log_t^\star\mu_J\le\int_{\cU_{J,t}}\Log_t^\star\f_0\Log_t^\star\mu_J+2\d=\int_{\rsig_J}\f_0\,\mu_J+2\d
$$
for $t$ small enough. Since $\f_{J,t}$ is psh, the function 
$$
f_{J,t}:=\int_{\cU_{J,t}/\rsig_J}\f_{J,t}
$$ 
obtained by integration along the fibers of the principal $G_J$-bundle $\Log_t\colon\cU_{J,t}\to\rsig_J$ is convex and satisfies $\f_{J,t}\le\Log_t^\star f_{J,t}$, by the mean value inequality (see~\S\ref{sec:toric}). This implies
$\Log_t^\star\f_0\le\f_{J,t}+\d\le\Log_t^\star f_{J,t}+\d$, and hence $\f_0\le f_{J,t}+\d$. Further, 
$$
\int_{\rsig_J} f_{J,t}\mu_J=\int_{\cU_{J,t}}\f_{J,t}\Log_t^\star\mu_J\le\int_{\rsig_J}\f_0\,\mu_J+2\d. 
$$
Since both functions $f_{J,t}$ and $\f_0$ are convex, this implies $\sup_\Sigma|f_{J,t}-\f_0|\le C\d$ for a constant $C=C(\Sigma)>0$. Thus
$$
\f_{J,t}\le\Log_t^\star f_{J,t}\le\Log_t^\star\f_0+C\d
$$
on $\Log_t^{-1}(\Sigma)$, and the result follows. 
\end{proof}

Corollary~\ref{cor:smgen} guarantees that the rescaled K\"ahler metric 
$$
\e_t\ddc\phi_t=\om_t+\ddc\f_t\in\e_t c_1(L_t)
$$
is $C^\infty$-close in the generic region to the \emph{semiflat metric} $\ddc\Log_t^\star\f_0$ on the principal bundle 
$$
\Log_t\colon\Log_t^{-1}(\D_J^{\reg})\to\D_J^{\reg}.
$$
In the Calabi--Yau case, \ie when $\sigma_t=|\Om_t|^2$ for a holomorphic volume form $\Om_t$ on $X_t$, $\Log_t$ is a special Lagrangian fibration for the semiflat metric, and a perturbation argument based on~\cite{Zha} then yields a special Lagrangian fibration for the Calabi--Yau metric $\om_{\mathrm{CY},t}=\ddc\phi_t$ in the generic region~\cite[Theorem~4.9]{LiSYZ}.

\appendix
%
%%%%%%%%%%%%%%%%%%%%%%%%%%%%%%%%%%%%%%%%%%%%
\section{Stability theorems for complex Monge--Amp\`ere equations}\label{sec:stab}
In this appendix we work on a compact K\"ahler manifold $(X,\om)$, of dimension $n:=\dim X$ and volume $V=\int_X\om^n$. 
%
%%%%%%%%%%%%%%%%%%%%%%%%%%%%%%%%%%%%%%%%%%%%
\subsection{Two metrics}
We denote by $\CPSH(\om)$ the set of continuous $\om$-psh functions $\f\in\Cz(X)$, and by 
$$
\MA(\f)=\MA_\om(\f):=V^{-1}(\om+\ddc\f)^n
$$
the (normalized) Monge--Amp\`ere operator. The space $\CPSH(\om)$ is complete with respect to the supnorm metric 
$$
\dd_\infty(\f,\p):=\sup_X|\f-\p|. 
$$
We shall also consider the \emph{Darvas metric} $\dd_1$, which satisfies 
\begin{equation}\label{equ:d1}
C_n^{-1}\dd_1(\f,\p)\le\int|\f-\p|\MA(\f)+\int|\f-\p|\MA(\p)\le C_n\dd_1(\f,\p)
\end{equation}
for a constant $C_n>0$ only depending on $n$, see~\cite[Theorem~3.32]{Dar}. The Monge--Amp\`ere operator is continuous in the $\dd_1$-distance --- for instance by~\eqref{equ:Hodge} combined with $\ii(\f,\p)\le C_n\dd_1(\f,\p)$. The completion of $\CPSH(\om)$ with respect to $\dd_1$ can be identified with the space $\cE^1(\om)$ of potentials of finite energy.

%
%%%%%%%%%%%%%%%%%%%%%%%%%%%%%%%%%%%%%%%%%%%%
\subsection{The $\dd_\infty$-stability theorem}
The \emph{Bedford--Taylor capacity} is defined by
$$
\Capa(B):=\sup\left\{\int_B\MA(\f)\mid\f\in\CPSH(\om),\,0\le\f\le 1\right\}.
$$
for each Borel set $B\subset X$. Following Ko\l{}odziej's fundamental work~\cite{Kolo1}, we shall consider probability measures $\mu$ on $X$ that are well-dominated by the capacity, in the sense that $\mu\le A\Capa^2$ for a constant $A>0$. A broad class of such measures is provided by the following result, a well-known consequence of the Alexander--Taylor comparison theorem (see for instance~\cite[Lemma~2.9]{Li22}). 

\begin{lem}\label{lem:cap2} Assume $\mu$ satisfies a \emph{Skoda estimate} 
\begin{equation}\label{equ:Skoda}
\int_X e^{-\a\f}\,d\mu\le C
\end{equation}
for all $\f\in\CPSH(\om)$ normalized by $\sup_X\f=0$, where $\a,C>0$. Then $\mu\le A\Capa^2$ for a constant $A=A(n,\a,C)>0$. 
\end{lem}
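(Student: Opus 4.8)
The plan is to deduce the capacity bound from the Skoda estimate via the Alexander--Taylor comparison theorem, which quantitatively links the capacity of a Borel set $B$ to the supremum of the global extremal function
$$
u_B:=\sup\{\f\in\CPSH(\om)\mid\f\le 0\text{ on }B\}^\star,
$$
the upper semicontinuous regularization. The starting point is the classical Alexander--Taylor inequality: there is a constant $\Lambda=\Lambda(X,\om)>0$ such that for every Borel set $B$ with $\Capa(B)$ small one has
$$
\sup_X u_B\le\Lambda\left(\log\tfrac{1}{\Capa(B)}\right)^{-1},
$$
equivalently $\Capa(B)\le\exp\!\bigl(-\Lambda/\sup_X u_B\bigr)$; see \cite{Kolo1} and the references in \cite{Li22}. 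When $\Capa(B)$ is not small the desired bound is automatic (since $\mu(B)\le1$ and $\Capa$ is bounded below away from $0$ on a fixed manifold), so we may work in the regime where $\sup_X u_B=:M_B$ is small.

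Next I would estimate $\mu(B)$ using the Skoda hypothesis applied to the normalized extremal function. Since $u_B\le0$ on $B$ up to a pluripolar (hence $\mu$-null, by \eqref{equ:Skoda} applied along a decreasing sequence) set, and $v:=u_B-M_B\in\CPSH(\om)$ satisfies $\sup_X v=0$, we get on $B$ that $v\le -M_B$, whence $e^{-\a v}\ge e^{\a M_B}$ there. Therefore
$$
e^{\a M_B}\,\mu(B)\le\int_B e^{-\a v}\,d\mu\le\int_X e^{-\a v}\,d\mu\le C,
$$
so $\mu(B)\le C\,e^{-\a M_B}$. Combining this with the Alexander--Taylor bound $M_B\ge\Lambda\bigl(\log\tfrac1{\Capa(B)}\bigr)^{-1}$ yields
$$
\mu(B)\le C\exp\!\left(-\frac{\a\Lambda}{\log(1/\Capa(B))}\right).
$$
A direct elementary check shows that the right-hand side is $\le A\,\Capa(B)^2$ for $\Capa(B)$ small enough, with $A$ depending only on $n,\a,C$ (and the fixed geometric constant $\Lambda$, which can be absorbed): indeed $\exp(-\a\Lambda/\log(1/s))$ decays faster than any power of $s$ as $s\to0$, so one may choose $A$ and a threshold $s_0$ accordingly, and for $\Capa(B)\ge s_0$ use $\mu(B)\le1\le A s_0^2\le A\Capa(B)^2$ after enlarging $A$. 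Taking the supremum over Borel $B$ gives $\mu\le A\Capa^2$.

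The main obstacle is purely bookkeeping: one must handle the pluripolar set where $u_B$ fails to vanish on $B$ (the extremal function need not be continuous), and verify carefully that \eqref{equ:Skoda} forces $\mu$ to charge no pluripolar set, which is needed to discard that set in the estimate above; this is standard but should be spelled out by regularizing $u_B$ from above. The quantitative Alexander--Taylor inequality itself is the genuine input, and I would simply cite it (e.g.\ \cite{Kolo1} or \cite[Lemma~2.9]{Li22}) rather than reprove it. Everything else is the elementary comparison of $\exp(-c/\log(1/s))$ with $s^2$ near $s=0$.
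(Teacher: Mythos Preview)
Your overall strategy---apply the Skoda estimate to the normalized global extremal function $u_B-M_B$ to get $\mu(B)\le C e^{-\a M_B}$, then feed in the Alexander--Taylor comparison---is exactly the right one, and is what the paper has in mind. But the form of Alexander--Taylor you wrote down is wrong, and this makes the final step collapse. Your displayed inequality $M_B\le\Lambda\bigl(\log\tfrac1{\Capa(B)}\bigr)^{-1}$ says that $M_B$ is \emph{small} when $\Capa(B)$ is small, which is the opposite of the truth (for a small ball $B$, the extremal function can be very positive away from $B$, so $M_B$ is large). Consequently the ``lower bound'' $M_B\ge\Lambda/\log(1/\Capa(B))$ you then invoke tends to $0$ and is useless; and your decisive claim that $\exp\bigl(-\a\Lambda/\log(1/s)\bigr)$ decays faster than any power of $s$ is simply false: as $s\to 0$ one has $\log(1/s)\to\infty$, hence the exponent tends to $0$ and the whole expression tends to $1$, not $0$.

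The correct Alexander--Taylor inequality in the compact K\"ahler setting (see e.g.\ Guedj--Zeriahi or \cite[Lemma~2.9]{Li22}) reads $\Capa(B)\ge c(1+M_B)^{-n}$, i.e.\ $M_B\ge c\,\Capa(B)^{-1/n}-1$. Plugging this into $\mu(B)\le Ce^{-\a M_B}$ gives
\[
\mu(B)\le C'\exp\bigl(-c\a\,\Capa(B)^{-1/n}\bigr),
\]
and \emph{this} function of $s=\Capa(B)$ genuinely decays faster than any power of $s$ as $s\to 0$; in particular it is $\le A s^2$ for $s$ small, and the large-$s$ regime is handled exactly as you said. So the fix is purely to replace your stated comparison by the correct one; the rest of your outline (including the remark that Skoda forces $\mu$ to be pluripolar-null) is fine.
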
 

Measures satisfying ~\eqref{equ:Skoda} are fairly common: 

\begin{exam} By the uniform version of Skoda's theorem~\cite{Zer}, any smooth positive volume form $\mu$ on $X$ satisfies a Skoda estimate. 
\end{exam}

\begin{exam} If $\mu$ satisfies a Skoda estimate, H\"older's inequality shows that any measure with $L^{1+\e}$ density with respect to $\mu$ satisfies a Skoda estimate as well. In particular, any measure with $L^{1+\e}$-density with respect to Lebesgue measure satisfies a Skoda estimate. 
\end{exam}

\begin{exam} If $\f\in\CPSH(\om)$ is H\"older continuous, then $\MA(\f)$ satisfies a Skoda estimate, see~\cite[Corollary~1.2]{DNS}. 
\end{exam} 

In what follows we set
$$
\CPSH_0(\om):=\{\f\in\CPSH(\om)\mid\int\f\,\om^n=0\}.
$$
Ko\l{}odziej's results yield the following existence and stability theorems~\cite{Kolo1,Kolo2}  

\begin{thm}\label{thm:Kolo1} For any probability measure $\mu$ that satisfies $\mu\le A\Capa^2$ with $A>0$, there exists a unique $\f\in\CPSH_0(\om)$ such that $\MA(\f)=\mu$, which further satisfies $\sup_X|\f|\le M$ for a constant $M=M(n,A)$. 
\end{thm}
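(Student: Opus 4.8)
The plan is to reduce everything to Ko\l{}odziej's a priori $L^\infty$-estimate: once it is available, existence follows by approximating $\mu$ with smooth data, invoking Yau's theorem, and passing to the limit, while uniqueness comes from the comparison principle.

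First I would establish the a priori estimate in the following form: if $\f\in\CPSH(\om)$ is \emph{bounded} and solves $\MA(\f)=\mu$, then $\sup_X|\f|\le M$ for some $M=M(n,A)$ that does not depend on the assumed bound on $\f$ (the dependence on the fixed $(X,\om)$ being only through standard $L^1$-compactness constants for $\om$-psh functions). Normalizing by $\sup_X\f=0$, the bound $\sup_X\f\le C(X,\om)$ is automatic from $\int_X\f\,\om^n=0$, so one is left to bound $\inf_X\f$ from below. The key ingredient is the inequality
$$
t^n\,\Capa\big(\{\f<-s-t\}\big)\ \le\ \mu\big(\{\f<-s\}\big)\qquad(s>0,\ 0<t\le 1),
$$
which follows, for each $h\in\CPSH(\om)$ with $0\le h\le 1$, from the comparison principle for the Bedford--Taylor operator applied to $\f$ and $-s-t+th$, together with $\MA(-s-t+th)\ge t^n\MA(h)$, upon taking the supremum over $h$. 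Inserting the hypothesis $\mu\le A\,\Capa^2$ and writing $g(s):=\Capa(\{\f<-s\})^{1/n}$ turns this into the self-improving recursion $t\,g(s+t)\le A^{1/n}g(s)^2$. The $L^1$-compactness of $\{u\in\PSH(\om):\sup_X u=0\}$ yields the uniform bound $\int_X(-u)\,\MA(h)\le C(X,\om)$ for $0\le h\le 1$, hence by Chebyshev a level $s_0=s_0(n,A)$ at which $g(s_0)$ lies below the threshold of Ko\l{}odziej's elementary real-variable lemma; that lemma --- valid for a non-increasing, right-continuous $g$ with $g\to 0$ obeying $t\,g(s+t)\le B\,g(s)^2$ for $s\ge s_0$ --- then forces $g\equiv 0$ beyond $s_0+C(n,A)$, that is, $\f\ge -M(n,A)$.

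Granting this, I would prove existence as follows. When $\mu$ is a smooth positive volume form of unit mass, Yau's theorem provides a smooth $\f\in\CPSH_0(\om)$ with $\MA(\f)=\mu$. For a general $\mu\le A\,\Capa^2$ I would choose smooth positive volume forms $\mu_j$ of unit mass with $\mu_j\to\mu$ weakly and a uniform domination $\mu_j\le 2A\,\Capa^2$ --- a suitable mollification of $\mu$ preserving the capacity domination up to a fixed factor --- and let $\f_j\in\CPSH_0(\om)$ solve $\MA(\f_j)=\mu_j$. By the a priori estimate the $\f_j$ are uniformly bounded, hence relatively compact in $L^1(X)$; upgrading to uniform convergence of a subsequence $\f_j\to\f$ produces a \emph{continuous} limit $\f\in\CPSH_0(\om)$ with $\sup_X|\f|\le M(n,A)$, and $\MA(\f)=\mu$ by continuity of the Monge--Amp\`ere operator under uniform convergence (Bedford--Taylor). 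The uniqueness part is then standard: if $\f,\p\in\CPSH_0(\om)$ both solve $\MA(\cdot)=\mu$, the comparison principle gives $\MA(\max\{\f,\p\})=\mu$, and the domination principle for bounded $\om$-psh functions --- proved exactly as Corollary~\ref{cor:dom}, Yau's theorem being available on the K\"ahler manifold $(X,\om)$ --- forces $\f=\p$.

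The step I expect to be the main obstacle is the a priori estimate: both the capacity inequality above (a delicate use of the comparison principle) and, especially, the extraction from the recursion $t\,g(s+t)\le A^{1/n}g(s)^2$ of a bound depending \emph{only} on $n$ and $A$. A secondary technical point lies in the existence argument: the smooth approximants $\mu_j$ must be constructed so as to retain the capacity domination \emph{uniformly}, and the passage from weak $L^1$-convergence to uniform convergence of the $\f_j$ --- needed to obtain a \emph{continuous} solution --- requires the approximation to converge in a suitable capacitary sense, using the full strength of Ko\l{}odziej's stability estimates \cite{Kolo1,Kolo2} (compare the results developed below).
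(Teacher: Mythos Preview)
The paper does not supply its own proof of this theorem: it is stated with attribution to Ko\l{}odziej \cite{Kolo1,Kolo2} and used as a black box. Your outline is essentially the standard argument from those references. The capacity inequality $t^n\Capa(\{\f<-s-t\})\le\mu(\{\f<-s\})$ and the ensuing self-improving recursion are exactly what the paper itself records (in the relative form) in the proof of Lemma~\ref{lem:appdom}, citing \cite[Lemma~2.3]{EGZ}; your a priori estimate is correct as sketched, with the caveat you already note that the constant also depends on $(X,\om)$ through the $L^1$-compactness bound for normalized $\om$-psh functions.

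One genuine subtlety in your existence step deserves flagging: producing smooth approximants $\mu_j\to\mu$ that retain a \emph{uniform} domination $\mu_j\le A'\Capa^2$ is not automatic from a generic mollification, and Ko\l{}odziej's original treatment in \cite{Kolo1} proceeds somewhat differently (first for measures with $L^p$ density, then extending). The upgrade from $L^1$-subsequential convergence of the $\f_j$ to uniform convergence---needed to land in $\CPSH_0(\om)$ rather than merely bounded $\om$-psh functions---does require the stability machinery, as you acknowledge; this is not circular since the a priori bound is already in hand, but it is where most of the work lies. Your uniqueness argument via the domination principle is correct and matches the paper's Corollary~\ref{cor:dom}.
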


\begin{rmk} The result is more commonly stated with the normalization $\sup_X\f=0$, but the two versions are equivalent since $|\sup_X\f|$ and $V^{-1}\left|\int_X\f\,\om^n\right|$ are both bounded by $\sup_X|\f|$, which is under control. 
\end{rmk}

\begin{thm}\label{thm:Kolo2} Pick $\f,\p\in\CPSH_0(\om)$, and assume that 
$$
\MA(\f),\MA(\p)\le A\Capa^2
$$
with $A>0$. For any $\d>0$, we can then find $\e=\e(\d,n,A)$ such that 
$$
\int_X\left|\MA(\f)-\MA(\p)\right|\le\e\Longrightarrow\dd_\infty(\f,\p)\le\d. 
$$
\end{thm}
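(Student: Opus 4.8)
\emph{Proof sketch (plan).} The plan is to run the classical capacity-iteration method of Ko\l{}odziej. By symmetry it suffices to find $\e=\e(\d,n,A)$ such that $\int_X|\MA(\f)-\MA(\p)|\le\e$ forces $\sup_X(\p-\f)\le\d$. First, by the uniqueness in Theorem~\ref{thm:Kolo1}, $\f$ and $\p$ are the normalized solutions of their own Monge--Amp\`ere equations, so $\sup_X|\f|,\sup_X|\p|\le M=M(n,A)$; in particular the open superlevel sets $\Omega_t:=\{\p>\f+t\}$ are empty for $t>2M$, and we may assume $\d<2M$. Put $g(t):=\Capa(\Omega_t)$: this is decreasing, bounded by $\Capa(X)$, and vanishes on $(2M,\infty)$. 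Since $\p-\f$ is continuous, the target $\sup_X(\p-\f)\le\d$ is equivalent to $g(\d)=0$.

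The core is a \emph{self-improving} capacity estimate. The Bedford--Taylor comparison principle, applied to $\f$ against the obstacle $\p-s$, gives for $0\le s<t$ (with $t-s$ small, which is all we shall use)
$$(t-s)^n\,g(t)\le C_n\int_{\Omega_s}\MA(\f),$$
and the hypothesis $\MA(\f)\le A\,\Capa^2$ converts the right-hand side into $A\,\Capa(\Omega_s)^2=A\,g(s)^2$. Hence the $\e$-free recursion
$$(t-s)^n\,g(t)\le C_nA\,g(s)^2,\qquad 0\le s<t.$$
A De Giorgi-type iteration, telescoping this inequality along a dyadic sequence $t_j=\tfrac\d2+\tfrac\d2(1-2^{-j})\uparrow\d$, then produces a threshold $\kappa_\ast=\kappa_\ast(n,A,\d)>0$ such that $g(\d/2)<\kappa_\ast$ implies $g(t_j)\to0$, and therefore $g(\d)=0$.

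It remains to guarantee the \emph{kick-start} $g(\d/2)<\kappa_\ast$; this is the only place where the smallness of $\int_X|\MA(\f)-\MA(\p)|$ enters, and it is also the main obstacle, since $\MA(\f)$ and $\MA(\p)$ may have equal (and not small) mass on $\Omega_s$, so no naive comparison suffices. One must pass through an $L^1$ estimate. From $\sup_X|\f-\p|\le2M$ and~\eqref{equ:I} one has $0\le\ii(\f,\p)\le2M\e$ for the Aubin energy; feeding this into the H\"older estimate~\eqref{equ:Hodge}, with its third and fourth arguments ranging over constants and over the (uniformly bounded, by Theorem~\ref{thm:Kolo1}) solutions of $\MA(\rho)=h\,V^{-1}\om^n$ for smooth densities $h$ with $\tfrac12\le h\le2$, and using $\int_X(\f-\p)\,\om^n=0$, yields $\int_X|\f-\p|\,\om^n\le C(n,\om)\,\e^{\a_n}$. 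By Chebyshev this gives $\om^n(\Omega_r)\le C(n,\om)\,\e^{\a_n}/r$ uniformly in $r>0$; combined with the uniform $L^\infty$ bound (so that the relevant superlevel sets are sublevel sets of a bounded function) and Ko\l{}odziej's volume--capacity comparison --- applied locally on balls on which $\p$ has oscillation $<\d/4$, in order to reduce to sublevel sets of genuine $\om$-psh functions --- this upgrades to $g(\d/2)\le\kappa(\e)$ with $\kappa(\e)\to0$ as $\e\to0$. Taking $\e$ so small that $\kappa(\e)<\kappa_\ast(n,A,\d)$ concludes; the symmetric estimate then gives $\dd_\infty(\f,\p)\le\d$. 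The most delicate point is precisely this last conversion of $L^1$-smallness into smallness of the capacity of a single superlevel set; the comparison-principle estimate and the iteration lemma are then routine bookkeeping.
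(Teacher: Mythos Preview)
The paper does not prove this theorem: it is stated as Ko\l{}odziej's result and cited from~\cite{Kolo2}, with only a remark on the normalization. There is therefore no proof in the paper to compare your sketch against.

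On its own merits, your capacity-iteration step (the recursion $(t-s)^n g(t)\le C_nA\,g(s)^2$ and the De~Giorgi telescoping) is Ko\l{}odziej's method and is fine. The kick-start, however, has a genuine gap. Your derivation of $\int_X|\f-\p|\,\om^n\le C\e^{\a_n}$ via~\eqref{equ:Hodge} can be made to work, but the subsequent step---converting small $\om^n$-volume of $\Omega_{\d/2}$ into small capacity---fails: there is no general upper bound on $\Capa(E)$ in terms of $\om^n(E)$, even for sublevel sets of bounded $\om$-psh functions (a real hypersurface in a torus has zero volume but positive capacity). The local reduction you sketch does not get around this, since on each ball you still end up with a sublevel set of a bounded $\om$-psh function and face the same obstruction.

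Ko\l{}odziej's actual kick-start (alluded to in the paper's discussion after Lemma~\ref{lem:stab}) avoids volume altogether: it uses the log concavity of the Monge--Amp\`ere operator together with an auxiliary solution $\rho$ to a carefully chosen Monge--Amp\`ere equation, and applies the comparison principle to $\rho$ directly to bound $\int_{\Omega_s}\MA(\f)$ in terms of $\e$. That is the missing idea here.
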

Here the left-hand side denotes the total variation of the signed measure $\MA(\phi)-\MA(\p)$, which coincides with its operator norm as an element of $\Cz(X)^\vee$. 

\begin{rmk} As in the above remark, a different normalization is used in~\cite[Theorem~4.1]{Kolo2}, but any choice of normalization leads to the same conclusion that $\f-\p$ is $C^0$-close to a constant.
\end{rmk}

%\begin{rmk}\label{rmk:stab} Since $\mu$ and $\nu$ both have mass $1$, we have 
%$$
%\int|\mu-\nu|=2\int(\mu-\nu)_+, 
%$$
%which is thus small as soon as $|\mu-\nu|\le\e\mu$ outside a set of small $\mu$-measure. 
%\end{rmk}

%
%%%%%%%%%%%%%%%%%%%%%%%%%%%%%%%%%%%%%%%%%%%%
\subsection{The $\dd_1$-stability theorem}
In~\cite[Theorem~2.5]{LiSYZ}, Yang Li adapts Ko\l{}odziej's proof of Theorem~\ref{thm:Kolo2} to establish an \emph{asymmetric} stability theorem, where the well-domination assumption only bears on the Monge--Amp\`ere measure of one of the two functions. We formulate it here in the following slightly improved version. 

\begin{thm}\label{thm:stab} Pick $\f,\p\in\CPSH_0(\om)$, and assume given $A,M>0$ such that
\begin{itemize}
\item[(i)] $\MA(\f)\le A\Capa^2$;
\item[(ii)] $\sup_X|\p|\le M$. 
\end{itemize}
For any $\d>0$, we can then find $\e=\e(\d,n,A,M)>0$ such that $\int_X\left|\MA(\f)-\MA(\p)\right|\le\e$ implies 
\begin{itemize}
\item[(a)] $\dd_1(\f,\p)\le\d$; 
\item[(b)] $\p\le\f+\d$ on $X$.
\end{itemize}
\end{thm}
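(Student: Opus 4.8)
The plan is to adapt Ko\l{}odziej's proof of the symmetric $\dd_\infty$-stability theorem (Theorem~\ref{thm:Kolo2}) along the lines of its asymmetric refinement in~\cite{LiSYZ}: the guiding principle is that the uncontrolled measure $\MA(\p)$ must only ever enter either through the difference $\MA(\f)-\MA(\p)$, whose total variation is $\le\e$, or through the comparison inequality $\MA(\p)\le\MA(\f)$ valid on $\{\p>\f\}$, so that every genuine capacity estimate is thrown onto $\MA(\f)$, which is dominated by $\Capa^2$. First I would reduce to the case where \emph{both} potentials are uniformly bounded: since $\MA(\f)$ is a probability measure with $\MA(\f)\le A\Capa^2$, Theorem~\ref{thm:Kolo1} together with the uniqueness part of Ko\l{}odziej's theorem identifies $\f$ as \emph{the} solution in $\CPSH_0(\om)$ of $\MA(\cdot)=\MA(\f)$, so $\sup_X|\f|\le M'(n,A)$; replacing $M$ by $M_0:=\max\{M,M'(n,A)\}$ we may assume $\sup_X|\f|,\sup_X|\p|\le M_0$ with $M_0=M_0(n,A,M)$. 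It is this step that absorbs the asymmetry of the hypotheses and accounts for the mild improvement over~\cite[Theorem~2.5]{LiSYZ}, where $\MA(\p)\le A\Capa^2$ is assumed.

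Next I would establish the one-sided bound (b). For $s>0$, the comparison principle applied to $\f+s$ and $\p$ gives $\MA(\p)\le\MA(\f)$ on $\{\p>\f+s\}$, so for $h(s):=\Capa(\{\p>\f+s\})$ only $\MA(\f)$ is involved. Running Ko\l{}odziej's iteration — for $0<\tau\ll1$ and a competitor $w\in\CPSH(\om)$ with $0\le w\le1$, one interpolates the $\om$-psh function $(1-\tau)\f+\tau w$, which lies within $\tau(1+M_0)$ of $\f$ (using $\sup_X|\f|\le M_0$) and satisfies $\MA((1-\tau)\f+\tau w)\ge\tau^n\MA(w)$, applies the comparison principle on sublevel sets of the shape $\{(1-\tau)\f+\tau w<\p-\text{const}\}$, and uses $\MA(\f)\le A\Capa^2$ together with $\int_X|\MA(\f)-\MA(\p)|\le\e$ — one is led to a functional inequality of the form $t^n\,h(s+t)\le C_n\bigl(\e+h(s)^2+t\,h(s)\bigr)$ for $0<t<1$. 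Ko\l{}odziej's numerical lemma then gives $h(s)=0$ for $s$ beyond a threshold $s_0=s_0(\e,n,A,M_0)$ with $s_0(\e)\to0$ as $\e\to0$, i.e. $\p\le\f+s_0(\e)$ on $X$; in particular (b) holds once $\e$ is small.

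For (a) I would feed (b) back in and refine the same iteration so as to also control $\int_X|\f-\p|\,\MA(\f)$ and $\int_X|\f-\p|\,\MA(\p)$ directly, which by~\eqref{equ:d1} amounts to $\dd_1(\f,\p)\le\d$. The term $\int_X(\p-\f)_+\,\MA(\f)$ is $\le s_0(\e)$ by (b), and the normalization $\int_X(\f-\p)\,\om^n=0$ forces $\|\f-\p\|_{L^1(\om^n)}\le2s_0(\e)V$, so the remaining mass $\om^n(\{|\f-\p|>s\})$ is small; on the sets $\{|\f-\p|>s\}$, which by (b) reduce to sublevel sets $\{\p<\f-s\}$, the iteration furnishes a smallness bound on $\Capa(\{|\f-\p|>s\})$, and then $\bigl(\MA(\f)+\MA(\p)\bigr)(\{|\f-\p|>s\})\le 3A\,\Capa(\{|\f-\p|>s\})^2+\e$ — the $\MA(\p)$-contribution being routed, as always, through $\MA(\p)\le\MA(\f)$ on $\{\p>\f\}$ and through the $\e$-bound on $\{\f>\p\}$. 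Splitting according to whether $|\f-\p|\le s$ or $|\f-\p|>s$, choosing $s:=\d/(4C_n)$, and using $\sup_X|\f-\p|\le 2M_0$ then gives $\dd_1(\f,\p)\le\d$ for $\e$ small.

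The step I expect to be the main obstacle is precisely the asymmetric capacity analysis underlying the iteration: one must verify that in every inequality the occurrences of $\MA(\p)$ can genuinely be eliminated in favour of $\MA(\f)\le A\Capa^2$ and the total-variation bound $\int_X|\MA(\f)-\MA(\p)|\le\e$ (so that no bound on $\MA(\p)$ is ever used on its own), keep the interpolated competitors honestly $\om$-psh and uniformly bounded, apply the comparison principle only on clean sets of the form $\{u<v\}$, and — most delicately — propagate the accumulating additive $\e$-errors through the numerical iteration so that the resulting threshold $s_0(\e)$ really does tend to $0$, and so that the capacity estimate extends to the sublevel sets $\{\p<\f-s\}$ appearing in conclusion~(a). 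This is the substance of Yang Li's argument in~\cite{LiSYZ}; the only structural change here is the first-paragraph reduction, which trades $\MA(\p)\le A\Capa^2$ for the weaker $\sup_X|\p|\le M$.
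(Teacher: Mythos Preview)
Your initial reduction is correct and matches the paper: Theorem~\ref{thm:Kolo1} bounds $\sup_X|\f|$ by a constant depending only on $n,A$, so one may assume $\sup_X|\f|,\sup_X|\p|\le M_0=M_0(n,A,M)$. Both subsequent steps, however, have genuine gaps. For (b), the Ko\l{}odziej iteration you sketch is essentially the content of Lemma~\ref{lem:appdom}, and it only yields $\p\le\f+C\,\mu(\{\p>\f\})^{1/2n}$ where $\mu:=\MA(\f)$; nothing in your hypotheses forces $\mu(\{\p>\f\})$ --- equivalently, your seed value $h(0)=\Capa(\{\p>\f\})$ --- to be small in terms of $\e$, so the threshold $s_0$ produced by the numerical lemma is of order $h(0)$, not $o(1)$ as $\e\to 0$. (Your displayed functional inequality carries an additive $\e$, but iterating from $h(0)=O(1)$ still only gives $s_0=O(1)$.) The genuine asymmetric input is Lemma~\ref{lem:stab}, whose proof uses log-concavity of $\MA$ and an \emph{auxiliary} Monge--Amp\`ere equation rather than a bare iteration, and which crucially requires the extra hypothesis $\mu(\{\p\le\f\})\ge\d$; it does not deliver (b) outright.

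For (a), your plan to bound $\Capa(\{\p<\f-s\})$ by ``the iteration'' runs head-on into the asymmetry: the iteration in that direction would need $\MA(\p)\le A'\Capa^2$, precisely the hypothesis you are dispensing with. The paper proceeds quite differently. Setting $f:=\p-\f$ and $m:=\sup_X f$, one applies Lemma~\ref{lem:stab} to the shifted pair $(\f+m-\d,\p)$ to obtain, by contradiction, that $\mu(\{f\le m-\d\})\le\d$, \ie $f$ is $\mu$-concentrated near its supremum. The missing ingredient is then the H\"older estimate~\eqref{equ:Hodge}: since $\ii(\f,\p)=\int f\,(\MA(\p)-\MA(\f))\le 2M_0\,\e$ and $\int f\,\om^n=0$ by normalization, \eqref{equ:Hodge} gives $\bigl|\int f\,d\mu\bigr|\le C\e^{\a_n}$. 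Combining the two forces $|m|\le C\d$, hence $\mu(\{|f|\ge C\d\})\le\d$. From this single estimate, (b) follows from Lemma~\ref{lem:appdom} applied with $\f$ shifted by $C\d$, while (a) follows by splitting $\int|f|\,d\mu$ at level $C\d$ and then passing to $\int|f|\,\MA(\p)$ via the total-variation bound --- no capacity control on $\{\f>\p\}$ is ever needed.
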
 

The first ingredient in the proof of Theorem~\ref{thm:stab} is the following `approximate domination principle', a direct application of Ko\l{}odziej's capacity estimates. 

\begin{lem}\label{lem:appdom}\cite[Theorem~2.7]{Li22}  Assume $\f,\p\in\CPSH(\om)$ satisfy (i), (ii) of Theorem~\ref{thm:stab}, and set $\mu:=\MA(\f)$. Then 
\begin{equation}\label{equ:appdom}
\p\le\f+C\mu\big(\f<\p\big)^{1/2n}
\end{equation}
for a constant $C=C(n,A,M)>0$. 
\end{lem}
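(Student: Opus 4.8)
The plan is to reduce the statement to an upper bound on $s:=\sup_X(\p-\f)$ and then deploy Kołodziej's capacity iteration. If $s\le0$ there is nothing to prove, so assume $s>0$; the goal is $s\le Cm^{1/2n}$ with $m:=\mu(\{\f<\p\})$. First I would record the a priori bounds: normalizing $\f\in\CPSH_0(\om)$, Theorem~\ref{thm:Kolo1} gives $\sup_X|\f|\le M_1(n,A)$, so together with $\sup_X|\p|\le M$ one has $s\le M+M_1$; set $M_2:=\max(M,M_1)$, a constant depending only on $n,A,M$. This a priori control on the size of $s$ is what will make the (necessarily range-restricted) capacity estimate below suffice.

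The core step is a capacity estimate for the open, nonempty super-level sets $U_\eta:=\{\p>\f+s-\eta\}$, $0<\eta<s$: for levels $\delta'<\delta$ with $\delta-\delta'\le2(1+M_2)$,
$$\Capa(U_{\delta'})\le\Big(\tfrac{2(1+M_2)}{\delta-\delta'}\Big)^n\mu(U_\delta).$$
To see this, fix a competitor $w\in\CPSH(\om)$ with $0\le w\le1$, set $\lambda:=(\delta-\delta')/(2(1+M_2))\in(0,1]$, and consider $v:=(1-\lambda)\p+\lambda w$, which is $\om$-psh and satisfies $\p-\lambda M_2\le v\le\p+\lambda(1+M_2)$ as well as $(\om+\ddc v)^n\ge\lambda^n(\om+\ddc w)^n$. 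Applying the Bedford--Taylor comparison principle to $v$ and to the ($\om$-psh) function $u:=\f+s-\tfrac{\delta+\delta'}2$, the bounds on $v$ force $U_{\delta'}\subseteq\{u<v\}\subseteq U_\delta$, whence $\lambda^n\int_{U_{\delta'}}\MA(w)\le\int_{\{u<v\}}\MA(v)\le\int_{\{u<v\}}\MA(\f)\le\mu(U_\delta)$; taking the supremum over $w$ gives the claim. Since $U_\delta\subseteq\{\f<\p\}$ for $\delta<s$, this yields both $\Capa(U_{\delta'})\le(\tfrac{2(1+M_2)}{\delta-\delta'})^n m$ and, via the well-domination hypothesis $\mu\le A\,\Capa^2$, the self-improving inequality $\Capa(U_{\delta'})\le A(\tfrac{2(1+M_2)}{\delta-\delta'})^n\Capa(U_\delta)^2$.

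From here I would conclude by Kołodziej's iteration scheme (as in the proof of Theorem~\ref{thm:Kolo1}). Feeding the self-improving inequality along a geometric sequence $\delta_0>\delta_1>\cdots\downarrow\delta_0/2$ (whose increments are $\le\delta_0/4$, hence $\le2(1+M_2)$ since $\delta_0$ will be $\le s/2\le M_2$), one finds that if $\Capa(U_{\delta_0})$ were below a threshold $\kappa(n,A,M_2)\,\delta_0^n$ then $\Capa(U_\delta)\to0$ as $\delta\downarrow\delta_0/2$; but $\{\p-\f>s-\delta_0/2\}$ is nonempty and open for $\delta_0<s$, so its capacity is positive --- a contradiction. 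Hence $\Capa(U_{\delta_0})>\kappa\delta_0^n$ whenever $\delta_0<s$. Taking $\delta_0=s/2$ and comparing with the bound $\Capa(U_{s/2})\le(\tfrac{8(1+M_2)}{s})^n m$ (the capacity estimate with $\delta'=s/2$, $\delta=3s/4$) gives $s^{2n}\le C'(n,A,M)\,m$, i.e. $s\le Cm^{1/2n}$, which is exactly the assertion.

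The two ingredients --- the convex-combination capacity estimate and the de Giorgi-type iteration --- are each part of Kołodziej's standard toolkit, so there is no single deep obstacle. The point requiring genuine care is making them mesh: the constraint $\lambda\le1$ confines the capacity estimate to bounded increments $\delta-\delta'\le2(1+M_2)$, so the iteration can only be run over a bounded range of levels, and it is precisely the a priori bound $s\le M+M_1$ coming from Kołodziej's $L^\infty$ estimate that guarantees this range is large enough; one then has to track the constants through the iteration carefully in order to extract the sharp exponent $1/2n$ from the quadratic gain $\mu\le A\,\Capa^2$.
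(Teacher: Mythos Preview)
Your proof is correct and follows essentially the same route as the paper's: a capacity estimate for the sets $\{\p-\f>t\}$ via the comparison principle, combined with $\mu\le A\Capa^2$, yields a self-improving inequality that is then run through the Ko\l{}odziej/de Giorgi iteration (the paper cites this as \cite[Lemmas~2.3--2.4]{EGZ}). The only cosmetic difference is that the paper packages the capacity estimate via the \emph{relative} capacity $\Capa_\p$ (noting $\Capa_\p\sim\Capa$ since $|\p|\le M$), whereas you work directly with $\Capa$ and spell out the convex-combination competitor $v=(1-\lambda)\p+\lambda w$; both arguments also implicitly rely on the a~priori bound on $\sup_X(\p-\f)$ that you correctly flag.
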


\begin{proof} In what follows $C=C(n,A,M)>0$ is allowed to vary from line to line. Introduce the capacity relative to $\p$
$$
\Capa_\p(B):=\sup\left\{\int_B\MA(\f)\mid\f\in\CPSH(\om),\,0\le\f-\p\le 1\right\}.
$$
Since $\sup|\p|\le M$, it is easy to see that $C^{-1}\Capa\le\Capa_\p\le C\Capa$, and hence $\mu\le C\Capa_\p^2$. Consider the function $f\colon [0,\infty)\to[0,\infty)$ defined by 
$$
f(t):=\mu\big(\f<\p-t\big)^{1/2n}, 
$$
It is nonincreasing, right-continuous, tends to $0$ at infinity, and we have $f(t)=0$ iff $\p\le\f+t$, by the domination principle (see Corollary~\ref{cor:dom}). The key ingredient is then the inequality
$$
s^n\Capa_\p\big(\f<\p-s-t\big)\le\mu\big(\f<\p-t\big)
$$
for all $t\in\R$, $s\in [0,1]$, a general consequence of the comparison principle (see for instance~\cite[Lemma~2.3]{EGZ}), which combines with $\mu\le C\Capa_\p^2$ to yield the decay estimate 
$$
s f(t+s)\le C f(t)^2
$$
for $t\ge 0$ and $s\in [0,1]$. Thanks to an elementary lemma~\cite[Lemma~2.4]{EGZ}, this guarantees that $f(t)=0$ for $t\ge 4C f(0)$ as soon as $f(0)=\mu\big(\f<\p\big)^{1/2n}$ is smaller than $1/2C$. The result follows.
\end{proof}

The heart of the proof of Theorem~\ref{thm:stab} is the following statement. 

\begin{lem}\label{lem:stab}\cite[Theorem~2.7]{Li22} Pick $\f,\p\in\CPSH(\om)$, and assume that
\begin{itemize}
\item[(i)] $\mu:=\MA(\f)$ satisfies $\mu\le A\Capa^2$;
\item[(ii)] $\sup_X|\f|,\sup_X|\p|\le M$; 
\item[(iii)] $\mu\big(\p\le\f\big)\ge\d>0$;
\item[(iv)] $\int_X\left|\MA(\f)-\MA(\p)\right|\le\e$; 
\end{itemize}
If $\e\le\e_0(\d,n,A,M)$ we then have $\p\le\f+C\e^\a$ with $C=C(\d,nA,M)>0$ and $\a:=1/(2n+3)$. 
\end{lem}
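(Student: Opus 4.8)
The plan is to follow Ko\l{}odziej's capacity method, adapted to the asymmetric setting as in Yang Li's work. Let $\mu:=\MA(\f)$ and $\nu:=\MA(\p)$. The starting point is to combine the approximate domination principle of Lemma~\ref{lem:appdom} with a capacity decay estimate for the sublevel sets $\{\f<\p-s\}$, but with the roles of $\f$ and $\p$ chosen so that only the well-domination of $\mu$ is used. First I would set $f(t):=\mu(\f<\p-t)^{1/2n}$ and, exactly as in the proof of Lemma~\ref{lem:appdom}, use the comparison principle estimate $s^n\Capa_\p(\f<\p-s-t)\le\nu(\f<\p-t)$ --- crucially this bounds a relative capacity of a sublevel set of $\f$ in terms of $\nu=\MA(\p)$, \emph{not} $\mu$. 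The new input over Lemma~\ref{lem:appdom} is that we do not have $\nu\le A\Capa^2$; instead we must control $\nu(\f<\p-t)$ by $\mu(\f<\p-t)$ plus the total-variation error $\e$, i.e. $\nu(\f<\p-t)\le\mu(\f<\p-t)+\e$. This yields a perturbed decay estimate of the form $s\,f(t+s)^{2n}\le C\big(f(t)^{2n}+\e\big)^{?}$ that must be massaged into the shape amenable to the elementary iteration lemma~\cite[Lemma~2.4]{EGZ}, at the cost of an $\e$-dependent `floor' below which the decay stops. Tracking how this floor propagates through the iteration is what produces the exponent $\a=1/(2n+3)$ and the conclusion $\p\le\f+C\e^\a$ once $\e\le\e_0(\d,n,A,M)$, where hypothesis (iii), $\mu(\p\le\f)\ge\d$, ensures $f(0)=\mu(\f<\p)^{1/2n}\le(1-\d)^{1/2n}$ is bounded away from $1$, giving room to start the iteration.

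More concretely, I would proceed as follows. Fix $s_0\in(0,1]$ small (to be optimized in terms of $\e$) and iterate the inequality at scale $s_0$: writing $t_k:=ks_0$, one gets $s_0\,f(t_{k+1})^{2n}\le C_1 f(t_k)^{2n}\nu\big(\f<\p-t_k\big)^{(2n-1)/2n}$-type bounds, but it is cleaner to bound $\nu$-measures by $\mu$-measures plus $\e$ at each step and absorb. The key algebraic point: if $f(t_k)^{2n}\le\e$ stays true from some $k$ on, we stop and declare $\p\le\f+(k s_0)\cdot$const; otherwise $f(t_k)^{2n}>\e$ and the perturbation $\e$ is dominated by the main term, so the pure Ko\l{}odziej decay $s_0 f(t_{k+1})\le C f(t_k)^2$ runs, forcing $f$ to reach the floor within $O(\log(1/\e))$ steps — but we cannot afford a $\log$, so instead one uses the EGZ lemma directly on the continuous inequality $s f(t+s)\le C(f(t)^2+\e^{1/2n})$, obtaining $f(t)\le 2\e^{1/4n}$ for $t\ge t_*$ with $t_*$ of size $O(\e^{-\text{small}})$... this is exactly where the bookkeeping determines $\a$. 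Having established $\p\le\f+C\e^\a$, which is conclusion (b) up to renaming constants and choosing $\e_0$ so $C\e_0^\a\le\d$, I would then derive (a): since $\sup|\f|,\sup|\p|\le M$ and $\p\le\f+\d$, the function $u:=\max\{\f,\p-\d\}$ satisfies $0\le u-\p+\d\le$ something controlled, and the Darvas-metric bound $\dd_1(\f,\p)\le C_n\big(\int|\f-\p|\MA(\f)+\int|\f-\p|\MA(\p)\big)$ from~\eqref{equ:d1} can be estimated using $\int|\f-\p|(\mu+\nu)\le\int(\f-\p)_+(\mu+\nu)+\int(\f-\p)_-(\mu+\nu)$, bounding the first integrand by $\d$ pointwise and the second by a partial-integration/energy argument exploiting that $\int(\f-\p)(\mu-\nu)$ is small (of order $M\e$ by (iv)) while $\int(\p-\f)_+\nu$ is small by symmetry of the domination estimate applied the other way — or more simply, once $\p\le\f+\d$ one has $\int|\f-\p|\,d\mu\le\d+\int(\f-\p)_+d\mu$ and the latter is controlled by $\dd_1$-continuity together with the Chern--Levine--Nirenberg/energy estimate~\eqref{equ:Hodge}.

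The main obstacle I anticipate is precisely the asymmetry: in the classical Theorem~\ref{thm:Kolo2} both Monge--Amp\`ere measures are well-dominated, so one can symmetrize and the capacity estimates close up cleanly; here we only control $\MA(\f)$, and every appearance of $\Capa_\p(\f<\p-s)\le\text{const}\cdot\nu(\ldots)$ introduces $\nu=\MA(\p)$ which we must immediately trade back to $\mu$ at the price of $\e$. Ensuring these trades do not accumulate — i.e. that the perturbation enters the EGZ iteration additively with a controlled power, rather than multiplicatively or with a compounding constant — is the delicate part, and is what forces the somewhat unusual exponent $\a=1/(2n+3)$ rather than the $1/(2n+1)$ or similar one might naively expect. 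A secondary technical point is verifying that the hypotheses of Lemma~\ref{lem:appdom} and of the EGZ elementary lemma are genuinely met with the stated dependence of constants only on $n,A,M$ (and $\d$ for $\e_0$), which requires care because the relative capacity $\Capa_\p$ is comparable to $\Capa$ only with constants depending on $M=\sup|\p|$, and hypothesis (ii) of the theorem as stated only bounds $|\p|$; one uses Theorem~\ref{thm:Kolo1} applied to $\mu\le A\Capa^2$ to also get $\sup|\f|\le M'(n,A)$, restoring the two-sided bound needed to invoke Lemma~\ref{lem:stab}.
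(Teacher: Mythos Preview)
There is a genuine gap. The paper does not give a full proof but explicitly names the three ingredients: Lemma~\ref{lem:appdom}, the \emph{log-concavity} of the Monge--Amp\`ere operator, and the introduction of an \emph{auxiliary Monge--Amp\`ere equation} to which the comparison principle is applied. Your proposal uses only the first of these and tries to get by with a perturbed iteration of the Ko\l{}odziej decay estimate; this cannot work, for the following reason.

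The decay inequality you write down, $s^n\Capa_\p(\f<\p-s-t)\le\nu(\f<\p-t)$, is misstated: the comparison principle already gives $\mu$ on the right (this is exactly the inequality used in the proof of Lemma~\ref{lem:appdom}), so there is no ``trading $\nu$ back to $\mu$ at the price of $\e$'' to be done. In fact Lemma~\ref{lem:appdom} by itself, using only (i) and (ii), already yields the full unperturbed iteration and hence $\p\le\f+C\mu(\f<\p)^{1/2n}$. The problem is that (iii) only tells you $\mu(\f<\p)\le 1-\d$, which is useless in~\eqref{equ:appdom}; it does \emph{not} give ``room to start the iteration'' in the EGZ lemma, which requires $f(0)$ to be small, not merely bounded away from $1$. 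What is actually needed is a separate argument showing $\mu(\f<\p-s)$ is of order $\e^{\text{power}}$ for some small $s>0$, and this is where hypothesis (iv) enters --- via an auxiliary potential $\rho$ solving $\MA(\rho)=c^{-1}\mathbf{1}_{\{\p\le\f\}}\mu$ (bounded by Theorem~\ref{thm:Kolo1} since $c\ge\d$), the log-concavity inequality for mixed Monge--Amp\`ere masses, and the Chebyshev-type bound $\mu(d\mu/d\nu\ge 1-s)\le s^{-1}\int|\mu-\nu|$ mentioned after the lemma. Your iteration scheme never produces such a smallness estimate, so it cannot close.

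A secondary point: the lemma's conclusion is solely $\p\le\f+C\e^\a$; the statements (a) and (b) you go on to discuss belong to Theorem~\ref{thm:stab}, whose proof \emph{uses} Lemma~\ref{lem:stab} as a black box (applied with $\p$ shifted by $m-\d$) together with~\eqref{equ:Hodge}. You should separate the two.
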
 
We will not reproduce the proof here, and only mention that it combines Lemma~\ref{lem:appdom}, the log concavity of the Monge--Amp\`ere operator, and the introduction of the solution to an auxiliary Monge--Amp\`ere equation to which the comparison principle gets applied, along the same lines as~\cite[Theorem~4.1]{Kolo2}. Note that Li assumes a Skoda estimate in place of (i), which is what is used in the proof, and that Li assumes $\f,\p$ to be smooth, which only gets used in the log concavity of the Monge--Amp\`ere operator and the Chebyshev-type inequality
$$
\mu\big(f\ge 1-s\big)\le s^{-1}\int|\mu-\nu|
$$ 
with $\nu:=\MA(\p)$ and $f=d\mu/d\nu$, both of which remain true under our slightly more general assumptions (see~\cite[Proposition~1.11]{BEGZ} for the former).

\begin{proof}[Proof of Theorem~\ref{thm:stab}] Set $f:=\p-\f$, and note that $\sup_X|f|\le C$ by Theorem~\ref{thm:Kolo1}, where $C>0$ denotes a constant only depending on $n,A,M$, that is allowed to vary from line to line. It will be enough to show $\mu\big(|f|\ge C\d\big)\le\d$ for $\e\le\e_0(\d,n,A,M)$. Indeed, this will yield (b), by Lemma~\ref{lem:appdom}. Since $|f|\le 2M$, it will also imply
$$
\int|f|\MA(\f)=\int|f|\,d\mu\le 2M\mu\big(|f|\ge\d\big)+\d\le C\d, 
$$
thus 
$$
\int|f|\MA(\p)\le\int|f|\,\MA(\f)+2M\int\left|\MA(\f)-\MA(\p)\right|\le C\d,
$$
and hence $\dd_1(\f,\p)\le C\d$, by~\eqref{equ:d1}. 

Setting $m:=\sup_X f$, we first claim that $\mu\big(f\le m-\d\big)\le\d$ if $\e\le\e_0(\d,n,A,M)$. Assuming by contradiction $\mu\big(f\le m-\d\big)>\d$, Lemma~\ref{lem:stab} yields
$$
f\le m-\d+C\e^{\a}\le m-\d/2
$$
whenever $\e\le\e_0(\d,n,A,M)$, and taking the sup over $X$ yields the desired contradiction. We next observe that 
$$
\ii(\f,\p)=\int_X f\,(\MA(\p)-\MA(\f))\le(\sup_X|f|)\int_X\left|\MA(\f)-\MA(\p)\right|\le C\e,
$$
and similarly $\ii(\f,0),\ii(\p,0)\le C$. Since $\int f\om^n=\int(\f-\p)\om^n=0$ by the normalization of $\f,\p$, \eqref{equ:Hodge} yields 
$$
\left|\int f d\mu\right|\le C\e^{\a_n}\le\d
$$
for $\e\le\e_0(\d,n,A,M)$. Since $\mu\big(|f-m|\ge\d\big)\le\d$ (by the above claim) and $\sup_X|f-m|\le\sup_X|f|+|m|\le C$, we infer 
\begin{align*}
|m| & \le \left|\int_{\{|f-m|\ge\d\}}(f-m)d\mu\right|+ \left|\int_{\{|f-m|<\d\}}(f-m)d\mu\right|+\int_X|f|\mu\\
& \le C\mu\big(|f-m|\ge\d\big)+2\d\le C\d, 
\end{align*}
thus $\{(|f|\ge C\d\}\subset\{|f-m|\ge\d\}$, and hence $\mu\left(|f|\ge C\d\right)\le\d$. The result follows. 
\end{proof}

%
%
%%%%%%%%%%%%%%%%%%%%%%%%%%%%%%%%%%%%%%%%%%%%
%
%
%%%%%%%%%%%%%%%%%%%%%%%%%%%%%%%%%%%%%%%%%%%%

%
% 
%
%%%%%%%%%%%%%%%%%%%%%%%%%%%%%%%%%%%%%%%%%%%%%%%%%%%%%%%%%%%%%%%%%%%
%
%
%
\end{document}